\UseRawInputEncoding
\documentclass[11pt,twoside,a4paper]{amsart}
\usepackage{amssymb}
\usepackage{amsmath,amscd}
\usepackage[initials,nobysame,shortalphabetic]{amsrefs}
\usepackage[all,cmtip]{xy}
\usepackage{hyperref}
\date{\today}
\usepackage{geometry}
% from SLARW

%\usepackage{geometry}
%\geometry{head=8mm,foot=8mm,bindingoffset=12mm,vcentering=true,twoside=true,textheight=242mm,textwidth=151mm}
%CHERN
%\usepackage{geometry}
%\geometry{head=8mm,foot=8mm,bindingoffset=12mm,vcentering=true,twoside=true,textheight=234mm,textwidth=161mm}
%\usepackage{geometry}
%\geometry{head=8mm,foot=8mm,bindingoffset=12mm,vcentering=true,twoside=true,textheight=242mm,textwidth=161mm}
%\geometry{head=8mm,foot=8mm,bindingoffset=12mm,vcentering=true,twoside=true,textheight=236mm,textwidth=161mm}
\geometry{head=8mm,foot=8mm,bindingoffset=12mm,vcentering=true,twoside=true,textheight=225mm,textwidth=141mm}

\usepackage{xcolor}

\def\la{\langle}
\def\ra{\rangle}

%%%%%%

\def\w{\wedge}

\def\dbar{\bar\partial}

\def\R{{\mathbf R}}
\def\N{{\mathbf N}}
\def\C{{\mathbf C}}
\def\P{{\mathbf P}}
\def\w{{\wedge}}
\def\Pk{{\mathbf P}}

\def\D{{\mathcal D}}

\def\F{{\mathcal F}}

\def\codim{{\rm codim\,}}

\def\E{{\mathcal E}}

\def\O{{\mathcal O}}

\def\U{{\mathcal U}}

\def\nbh{neighborhood }

\def\be{\begin{equation}}
\def\ee{\end{equation}}

\def\Ok{\mathcal O}

\def\1{{\bf 1}}

\def\div{{\rm div}}
\def\Pk{{\mathbb P}}

\DeclareMathOperator{\supp}{supp}
\DeclareMathOperator{\psh}{PSH}
\DeclareMathOperator{\MA}{MA}

% Grisen
\def\B{{\mathbf B}} 
\def\G{{\mathcal G}}
\def\Di{{\mathbf D}}
\def\F{{\mathcal F}}
\def\S{{S}}

% Martin's

 %\textrm{full}

 %%^{\ph_\sm}
%\textbf

%\sum{\big.\!}
%\sum{\big.\!}

\newcommand{\ma}{Monge-Amp\`ere }

%added at the 17th September

%added at the 8th April 19 (and later)

%changed
\definecolor{darkred}{rgb}{.77, .01,.2}
%{{\texttt{[#1]}}}

	%% l looks like  \ell
\mathcode`l="8000
\begingroup
\makeatletter
\lccode`\~=`\l
\DeclareMathSymbol{\lsb@l}{\mathalpha}{letters}{`l}
\lowercase{\gdef~{\ifnum\the\mathgroup=\m@ne \ell \else \lsb@l \fi}}%
\makeatother
\endgroup

	%% fat headline for sections
%\makeatletter
%\def\section{\@startsection{section}{1}%
%  \z@{1.75\linespacing\@plus\linespacing}{1\linespacing}%
%%  \z@{1.0\linespacing\@plus\linespacing}{.5\linespacing}%
%  {\normalfont\bf\centering}}
%\makeatother
%%% end martin's

\newtheorem{thm}{Theorem}[section]
\newtheorem{lma}[thm]{Lemma}
\newtheorem{cor}[thm]{Corollary}
\newtheorem{prop}[thm]{Proposition}

\theoremstyle{definition}

\newtheorem{df}[thm]{Definition}

\theoremstyle{remark}

\newtheorem{preremark}[thm]{Remark}
\newtheorem{preex}[thm]{Example}

\newenvironment{remark}{\begin{preremark}}{\qed\end{preremark}}
\newenvironment{ex}{\begin{preex}}{\qed\end{preex}}

\numberwithin{equation}{section}

\title[Non-pluripolar energy and the complex Monge-Amp\`ere operator]
{Non-pluripolar energy and the complex Monge-Amp\`ere operator}

\begin{document}
\date{\today}

\author{Mats Andersson \& David Witt Nystr\"om \& Elizabeth  Wulcan}
\address{Department of Mathematical Sciences\\Chalmers University of Technology and the University of Gothenburg\\412 96 Gothenburg\\SWEDEN}
\email{matsa@chalmers.se, wittnyst@chalmers.se, wulcan@chalmers.se}

\thanks{The authors were partially supported by the
  Swedish Research Council.
  The second author was also partially supported by a grant from the
  Knut and Alice Wallenberg Foundation and the 
  %The
  third author was also partially supported by the G\"oran
Gustafsson Foundation for Research in Natural Sciences and Medicine.}

\begin{abstract}
Given a domain $\Omega\subset \C^n$ we introduce a class of
plurisubharmonic (psh) functions $\G(\Omega)$ and Monge-Amp\`ere
operators $u\mapsto [dd^c u]^p$, $p\leq n$, on $\G(\Omega)$ that
extend the Bedford-Taylor-Demailly Monge-Amp\`ere operators.
Here $[dd^c u]^p$ is a closed positive current of bidegree
$(p,p)$ that dominates
the non-pluripolar Monge-Amp\`ere current $\la dd^c u\ra^p$. 
We prove that $[dd^c u]^p$
is the limit of Monge-Amp\`ere currents of
certain natural regularizations of $u$.

On a compact K\"ahler manifold $(X, \omega)$ we introduce a notion of
non-pluripolar energy 
and a corresponding finite energy 
class $\G(X, \omega)\subset \psh(X, \omega)$
 that is a global version of $\G(\Omega)$.
  From the local construction 
 we get global Monge-Amp\`ere currents
$[dd^c \varphi + \omega]^p$ for $\varphi\in \G(X,\omega)$ 
that only depend on the current $dd^c \varphi+ \omega$. 
The limits of  Monge-Amp\`ere currents of certain natural
regularizations of $\varphi$ can be expressed in terms of $[dd^c \varphi + \omega]^j$ for 
$j\leq p$.
We get a mass formula involving the currents $[dd^c
\varphi+\omega]^p$ that describes the loss of mass of the
non-pluripolar Monge-Amp\`ere measure $\la dd^c
\varphi+\omega\ra^n$. 
The class $\G(X, \omega)$ includes $\omega$-psh functions with
analytic singularities and the class $\E(X, \omega)$ of
$\omega$-psh functions of finite energy and certain other convex energy
classes, although it is not convex itself.

\end{abstract}

\maketitle

\section{Introduction}

Let $\Omega$ be a domain in $\C^n$, and let $u$ be a plurisubharmonic
(psh) function on $\Omega$, $u\in \psh(\Omega)$. If $u$ is $C^2$ then $dd^cu$ is a positive form, and the associated
Monge-Amp\`ere measure is defined as the top wedge power of this form
with itself. This positive measure plays a fundamental role in
pluripotential theory  akin to the role played by the Laplacian in ordinary potential theory.
If $u$ is not $C^2$, then $dd^c u$ is no longer a form but a
current. As is well known the wedge product of currents is typically
not well-defined, which raises the question whether it is still possible
to define a Monge-Amp\`ere measure for more general psh functions.

Bedford-Taylor \cite{BT1, BT2} solved this problem when $u$ is
(locally) bounded. Their idea was to define the Monge-Amp\`ere measure
$(dd^c u)^n$ 
recursively. Assume that $T$ is a closed positive current of bidegree
$(j, j)$. Then $T$ has measure coefficients and since $u$
is bounded, $uT$ is a well-defined current and thus so is 
$dd^c(uT)$. Bedford-Taylor proved that
this current is closed and positive. They could then 
recursively define closed positive currents 
$$(dd^c
u)^{p}:=dd^c (u(dd^c u)^{p-1}).
$$
The Monge-Amp\`ere operators $u\mapsto (dd^c u)^p$ have some essential
continuity properties. Bedford-Taylor proved that if $u_\ell$ is any
sequence of psh functions decreasing to $u$, then $(dd^c u_\ell)^p$
converges weakly to $(dd^c u)^p$.

We are interested in the situation when $u$ is not locally
bounded. Demailly \cite{Dem87, Dem93} showed that it is possible to extend the
Bedford-Taylor Monge-Amp\`ere operators to psh functions that are
bounded outside
 ``small''
sets. Moreover, B\l{}ocki, \cite{B06},  and
Cegrell, \cite{C}, characterized the largest class
$\D(\Omega)$
of psh functions on which there is a 
Monge-Amp\`ere operator $u\mapsto (dd^cu)^n$ that is continuous under decreasing
sequences. For instance, functions in $\psh(\Omega)$ that are bounded
outside a compact set in $\Omega$ are in $\D(\Omega)$, see, e.g.,
\cite{B06}. On the other hand psh functions with analytic
singularities, i.e., locally of the form $u=c\log |f|^2 +b$, where
$c>0$, $f$ is a tuple of holomorphic functions, and $b$ is locally
bounded, are not in $\D(\Omega)$ unless their unbounded locus is
discrete, see \cite{B}  or Proposition~\ref{omtenta}.

To handle more singular psh functions Bedford-Taylor \cite{BT3} introduced the notion of
non-pluripolar Monge-Amp\`ere currents. The idea is to capture the 
Monge-Amp\`ere currents  of the ``bounded part'' of $u\in \psh(\Omega)$. Note that for any
$\ell$, $\max(u,-\ell)$ is psh and locally bounded, and thus $(dd^c\max(u,-\ell))^p$ is
well-defined for any $p$.
For each $p\leq n$,  
\begin{equation}\label{husar}
  \la dd^c u \ra ^p:=\lim_{\ell\to
    \infty}1_{\{u>-\ell\}} \big (dd^c \max(u,-\ell)\big )^p 
\end{equation}
is a form with measure coefficients. The existence of the limit follows from the fact that the Monge-Amp\`ere
operators on bounded psh functions are local in the plurifine topology,
i.e., if $u=v$ on a plurifine open set, then $(dd^c u)^p=(dd^c v)^p$ on that
set. 
One serious issue is that the measure coefficients of $\la dd^c u \ra
^p$ 
might be 
not locally finite, as an example due to Kiselman, \cite{K}, 
shows.
If they are locally finite, however, by 
\cite{BEGZ}, $\la dd^c
u\ra^p$ is a closed positive $(p,p)$-current.
For instance, this is the case when $u$ has analytic singularities. 
We refer to the currents $\la dd^c u\ra^p$ as  
\emph{non-pluripolar Monge-Amp\`ere currents}. 

As the name suggests, 
the non-pluripolar Monge-Amp\`ere
currents do not
charge pluripolar sets.
Thus, since $\la dd^c u\ra^p$ cannot capture the behaviour on the singular set
of $u$, they do not 
coincide with Demailly's extensions of $(dd^c u)^p$ in general. 
In particular, it follows 
that the non-pluripolar Monge-Amp\`ere operators $u\mapsto \la dd^c u\ra^p$ 
 are far from being continuous under decreasing sequences in general. 
For instance, if $u=\log|f|^2$, where $f$ is a holomorphic
function, then $\la dd^c u\ra=0$, whereas for any sequence $u_\ell$
decreaing to $u$, $dd^c u_\ell$ converges weakly to $dd^cu$, which by the
Poincar\'e-Lelong formula is the current of integration $[f=0]$
along the divisor of $f$. 

\smallskip 

The purpose of this paper is to introduce a new class of psh functions
together with an extension of the Demailly-Bedford-Taylor
Monge-Amp\`ere operators that capture the singular behaviour.

\begin{df}\label{grisen}
  Let $\Omega$ be a domain in $\C^n$. 
  We say $u\in\psh(\Omega)$ has \emph{locally finite non-pluripolar
    energy}, $u\in \G(\Omega)$ if, for each $j\leq n-1$, $\la
  dd^c u\ra^j$ is locally finite and $u$ is locally integrable with
  respect to $\la
  dd^c u\ra^j$. 
\end{df}

If $u\in \G(\Omega)$ and $j\leq n-1$, then $u\la dd^c u\ra ^j$ is a
well-defined current and thus by mimicking the original constrution by 
Bedford-Taylor we can define generalized Monge-Amp\`ere currents. 

\begin{df}\label{grisprodukt}
  Given $u\in \G(\Omega)$ for $p=1,\ldots, n$ we define
  \begin{equation}\label{pig}
    [dd^c u]^{p}=dd^c(u\la dd^c u\ra ^{p-1})
    \end{equation}
      and
         \begin{equation*}
        \S_{p}(u)=[dd^c u]^{p}-\la dd^c u\ra^{p}.
      \end{equation*}
\end{df}

Using the locality of the non-pluripolar Monge-Amp\`ere operators and the
integrability it follows that $[dd^c u]^{p}$ and $\S_{p}(u)$ are
closed positive currents. 
In particular, $[dd^cu]^p$ dominates 
$\la dd^c u\ra^p$.

Definitions ~\ref{grisen} and ~\ref{grisprodukt} are inspired by the
construction of Monge-Amp\`ere currents in \cite{A,
  AW}. From \cite{AW}*{Proposition~4.1} it follows that psh functions
with analytic singularities have locally finite non-pluripolar energy.
Thus there are functions in $\G(\Omega)$ that
are not in $\D(\Omega)$. 
If $u\in \psh(\Omega)$ has analytic singularities, then the
currents $[dd^cu]^p$ coincide with the
Monge-Amp\`ere currents $(dd^c u)^p$ introduced in \cite{A, AW}. In this case
$\S_p(u)=\1_Z[dd^cu]^p$, where $Z$ is the unbounded locus of $u$.
In \cite{ASWY, AESWY} these Monge-Amp\`ere currents are 
used to understand non-proper intersection theory in terms of
currents. In particular, the Lelong numbers of the currents $[dd^c \log |f|^2]^p$
are certain local intersection numbers, so-called Segre numbers,
associated with the ideal generated by $f$.

In Section ~\ref{lokala} we provide other examples of functions in
$\G(\Omega)$ and also psh functions that are not in $\G(\Omega)$.
For instance, Example ~\ref{utvidgat} shows that there are psh functions
$u\leq v$ such that $u\in \G(\Omega)$ but $v\notin \G(\Omega)$.

\smallskip

Given $u\in \G(\Omega)$, note that $\max(u,-\ell)$ is a
natural sequence of locally bounded psh functions decreasing to $u$,
cf.\ \eqref{husar}. 
Our
first main theorem states that our new Monge-Amp\`ere currents $[dd^c
u]^p$ are the limits of the Monge-Amp\`ere currents of this
regularization.
\begin{thm}\label{regularisera}
 Assume that $u\in \G(\Omega)$. Then
  \begin{equation*}%\label{sporten}
    \big(dd^c \max(u,-\ell)\big)^p \to [dd^c u]^p, \quad \ell\to\infty.  
  \end{equation*}

  More generally, let $\chi_\ell:\R\to \R$ be a sequence of nondecreasing
convex functions, bounded from below, 
that decreases to $t$
as $\ell\to \infty$, and let $u_\ell=\chi_\ell\circ u$. Then 
\begin{equation*}
  (dd^cu_\ell)^p\to [dd^cu]^p, \quad \ell\to\infty.  
 %  (dd^c\chi_\ell\circ u)^p\to [dd^cu]^p.
  \end{equation*}
\end{thm}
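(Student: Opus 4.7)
The plan is to argue by induction on $p$. For $p=1$, $[dd^c u]^1=dd^c u$ by Definition \ref{grisprodukt}, and $dd^c u_\ell\to dd^c u$ since $u_\ell$ decreases to $u$ in $L^1_{\loc}$. Assume the convergence for exponent $p-1$. Since each $u_\ell$ is locally bounded psh, Bedford--Taylor's recursion gives $(dd^c u_\ell)^p=dd^c\bigl(u_\ell\,(dd^c u_\ell)^{p-1}\bigr)$, while by definition $[dd^c u]^p=dd^c\bigl(u\,\la dd^c u\ra^{p-1}\bigr)$. Weak continuity of $dd^c$ then reduces the theorem to proving
\[
u_\ell\,(dd^c u_\ell)^{p-1}\longrightarrow u\,\la dd^c u\ra^{p-1}
\]
weakly as $\ell\to\infty$.

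To verify this, fix a test form $\phi$ and an auxiliary cutoff $k\geq 1$, and split
\[
\bigl\la u_\ell\,(dd^c u_\ell)^{p-1},\phi\bigr\ra=I_\ell^{(k)}+J_\ell^{(k)},
\]
where the two pieces correspond to the indicators $\mathbf{1}_{\{u>-k\}}$ and $\mathbf{1}_{\{u\leq -k\}}$. On the plurifine open set $\{u>-k\}$, both $u_\ell=\chi_\ell\circ u$ and $\chi_\ell\circ u^{(k)}$ (with $u^{(k)}:=\max(u,-k)$) are bounded psh and coincide; plurifine locality of the Bedford--Taylor Monge--Amp\`ere operators therefore yields
\[
\mathbf{1}_{\{u>-k\}}(dd^c u_\ell)^{p-1}=\mathbf{1}_{\{u>-k\}}\bigl(dd^c(\chi_\ell\circ u^{(k)})\bigr)^{p-1}.
\]
Since $\chi_\ell$ decreases to $t$ uniformly on $[-k,0]$ by Dini, $\chi_\ell\circ u^{(k)}$ tends uniformly to $u^{(k)}$, so Bedford--Taylor continuity gives $I_\ell^{(k)}\to\la u\,\mathbf{1}_{\{u>-k\}}(dd^c u^{(k)})^{p-1},\phi\ra$ as $\ell\to\infty$. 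Letting then $k\to\infty$, the definition \eqref{husar} together with the integrability $u\in L^1_{\loc}(\la dd^c u\ra^{p-1})$ built into $\G(\Omega)$ yields $I_\ell^{(k)}\to\la u\,\la dd^c u\ra^{p-1},\phi\ra$ by monotone convergence.

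The main obstacle is to show $\limsup_{k\to\infty}\limsup_{\ell\to\infty}|J_\ell^{(k)}|=0$. Using that $\chi_\ell\geq t$ and $\chi_\ell(-k)\to -k$, for $\ell$ large enough one has $u_\ell\leq \chi_\ell(-k)<0$ and $u_\ell\geq u$ on $\{u\leq -k\}$, so $|u_\ell|\leq|u|$ there; the problem thereby reduces to making $\int_K|u|\,\mathbf{1}_{\{u\leq -k\}}(dd^c u_\ell)^{p-1}$ arbitrarily small uniformly in $\ell$. My plan is to dyadically stratify $\{u\leq -k\}$ into the annular sets $\{-2^{j+1}k\leq u\leq -2^jk\}$ for $j\geq 0$, to replace $(dd^c u_\ell)^{p-1}$ on each stratum by the Bedford--Taylor Monge--Amp\`ere current of a suitable max-regularization $\max(u,-2^{j+1}k)$ via plurifine locality, and to bound the resulting pieces by masses of $\la dd^c u\ra^{p-1}$ on the same strata through a Chern--Levine--Nirenberg type estimate. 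The local integrability of $u$ with respect to $\la dd^c u\ra^{p-1}$ encoded in $\G(\Omega)$ should then make the resulting sum $o(1)$ as $k\to\infty$, uniformly in $\ell$, completing the induction.
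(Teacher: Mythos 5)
There is a genuine gap, and it sits exactly where you locate your ``main obstacle'': the statement you reduce the theorem to is false, so the obstacle cannot be overcome. In general $u_\ell\,(dd^c u_\ell)^{p-1}$ does \emph{not} converge weakly to $u\,\la dd^c u\ra^{p-1}$. Take $u=\log|z_1|^2+|z_2|^2$ in the unit ball of $\C^2$ and $p=2$, so $u\in\G(\B)$ and $\la dd^c u\ra=dd^c|z_2|^2$, while $S_1(u)=[z_1=0]\neq 0$. Pair against $\phi=\chi\, i\,dz_2\w d\bar z_2$ with $\chi\geq 0$ a cutoff. Then $\int \1_{\{u\le -k\}}(dd^c u_\ell)\w\phi\to\int [z_1=0]\w\phi=c>0$ as $\ell\to\infty$ (the total mass converges to that of $dd^c u=[z_1=0]+dd^c|z_2|^2$, and plurifine locality shows the part on $\{u>-k\}$ contributes nothing against this $\phi$), whereas $u_\ell\le -k$ on $\{u\le -k\}$. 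Hence $\limsup_\ell|J_\ell^{(k)}|\geq kc/2\to\infty$, and the mass of $u_\ell\,dd^c u_\ell$ blows up, so weak convergence to the finite-mass current $u\la dd^c u\ra$ is impossible. This is not an artifact of the example: whenever $S_{p-1}(u)\neq 0$, a fixed amount of Monge--Amp\`ere mass of $(dd^c u_\ell)^{p-1}$ persists on $\{u\le -k\}$ for every $k$, and no Chern--Levine--Nirenberg estimate in terms of $\la dd^c u\ra^{p-1}$ on your dyadic strata can control it, precisely because $\la dd^c u\ra^{p-1}$ by construction puts no mass where this mass concentrates. (A secondary, repairable issue: $\{u>-k\}$ is only plurifine open, so passing the indicator $\1_{\{u>-k\}}$ and the Borel factor $u_\ell$ through a weak limit in your treatment of $I_\ell^{(k)}$ also needs justification.)

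What makes the theorem true is not smallness of the bad term but an exact cancellation after applying $dd^c$. For $u_\ell=\max(u,-\ell)$ the paper proves the identity $(dd^c u_\ell)^p=dd^c u_\ell\w\la dd^c u\ra^{p-1}=dd^c\big(u_\ell\,\la dd^c u\ra^{p-1}\big)$: testing $u_\ell\big((dd^c u_\ell)^{p-1}-\la dd^c u\ra^{p-1}\big)$ against $dd^c\xi$, plurifine locality kills the contribution on $O_\ell=\{u>-\ell\}$, and on the complement $u_\ell\equiv -\ell$ is \emph{constant}, so the pairing equals $-\ell\int_X\big((dd^c u_\ell)^{p-1}-\la dd^c u\ra^{p-1}\big)\w dd^c\xi$, which vanishes by Stokes' theorem since both currents are closed. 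Thus the term you try to estimate is large but annihilated by $dd^c$; afterwards $(dd^c u_\ell)^p=dd^c(u_\ell\la dd^c u\ra^{p-1})\to dd^c(u\la dd^c u\ra^{p-1})=[dd^c u]^p$ using only the integrability of $u$ against $\la dd^c u\ra^{p-1}$ encoded in $\G(\Omega)$. For general $\chi_\ell$ the constancy off $O_\ell$ is lost, which is why the paper writes $\chi_\ell(t)=\int_0^\infty\max(t,-s)\,\chi_\ell''(-s)\,ds$ and superposes the max-identity over this probability measure; your uniform-convergence argument on $\{u>-k\}$ does not substitute for this step. If you want to keep a decomposition like yours, you must group $J_\ell^{(k)}$ with the matching piece of the limit current and exploit closedness plus Stokes, rather than estimate it away.
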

Note that $u_\ell=\max(u, -\ell)$ corresponds to $\chi_\ell(t)=\max(t, -\ell)$.
Also note that Theorem ~\ref{regularisera} implies that $[dd^c u]^p$
coincides with the Bedford-Taylor-Demailly Monge-Amp\`ere current when this is
defined. 

\begin{ex}
  Let $u=\log|f|^2$, where $f$ is a tuple of holomorphic functions and
  let $\chi_\epsilon=\log(e^t+\epsilon)$.
  Then $\chi_\epsilon \circ u = \log (|f|^2 +\epsilon)$ and Theorem
  ~\ref{regularisera} asserts that
  \[
\lim_{\epsilon\to 0} \big (    dd^c \log (|f|^2 +\epsilon) \big )^p =
[dd^c u ]^p.
\]
This was proved in \cite{A}*{Proposition~4.4}.
\end{ex}
The Monge-Amp\`ere currents of the natural regularizations $\max(u,
-\ell)$ do not always converge, see Example ~\ref{varannan}, and thus not all psh
functions are in $\G(\Omega)$.

Since there are functions in $\G(\Omega)$ that are not in $\D(\Omega)$
 we cannot expect continuity for all decreasing sequences.
Our next result is a twisted version of Theorem  ~\ref{regularisera} that
illustrates that failure of continuity.
Let $v$ be a locally bounded psh function on $\Omega$.
Then $\max(u,v-\ell)$
is another natural sequence of locally bounded psh functions decreasing to $u$. 
Moreover, if $\chi_\ell$ is as in Theorem \ref{regularisera}, then 
also $\chi_\ell\circ (u-v)+v$ is a sequence of locally bounded psh functions decreasing to $u$. 
\begin{thm}\label{vegetera}
Assume that $u\in \G(\Omega)$ and that $v$ is a smooth psh function on $\Omega$. 
Then 
  \begin{equation*}%\label{cigarett}
    \big(dd^c \max(u,v-\ell)\big)^p \to [dd^c u]^p + \sum_{j=1}^{p-1}
  \S_j(u)\w  (dd^c v)^{p-j}, \quad \ell\to\infty.  
  \end{equation*}

More generally, let $\chi_\ell:\R\to \R$ be a sequence of nondecreasing
convex functions, bounded from below, 
that decreases to $t$
as $\ell\to \infty$, and let $u_\ell=\chi_\ell\circ (u-v)+v$.  Then 
\begin{equation*}
   (dd^cu_\ell)^p
  \to [dd^c u]^p + \sum_{j=1}^{p-1}
  \S_j(u) \w (dd^c v)^{p-j}, \quad \ell\to\infty.  
    \end{equation*}
  \end{thm}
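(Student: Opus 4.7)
The proof proceeds by induction on $p$, following the strategy of Theorem~\ref{regularisera}. The base case $p=1$ is immediate: the sum on the right is empty, and $u_\ell \searrow u$ with $u_\ell$ locally bounded and $u$ locally integrable gives $u_\ell \to u$ in $L^1_{\loc}$, hence $dd^c u_\ell \to dd^c u = [dd^c u]^1$. As in Theorem~\ref{regularisera}, the general convex $\chi_\ell$ case reduces to $\chi_\ell(t) = \max(t, -\ell)$ via a sandwich comparison with shifted max functions, so I may assume $u_\ell = \max(u, v-\ell)$ henceforth.

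For the inductive step, write $u_\ell = v + (u_\ell - v)$. Since $v$ is smooth and $T_\ell := (dd^c u_\ell)^{p-1}$ is closed positive with measure coefficients, the Leibniz rule $dd^c(v T_\ell) = dd^c v \w T_\ell$ gives
\begin{equation*}
(dd^c u_\ell)^p \;=\; dd^c v \w T_\ell \;+\; dd^c\bigl((u_\ell - v) T_\ell\bigr).
\end{equation*}
By the inductive hypothesis, $T_\ell \to T_{p-1} := [dd^c u]^{p-1} + \Sum_{j=1}^{p-2} \S_j(u) \w (dd^c v)^{p-1-j}$, and smoothness of $dd^c v$ yields $dd^c v \w T_\ell \to dd^c v \w T_{p-1}$. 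The crux is to show that the second summand converges to
\begin{equation*}
dd^c\bigl((u-v)\la dd^c u\ra^{p-1}\bigr) \;=\; [dd^c u]^p \;-\; dd^c v \w \la dd^c u\ra^{p-1},
\end{equation*}
the equality following from $[dd^c u]^p = dd^c(u \la dd^c u\ra^{p-1})$ and $dd^c(v \la dd^c u\ra^{p-1}) = dd^c v \w \la dd^c u\ra^{p-1}$ (valid since $\la dd^c u\ra^{p-1}$ is closed and $v$ is smooth). Summing with $dd^c v \w T_{p-1}$ and regrouping via $\S_{p-1}(u) = [dd^c u]^{p-1} - \la dd^c u\ra^{p-1}$ then produces the target formula.

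To identify the limit of $(u_\ell - v)T_\ell$, I decompose along the plurifine open sets $\{u > v - \ell\}$ and $\{u < v - \ell\}$. On the former, $u_\ell = u$ locally, so Bedford--Taylor plurifine locality forces $T_\ell|_{\{u > v-\ell\}}$ to stabilize in $\ell$ to the restriction of $\la dd^c u\ra^{p-1}$, while $(u_\ell - v) = u - v$ there is made sense of by the $\G(\Omega)$-integrability $u \in L^1_{\loc}(\la dd^c u\ra^{p-1})$. On the latter, $u_\ell = v - \ell$ locally so $T_\ell = (dd^c v)^{p-1}$ and $(u_\ell - v)T_\ell = -\ell(dd^c v)^{p-1}$, whose contribution vanishes since $\ell \cdot \mathrm{Leb}\{u < v - \ell\} \to 0$ by the local integrability of the psh function $u$. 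The principal obstacle is the transition set $\{u = v - \ell\}$: in general $T_\ell$ can carry order-one singular mass there (e.g.\ for $u = \log|z_1|$ with $p = 2$), so $(u_\ell - v)T_\ell$ has divergent $O(-\ell)$ contributions and need not converge as a current on its own; the argument must show that after applying $dd^c$ these contributions precisely cancel or are absorbed into the $\S_{p-1}(u)\w dd^c v$ piece of the target. Making this rigorous uses plurifine locality together with the finite-energy condition defining $\G(\Omega)$, in the spirit of the proof of Theorem~\ref{regularisera}.
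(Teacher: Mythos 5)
Your inductive Leibniz decomposition is a genuinely different route from the paper's, and the algebra of the limits is consistent with the target formula, but the proof has a real gap exactly where you flag ``the principal obstacle.'' You correctly observe that $T_\ell=(dd^c u_\ell)^{p-1}$ can carry order-one mass on the transition set $\{u=v-\ell\}$ (e.g.\ $u=\log(|z_1|^2+|z_2|^2)$, $p=2$), so that $(u_\ell-v)T_\ell$ has an $O(-\ell)$ divergent piece, and you then assert that after applying $dd^c$ these contributions ``precisely cancel or are absorbed.'' That assertion is the whole theorem; nothing in your argument establishes it, and plurifine locality plus integrability alone will not, because $\1_{\{u=v-\ell\}}T_\ell$ is not a closed current, so $dd^c\bigl(-\ell\,\1_{\{u=v-\ell\}}T_\ell\bigr)$ has no reason to vanish. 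The paper's Lemma~5.5 resolves exactly this point by a different manoeuvre: one subtracts the \emph{closed} current $\la dd^c u\ra^{j}\w(dd^c v)^{p-j-1}$ from the mixed product \emph{before} multiplying by the unbounded factor. The difference vanishes on $\{u>v-\ell\}$ by plurifine locality and is multiplied by the pluriharmonic-plus-smooth function $v-\ell$ on the complement, so Stokes' theorem kills the $-\ell$ term exactly (both currents being closed) and converts the $v$ term into $dd^c v\w(\cdots)$. This yields an exact identity for each finite $\ell$ (no cancellation of divergences is ever needed), and the limit is then taken only in the tame factor $dd^c u_\ell\w\la dd^c u\ra^{j-1}$ using $u\in L^1_{\mathrm{loc}}(\la dd^c u\ra^{j-1})$. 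Without some version of this subtraction your induction cannot close.

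A second gap is the first sentence of your second paragraph: the general convex $\chi_\ell$ case does not reduce to $\max(t,-\ell)$ ``via a sandwich comparison with shifted max functions.'' Monge--Amp\`ere operators are not monotone under such squeezes, so $w_\ell\le u_\ell\le w'_\ell$ with $(dd^c w_\ell)^p$ and $(dd^c w'_\ell)^p$ converging to the same limit gives no information about $(dd^c u_\ell)^p$. The paper instead writes $\chi_\ell(t)=\int_0^\infty\max(t,-s)\,\chi_\ell''(-s)\,ds$ (Lemma~5.3), expresses $(dd^c(\chi_\ell\circ(u-v)+v))^p$ as an integral of mixed products $dd^c u_{s_p}\w\cdots\w dd^c u_{s_1}$ over $s\in\R_{\ge0}^p$ against the product probability measure, and needs the multi-parameter version of the identity above (Lemma~5.5 with distinct $\ell_1\ge\cdots\ge\ell_p$) together with continuity in $s$ (Lemma~2.2) and a concentration lemma (Lemma~5.4) to pass to the limit. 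This multi-parameter structure is not recoverable from the single-parameter max case by comparison.
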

  
Note that the lower degree Monge-Amp\`ere currents $[dd^c u]^j$ come
into play. Also note that Theorem ~\ref{regularisera} follows from
Theorem ~\ref{vegetera} by
setting $v=0$.

When $u$ has analytic singularities, Theorem ~\ref{regularisera} first
appeared in \cite{ABW}*{Theorem~1.1}, and Theorem ~\ref{vegetera}
appeared in \cite{B}*{Theorem~1}, although formulated slightly
differently, cf.\ Remark ~\ref{oversattning} below. 
In those papers, using a Hironaka desingularization, the results are reduced to
the case with divisorial singularities.  Such a reduction is not available in the general case,
and in this paper we instead rely on properties from \cite{BEGZ} of the non-pluripolar
Monge-Amp\`ere operator. 
In particular, we get new proofs of the results in \cite{ABW} and
\cite{B}.

\bigskip 

Let us now turn to the global setting.
Assume that $(X,\omega)$ is a compact K\"ahler manifold of dimension $n$. 
Recall that a function $\varphi$ is said to be $\omega$-psh,
$\varphi\in \psh(X,\omega)$, if whenever $h$ is a local potential for
$\omega$, i.e. $dd^c h=\omega$, $\varphi+h$ is psh. Then $dd^c \varphi+\omega$ is a closed positive current in $[\omega]$, and by the $dd^c$-lemma any closed positive current in $[\omega]$ can be written as $dd^c \varphi+\omega$ for some $\omega$-psh $\varphi$, and this $\varphi$ is unique up to adding of constants. Thus studying $\omega$-psh functions is the same as studying closed positive currents in $[\omega]$.

If $\varphi\in \psh(X, \omega)$ is bounded, then there are well-defined Monge-Amp\`ere currents
$(dd^c \varphi+ \omega)^p$, locally defined as $(dd^c
(\varphi+h))^p$,  
where $h$ is a local potential for $\omega$.
It turns out, \cite{BEGZ}*{Proposition~1.6}, that for un unbounded
$\varphi$ the non-pluripolar Monge-Amp\`ere currents $\la dd^c
\varphi+\omega \ra^p$ are always well-defined. 
Moreover, 
\cite{BEGZ}*{Proposition~1.20} showed that
\begin{equation*}
  \int_X \la dd^c \varphi+\omega \ra^p\w\omega^{n-p}\leq \int_X \omega^n.
\end{equation*}
When $\varphi$ is bounded we have equality
\begin{equation}\label{masslikhet}
  \int_X (dd^c \varphi+\omega )^p\w\omega^{n-p} =\int_X \omega^n
\end{equation}
but in general the inequality can be strict. 

Our definitions of $\G(X)$ and Monge-Amp\`ere currents
naturally lend themselves to the global setting.
\begin{df}\label{drottning}
  Let $(X, \omega)$ be a compact K\"ahler manifold of dimension
  $n$. We say that $\varphi\in \psh(X,\omega)$ has \emph{finite
    non-pluripolar energy}, $\varphi\in \G(X,\omega)$, if, for each
  $j\leq n-1$, $\varphi$ is integrable with respect to $\la dd^c \varphi +
  \omega\ra^j$. 
\end{df}
\begin{df}\label{prins}
  Given $\varphi\in \G(X, \omega)$ we define
  \begin{equation*}
    [dd^c \varphi+\omega]^{p}=\big [dd^c(\varphi+h)\big]^p, 
  \end{equation*}
  where $h$ is a local potential for $\omega$, 
  and
  \begin{equation*}
    \S_{p}^\omega(\varphi)=[dd^c \varphi+\omega]^{p}-\la dd^c
    \varphi+\omega\ra ^{p}.
   \end{equation*}
\end{df}
Since two local potentials differ by a pluriharmonic function, $[dd^c \varphi+\omega]^{p}$ and $\S_{p}^\omega(\varphi)$
are well-defined global positive closed currents on $X$. 
Note that whether an $\omega$-psh function $\varphi$ is in $\G(X,
\omega)$ only depends on
the current $dd^c \varphi+\omega$ and not on the choice of $\omega$ as a
K\"ahler representative in the
class $[\omega]$. Also the currents $[dd^c \varphi+\omega]^{p}$ and
$\S_{p}^\omega(\varphi)$ only depend on
the current $dd^c \varphi+\omega$.

\smallskip 

From Theorem ~\ref{vegetera} we get global 
regularization results.
Given $\varphi\in \psh(X, \omega)$, note that $\max (\varphi, -\ell)$
is a natural sequence of bounded $\omega$-psh
functions decreasing to $\varphi$.

\begin{thm}\label{krubba}
  Assume that $\varphi\in \G(X, \omega)$. Then
  \begin{equation*}
 \big (dd^c \max (\varphi, -\ell) + \omega \big )^p \to  
[dd^c \varphi+\omega]^p+\sum_{j=1}^{p-1} \S_j^\omega
(\varphi)\w \omega^{p-j}, \quad \ell\to\infty.   
\end{equation*}

More generally, let $\chi_\ell:\R\to \R$ be a sequence of nondecreasing
convex functions, bounded from below, 
that decreases to $t$
as $\ell\to \infty$, and let $\varphi_\ell=\chi_\ell\circ \varphi$. Then 
\begin{equation*}\label{kokos}
  (dd^c\varphi_\ell+\omega)^p\to
[dd^c \varphi+\omega]^p+\sum_{j=1}^{p-1} \S_j^\omega
(\varphi)\w\omega^{p-j}, \quad \ell\to\infty.  
\end{equation*}
\end{thm}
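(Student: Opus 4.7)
The plan is to deduce Theorem~\ref{krubba} from the local Theorem~\ref{vegetera} by a chart-by-chart argument, exploiting the fact that weak convergence of currents is a local property. Since the first displayed convergence corresponds to $\chi_\ell(t)=\max(t,-\ell)$ in the second, it suffices to prove the more general statement.

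Cover $X$ by finitely many open sets $U_\alpha$ on each of which $\omega=dd^c h_\alpha$ for some smooth (strictly) psh local potential $h_\alpha$. Fix one such $U=U_\alpha$ with $h=h_\alpha$, and set $u:=\varphi+h$ and $v:=h$ on $U$. Then $u\in\psh(U)$ and $v$ is smooth and psh. Moreover $u-v=\varphi$, so on $U$ the regularization satisfies
\begin{equation*}
u_\ell:=\chi_\ell\circ(u-v)+v=\chi_\ell\circ\varphi+h,
\end{equation*}
and hence $dd^c u_\ell=dd^c\varphi_\ell+\omega$, where $\varphi_\ell=\chi_\ell\circ\varphi$.

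Next I would check that $u\in\G(U)$, which is exactly what is needed to apply Theorem~\ref{vegetera}. By construction $\la dd^c u\ra^j=\la dd^c\varphi+\omega\ra^j$ on $U$; the right-hand side is a globally well-defined closed positive current on $X$ (by BEGZ), hence locally finite on $U$. Local integrability of $u$ against these currents follows because $h$ is smooth and bounded on relatively compact pieces of $U$, while $\varphi$ is integrable against $\la dd^c\varphi+\omega\ra^j$ by the hypothesis $\varphi\in\G(X,\omega)$. Similarly, by Definition~\ref{prins} we have $[dd^c u]^p=[dd^c\varphi+\omega]^p$ and $\S_j(u)=\S_j^\omega(\varphi)$ on $U$, and $dd^c v=\omega$.

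Applying Theorem~\ref{vegetera} on $U$ with this choice of $u$ and $v$ therefore gives
\begin{equation*}
(dd^c\varphi_\ell+\omega)^p=(dd^c u_\ell)^p\longrightarrow[dd^c u]^p+\sum_{j=1}^{p-1}\S_j(u)\w(dd^c v)^{p-j}=[dd^c\varphi+\omega]^p+\sum_{j=1}^{p-1}\S_j^\omega(\varphi)\w\omega^{p-j}
\end{equation*}
weakly on $U$. Since both sides are restrictions of globally defined currents on $X$ and the $U_\alpha$ cover $X$, the convergence holds weakly on $X$. The only obstacle worth mentioning is the verification that $u\in\G(U)$, but as described above this follows almost tautologically from the fact that the class $\G(X,\omega)$ and the currents $[dd^c\varphi+\omega]^p$, $\S_j^\omega(\varphi)$ are defined precisely to reduce to the corresponding local objects for $\varphi+h$ under any choice of local potential $h$ for $\omega$.
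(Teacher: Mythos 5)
Your proposal is correct and follows essentially the same route as the paper: the authors deduce Theorem~\ref{krubba} from Theorem~\ref{kyrka} (with $g=0$), whose proof is exactly your localization argument, namely writing $\varphi=u-h$ for a local potential $h$ of $\omega$ and applying Theorem~\ref{vegetera} with $v=h$. Your explicit verification that $u=\varphi+h\in\G(U)$ is a point the paper leaves implicit, but it is correct and follows, as you say, directly from the definitions of $\G(X,\omega)$ and of the global currents.
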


If $\eta$ is another K\"ahler form in $[\omega]$, then
$\eta=\omega+dd^c g$ for some smooth function $g$. There is an
associated regularization of $\varphi$, namely
$\varphi_\ell:=\max(\varphi-g,-\ell)+g$, which corresponds to the max-regularization of the current $dd^c \varphi+\omega$ with
respect to the alternative decomposition $dd^c(\varphi-g)+\eta$.

\begin{thm}\label{kyrka}
 Assume that $\varphi\in \G(X, \omega)$, that $\eta$ is a K\"ahler
 form in $[\omega]$, and that $g$ and
 $\varphi_\ell$ are as above. Then
  \begin{equation}\label{saffran}
 (dd^c \varphi_\ell +\omega)^p \to  
[dd^c \varphi+\omega]^p+\sum_{j=1}^{p-1} \S_j^\omega
(\varphi)\w \eta^{p-j}, \quad \ell\to\infty.   
\end{equation}

More generally, let $\chi_\ell:\R\to \R$ be a sequence of nondecreasing
convex functions, bounded from below, 
that decreases to $t$
as $\ell\to \infty$, and let $\varphi_\ell=\chi_\ell\circ
(\varphi-g)+g$. Then \eqref{saffran} holds. 
\end{thm}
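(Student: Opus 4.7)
My approach will be to reduce Theorem~\ref{kyrka} to the already established Theorem~\ref{krubba} by changing the K\"ahler representative from $\omega$ to $\eta$ in the class $[\omega]$. The key observation, emphasized after Definitions~\ref{drottning} and~\ref{prins}, is that membership in $\G$ and the currents $[dd^c\varphi+\omega]^p$, $\S_p^\omega(\varphi)$ depend only on the underlying closed positive current $dd^c\varphi+\omega$, not on the particular K\"ahler form chosen to represent its class. So any statement about the current may as well be phrased with respect to $\eta$.

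To carry this out, I would set $\psi := \varphi - g$. Since $\eta = \omega + dd^c g$ with $g$ smooth, if $h$ is any local potential for $\omega$ then $h+g$ is a local potential for $\eta$, and $\psi + (h+g) = \varphi + h$ is psh; hence $\psi$ is $\eta$-psh. Moreover $dd^c \psi + \eta = dd^c \varphi + \omega$, so by the above observation $\psi \in \G(X, \eta)$ and one has the identifications
\begin{equation*}
[dd^c \psi + \eta]^p = [dd^c \varphi + \omega]^p, \qquad \S_p^\eta(\psi) = \S_p^\omega(\varphi).
\end{equation*}
Next I would rewrite the regularization in the $\eta$-frame: by definition $\varphi_\ell = \chi_\ell \circ (\varphi - g) + g = \psi_\ell + g$ where $\psi_\ell := \chi_\ell \circ \psi$, and thus $dd^c \varphi_\ell + \omega = dd^c \psi_\ell + dd^c g + \omega = dd^c \psi_\ell + \eta$. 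So $\psi_\ell$ is exactly the natural regularization of the $\eta$-psh function $\psi$ appearing in Theorem~\ref{krubba}.

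Applying Theorem~\ref{krubba} to the K\"ahler manifold $(X,\eta)$, the function $\psi \in \G(X, \eta)$ and the sequence $\chi_\ell$ then yields
\begin{equation*}
(dd^c \psi_\ell + \eta)^p \to [dd^c \psi + \eta]^p + \sum_{j=1}^{p-1} \S_j^\eta(\psi) \wedge \eta^{p-j}, \quad \ell \to \infty,
\end{equation*}
and substituting the identifications from the preceding paragraph turns this into \eqref{saffran}. The first assertion of the theorem, with $\varphi_\ell = \max(\varphi - g, -\ell) + g$, is the special case $\chi_\ell(t) = \max(t, -\ell)$ and thus requires no separate argument. Because this is essentially a translation under change of K\"ahler representative once the invariance property is in hand, there is no real obstacle; the only point demanding care is the invariance of $\G(X,\omega)$ and of $[dd^c\varphi+\omega]^p$, $\S_p^\omega(\varphi)$ under the choice of representative in $[\omega]$, which is built into the definitions through their reduction to the local operator $[dd^c u]^p$ via local potentials.
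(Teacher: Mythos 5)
Your proof is correct, and the identifications you rely on ($\psi+ (h+g)=\varphi+h$ locally, hence $[dd^c\psi+\eta]^p=[dd^c\varphi+\omega]^p$, $\S_p^\eta(\psi)=\S_p^\omega(\varphi)$, and $\psi\in\G(X,\eta)$ since $g$ is bounded) all hold. But the route is different from the paper's, and there is one logical point you must attend to. The paper proves the general statement (Theorem~\ref{kyrkbacke}) directly by localizing: write $\varphi=u-h$ with $h$ a local potential of $\omega$, observe $\varphi_\ell+h=\chi_\ell\circ(u-h-g)+h+g$, and apply the local twisted Theorem~\ref{vegetarisk} with $v=h+g$ (which is psh since $dd^c(h+g)=\eta>0$); Theorem~\ref{krubba} is then obtained as the special case $g=0$. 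You invert this dependency: you deduce Theorem~\ref{kyrka} from Theorem~\ref{krubba} applied to $(X,\eta)$. Since the paper derives Theorem~\ref{krubba} \emph{from} Theorem~\ref{kyrka}, taking Theorem~\ref{krubba} as ``already established'' makes your argument circular as written relative to the paper's logical structure. The fix is easy and worth stating explicitly: Theorem~\ref{krubba} admits a direct proof (locally $\max(\varphi,-\ell)+h=\max(u-h,-\ell)+h$, so one applies Theorem~\ref{vegetarisk} with $v=h$ --- note the twist does \emph{not} disappear even when $g=0$, because the global max-regularization corresponds locally to max-regularization relative to the potential $h$). Once that is in place, your change-of-representative argument is a clean and conceptually illuminating way to get the $\eta$-version for free: it makes transparent that the extra terms $\S_j^\omega(\varphi)\wedge\eta^{p-j}$ are nothing but the $\S_j\wedge\omega^{p-j}$ terms seen from the representative $\eta$. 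What it does not buy is any saving at the local level --- both routes ultimately rest on the same twisted local regularization theorem.
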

Note that Theorem ~\ref{krubba} follows immediately from Theorem
~\ref{kyrka} by setting $g=0$.
As in the local case, 
for $\varphi$ with analytic singularities Theorems ~\ref{krubba} and
~\ref{kyrka}  
follow from
\cite{B}*{Theorem~1}, cf.\ Remark 
~\ref{overstyv}.

From \eqref{masslikhet} and Theorem ~\ref{krubba} %and ~\ref{kyrka}
we get the following mass formula.

\begin{thm}\label{mannagryn}
Assume that $\varphi\in
\G(X, \omega)$. Then for each $p\leq n$, 
\begin{equation*}%\label{klasskamrat}
\int_X \la dd^c \varphi+\omega\ra^p\w \omega^{n-p}+\sum_{j=1}^p \int_X
\S_j^\omega(\varphi)\w \omega^{n-j} =\int_X\omega^n.
\end{equation*}
\end{thm}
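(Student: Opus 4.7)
The plan is to deduce the mass formula by applying Theorem \ref{krubba} with the smooth test form $\omega^{n-p}$ and combining the result with the bounded mass identity \eqref{masslikhet}. The only real content is verifying that the limit can be tested against $\omega^{n-p}$, which is automatic since $\omega$ is smooth.

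First I would fix $\varphi\in\G(X,\omega)$ and consider the regularization $\varphi_\ell:=\max(\varphi,-\ell)$, which is a bounded $\omega$-psh function. By \eqref{masslikhet} applied to $\varphi_\ell$,
\begin{equation*}
\int_X (dd^c\varphi_\ell+\omega)^p\w\omega^{n-p}=\int_X\omega^n
\end{equation*}
for every $\ell$. By Theorem \ref{krubba}, the positive currents $(dd^c\varphi_\ell+\omega)^p$ converge weakly to
\begin{equation*}
[dd^c\varphi+\omega]^p+\sum_{j=1}^{p-1}\S_j^\omega(\varphi)\w\omega^{p-j}.
\end{equation*}
Since $\omega^{n-p}$ is a smooth $(n-p,n-p)$-form on the compact manifold $X$, weak convergence allows us to test against it, yielding
\begin{equation*}
\int_X\omega^n=\int_X[dd^c\varphi+\omega]^p\w\omega^{n-p}+\sum_{j=1}^{p-1}\int_X\S_j^\omega(\varphi)\w\omega^{n-j}.
\end{equation*}

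Next I would use Definition \ref{prins} in the form $[dd^c\varphi+\omega]^p=\la dd^c\varphi+\omega\ra^p+\S_p^\omega(\varphi)$ to split off the non-pluripolar part, absorb the extra term $\int_X\S_p^\omega(\varphi)\w\omega^{n-p}$ into the sum, and obtain
\begin{equation*}
\int_X\omega^n=\int_X\la dd^c\varphi+\omega\ra^p\w\omega^{n-p}+\sum_{j=1}^{p}\int_X\S_j^\omega(\varphi)\w\omega^{n-j},
\end{equation*}
which is exactly the claim of Theorem \ref{mannagryn}.

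There is no real obstacle: everything is reduced to Theorem \ref{krubba} together with the easy bounded case \eqref{masslikhet}. The only minor point worth checking is that weak convergence of the positive $(p,p)$-currents $(dd^c\varphi_\ell+\omega)^p$ indeed suffices to pass the integrals against $\omega^{n-p}$ to the limit; this is standard because $\omega^{n-p}$ is smooth, and the currents $(dd^c\varphi_\ell+\omega)^p$ are closed and positive, so integration against any smooth form is continuous under weak limits. Thus the whole argument is essentially a one-step reduction to the already-established global regularization theorem.
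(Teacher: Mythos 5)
Your proof is correct, but it is not the route the paper takes. The paper proves Theorem \ref{mannagryn} (in the slightly more general form of Theorem \ref{mannagrynsgrot}) by a direct telescoping argument: since $[dd^c\varphi+\omega]^j-\la dd^c\varphi+\omega\ra^{j-1}\w\omega = dd^c(\varphi\la dd^c\varphi+\omega\ra^{j-1})$ is exact, Stokes' theorem gives
\[
\int_X\la dd^c \varphi+\omega\ra^j\w \omega^{n-j}-\int_X\la dd^c \varphi+\omega\ra^{j-1}\w \omega^{n-j+1}=-\int_X \S_j^\omega(\varphi)\w \omega^{n-j},
\]
and summing over $j=1,\ldots,p$ yields the mass formula. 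This is entirely cohomological and does not invoke the regularization theorem at all --- which is precisely why the authors present it, as they state explicitly before Theorem \ref{mannagrynsgrot}. Your argument instead leans on Theorem \ref{krubba}, the deepest convergence result in the paper, together with the bounded-case identity \eqref{masslikhet}; the authors acknowledge this alternative (they remark that the mass formula ``also follows immediately from \eqref{ringar}'' and Theorem \ref{kyrkbacke}), so your derivation is legitimate, and your handling of the weak limit against the global smooth form $\omega^{n-p}$ on the compact manifold is fine. What the paper's approach buys is independence: the mass formula is seen to be an elementary consequence of the definition of $[dd^c\varphi+\omega]^p$ and Stokes' theorem, whereas your version makes it logically downstream of the hardest theorem in the paper. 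If you only need the identity, the direct proof is the better one to internalize.
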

In fact, Theorem ~\ref{mannagryn} is a cohomological consequence of
the definition of $[dd^c \varphi+\omega]^p$; in Section ~\ref{vattenflaska} we provide a direct proof that does not rely on 
Theorem ~\ref{krubba}. 
For $\varphi$ with analytic singularities this theorem appeared 
in \cite{ABW}*{Theorem~1.2 and Proposition~5.2}.
Note that, for $j\leq p$, the current $S_j^\omega(\varphi)$ captures
the mass that ``escapes'' from $\la dd^c \varphi+\omega\ra ^p$ at
codimension $j$. 

\bigskip

From the local case it follows that $\omega$-psh functions with analytic
singularities are in $\G(X, \omega)$, and in Section
\ref{exempelsamling} we provide other examples.
However, in the global
setting we know more about the structure of the class $\G(X, \omega)$.
In particular, it contains the B\l{}ocki-Cegrell class. 
Note that being in the B\l{}ocki-Cegrell class is a
local statement, cf.\ Proposition ~\ref{humle} below. We say that $\varphi\in \psh(X, \omega)$ is in $\D(X, \omega)$ if
    whenever $g$ is a local $dd^c$-potential of $\omega$ in an open
    set $\U\subset X$, then $\varphi+g\in \D(\U)$. 
\begin{thm}\label{globalbc}
  Let $(X, \omega)$ be a compact K\"ahler manifold of dimension $n$.
  Then $\D(X, \omega)\subset \G(X, \omega)$.
  \end{thm}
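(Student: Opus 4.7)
The plan is to reduce the global statement to the local inclusion $\D(\Omega)\subset\G(\Omega)$ and then invoke known structural properties of the B\l{}ocki-Cegrell class.

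First I would observe that membership in $\G(X,\omega)$ is a local condition on potentials: $\varphi\in\G(X,\omega)$ if and only if $\varphi+g\in\G(\U)$ for every local $dd^c$-potential $g$ of $\omega$ on an open $\U\subset X$. Indeed, on $\U$ one has $\la dd^c(\varphi+g)\ra^j=\la dd^c\varphi+\omega\ra^j$, and since $g$ is bounded the integrability of $\varphi$ and of $\varphi+g$ against this current are equivalent; the current $\la dd^c\varphi+\omega\ra^j$ has finite mass on compact $X$ by \cite{BEGZ}, so global integrability is equivalent to local integrability on a finite cover. The parallel defining property of $\D(X,\omega)$ then reduces the theorem to the local assertion $\D(\Omega)\subset\G(\Omega)$.

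For $u\in\D(\Omega)$, \cite{B06} and \cite{C} provide that $(dd^c u)^j$ is a closed positive current of locally finite mass for every $j\leq n$ and that $u$ is locally integrable against $(dd^c u)^j$ for $j\leq n-1$---this integrability is exactly what makes the recursive formula $(dd^c u)^{j+1}=dd^c(u(dd^c u)^j)$ meaningful in $\D(\Omega)$. Moreover, the Monge-Amp\`ere operator remains local in the plurifine topology on this class, so $(dd^c\max(u,-\ell))^j=(dd^c u)^j$ on the plurifine open set $\{u>-\ell\}$. Passing to the limit in \eqref{husar} identifies $\la dd^c u\ra^j=\1_{\{u>-\infty\}}(dd^c u)^j$, a positive current dominated by $(dd^c u)^j$; in particular $\la dd^c u\ra^j$ has locally finite mass. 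Since $-u\geq 0$ locally up to a constant, local integrability of $u$ against the larger current $(dd^c u)^j$ forces local integrability against $\la dd^c u\ra^j$, so both conditions of Definition~\ref{grisen} hold and $u\in\G(\Omega)$.

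The main obstacle I anticipate is the clean invocation of plurifine locality for the Monge-Amp\`ere operator on $\D(\Omega)$, since the classical Bedford-Taylor statement is formulated only for locally bounded psh functions; once B\l{}ocki's extension to the domain-of-definition class is in hand, the remainder is a straightforward comparison of positive currents.
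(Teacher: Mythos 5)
There is a genuine gap, and it sits exactly where you wave your hands. Your reduction to a local statement $\D(\Omega)\subset\G(\Omega)$ would be fine \emph{if} that local inclusion were known, but it is not: it is precisely the open point (the authors themselves leave it as a question), and the paper's proof of Theorem~\ref{globalbc} is genuinely global rather than a localization. The unjustified step is your claim that for $u\in\D(\Omega)$ ``$u$ is locally integrable against $(dd^c u)^j$ for $j\leq n-1$'' because ``this integrability is exactly what makes the recursive formula meaningful.'' That is not how $\D(\Omega)$ is defined. B\l{}ocki's characterization (Proposition~\ref{humle} in the paper) controls the gradient currents $|u_\lambda|^{n-j-2}du_\lambda\w d^c u_\lambda\w(dd^c u_\lambda)^j\w\omega^{n-j-1}$ for $j\le n-2$ --- note the weight degenerates to $|u_\lambda|^0$ at $j=n-2$ --- and says nothing directly about the finiteness of $\int -u\,(dd^c u)^j\w\omega^{n-j}$. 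Converting the gradient bounds into the $|u|$-weighted mass bounds that Definition~\ref{grisen} requires is the entire content of the theorem, and doing it on a domain forces an integration by parts against cutoff functions whose error terms are not controlled; this is why the local inclusion remains open.

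The paper's argument avoids this by working on the compact manifold: it builds (via Demailly regularization and a diagonal choice) a sequence of smooth negative quasi-psh functions $\psi_\ell$ decreasing to $\varphi$, applies Proposition~\ref{humle} on the charts of a finite cover to bound the gradient terms, and then uses Lemma~\ref{stolsben} --- a global integration by parts where Stokes' theorem on $X$ produces no boundary or cutoff contributions, only an additive constant $C$ depending on $\int_X\omega^n$ --- to pass inductively from the gradient bounds to uniform bounds on $\int_X-\psi_\ell(dd^c\psi_\ell+\omega)^j\w\omega^{n-j}$, and hence on $\int_X-\varphi\la dd^c\varphi+\omega\ra^j\w\omega^{n-j}$. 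If you want to salvage your outline, you must either prove the local inclusion (which would be a new result) or replace the second paragraph by a global estimate of this kind; as written, the key inequality is assumed rather than proved.
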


%However, in the global
%setting we know more about the structure of the class $\G(X, \omega)$.  
Next, as the name suggests, the class $\G(X, \omega)$ of $\omega$-psh
functions with finite non-pluripolar energy 
can be understood as a
finite energy class.
Recall that the \emph{Monge-Amp\`ere energy} of $\varphi\in
\psh(X,\omega)$,  
introduced in \cite{GZ}, inspired by earlier work \cite{cegrell} in
the local setting, is defined as 
\begin{equation}\label{skramla}
  E(\varphi)=\frac{1}{n+1}\sum_{j=0}^n\int_X
  \varphi(dd^c\varphi+\omega)^j\wedge \omega^{n-j} 
\end{equation}
if $\varphi$ is bounded and by 
  \begin{equation}\label{ramla}
    E(\varphi)=\inf\left \{E(\psi): \psi\geq \varphi, \psi\in \psh(X,\omega)\cap
    L^{\infty}(X)\right \}
\end{equation}
in general. 
%We let 
The corresponding \emph{finite energy class} 
  \begin{equation}\label{famla}
    \E(X,\omega):=\{\varphi\in \psh(X,\omega):
    E(\varphi)>-\infty\}
  \end{equation}
  is convex.
Recall that, if $\varphi, \psi\in \psh(X, \omega)$, then $\varphi$ is
said to be \emph{less singular} than $\psi$, $\varphi\succeq \psi$, if $\varphi\geq \psi +
O(1)$. If $\varphi\succeq \psi$ and $\psi\succeq\varphi$ we say that $\varphi$ and $\psi$ have the same
\emph{singularity type} and write $\varphi\sim \psi$.
The class $\E(X, \omega)$ is  
  closed under finite pertubations in the sense that if
  $\varphi\in \E(X, \omega)$ and $\psi\sim \varphi$,
   then
  $\psi\in\E(X, \omega)$. 
Moreover, $\E(X, \omega)$ 
is contained in the \emph{full mass class} 
  \begin{equation}\label{svamla}
    \F(X,\omega):=\left \{\varphi\in \psh(X,\omega): \int_X
    \langle dd^c \varphi+\omega\rangle^ n=\int_X
    \omega^n\right \}.
\end{equation}

We introduce an alternative energy for $\varphi\in \psh(X, \omega)$. 
\begin{df}\label{mamba} 
  Let $(X, \omega)$ be a compact K\"ahler manifold of dimension
  $n$. For $\varphi\in \psh(X, \omega)$ we define the \emph{non-pluripolar energy} 
\begin{equation*}
E^{np}(\varphi)=\frac{1}{n}\sum_{j=0}^{n-1}\int_X \varphi\langle dd^c
\varphi+\omega\rangle^j\wedge \omega^{n-j}.
\end{equation*}
\end{df} 
Note that 
\begin{equation*}
  \G(X,\omega)=\{\varphi\in \psh(X,\omega):
  E^{np}(\varphi)>-\infty\}, 
\end{equation*}
so that $\G(X, \omega)$ can be thought of as an finite energy class,
cf. \eqref{famla}.
\begin{thm}\label{stranda}
  Let $(X, \omega)$ be a compact K\"ahler manifold. Then 
  \begin{enumerate}
  \item
    %Assume that $\varphi, \psi\in \psh(X, \omega)$.
    if $\varphi\in
    \G(X,\omega)$ and $\psi\sim \varphi$, then $\psi\in
    \G (X,\omega)$; 
  \item
    $\E(X,\omega)\subset \G(X,\omega)$. 
    \end{enumerate} 
  \end{thm}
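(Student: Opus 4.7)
The plan is to handle Part~(2) directly by the classical finite-energy machinery, and Part~(1) by truncation and integration by parts. For Part~(2): if $\varphi\in\E(X,\omega)$, then the extension of the Monge-Amp\`ere energy via \eqref{skramla}--\eqref{ramla} and the Guedj--Zeriahi/BEGZ theory of $\E$ gives $\varphi\in\F(X,\omega)$ together with finiteness of $\int_X\varphi\la dd^c\varphi+\omega\ra^j\w\omega^{n-j}$ for every $j\le n$; summing over $j\le n-1$ yields $E^{np}(\varphi)>-\infty$, hence $\varphi\in\G(X,\omega)$.

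For Part~(1), write $\psi=\varphi+v$ with $\|v\|_\infty\le M$, normalize $\varphi,\psi\le 0$, and set $\varphi_k:=\max(\varphi,-k)$, $\psi_k:=\max(\psi,-k)$; these are bounded $\omega$-psh with $\|\psi_k-\varphi_k\|_\infty\le M$. For bounded $\omega$-psh functions, telescoping
\[
(dd^c\psi_k+\omega)^j-(dd^c\varphi_k+\omega)^j=\sum_{i=0}^{j-1}(dd^c\psi_k+\omega)^{j-1-i}\w dd^c(\psi_k-\varphi_k)\w(dd^c\varphi_k+\omega)^i
\]
and integrating by parts to shift $dd^c$ onto the bounded difference $\psi_k-\varphi_k$ gives
\[
\Bigl|\int_X(-\psi_k)(dd^c\psi_k+\omega)^j\w\omega^{n-j}-\int_X(-\varphi_k)(dd^c\varphi_k+\omega)^j\w\omega^{n-j}\Bigr|\le C(n,\omega)\,M
\]
uniformly in $k$. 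By plurifine locality,
\[
\int_X(-\varphi_k)(dd^c\varphi_k+\omega)^j\w\omega^{n-j}=\int_{\{\varphi>-k\}}(-\varphi)\la dd^c\varphi+\omega\ra^j\w\omega^{n-j}+k\,\delta^\varphi_{j,k},
\]
where $\delta^\varphi_{j,k}=M_j(\varphi)+\mu^\varphi(\{\varphi\le -k\})$, with $\mu^\varphi:=\la dd^c\varphi+\omega\ra^j\w\omega^{n-j}$ and $M_j(\varphi):=\int_X\omega^n-\int_X\mu^\varphi$ the mass loss at level $j$; an analogous identity holds for $\psi$.

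The main obstacle is to control $k(\delta^\varphi_{j,k}-\delta^\psi_{j,k})$ as $k\to\infty$. Here one invokes singularity-type invariance of the total non-pluripolar mass, $M_j(\varphi)=M_j(\psi)$ (a consequence of the monotonicity theorem for non-pluripolar products, due to Witt Nystr\"om and Darvas--Di Nezza--Lu), which reduces the difference to $k(\mu^\varphi(\{\varphi\le -k\})-\mu^\psi(\{\psi\le -k\}))$. Since $-\varphi\ge k$ on $\{\varphi\le -k\}$, $k\mu^\varphi(\{\varphi\le -k\})\le\int_X(-\varphi)\,d\mu^\varphi$, finite by $\varphi\in\G(X,\omega)$, while $-k\mu^\psi(\{\psi\le -k\})\le 0$. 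Combining,
\[
\int_{\{\psi>-k\}}(-\psi)\la dd^c\psi+\omega\ra^j\w\omega^{n-j}\le 2\int_X(-\varphi)\la dd^c\varphi+\omega\ra^j\w\omega^{n-j}+C(n,\omega)M
\]
uniformly in $k$. Monotone convergence then gives $\int_X(-\psi)\la dd^c\psi+\omega\ra^j\w\omega^{n-j}<\infty$ for each $j\le n-1$, so $\psi\in\G(X,\omega)$.
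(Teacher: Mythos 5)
Your proof is correct, but it reaches part (1) by a genuinely different route than the paper, and part (2) by citation rather than by the paper's self-contained argument. The paper deduces Theorem \ref{stranda} from Theorem \ref{grisegenskaper}; its part (1) rests on Proposition \ref{gris2} (the identity expressing $E^{np}_k(\varphi)$ as the decreasing limit of $E_k(\max(\varphi,-\ell))$ plus $\ell$ times the mass defect, itself a consequence of plurifine locality and mass conservation for bounded potentials) combined with Proposition \ref{arma}, the monotonicity of $E^{np}_k$ on a fixed singularity type; that monotonicity is in turn reduced to the monotonicity of the classical energy $E_k$ on bounded $\omega$-psh functions (Proposition \ref{ponton}, via the Berman--Boucksom derivative formulas) and to Proposition \ref{monoton}. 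You use the same two structural inputs --- the plurifine-locality identity for the truncations $\max(\varphi,-k)$ and the invariance of the non-pluripolar masses under $\sim$ (Proposition \ref{monoton}, i.e.\ Witt Nystr\"om and Darvas--Di Nezza--Lu) --- but you replace the monotonicity of $E_k$ by a direct quantitative integration-by-parts estimate showing that the truncated energies of two bounded $\omega$-psh functions differing by at most $M$ in sup-norm differ by at most $C(n,\omega)M$. This is more elementary (no derivative or concavity computations are needed), is symmetric in $\varphi$ and $\psi$ rather than passing through the one-sided comparison $\psi\ge\varphi$, and produces the explicit uniform bound $\int_X(-\psi)\la dd^c\psi+\omega\ra^j\w\omega^{n-j}\le 2\int_X(-\varphi)\la dd^c\varphi+\omega\ra^j\w\omega^{n-j}+C(n,\omega)M$; what it does not deliver is the finer information the paper extracts along the way (monotonicity and concavity of $E^{np}_k$ on singularity classes, and the identity $E_k=E^{np}_k$ on $\F_k(X,\omega)$), which the paper reuses for the relative energy classes $\E^\psi_k(X,\omega)$. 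For part (2) you invoke the Guedj--Zeriahi/BEGZ identification of $E$ with its non-pluripolar expression on $\E(X,\omega)$ (Proposition~2.11 of \cite{BEGZ}); this is legitimate --- the paper itself remarks that the inclusion $\E_k(X,\omega)\subset\F_k(X,\omega)$ also follows from that result --- but the paper instead obtains $E_k(\varphi)\le E^{np}_k(\varphi)$, and hence $\E(X,\omega)\subset\G(X,\omega)$ together with $\E(X,\omega)\subset\F(X,\omega)$, in one stroke from Proposition \ref{gris2}.
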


Although $\G(X, \omega)$ contains the convex subclass $\E(X, \omega)$
it is not convex itself.
However, it contains certain other convex energy classes. 
%
%However for each $\psi\in \G(X, \omega)$ we
%introduce a non-trivial subclass, that is convex if $\psi$ has sul.
%It follows that $\G(X, \omega)$ has a quite rich
%structure. 
% To do this we introduce a notion of relative energy.
  \begin{df}\label{flamenco} 
  Let $\psi\in\G(X, \omega)$.
For $\varphi\in \psh(X,\omega)$, such that $\varphi\preceq \psi$, we
define the \emph{energy relative to $\psi$} 
  \begin{equation*}
    E^{\psi}(\varphi)=\inf\{E^{np}(\varphi'): \varphi'\geq 
    \varphi, \varphi'\sim \psi\}.\end{equation*}
We define the corresponding \emph{finite relative energy classes} 
  \begin{equation*}
  \E^\psi(X, \omega) =\{\varphi\preceq \psi,
  E^\psi(\varphi)>-\infty\}
  \end{equation*}
  and the \emph{relative full mass classes} 
    \begin{multline*}
      \F^{\psi}(X,\omega)=\\\Big \{\varphi\in \psh(X,\omega): \varphi\preceq
      \psi, \sum_{j=0}^{n-1}\int_X \langle dd^c \varphi+\omega\rangle^j \wedge
      \omega^{n-j}=\sum_{j=0}^{n-1}\int_X \langle dd^c \psi+\omega\rangle^j\wedge
      \omega^{n-j} \Big \}.
      \end{multline*}
\end{df}
  Note that if $\psi\in \psh (X,
  \omega)\cap L^\infty (X)$, then $\E^\psi(X, \omega)\supset \E(X,
  \omega)$. 
The classes $\E^\psi(X, \omega)$ have the following properties,
similar to $\E(X, \omega)$. Following \cite{BEGZ} we say that $\varphi\in \psh(X, \omega)$ 
has \emph{small unbounded locus (sul)} if there exists a
complete pluripolar closed subset $A\subset X$ such that
$\varphi$ is locally bounded outside $A$, cf.\ Section
\ref{struntsumma} below. 
\begin{thm}\label{blanda}
  Let $(X, \omega)$ be a compact K\"ahler manifold. Then 
      \begin{enumerate}
      \item
        if $\varphi\in \E^\psi(X,\omega)$ and $\varphi' \sim \varphi$, 
        then $\varphi'\in \E^\psi(X,\omega)$;
      \item
        if $\psi$ has sul, %small unbounded locus, 
then $\E^\psi(X,\omega)$ is convex;
      \item
        $\E^\psi(X,\omega)= \G (X,\omega)\cap \F^\psi(X,\omega)$.
  \end{enumerate}
      \end{thm}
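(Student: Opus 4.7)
\emph{Overall strategy and proof of (1).} My plan is to follow the standard finite-energy-class template (cf.\ the treatment of $\E(X,\omega)$ in \cite{BEGZ}): reduce every statement about $\ph$ to the family of competitors $\tilde\ph\geq\ph$ with $\tilde\ph\sim\psi$, and exploit two basic facts about non-pluripolar masses --- they depend only on the singularity type, and are monotone under $\ph\leq\tilde\ph$. The main technical tools are plurifine locality of $\la dd^c\cdot+\omega\ra^j$, multilinearity of the non-pluripolar product on regions where all functions are locally bounded, and the monotonicity theorem for non-pluripolar masses. For (1), pick $C$ with $|\ph-\ph'|\leq C$: given a competitor $\tilde\ph'\geq\ph'$, $\tilde\ph'\sim\psi$ for $E^\psi(\ph')$, the shifted function $\tilde\ph'+C$ is admissible for $E^\psi(\ph)$, and since $\tilde\ph'\sim\psi$,
\[E^{np}(\tilde\ph'+C)=E^{np}(\tilde\ph')+C\cdot M,\quad M:=\tfrac{1}{n}\sum_{j=0}^{n-1}\int_X\la dd^c\psi+\omega\ra^j\w\omega^{n-j}.\]
Taking infima and swapping $\ph\leftrightarrow\ph'$ yields $|E^\psi(\ph)-E^\psi(\ph')|\leq CM$, proving (1).

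\emph{Proof of (2).} Given competitors $\tilde\ph_i\geq\ph_i$, $\tilde\ph_i\sim\psi$ with $E^{np}(\tilde\ph_i)\leq E^\psi(\ph_i)+\epsilon$, I would use the convex combination $\tilde\ph:=t\tilde\ph_1+(1-t)\tilde\ph_2$, which is $\omega$-psh, majorizes $t\ph_1+(1-t)\ph_2$, and satisfies $\tilde\ph\sim\psi$. Expanding $\la dd^c\tilde\ph+\omega\ra^j$ by the binomial formula uses multilinearity of the non-pluripolar product, which is justified on $X\setminus A$ where $A$ is the complete pluripolar set furnished by sul of $\psi$ (outside which $\psi$, $\tilde\ph_i$ and $\tilde\ph$ are all locally bounded). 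Each resulting mixed integral is then estimated by H\"older-type inequalities for non-pluripolar masses from \cite{BEGZ} combined with integration by parts on $X\setminus A$, globalized using that $A$ is complete pluripolar. This produces a lower bound on $E^{np}(\tilde\ph)$ in terms of $E^{np}(\tilde\ph_1)$, $E^{np}(\tilde\ph_2)$, and the fixed $\psi$-masses, whence $t\ph_1+(1-t)\ph_2\in\E^\psi$.

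\emph{Proof of (3).} For $\E^\psi\subset\G\cap\F^\psi$, take a competitor $\tilde\ph_\epsilon\geq\ph$, $\tilde\ph_\epsilon\sim\psi$ with $E^{np}(\tilde\ph_\epsilon)$ finite. Non-pluripolar monotonicity under $\ph\leq\tilde\ph_\epsilon$ gives $\int_X\la dd^c\ph+\omega\ra^j\w\omega^{n-j}\leq\int_X\la dd^c\psi+\omega\ra^j\w\omega^{n-j}$. To upgrade this to equality ($\ph\in\F^\psi$) and to $\ph\in\G$, I would test the family of max-regularizations $\tilde\ph_{\epsilon,k}:=\max(\ph,\tilde\ph_\epsilon-k)$, which are competitors for every $k$: a plurifine decomposition of $E^{np}(\tilde\ph_{\epsilon,k})$ into the contribution on $\{\ph>\tilde\ph_\epsilon-k\}$ (which captures $E^{np}(\ph)$ in the limit) and a remainder, combined with the uniform lower bound $E^{np}(\tilde\ph_{\epsilon,k})\geq E^\psi(\ph)>-\infty$, rules out any mass defect in $j$ and forces $\ph$ to be integrable against each $\la dd^c\ph+\omega\ra^j$. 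For the converse $\G\cap\F^\psi\subset\E^\psi$, I set $\tilde\ph_k:=\max(\ph,\psi-k)\downarrow\ph$, so $\tilde\ph_k\sim\psi$, and use plurifine locality to write
\[E^{np}(\tilde\ph_k)=\tfrac{1}{n}\sum_{j=0}^{n-1}\Bigl(\int_{\{\ph>\psi-k\}}\ph\,\la dd^c\ph+\omega\ra^j\w\omega^{n-j}+\int_{\{\ph<\psi-k\}}(\psi-k)\,\la dd^c\psi+\omega\ra^j\w\omega^{n-j}\Bigr)+R_k,\]
where $R_k$ collects the mass of $\la dd^c\tilde\ph_k+\omega\ra^j$ on the level set $\{\ph=\psi-k\}$. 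The first sum converges to $E^{np}(\ph)>-\infty$ as $k\to\infty$ (using $\ph\in\G$), while the $\F^\psi$ hypothesis should force the tail integrals and $R_k$ to be bounded below uniformly in $k$, giving $E^\psi(\ph)\geq\liminf_k E^{np}(\tilde\ph_k)>-\infty$.

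\emph{Main obstacle.} The hard part will be this last step: extracting uniform-in-$k$ control of the tail and switch-level terms from the matching-masses condition $\ph\in\F^\psi$. Quantitatively one needs something like $k\cdot\la dd^c\psi+\omega\ra^j(\{\ph\leq\psi-k\})=o(1)$ for each $j=0,\ldots,n-1$, which is the $\psi$-relative analogue of the nontrivial direction in the classical identity $\F(X,\omega)\cap\E(X,\omega)=\E(X,\omega)$ from \cite{BEGZ}; I expect it to come from Cegrell-type integration by parts for non-pluripolar MA applied iteratively in $j$. A secondary technical difficulty is the multilinear integration by parts in (2), where the sul hypothesis on $\psi$ is essential to justify the computations on $X\setminus A$ before pushing across the complete pluripolar set $A$.
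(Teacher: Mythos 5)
Your part (1) is correct and is essentially the paper's argument (shift a competitor by $C$, use that the masses of $\tilde\ph'\sim\psi$ equal those of $\psi$ by monotonicity of non-pluripolar masses, and take infima). The genuine gaps are in (2) and in the inclusion $\G\cap\F^\psi\subset\E^\psi$ of (3). For (2), showing that the convex combination $\tilde\ph=t\tilde\ph_1+(1-t)\tilde\ph_2$ of near-optimal competitors satisfies $E^{np}(\tilde\ph)\gtrsim tE^{np}(\tilde\ph_1)+(1-t)E^{np}(\tilde\ph_2)$ is not a matter of binomial expansion plus H\"older-type mass inequalities: those control total masses $\int\la dd^c\tilde\ph_1+\omega\ra^a\w\la dd^c\tilde\ph_2+\omega\ra^b\w\omega^{n-a-b}$ from above, whereas what you need is a \emph{lower} bound on energies $\int\tilde\ph\,\la\cdots\ra$, i.e.\ genuine concavity of $E^{np}$ on $\omega$-psh functions with sul of a fixed singularity type. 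In the paper this is the content of Proposition ~\ref{konkav}, proved via the second-derivative formula (Proposition ~\ref{krikon}), whose right-hand side is a sum of manifestly nonpositive terms $-\int du\w d^cu\w\cdots$ obtained by repeated integration by parts (Proposition ~\ref{partial}) on $X\setminus A$; nothing of H\"older type enters. Moreover, even granting such a bound for the single competitor $\tilde\ph$, you have not controlled the \emph{infimum} defining $E^\psi(t\ph_1+(1-t)\ph_2)$: one must first reduce that infimum to the canonical sequence $\max(t\ph_1+(1-t)\ph_2,\psi-\ell)$ and compare it with $t\max(\ph_1,\psi-\ell)+(1-t)\max(\ph_2,\psi-\ell)$, which requires the monotonicity of $E^{np}$ on functions of the same singularity type (Proposition ~\ref{arma} and Lemma ~\ref{enkelt} in the paper). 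Neither step appears in your sketch.

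For the converse in (3) you correctly identify the missing estimate as the main obstacle, but it remains unproven: you would need $k\cdot\sum_j\int_{\{\ph\leq\psi-k\}}\la dd^c\max(\ph,\psi-k)+\omega\ra^j\w\omega^{n-j}$ to stay bounded, and "Cegrell-type integration by parts iterated in $j$" is not carried out. The paper avoids this entirely by a different and much shorter route: Proposition ~\ref{gris2} gives the exact identity that $E\big(\max(\ph,-\lambda)\big)+\frac{\lambda}{n}\sum_j\int_X(\omega^j-\la dd^c\ph+\omega\ra^j)\w\omega^{n-j}$ decreases to $E^{np}(\ph)$, and applying it to the competitors $\ph_\ell=\max(\ph,\psi-\ell)$ (for which the mass defect equals that of $\psi$ by monotonicity) together with monotonicity of the bounded energy $E$ yields directly $E^\psi(\ph)\geq E^{np}(\ph)+\lim_\lambda\lambda\big(T(\psi)-T(\ph)\big)$; the hypothesis $\ph\in\F^\psi$ kills the last term and gives $E^\psi(\ph)\geq E^{np}(\ph)>-\infty$ with no tail analysis. (Your forward inclusion $\E^\psi\subset\G\cap\F^\psi$ is in the same spirit as the paper's three-term plurifine decomposition of $\sum_j\int\ph_\ell\la dd^c\ph_\ell+\omega\ra^j\w\omega^{n-j}$ and is salvageable, though it is cleaner to split on $\{\ph>\psi-\ell\}$ and its complement so that no separate level-set remainder $R_k$ appears.)
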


%\bigskip
\smallskip

The paper is organized as follows. In Section ~\ref{nonp} we provide
some background on the classical and the non-pluripolar Monge-Amp\`ere operators in the local
setting. In Section ~\ref{zooma} we introduce the class $\G(\Omega)$, 
and more generally classes $\G_k(\Omega)$ of psh functions of \emph{locally finite
 non-pluripolar energy of order $k$}, 
and our Monge-Amp\`ere operators, and in Section ~\ref{lokala} we
provide various examples of functions in $\G(\Omega)$. In Section
~\ref{regsec} we prove the regularity result Theorem
~\ref{vegetera}. In fact, we prove a sligthly more general version
formulated in terms of $\G_k(\Omega)$. 

In Section ~\ref{globala} we extend our definitions and regularity results to the
global setting. In Section ~\ref{maenergi} we recall the classical
 Monge-Amp\`ere energy and in Sections ~\ref{ickepp} and ~\ref{relativa} we study the non-pluripolar
energy and the relative energy, respectively; in particular, we prove
Theorems ~\ref{stranda} and ~\ref{blanda}. As in the local case we
introduce more generally non-pluripolar and relative energies of order
$k$ and corresponding finite energy classes $\G_k(X, \omega)$ and
$\E_k^\psi(X, \omega)$, and we prove versions of our results formulated in
terms of these. 
In Section ~\ref{dumle} we discuss
the B\l{}ocki-Cegrell class and prove Theorem
~\ref{globalbc}. Finally, 
in Section ~\ref{exempelsamling} we give various examples of functions with finite
non-pluripolar energy.

\smallskip
\noindent {\bf Ackowledgement}
We would like to thank the referees for valuable comments and suggestions
that have improved the presentation.

\section{The complex Monge-Amp\`ere product}\label{nonp}
Throughout this paper $X$ is a domain in $\C^n$ or
more generally a complex manifold of
dimension $n$. % if nothing else is mentioned.
All measures are assumed
to be Borel measures. We let $d^c={1/4\pi i}(\partial - \dbar)$, so
that $dd^c \log |z_1|^2=[z_1=0]$.

%To begin with we let $X$ be any complex manifold of dimension $n$.
In this section we recall some basic facts about the (non-pluripolar)
Monge-Amp\`ere products. We refer to, e.g., \cite{D}*{Chapter III} for the classical
Bedford-Taylor-Demailly theory, see also \cite{BT2, BT3, BEGZ}. 
%Let us recall some basic facts from  \cite{BEGZ}. 
%

First, the plurifine topology is the coarsest topology such that all psh functions on all open subsets of $X$ are 
continuous. A basis for this topology is given by all sets of the form
$V\cap \{u>0\}$, where $V$ is open and $u$ is psh in
$V$.

Let $T$ be a closed positive current and let $u$ be a locally bounded
psh function on $X$. Then $uT$ is a well-defined current and 
\begin{equation*}%\label{kora}
  dd^c u\w T:= dd^c (uT)
\end{equation*}
is again a closed positive current.
In particular, if $u_1, \ldots, u_p$ are locally bounded
psh functions, then the product  $dd^c u_p\w \cdots \w dd^c u_1\w T$
is defined inductively as
\begin{equation}\label{mixad}
dd^c u_{p} \w \cdots \w dd^c u_1\w T:= dd^c( u_{p} dd^c u_{p-1}\w
\cdots \w dd^c u_1\w T).
\end{equation}
It turns out that this product is commutative in the factors $dd^c
u_j$ and
multilinear in the factors $dd^c u_j$ and it does not charge pluripolar sets.
Moreover  $dd^c u_p\w \cdots \w dd^c u_1$ is \emph{local in the plurifine topology}, i.e.,
if $O$ is a plurifine open set and $u_j=v_j$ pointwise on $O$, then 
$$
\1_O dd^c u_p\w \cdots \w dd^c u_1=\1_Odd^c v_p\w \cdots \w dd^c
v_1.
$$
The products \eqref{mixad} satisfy the following continuity property.
\begin{lma}\label{konto}
Assume that $u_1,\ldots, u_p$ are locally bounded psh functions and
that $u_1^\ell, \ldots, u_p^\ell$ are decreasing sequences of psh
functions converging to $u_1,\ldots, u_p$, respectively, and that $T$ is a closed positive
current. Then 
\[
  dd^c u^\ell_p\w \ldots \w dd^c u^\ell_1\w T \to dd^c u_p\w \ldots \w dd^c
  u_1\w T
\]
weakly when $\ell\to \infty$. 
  \end{lma}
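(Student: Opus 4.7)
I would prove the lemma by induction on $p$, following the classical Bedford-Taylor scheme. The base case $p=1$ is immediate from the definition: $dd^c u_1^\ell \w T = dd^c(u_1^\ell T)$, and since the sequence $u_1^\ell \searrow u_1$ is locally uniformly bounded and $T$ has measure coefficients, monotone convergence against any smooth test form gives $u_1^\ell T \to u_1 T$ weakly, whereupon the continuity of $dd^c$ on currents finishes this step.

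For the inductive step, set
\[
S^\ell := dd^c u_{p-1}^\ell \w \cdots \w dd^c u_1^\ell \w T, \qquad S := dd^c u_{p-1} \w \cdots \w dd^c u_1 \w T.
\]
By the inductive hypothesis $S^\ell \to S$ weakly and both are closed positive currents. A standard Chern-Levine-Nirenberg estimate, combined with the local uniform bounds on the $u_j^\ell$, guarantees that the masses of $S^\ell$ are locally uniformly bounded. Because $dd^c u_p^\ell \w S^\ell = dd^c(u_p^\ell S^\ell)$ and $dd^c$ is continuous on currents, it suffices to show that $u_p^\ell S^\ell \to u_p S$ weakly.

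To this end I would use the decomposition
\[
u_p^\ell S^\ell - u_p S = (u_p^\ell - u_p) S^\ell + u_p (S^\ell - S).
\]
The first term tends to zero weakly: Hartogs' lemma, applied to the decreasing sequence $u_p^\ell \searrow u_p$ of psh functions with locally bounded limit, yields $u_p^\ell \to u_p$ uniformly on compact sets, and this combined with the uniform mass bound on $S^\ell$ immediately gives $(u_p^\ell - u_p) S^\ell \to 0$ weakly.

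The hard part is the second term $u_p (S^\ell - S) \to 0$: since $u_p$ is only bounded upper semicontinuous and not continuous in general, one cannot directly pair the weak convergence $S^\ell \to S$ against $u_p$. The standard remedy is to approximate $u_p$ from above by smooth psh functions $u_p^{(\eps)} \searrow u_p$, obtained by convolution with a mollifier. For each fixed $\eps$ the function $u_p^{(\eps)}$ is continuous, hence $u_p^{(\eps)}(S^\ell - S) \to 0$ weakly follows at once from $S^\ell \to S$. The residual error $(u_p^{(\eps)} - u_p) S^\ell$ is nonnegative when tested against nonnegative forms; using the $\limsup$ bound that holds for bounded usc functions tested against weakly converging positive currents, together with monotone convergence $\int \chi (u_p^{(\eps)} - u_p)\, S \to 0$ as $\eps \to 0$, one controls this error uniformly in $\ell$. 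Combining these estimates gives $u_p(S^\ell - S) \to 0$ weakly and closes the induction. The main obstacle throughout is precisely the upper semicontinuity of $u_p$: it is what forces the roundabout smoothing-and-comparison argument rather than a direct pairing against the weak limit.
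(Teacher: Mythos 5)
There is nothing in the paper to compare against here: Lemma~\ref{konto} is stated without proof, being the classical Bedford--Taylor continuity theorem for decreasing sequences (see \cite{D}*{Chapter III} or \cite{BT2}), so your argument has to stand on its own. It does not, and the failure is at the exact point where the classical theorem is genuinely hard. Your treatment of the first term rests on the claim that Hartogs' lemma (really Dini's theorem) upgrades the decreasing pointwise convergence $u_p^\ell\searrow u_p$ to local uniform convergence. This is false unless $u_p$ is continuous: a locally uniform limit of the continuous functions $u_p^\ell$ (or of their mollifications) would itself be continuous, whereas a locally bounded psh function is in general only upper semicontinuous (e.g.\ relative extremal functions of compacta with irregular boundary points are bounded, psh and discontinuous). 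Dini/Hartogs needs continuity of the limit to turn $\{u_p^\ell<u_p+\varepsilon\}$ into an open cover. If decreasing sequences of psh functions did converge locally uniformly, the whole Bedford--Taylor convergence theorem would be a triviality; the actual proof handles precisely this obstruction via quasicontinuity of psh functions and capacity estimates (every $u_j$ is continuous off an open set of arbitrarily small capacity, and the masses of the currents $S^\ell$ on such sets are uniformly small by Chern--Levine--Nirenberg-type capacity bounds).

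The second term has a sign problem of the same nature. After mollifying, the error function $u_p^{(\varepsilon)}-u_p$ is continuous minus upper semicontinuous, hence \emph{lower} semicontinuous; for a nonnegative lsc function $f$ and positive measures $\mu_\ell\to\mu$ weakly one only gets $\liminf_\ell\int f\,d\mu_\ell\geq\int f\,d\mu$, which is the wrong direction for the uniform-in-$\ell$ upper bound you need on $\int\chi\,(u_p^{(\varepsilon)}-u_p)\,S^\ell$. So both halves of your decomposition quietly assume away the central difficulty. To repair the proof you would have to import the quasicontinuity/capacity machinery (or reproduce the integration-by-parts scheme of \cite{D}*{Chapter III, Theorem 3.7}), at which point you are simply reproving the cited theorem.
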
 

We will use the following result. 
  
  \begin{lma}\label{pronto}
Assume that for $j=1,\ldots, p$, $u_s^j$, $s\in \R$ (or some interval
in $\R$), is a family of locally bounded psh functions on $X$ 
such that $u_s^j\to u^j_t$ locally uniformly on $X$ when $s\to
t$. Then
\[dd^c u_{s_p}^p\w\cdots\w dd^c u_{s_1}^1 \to 
  dd^c u_{t_p}^p\w\cdots\w dd^c u_{t_1}^1
\]
weakly 
when $(s_1,\ldots, s_p)\to (t_1,\ldots, t_p)$ in $\R^p$. 
\end{lma}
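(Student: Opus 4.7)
My plan is to proceed by induction on $p$. The base case $p=1$ is immediate: locally uniform convergence of $u^1_{s_1}$ to $u^1_{t_1}$ gives convergence in $L^1_\loc$, and $dd^c$ is continuous on distributions, so $dd^c u^1_{s_1}\to dd^c u^1_{t_1}$ weakly.

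For the inductive step, set $T_s := dd^c u^1_{s_1}\w\cdots\w dd^c u^{p-1}_{s_{p-1}}$. By the inductive hypothesis, $T_s\to T_t$ weakly as $s\to t$, and the Chern--Levine--Nirenberg estimates ensure that the masses $|T_s|(K)$ are locally uniformly bounded on each compact $K$. Using the Bedford--Taylor recursion $dd^c u^p_{s_p}\w T_s = dd^c(u^p_{s_p}T_s)$ and the continuity of $dd^c$ on currents, it suffices to show that $u^p_{s_p}T_s \to u^p_{t_p}T_t$ weakly. Writing
\[
u^p_{s_p}T_s - u^p_{t_p}T_t = (u^p_{s_p}-u^p_{t_p})T_s + u^p_{t_p}(T_s - T_t),
\]
the first summand tends to $0$ weakly thanks to the locally uniform convergence $u^p_{s_p}\to u^p_{t_p}$ combined with the uniform mass bound on $T_s$.

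The main obstacle is the second summand: one must verify that $u^p_{t_p}T_s \to u^p_{t_p}T_t$ weakly, where $u^p_{t_p}$ is a \emph{fixed} bounded psh function paired with a family of Bedford--Taylor currents converging only weakly. My approach would be a Bedford--Taylor smoothing argument. Pick a decreasing sequence $v_\epsilon\searrow u^p_{t_p}$ of smooth psh functions. For each fixed $\epsilon$, continuity of $v_\epsilon$ together with the weak convergence $T_s\to T_t$ gives $v_\epsilon T_s \to v_\epsilon T_t$ weakly as $s\to t$, while dominated convergence against the measure-valued current $T_t$ yields $v_\epsilon T_t \to u^p_{t_p}T_t$ weakly as $\epsilon\to 0$. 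The hard part is the exchange of these two limits, which reduces to the uniform smallness in $s$ of the nonnegative expression $(v_\epsilon-u^p_{t_p})T_s$ tested against positive forms as $\epsilon\to 0$. I would establish this uniformity via the Bedford--Taylor quasi-continuity of $u^p_{t_p}$ (its discontinuity set is pluripolar), the non-charging of pluripolar sets by the Bedford--Taylor currents $T_s$, and the uniform mass bound from Chern--Levine--Nirenberg; a standard diagonal argument then closes the induction.
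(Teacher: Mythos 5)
Your argument is correct in outline, but it takes a genuinely different---and harder---route than the paper. The paper never isolates the problematic term $u^p_{t_p}(T_s-T_t)$: instead it telescopes the full difference of products one factor at a time and then, using commutativity of the Bedford--Taylor product together with the recursion $dd^c u\wedge T=dd^c(uT)$ paired against a test form $\xi$, rewrites the $j$th telescoping term as
$\int_X (u^j_{s_j}-u^j_{t_j})\, dd^c u^p_{s_p}\wedge\cdots\wedge dd^c u^{j+1}_{s_{j+1}}\wedge dd^c u^{j-1}_{t_{j-1}}\wedge\cdots\wedge dd^c u^1_{t_1}\wedge dd^c\xi$.
The difference of two wedge products is thus converted into a single integral against a positive current with the small factor $u^j_{s_j}-u^j_{t_j}$ in front, so the plain Chern--Levine--Nirenberg mass bound combined with $\sup_{K}|u^j_{s_j}-u^j_{t_j}|\to 0$ finishes the proof in a few lines: no induction, no quasi-continuity, no capacity. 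Your decomposition forces you to prove $u^p_{t_p}T_s\to u^p_{t_p}T_t$ for a fixed bounded psh multiplier, which is exactly what the integration-by-parts trick avoids; to close it you need not just the crude mass bound you cite but the capacity form of the Chern--Levine--Nirenberg inequality, which makes the mass of $T_s$ on the small-capacity open set furnished by quasi-continuity small \emph{uniformly in $s$} (the constants being uniform because the $u^j_s$ are locally uniformly bounded), together with Dini's theorem on the complement. With that ingredient made explicit your double-limit exchange does go through, and your approach has the merit of being the standard Bedford--Taylor continuity machinery (it would, for instance, also handle convergence in capacity); but for the statement at hand the paper's argument is shorter and more elementary.
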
 
\begin{proof}
  Note that
  \begin{multline}\label{koppar}
    dd^c u_{s_p}^p\w\cdots\w dd^c u_{s_1}^1 -
  dd^c u_{t_p}^p\w\cdots\w dd^c u_{t_1}^1
  =\\
  \sum_{j=1}^p
  \big (dd^c u_{s_p}^p\w\cdots\w dd^c u_{s_j}^j\w
  dd^c u_{t_{j-1}}^{j-1}\w\cdots\w dd^c u_{t_1}^1
  -\\
 dd^c u_{s_p}^p\w\cdots\w dd^c u_{s_{j+1}}^{j+1}\w
  dd^c u_{t_{j}}^{j}\w\cdots\w dd^c u_{t_1}^1 \big ).
  \end{multline}
 Let $\xi$ be a test form. Since \eqref{mixad} is commutative in the
 factors $dd^c u_j$ we can
 write the action of the $j$th term in the right hand side of \eqref{koppar} on $\xi$ as
 \begin{equation}\label{pippi}
 \int_X (u^j_{s_j}-u^j_{t_j}) dd^c u_{s_p}^p\w\cdots\w dd^c u_{s_{j+1}}^{j+1}\w
  dd^c u_{t_{j-1}}^{j-1}\w\cdots\w dd^c u_{t_1}^1 \w dd^c\xi.
\end{equation}
Let $\mathcal U\subset X$ be a relatively compact neighborhood of the support
of $\xi$. Then there is a neighborhood $\mathcal V\subset \R$ of $t$
such that for $j=1,\ldots, p$, $s\in \mathcal V$,
$|u_s^j-v_s^j|<\epsilon$ in $\overline{\mathcal U}$. Since $u_s^j$ are
locally bounded, there is an $M$ such that
$\| u_s^j\|_{L^\infty(\overline{\mathcal U})}\leq M$.  We may also
  assume that $\int_X |dd^c \xi|\leq M$. 
Now by the Chern-Levine-Nirenberg inequalities there is constant $C$
such that the absolute value of \eqref{pippi} is bounded by
\begin{multline*}
  C \sup_{\overline{\mathcal U}}|u_{s_j}^j-u_{t_j}^j| \| u_{s_p}^p\|_{L^\infty(\overline{\mathcal U})}
  \cdots
  \| u_{s_{j+1}}^{j+1}\|_{L^\infty(\overline{\mathcal U})}
  \| u_{t_{j-1}}^{j-1}\|_{L^\infty(\overline{\mathcal U})}
  \cdots
  \| u_{t_1}^{1}\|_{L^\infty(\overline{\mathcal U})}
  \int_X |dd^c \xi|\leq\\
 C \epsilon (M+\epsilon)^{p-j} M^j\to 0.
\end{multline*}
Since this holds for any $\xi$, \eqref{koppar} converges weakly to $0$
when $(s_1,\ldots, s_p)\to (t_1, \ldots, t_p)$. 
  \end{proof}

\subsection{The non-pluripolar Monge-Amp\`ere product}\label{pruttkudde}
Let $u_1,\ldots, u_p$ be not necessarily locally bounded psh functions on $X$
and let
\begin{equation}\label{uttomning}
O_\ell=\bigcap_{j=1}^p \{u_j > -\ell\}.
\end{equation}
Then $O_\ell$ is a plurifine open set.
Following   \cite[Definition~1.1]{BEGZ}
we say that the non-pluripolar Monge-Amp\`ere product $\la dd^c u_p\wedge
\cdots \wedge dd^c u_1\ra$ is well-defined if
for each compact subset $K\subset X$ we have 
\begin{equation*}%\label{snok}
\sup_\ell \int_{K\cap O_\ell} \omega^{n-p}\wedge \bigwedge_{j=1}^p dd^c
\max (u_j,-\ell)<\infty, 
\end{equation*} 
where $\omega$ is a smooth strictly positive $(1,1)$-form on $X$.
This definition is clearly independent of $\omega$.

Since the \ma product for bounded functions is local in the plurifine
topology, it follows that 
\begin{equation*}
  \1_{O_\ell}
  \bigwedge_{j=1}^p dd^c \max (u_j,-\ell )
  =\1_{O_\ell} \bigwedge_{j=1}^p dd^c \max (u_j,-\ell' ), \quad \ell'>\ell.
\end{equation*}
It follows that 
there is a well-defined positive $(p,p)$-current 
%If $(u_1,\ldots, u_p)\in \B(X)$, then 
%Then
\begin{equation}\label{husqvarna}
\langle dd^c u_p\w\cdots\w dd^cu_1 \rangle:= \lim_\ell  \1_{O_\ell}
\bigwedge_{j=1}^p dd^c \max (u_j,-\ell ); 
\end{equation} 
by \cite{BEGZ}*{Theorem~1.8} it is closed.

%Let us list some properties of \eqref{
Note that \eqref{husqvarna} is commutative in the factors $dd^cu_j$ since
\eqref{mixad} is.
%We will use the following
%By \cite{BEGZ}*{Proposition~1.4} it has the following properties: 
%Let us list some additional properties of \eqref{husqvarna}, see 
%\cite{BEGZ}*{Proposition~1.4}:  
%Moreover,
By \cite{BEGZ}*{Proposition~1.4} it is multilinear 
in the following sense: 
  if $v$ is another psh function, then 
  \begin{equation}\label{beundra}
    \Big \la dd^c (u_p+v) \w \bigwedge_{j=1}^{p-1} dd^c u_{j}\Big \ra
    =
    \Big \la dd^c u_{p}\w \bigwedge_{j=1}^{p-1} dd^c u_{j}\Big \ra
    +
    \Big \la dd^c v \w \bigwedge_{j=1}^{p-1} dd^c u_{j}\Big \ra 
  \end{equation}
  in the sense that the left hand side is well-defined if and only if
  both terms in the right hand side are, and equality holds in this
  case. 
Moreover, \eqref{husqvarna} only depends on the currents $dd^c
  u_j$, i.e., it is not affected by adding pluriharmonic functions to
  the $u_j$. Also, the operator 
  \begin{equation*}%\label{masthugget}
    (u_1,\ldots, u_p)\mapsto \langle dd^c u_p\w\cdots\w dd^cu_1
    \rangle
  \end{equation*}
  is local in the plurifine
  topology whenever it is well-defined.

    \begin{lma}\label{handduk}
Assume that $u$ is a psh function on $X$ such that $\la dd^c u\ra^p$ is
well-defined, %. Moreover assume
and that $u_\lambda$ is a sequence of psh
functions on $X$ decreasing to $u$, such that $\la dd^c u_\lambda\ra^p$
is well-defined for each $\lambda$.
Moreover, assume that $\omega$ is a smooth positive $(1,1)$-form and
that $\chi$ is a non-negative test function on $X$. 
Then 
\begin{equation}\label{vedklabb}
 \liminf_{\lambda\to \infty} \int_X\la dd^c u_\lambda\ra^p \w\chi \omega^{n-p}\geq
 \int_X \la dd^c u
 \ra^p \w \chi \omega^{n-p}.
\end{equation}
\end{lma}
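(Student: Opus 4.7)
The strategy is to bound the integral of $\la dd^c u_\lambda\ra^p$ from below, for each truncation level $\ell$, by a Bedford-Taylor integral on the plurifine open set $O_\ell = \{u > -\ell\}$, and then let $\ell\to\infty$. Setting $v_\lambda = \max(u_\lambda, -\ell)$ and $v = \max(u, -\ell)$, the inclusion $O_\ell \subset \{u_\lambda > -\ell\}$ (from $u_\lambda \geq u$) together with the plurifine locality of the bounded Monge-Amp\`ere product and the monotone definition \eqref{husqvarna} gives
\[
\1_{O_\ell}\la dd^c u_\lambda\ra^p = \1_{O_\ell}(dd^c v_\lambda)^p, \qquad \1_{O_\ell}\la dd^c u\ra^p = \1_{O_\ell}(dd^c v)^p.
\]
By positivity of $\la dd^c u_\lambda\ra^p$, this yields the preliminary estimate
\[
\int_X \la dd^c u_\lambda\ra^p \w \chi\omega^{n-p} \;\geq\; \int_X \1_{O_\ell}(dd^c v_\lambda)^p \w \chi\omega^{n-p}.
\]

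The main technical step is to establish
\[
\liminf_{\lambda\to\infty}\int_X \1_{O_\ell}(dd^c v_\lambda)^p \w \chi\omega^{n-p} \;\geq\; \int_X \1_{O_\ell}(dd^c v)^p \w \chi\omega^{n-p}.
\]
By Lemma~\ref{konto} we have $(dd^c v_\lambda)^p \to (dd^c v)^p$ weakly, but the obstacle is that $O_\ell$ is only plurifine open, not open in the usual topology (as $u$ is merely upper semi-continuous), so $\1_{O_\ell}$ is not lower semi-continuous and Portmanteau's theorem does not apply directly. To circumvent this I would use the quasi-continuity of psh functions: for each $\varepsilon > 0$ there is an open set $U_\varepsilon \subset X$ with $\operatorname{cap}(U_\varepsilon) < \varepsilon$ such that $u|_{X \setminus U_\varepsilon}$ is continuous, so that $O_\ell \cap (X \setminus U_\varepsilon) = V_\varepsilon \cap (X \setminus U_\varepsilon)$ for some genuinely open $V_\varepsilon \subset X$. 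The classical Bedford-Taylor capacity estimate bounds $\int_{U_\varepsilon}(dd^c v_\lambda)^p \w \chi\omega^{n-p}$ uniformly in $\lambda$ by $C_\ell\,\operatorname{cap}(U_\varepsilon) < C_\ell\varepsilon$ (and likewise for the limit current), controlling the contributions from $U_\varepsilon$, while on the open set $V_\varepsilon$ the standard Portmanteau lower semi-continuity for weak limits of positive measures applies. Letting $\varepsilon \to 0$ completes this step.

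Finally, since $\la dd^c u\ra^p$ does not charge the pluripolar set $\{u = -\infty\}$, monotone convergence gives
\[
\int_X \1_{O_\ell}(dd^c v)^p \w \chi\omega^{n-p} = \int_X \1_{O_\ell}\la dd^c u\ra^p \w \chi\omega^{n-p} \;\nearrow\; \int_X \la dd^c u\ra^p \w \chi\omega^{n-p}
\]
as $\ell\to\infty$. Combining the three steps yields the inequality \eqref{vedklabb}. The hard part is the passage through the non-open indicator $\1_{O_\ell}$ in the weak limit, which is handled by the quasi-continuity and capacity estimate described above.
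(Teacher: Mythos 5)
Your proof is correct and follows essentially the same route as the paper: truncate at level $\ell$, use plurifine locality on $O_\ell=\{u>-\ell\}$ to replace the non-pluripolar products by Bedford--Taylor products of the truncations $\max(\cdot,-\ell)$, establish lower semicontinuity of the resulting masses on the plurifine open set $O_\ell$, and let $\ell\to\infty$ using that $\la dd^c u\ra^p$ does not charge $\{u=-\infty\}$. The only difference is that the paper simply invokes \cite{BT3}*{Corollary~3.3} for the lower semicontinuity step on $O_\ell$, whereas you re-derive that statement from scratch via quasi-continuity and the capacity estimate --- which is in effect a proof of the cited result.
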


\begin{proof}
Fix $\ell$ and let $O_\ell=\{u>-\ell\}$. Then $\max (u_\lambda,
-\ell)\searrow \max (u, -\ell)$ and by \cite{BT3}*{Corollary~3.3}, 
\begin{equation}\label{spraka}
   \liminf_{\lambda\to\infty}
   \int_{O_\ell}\big (dd^c \max (u_\lambda, -\ell) \big )^p\w \chi\omega^{n-p}
   \geq 
  \int_{O_\ell}\big (dd^c \max (u, -\ell)  \big )^p\w \chi\omega^{n-p}.  
   \end{equation}
Since $u_\lambda>u$, $u_\lambda = \max (u_\lambda,
-\ell)$ and $u=\max (u, -\ell)$ in $O_\ell$. Thus,
%by the locality in
%the plurifine topology of 
since  the non-pluripolar Monge-Amp\`ere operator is local in the plurifine topology,
\eqref{spraka} implies that 
\begin{equation}\label{chai}
   \liminf_{\lambda\to\infty}
   \int_{O_\ell}\la dd^c u_\lambda\ra^p\w \chi\omega^{n-p}
   \geq 
  \int_{O_\ell}\la dd^c u \ra^p\w \chi\omega^{n-p}.  
   \end{equation} 
   Now, \eqref{chai} holds for all $\ell$ and since $\la dd^c u\ra^p$
   does not charge pluripolar sets, in particular not 
   $V=\{u=-\infty\}$, we get \eqref{vedklabb}. 
  \end{proof}

\subsection{Psh functions with small unbounded locus}\label{struntsumma}
Following \cite{BEGZ} we say that a psh function $u$ on $X$ has \emph{small
  unbounded locus (sul)} if
there exists a complete pluripolar closed subset $A\subset
X$ such that $u$ is locally bounded outside $A$.

\begin{remark}\label{pojkar} 
Let $O=\bigcup_\ell O_\ell$, where $O_\ell$ is defined by
\eqref{uttomning}. Note that if $u_1,\ldots, u_p$ have sul and $A$ is
a closed complete pluripolar set such that $u_1,\ldots, u_p$ are locally
bounded outside $A$, then $X\setminus O\subset A$.
\end{remark} 

\begin{remark}\label{flickor}
  Assume that $u_1,\ldots, u_p$ have sul, and that $A$ is a closed complete
    pluripolar closed set such that each $u_j$ is locally bounded outside
  $A$. Then 
    $\la dd^c u_p\w\cdots \w dd^c u_1 \ra $ is
well-defined if and only if the
Bedford-Taylor product $dd^c u_p\w\cdots \w dd^c u_1 $,  which is defined on $X\setminus A$,
has locally finite mass near each point of $A$. Then $\la dd^c
u_p\w\cdots \w dd^c u_1 \ra $
is just the trivial extension of $dd^c u_p\w\cdots \w dd^c u_1 $, cf.\ 
\cite[page 204]{BEGZ}. 
\end{remark}

\section{Local Monge-Amp\`ere currents - the classes
  $\G_k(X)$}\label{zooma}
We slightly extend the definition of $\G(X)$ from the introduction,
cf.\ Definition ~\ref{grisen}.

\begin{df}\label{prisen}
 Let $X$ be a complex manifold of dimension $n$.
For $1\leq k\leq n-1$, we say that a psh function $u$ on $X$ has
\emph{locally finite non-pluripolar energy of order k}, $u\in\G_k(X)$,
if, for each $ j\leq k$,  $\la dd^c u\ra^j$ is locally finite and
$u$ is locally integrable with respect to $\la dd^c u\ra^j$. 
\end{df}

Note that if $\omega$ is a smooth strictly positive $(1,1)$-form, then $u\in
\G_k(X)$ if and only if 
the measure
\begin{equation}\label{grizzly}
  \sum_{j=0}^{k}\la dd^c  u\ra^j\w \omega^{n-j}
\end{equation}
is locally finite and $u$ is locally
integrable with respect to this measure. 
Clearly
\begin{equation*}
\G_1(X)\supset \G_2(X) \supset \cdots\supset \G_{n-1}(X)=\G(X),
   \end{equation*}
  where $\G(X)$ is as in Definition ~\ref{grisen}. 

\smallskip

If $u\in \G_k(X)$, then the Monge-Amp\`ere currents
\[
  [dd^c u]^p=dd^c( u \la dd^c u\ra^{p-1}) 
  \]
  and
  \[
    S_p(u)= [dd^c u]^p- \la dd^c u\ra^{p}
  \]
  from Definition ~\ref{grisprodukt} are well-defined $(p,p)$-currents
  for $p=1,\ldots, k+1$. 
\begin{prop}\label{slutna}
  The currents $[dd^cu]^p$ and $\S_p(u)$ are closed and positive. % $(p,p)$-currents.
  \end{prop}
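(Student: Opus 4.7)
The plan is to approximate $u$ by the natural sequence $u_\ell=\max(u,-\ell)$ of locally bounded psh functions decreasing to $u$, and realize both $[dd^c u]^p$ and $\S_p(u)$ as weak limits of manifestly positive currents.

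Closedness is immediate: $[dd^c u]^p=dd^c(u\langle dd^c u\rangle^{p-1})$ is $dd^c$-exact, so $d[dd^c u]^p=0$; and $\langle dd^c u\rangle^p$ is closed by \cite{BEGZ}*{Theorem~1.8}, cf.\ the discussion around \eqref{husqvarna}, whence $\S_p(u)$ is also closed.

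For positivity of $[dd^c u]^p$, the hypothesis $u\in\G_k(X)$ with $p-1\leq k$ ensures that $\langle dd^c u\rangle^{p-1}$ is a closed positive current of locally finite mass and that $u$ is locally integrable with respect to its coefficients. Since $u_\ell\searrow u$ with $|u_\ell-u|\leq|u|$, testing against a smooth form (written as a difference of strongly positive forms) and applying dominated convergence yields the weak convergence
\[
u_\ell\,\langle dd^c u\rangle^{p-1}\ \longrightarrow\ u\,\langle dd^c u\rangle^{p-1}.
\]
Applying $dd^c$ and using that $\langle dd^c u\rangle^{p-1}$ is closed gives $dd^c u_\ell\wedge\langle dd^c u\rangle^{p-1}\to [dd^c u]^p$. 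Each approximating current is the Bedford--Taylor wedge product of the positive $(1,1)$-current $dd^c u_\ell$ with the closed positive current $\langle dd^c u\rangle^{p-1}$, hence positive; so $[dd^c u]^p$ is positive as a weak limit of positive currents.

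For positivity of $\S_p(u)$, decompose the approximation along $O_\ell=\{u>-\ell\}$:
\[
dd^c u_\ell\wedge\langle dd^c u\rangle^{p-1}=\1_{O_\ell}\,dd^c u_\ell\wedge\langle dd^c u\rangle^{p-1}+\1_{X\setminus O_\ell}\,dd^c u_\ell\wedge\langle dd^c u\rangle^{p-1}.
\]
Since $u=u_\ell$ on the plurifine open set $O_\ell$, plurifine locality identifies the first summand with $\1_{O_\ell}(dd^c u_\ell)^p=\1_{O_\ell}\langle dd^c u\rangle^p$. As $\1_{O_\ell}\nearrow\1_{\{u>-\infty\}}$ and $\langle dd^c u\rangle^p$ puts no mass on the pluripolar set $\{u=-\infty\}$, this first piece converges weakly to $\langle dd^c u\rangle^p$. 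Consequently the second, manifestly positive, piece must converge weakly, and its limit is
\[
\S_p(u)=[dd^c u]^p-\langle dd^c u\rangle^p,
\]
which is therefore positive. The main technical point is the weak convergence $u_\ell\langle dd^c u\rangle^{p-1}\to u\langle dd^c u\rangle^{p-1}$, which rests precisely on the local integrability clause in the definition of $\G_k(X)$; once this is secured, both claims fall out by combining plurifine locality with the standard fact that non-pluripolar products do not charge pluripolar sets.
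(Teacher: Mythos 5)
Your proposal is correct and follows essentially the same route as the paper: closedness for the same reasons, positivity of $[dd^c u]^p$ by exhibiting it as a weak limit of the positive currents $dd^c u_\ell\wedge\langle dd^c u\rangle^{p-1}$ (the paper uses an arbitrary smooth decreasing approximation here, you use $\max(u,-\ell)$, which is immaterial), and positivity of $\S_p(u)$ via the same decomposition along $O_\ell=\{u>-\ell\}$, plurifine locality, and the fact that $\langle dd^c u\rangle^p$ does not charge $\{u=-\infty\}$. No gaps.
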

  \begin{proof}
Clearly $[dd^cu]^p$ is closed and, since $\la dd^c u\ra^p$ is closed,
so is $\S_p(u)$.

Note that if $u_\ell$ is a sequence of smooth psh functions decreasing
to $u$, then
%Moreover, note that 
\[
  [dd^c u]^p=\lim_{\ell\to \infty} dd^c (u_\ell \la dd^c u\ra^{p-1}).
\]
Since $\la dd^c u\ra^{p-1}$ is closed and positive, the currents in the right hand side are positive and thus so is the
limit. 
To see that also $S_p(u)$ is positive, let 
$u_\ell=\max(u,-\ell)$ and $O_\ell=\{u>-\ell\}$. 
Since $u_\ell\to u$, 
\begin{multline}\label{lastlimit}
[dd^cu]^p=\lim_{\ell\to \infty} dd^c (u_\ell \la dd^c u\ra^{p-1})=\\
\lim_{\ell\to\infty} \1_{X\setminus O_\ell} dd^c (u_\ell \la dd^c u\ra^{p-1})+\lim_{\ell\to\infty} \1_{O_\ell} dd^c(u_\ell \la dd^c u\ra^{p-1}).
\end{multline} 
Since the non-pluripolar Monge-Amp\`ere operator 
is local in the plurifine topology, 
$\1_{O_\ell} dd^c(u_\ell \la dd^c u\ra^{p-1})=\1_{O_\ell}dd^c u_\ell\w  \la dd^c u\ra^{p-1}=
\1_{O_\ell}\la dd^c u_\ell\ra^{p}$, 
and hence the last limit in \eqref{lastlimit} is equal to $\la dd^c u\ra^p$.
% since the non-pluripolar
%Monge-Amp\`ere operator 
%is local in the plurifine topology. 
Hence the first limit 
%in \eqref{lastlimit} 
must exist as well and it is certainly positive.
%(non-negative).
\end{proof}

\begin{prop}\label{obero}
  The currents $[dd^cu]^p$ and $\S_p(u)$ only depend on $dd^cu$,
  i.e., they are not affected by adding a pluriharmonic function to
  $u$. 
\end{prop}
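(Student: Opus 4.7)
The plan is to reduce the statement to the analogous (already noted) property of the non-pluripolar Monge-Amp\`ere current, together with a Leibniz-type computation that exploits $dd^c h = 0$ and the closedness of $\langle dd^c u\rangle^{p-1}$.

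First, I would record that if $h$ is pluriharmonic then $u + h\in \G_k(X)$ whenever $u\in\G_k(X)$: indeed $h$ is locally the real part of a holomorphic function, hence smooth, and by the remark after \eqref{beundra} in the paper the non-pluripolar operator only depends on $dd^c u_j$, so
\[
\la dd^c(u+h)\ra^j = \la dd^c u\ra^j
\]
for $j\leq k$, and local integrability of $u+h$ with respect to this measure is the same as that of $u$ since $h$ is locally bounded. Next, using that $\la dd^c u\ra^{p-1}$ has measure coefficients and $h$ is locally bounded, I would split
\[
[dd^c(u+h)]^p
= dd^c\bigl((u+h)\la dd^c u\ra^{p-1}\bigr)
= [dd^c u]^p + dd^c\bigl(h\,\la dd^c u\ra^{p-1}\bigr).
\]
It remains to show that the last term vanishes.

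For this step set $T=\la dd^c u\ra^{p-1}$, which is a closed positive $(p-1,p-1)$-current. Since $T$ is positive, it is real, so $dT=0$ implies $\partial T = \bar\partial T = 0$, hence also $d^cT=0$. Because $h$ is smooth, the usual Leibniz rules apply to $hT$, giving
\[
dd^c(hT) = dd^c h\w T + dh\w d^cT - d^ch\w dT + h\,dd^cT = dd^c h\w T,
\]
and this is zero since $h$ is pluriharmonic. Thus $[dd^c(u+h)]^p=[dd^c u]^p$, and combining with the equality $\la dd^c(u+h)\ra^p=\la dd^c u\ra^p$ quoted above, also $\S_p(u+h)=\S_p(u)$.

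The only mildly delicate point is justifying the Leibniz identity $dd^c(hT) = dd^c h\w T$ in the current sense; but once one observes that a real closed current satisfies both $dT=0$ and $d^cT=0$, this is a straightforward distributional computation against a test form, using that $h$ is smooth. Everything else is direct bookkeeping from the definitions in \ref{grisen}--\ref{grisprodukt} and the properties of the non-pluripolar product recalled in Section~\ref{pruttkudde}.
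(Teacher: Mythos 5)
Your proof is correct and follows essentially the same route as the paper's: reduce to $[dd^c u]^p$ via the fact that the non-pluripolar product depends only on $dd^c u$, split off $dd^c\bigl(h\,\la dd^c u\ra^{p-1}\bigr)$ using multilinearity, and kill that term using $dd^c h=0$ together with the closedness of $\la dd^c u\ra^{p-1}$. The extra Leibniz-rule justification you supply is a correct elaboration of the step the paper states in one line.
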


\begin{proof}
Since
$\la dd^c u\ra^p$ only depends on
$dd^cu$, cf.\ Section ~\ref{pruttkudde}, it is enough to prove the proposition for $[dd^c u]^p$.
Assume that $u'=u+h$, where $h$ is pluriharmonic.
Then by \eqref{beundra}, 
\begin{equation*}
  [dd^cu']^p=dd^c \big ( (u+h)\la dd^c u\ra^{p-1} \big )=
   dd^c  ( u\la dd^c u\ra^{p-1} ) + 
  dd^c  ( h\la dd^c u\ra^{p-1} ) =
  [dd^cu]^p,
\end{equation*}
where the last equality follows since $dd^c h=0$ and $\la dd^c
u\ra^{p-1}$ is closed.
  \end{proof}

 \begin{remark}%\label{sulgran}
    Assume that $u\in \G(X)$ has sul and is locally bounded outside
    the closed 
complete pluripolar set $A\subset X$.
Note that $[dd^c u]^p$ coincides with the standard Monge-Amp\`ere
current $(dd^c u)^p$ outside $A$. It follows from Remark ~\ref{flickor}
that 
\begin{equation*}
\1_{X\setminus A} [dd^c u]^p= \la dd^c u\ra^p \quad \text{ and }
\quad \S_p (u)=\1_A [dd^c u]^p. 
  \end{equation*}
\end{remark}

\section{Examples}\label{lokala} 
Let us consider some examples of functions with locally finite
non-pluripolar energy.

\begin{ex}\label{aladab}
Assume that $u\in \psh (X)$ has analytic singularities, i.e., that $u$
is locally of
the form 
\begin{equation}\label{himmelsbla}
  u=c\log |f|^2 +b,
\end{equation}
where $c>0$, $f=(f_1, \ldots, f_m)$ is a tuple of holomorphic functions,
$|f|^2=|f_1|^2+\cdots + |f_m|^2$, and $b$ is locally bounded.
Then $u$ is locally bounded outside the variety $Z\subset X$, locally
defined as $\{f=0\}$; in particular, $u$ has sul.

As mentioned in the introduction, it follows from
\cite{AW}*{Proposition~4.1} that $u\in \G(X)$. Moreover the currents $[dd^c u]^p$
coincide with the Monge-Amp\`ere currents $(dd^c u)^p$ 
defined inductively in \cite{AW} as $(dd^c u)^p=dd^c(u\1_{X\setminus
  Z} (dd^c u)^{p-1})$.

For the reader's convenience let us sketch an argument.  Assume
that \eqref{himmelsbla} holds in the open set $\U\subset X$ and, for
simplicity, that $c=1$.
By Hironaka's theorem there is a smooth 
modification $\pi\colon \U'\to \U$ such that $\pi^* f=f^0 f'$, where $f^0$ is a
 holomorphic section of a line bundle $L\to \U'$ and $f'$ is a non-vanishing tuple
 of holomorphic sections of $L^{-1}$.
Now
$$
\pi^* u = \log|f^0|^2 + b', 
$$
where $b'=\log |f'|^2+\pi^* b$ is locally bounded and psh in any local
frame for $L$.  
It follows that, for any $p$, $(dd^c b')^p$ is a well-defined closed positive current on $\U'$ and one can
check that
\[
\pi_* (dd^c b')^p =  \la dd^c u\ra^p.
  \]
Thus to see that $u\in \G(\U)$ it is enough to verify that $\log
|f^0|^2 (dd^c b')^p$ has locally bounded mass and this follows from a
standard Chern-Levine-Nirenberg type estimate, see, e.g., \cite[Chapter
III, Proposition~3.11]{D}. 
By the Poincar\'e-Lelong formula, 
\begin{equation*}%\label{ekar}
  \S_p (u) = \pi_* \big ( dd^c (\log |f^0|^2 (dd^c b')^p \big )
  =
  \pi_* \big ([\div f^0]\w (dd^c b')^p \big ), 
  \end{equation*} 
  where $[\div f^0]$ is the current of integration along
  the divisor %$\div f^0$
  of $f^0$.
\end{ex}

\def\k{{\kappa}}

\begin{prop}\label{omtenta} 
Let $\Omega$ be a domain in $\C^n$. Assume that $u\in \psh(\Omega)$
has analytic singularities and that the unbounded locus of $u$ is not
discrete. Then $u$ is not in 
$\mathcal D(\Omega)$. 
   \end{prop}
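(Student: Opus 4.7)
My plan is to argue by contradiction. Suppose $u\in\D(\Omega)$. By the defining property of the B\l{}ocki--Cegrell class, every decreasing sequence of locally bounded psh functions converging to $u$ must yield the same weak limit of the Monge-Amp\`ere measures. Since $u$ has analytic singularities, $u\in\G(\Omega)$ by Example~\ref{aladab}, so both Theorems~\ref{regularisera} and \ref{vegetera} apply.

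The strategy is to exhibit two such decreasing sequences with distinct weak limits. Pick $z_0\in\Omega$ and put $v(z)=|z-z_0|^2$, a smooth strictly psh function on $\Omega$. Both $\max(u,-\ell)$ and $\max(u,v-\ell)$ are locally bounded and decrease to $u$; by Theorems~\ref{regularisera} and \ref{vegetera} their $n$-th Monge-Amp\`ere currents converge weakly to $[dd^c u]^n$ and to $[dd^c u]^n+\sum_{j=1}^{n-1}\S_j(u)\w(dd^c v)^{n-j}$, respectively. The assumption $u\in\D(\Omega)$ forces these limits to coincide, giving $\sum_{j=1}^{n-1}\S_j(u)\w(dd^c v)^{n-j}=0$. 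Every summand is a positive measure (by Proposition~\ref{slutna}, $\S_j(u)$ is a closed positive current, while $(dd^c v)^{n-j}$ is a positive form), so each vanishes; since $dd^c v$ is a K\"ahler form, this forces $\S_j(u)=0$ for every $j=1,\dots,n-1$.

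The main obstacle is then to contradict this last conclusion by producing a nonzero $\S_k(u)$ with $k\le n-1$. Let $Z$ denote the unbounded locus of $u$; the non-discreteness hypothesis gives $k:=\codim Z\le n-1$. Working locally, write $u=c\log|f|^2+b$ and pass to the Hironaka resolution $\pi\colon\tilde U\to U$ of Example~\ref{aladab}, where $\pi^*u=c\log|f^0|^2+b'$. Expanding $dd^c(\pi^*u\cdot(dd^c b')^{k-1})$ and pushing forward separates a divisorial contribution $c\,\pi_*([\div f^0]\w(dd^c b')^{k-1})$ from $\la dd^cu\ra^k=\pi_*(dd^c b')^k$, yielding the identity $\S_k(u)=c\,\pi_*([\div f^0]\w(dd^c b')^{k-1})$. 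Choose a top-dimensional irreducible component $W\subset Z$ of dimension $n-k$ and a component $E$ of $\div f^0$ with $\pi(E)=W$; a King/Poincar\'e-Lelong type computation on a generic transverse slice to $W$ (equivalently, positivity of the $k$-th Segre number of the ideal $(f)$ along $W$, see \cite{ASWY,AESWY}) shows that the contribution of $E$ to this pushforward is a strictly positive multiple of the current of integration along $W$. Since $\la dd^c u\ra^k$ does not charge the pluripolar set $Z$, this positive $Z$-supported contribution is not cancelled, giving $\S_k(u)\neq 0$ and the desired contradiction.
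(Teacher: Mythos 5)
Your argument follows essentially the same route as the paper's: a contradiction obtained by comparing the two limits furnished by Theorems \ref{regularisera} and \ref{vegetera} for a smooth strictly psh $v$, which reduces everything to showing $\S_\kappa(u)\neq 0$ for $\kappa=\codim Z\leq n-1$. The only substantive difference lies in how that nonvanishing is established. The paper stays downstairs: it observes that $\S_\kappa(u)$ and $\1_Z(dd^c\log|f|^2)^\kappa$ have the same Lelong numbers, that for dimension reasons both are Lelong currents and hence equal, and then invokes the classical King formula. You instead work on the Hironaka resolution and appeal to positivity of the $\kappa$-th Segre number along a top-dimensional component of $Z$; since the Segre numbers are precisely the Lelong numbers computed by the King formula, the two arguments are two dressings of the same fact.

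One step is asserted more strongly than you can justify: for a single chosen component $E$ of $\div f^0$ with $\pi(E)=W$, the pushforward $\pi_*\big([E]\w(dd^c b')^{k-1}\big)$ is, by the support theorem, a \emph{nonnegative} multiple of $[W]$, but that multiple can vanish for a particular $E$ (it is governed by an intersection number of $c_1(L)$-type classes on the generic fibre of $\pi|_E$, which need not be positive). What is true, and all you actually need, is that the \emph{total} Lelong number of $\S_k(u)$ at a generic point of $W$ --- the $k$-th Segre number --- is positive; since every contribution $\pi_*\big([E_i]\w(dd^c b')^{k-1}\big)$ is a positive current, no cancellation can occur among them, so positivity of the sum suffices. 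Relatedly, your closing remark that $\la dd^c u\ra^k$ does not charge $Z$ is beside the point: in the resolution formula $\S_k(u)$ is already exhibited directly as the positive current $c\,\pi_*\big([\div f^0]\w(dd^c b')^{k-1}\big)$, so there is nothing for it to be cancelled against.
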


This result was first noted in \cite{B}. Here we provide a different argument. 

   \begin{proof}
Let $\k=\codim Z$. We claim that the current
$S_{\k}(u)=\1_Z [dd^c u]^\k=\1_Z(dd^cu)^\k$, where $(dd^cu)^\k$ is the
classical Bedford-Taylor-Demailly Monge-Amp\`ere current, is non-zero. Taking this for granted,
since $\kappa<n$ and all $S_j(u)$ are positive currents, it follows from
Theorem~1.5 that we can find a decreasing sequence $u_\ell$
converging to $u$ such that $(dd^c u_\ell)^n$ does not converge to $[dd^c u]^n$.
We conclude that  $u$ is not in $\mathcal D(\U)$,
cf.\ the introduction.

To prove the claim  let us assume that \eqref{himmelsbla} holds in the
open set $\mathcal U\subset \Omega$. Now the Lelong numbers of
$S_{\k}(u)$ and $\1_{Z}(dd^c\log|f|^2)^\k$ coincide at each point in
$\mathcal U$, see, e.g.,
\cite{AW}*{(1.9)}. For dimension reasons, both currents must be Lelong
currents, and thus $S_{\k}(u)=\1_{Z}(dd^c\log|f|^2)^\k$. By the
classical King formula,
see, e.g., \cite[Chapter III, (8.18)]{D}, $\1_{Z}(dd^c\log|f|^2)^\k$ is the Lelong current of an effective  cycle
whose support is precisely the union of the irreducible components of
$Z$ of pure codimension $\k$. % This proves the claim.
In particular $S_\k(u)$ is non-zero. 
\end{proof}

Next, let us consider some examples of functions in $\G(X)$ that do not have analytic singularites.

\begin{ex}\label{utvidgat}
  Let $f$
   be a tuple of holomorphic functions in
  a domain $\Omega\subset \C^n$
such that $|f|^2<1$
and let 
$$
u=-(-\log|f|^2)^\epsilon
$$
for some $\epsilon\in(0,1)$. Then $u$ is psh in $\Omega$ and it is locally bounded
outside $Z=\{f=0\}$; in particular $u$ has sul.
We claim that for each $k$, $u\in \G_k(\Omega)$ if and only if
$\epsilon <1/2$. Moreover, although $v:=\log |f|^2 \in \G(\Omega)$,
cf.\ Example ~\ref{aladab}, %we claim that for each $k$,
$u+v$ is not in $\G_k(\Omega)$
for any $\epsilon$.

\smallskip 

To prove the claim, first note that 
\begin{equation*}%\label{humor}
dd^c u=
\frac{i}{2\pi}\epsilon(1-\epsilon) (-\log|f|^2)^{\epsilon-2}
\frac{ \partial |f|^2 \w \dbar |f|^2}{|f|^4}
%\frac{1}{(-\log|f|^2)^{2-\epsilon}}
+
\epsilon (-\log|f|^2)^{\epsilon-1}dd^c \log|f|^2.
\end{equation*}
Next, note that if $\pi:X\to \Omega$ is a smooth modification, then
$u\la dd^c u \ra^j$ has locally finite mass %in $\Omega$
if and only if
$\pi^*(u\la dd^c u \ra^j)=\pi^*u\1_{X\setminus \pi^{-1}Z}(dd^c \pi^*
u)^j$ has locally finite mass. % in $X$.
By Hironaka's theorem there is such a
modification so that 
%we may assume that 
%Next, note that $u\la dd^c u \ra^j$ has locally 
%$u\in \G_k(\Omega)$ it is enough to check that
%$\pi^*u\1_{\pi^{-1}Z}(dd^c \pi^* u)^j$ has locally finite mass for
%$j\leq k$. \textcolor{red}{stam au med nat som skrivits innan om
%  restriktion, ocksa omvandningen har} 
%Next, let $\pi:X\to\Omega$ be a smooth modification such that
$\pi^*f=f^0f'$, where $f^0$ is a holomorphic section of a line bundle
$L$ and $f'$ is a non-vanishing tuple of holomorphic sections of $L^{-1}$.
Given a local frame we may assume that 
$f^0$ and $f'$ is a function and a tuple of functions, respectively.
%is a 
%function and $f'$ is a non-vanishing tuple of
%functions. 
Let $\eta=\dbar |f'|^2/|f'|^2$.
Then
\begin{equation*}
  \pi^* \Big (\frac{ \partial |f|^2 \w \dbar |f|^2}{|f|^4} \Big )
  =
  \frac{ d f^0 \w d \overline {f^0}}{|f^0|^2}
  +
  \frac{ d f^0 }{f^0}\w \eta
  +
  \overline{ \frac{ d f^0 }{f^0}\w \eta}
  +
  \bar\eta \w \eta  
\end{equation*}
   and, by the Poincar\'e-Lelong formula,  
  \begin{equation*}
    \pi^* (dd^c \log |f|^2)= % [D]+dd^c \log|f'|^2 % =:
    [D]+\omega_f,
  \end{equation*}
  where $D$ is the divisor defined by $f^0$ and $\omega_f:=dd^c
  \log|f'|^2$ is smooth. 
% In view of \eqref{humor}, 
It follows that 
  \begin{equation}\label{beck}
    \pi^* u \1_{X\setminus \pi^{-1}Z} dd^c \pi^* u =
  C (-\log|f^0|^2-\gamma)^{2\epsilon-2} \frac{ d f^0 \w d \overline
    {f^0}}{|f^0|^2}
  %\frac{1}{ (-\log|f^0|^2-\gamma)^{2-2\epsilon}}
  +\beta,
    \end{equation}
where $C$ is a constant, $\gamma=\log |f'|^2$, and $\beta$ has locally
finite mass.  
Moreover, for $j>1$, $ \pi^* u \1_{X\setminus \pi^{-1}Z} (dd^c \pi^* u)^j$ is a sum of
terms that are integrable or of the form a smooth form times 
\begin{equation}\label{hemskt}
  % \frac{1}{(-\log|f^0|^2-\gamma)^{a}}
  (-\log|f^0|^2-\gamma)^{a}
  \frac{ d f^0 \w d \overline
    {f^0}}{|f^0|^2},
\end{equation}
where $a\leq 2\epsilon -2$. 
By Hironaka's theorem we may assume that $f^0$ is a monomial, and then
an elemenary computation yields that \eqref{hemskt} has locally
finite mass if and only if $a<-1$. 
Hence \eqref{beck}, and thus $u\la dd^c u\ra$, have locally finite mass
if and only if $\epsilon <1/2$. Moreover, if $\epsilon <1/2$, then
$u\la dd^c u \ra^j$ has locally finite
mass for $j>1$. 
%\eqref{hemskt}, and thus $u\la dd^c u \ra^j$ for $j>1$, has locally finite
%mass. 
We conclude that 
$u\in \G(\Omega)$ if and only if $u\in
\G_1(\Omega)$, which in turn holds if and only if 
$\epsilon <1/2$.

Finally, note that a necessary condition for $u+v$ to be in
$\G_k(\Omega)$ for any $k$ is that $v\la dd^c u\ra$ has locally finite
mass. Now 
the pullback of $v\la dd^c u\ra$ 
% $(u+v)\la dd^c (u+v)\ra$
contains a term of the form 
\[
 C (-\log|f^0|^2-\gamma)^{2\epsilon-2} \frac{ d f^0 \w d \overline
    {f^0}}{|f^0|^2}, 
\]
where $C$ is a constant.
Since this does not have 
locally finite mass for any $\epsilon\in (0,1)$, it follows that $u+v\notin
\G_k(\Omega)$ for any such $\epsilon$. 
\end{ex}

Note in view of Example ~\ref{utvidgat} that, in contrast to the case
of $\D(\Omega)$, cf.\ \cite{B06}*{Theorem~1.2},  it is not true that 
$v\in \G_k(\Omega)$ and $v\leq u$ imply that $u\in \G_k(\Omega)$.

\begin{remark}\label{alexander}
Let $u$ and $v$ be as in Example ~\ref{utvidgat}. It is not hard to
check that $u+v$ has asymptotically analytic singularities in the
sense of Rashkovskii, \cite{R}*{Definition~3.4}. Indeed, this follows
after noting that for each $\delta>0$, there is a $C_\delta>0$ such
that
$(1+\delta)v -C_\delta \leq u+v\leq v$. We saw above that
$u+v\notin\G_k(\Omega)$ for any $k$. Hence we conclude that psh
functions with asymptotically analytic singularities are not in 
$\G_k(\Omega)$ in general. 
  \end{remark}

Note that $u$ in Example ~\ref{utvidgat} has sul; it is even locally
bounded outside the analytic variety $\{f=0\}$. 
Next, we will describe a way of constructing functions in $\G(X)$ that do
not have sul. %For this
We will use the following lemma that follows as in Example
~\ref{aladab}, see also \cite{LRSW}*{Proposition~3.2}.
\begin{lma}\label{gummistovel}
Assume that $u, v$ are psh functions with analytic singularities on
$X$. 
Then, for any smooth positive $(1,1)$-form $\omega$, test function
$\chi\geq 0$, and $i\leq j\leq n-1$,
%and any test form $\xi\geq 0$ of
%bidegree $(n-j, n-j)$ with compact support in $X$, 
\begin{equation*}%\label{segla}
\int_X u \la dd^cu \ra^{i}  \w \la dd^c v\ra^{j-i}\w\chi \omega^{n-j}>-\infty. %, ~~~~~~~ i\leq j\leq k.
\end{equation*}
%\textcolor{red}{Skulle ocksa kunna uttrycka genom att saga nat som att
%  $u$ ar lokalt integrabel mot $\la dd^cu \ra^{i}  \w \la dd^c
%  v\ra^{j-i}$. Skulle vara mer likt den lokala grisdefinitionen.} 
\end{lma}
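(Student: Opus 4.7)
My plan is to mimic the argument in Example \ref{aladab}, applied to a \emph{simultaneous} log resolution of the singularity sets of $u$ and $v$. Since the question is local, fix a point and assume on a neighborhood that $u = c_1 \log|f|^2 + b_1$ and $v = c_2 \log|g|^2 + b_2$, with $f,g$ tuples of holomorphic functions and $b_1, b_2$ locally bounded psh. By Hironaka there exists a smooth modification $\pi: X' \to X$ such that $\pi^* f = f^0 f'$ and $\pi^* g = g^0 g'$, with $f^0, g^0$ holomorphic sections of line bundles whose common zero divisor has normal crossings, and $f', g'$ non-vanishing tuples. Then on $X'$ (in a local frame),
\begin{equation*}
\pi^* u = c_1 \log|f^0|^2 + \tilde b_1, \qquad \pi^* v = c_2 \log|g^0|^2 + \tilde b_2,
\end{equation*}
with $\tilde b_1, \tilde b_2$ locally bounded and psh.

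The next step is to move the integral to $X'$. Let $Z \subset X$ be the (pluripolar) singular set of $u$ and $v$; $\pi$ is an isomorphism over $X \setminus Z$. Since $\la dd^c u\ra^i \wedge \la dd^c v\ra^{j-i}$ does not charge pluripolar sets, the integral on $X$ equals the integral over $X \setminus Z$ of the Bedford--Taylor product $u(dd^c u)^i \wedge (dd^c v)^{j-i} \wedge \chi \omega^{n-j}$, which via $\pi$ becomes an integral on $X' \setminus \pi^{-1}(Z)$. On that open set, because $\log|f^0|^2$ and $\log|g^0|^2$ are pluriharmonic away from their zero divisors, one has $(dd^c \pi^* u)^i = (dd^c \tilde b_1)^i$ and $(dd^c \pi^* v)^{j-i} = (dd^c \tilde b_2)^{j-i}$ in the Bedford--Taylor sense. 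These extend across the exceptional and singular divisors as closed positive currents of locally finite mass on $X'$ (products of MA currents of locally bounded psh functions, cf.\ Example \ref{aladab}), so the integral equals
\begin{equation*}
\int_{X'} \pi^* u \cdot (dd^c \tilde b_1)^i \wedge (dd^c \tilde b_2)^{j-i} \wedge \pi^*\bigl(\chi \omega^{n-j}\bigr).
\end{equation*}

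Split $\pi^* u = c_1 \log|f^0|^2 + \tilde b_1$. The contribution from $\tilde b_1$ is finite because $\tilde b_1$ is locally bounded and the wedge product is a positive measure of locally finite mass on $\mathrm{supp}\, \pi^* \chi$, which is compact. For the contribution from $c_1 \log|f^0|^2$, we invoke the standard Chern--Levine--Nirenberg / Demailly integrability result: any psh function is locally integrable with respect to the wedge product of Monge--Amp\`ere currents of locally bounded psh functions (see \cite{D}*{Chapter III, Proposition~3.11}). Applied to the psh function $\log|f^0|^2$ against the positive closed current $(dd^c \tilde b_1)^i \wedge (dd^c \tilde b_2)^{j-i}$, this yields the required lower bound, completing the proof.

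The main obstacle is the bookkeeping in the second step: one must verify that the non-pluripolar mixed product on $X$ corresponds, under $\pi_*$, to the honest Bedford--Taylor product of the locally bounded parts on $X'$, so that the integral can be lifted. This is exactly where the simultaneous log resolution is essential — it is not available for general psh functions, which is why the lemma is restricted to analytic singularities.
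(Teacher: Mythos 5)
Your proof is correct and follows essentially the same route the paper indicates for Lemma~\ref{gummistovel}, namely the argument of Example~\ref{aladab}: a (simultaneous) Hironaka log resolution writing $\pi^*u=c_1\log|f^0|^2+\tilde b_1$, $\pi^*v=c_2\log|g^0|^2+\tilde b_2$ with $\tilde b_i$ locally bounded psh, reduction of the non-pluripolar product to the Bedford--Taylor product of the bounded parts upstairs, and a Chern--Levine--Nirenberg estimate for the $\log|f^0|^2$ contribution. No gaps worth flagging.
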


\begin{ex}\label{plutt}
Let  $\U$ be a neighborhood of the unit ball $\B\subset \C^n$ and let 
 $v_i$, $i=1,2,\ldots$ be negative psh functions with analytic
singularities in $\U$. 
Let
$
  u_\ell=\sum_{i=1}^\ell b_i  v_i,
$
where $b_i>0$, and let
\[
u=\lim_{\ell\to\infty}  u_\ell=\sum_{i=1}^\infty b_i  v_i.
\]
We claim that we can choose $b_i$ so that the restriction of $u$ to
$\B$ is in $\G(\B)$.
Let $\chi$ be a smooth non-negative function with compact support in $\U$ such that
$\chi\equiv 1$ in $\B$, and let $\omega$ be a smooth strictly positive
$(1,1)$-form.  
It is enough to prove that, given $C>0$, we can
choose $b_i$ so that
%, there is a
%$C>0$ such that 
\begin{equation}\label{skakad}
  \int_\U u_\ell \la dd^c u_\ell \ra^j \w \chi\omega^{n-j} >-C, ~~~~
  % \text{ for all }
  \ell \geq 1, ~~~~~~ j \leq n-1.
  \end{equation}
Since $v_i<0$, $u_\ell\searrow u$ and it follows from Lemma ~\ref{handduk} that 
 \begin{equation*}%\label{skakad}
   \int_\U u \la dd^c u  \ra^j \w \chi \omega^{n-j}>-C, ~~~~~~ j\leq n-1, 
 \end{equation*} 
 %for $j\leq n-1$
 and thus $u\in \G(\U)$.

It remains to prove \eqref{skakad}.
%Let $I_\ell$ denote the left hand
%side in \eqref{skakad}.
Since \eqref{husqvarna} is
multilinear, it follows that
%\begin{equation*} 
% I_\ell
%  =
%  \int_X (u_{\ell-1}+b_\ell v_\ell)
%  \big \la dd^c (u_{\ell-1}+b_\ell v_\ell)
%  \big \ra^j \w \xi
%  =\\
%  I_{\ell-1}
%  +
%  \sum_{r=1}^{j+1} b_\ell^r T_r, 
%\end{equation*}
\begin{multline*}
 \int_\U u_\ell \la dd^c u_\ell \ra^j \w \xi \omega^{n-j}
  =
  \int_\U (u_{\ell-1}+b_\ell v_\ell)
  \big \la dd^c (u_{\ell-1}+b_\ell v_\ell)
  \big \ra^j \w \xi \omega^{n-j}
  =\\
  \int_\U u_{\ell-1} \la dd^c u_{\ell-1} \ra^j \w \xi \omega^{n-j}
  +
  \sum_{r=1}^{j+1} b_\ell^r T_r, 
  \end{multline*}
where each $T_r$ is a sum of terms of the form
\[
  \int_X \phi \la dd^c u_{\ell-1} \ra^\kappa \w \la dd^c v_\ell\ra^{j-\kappa} \w \xi \omega^{n-j},
\]
where $\phi=u_{\ell-1}$ or $\phi=v_\ell$; in particular they are
independent of the choice of $b_\ell$. By Lemma ~\ref{gummistovel} each
such integral is $>-\infty$. Thus by choosing $b_\ell$ small enough we
can make the difference between $\int_X u_\ell \la dd^c u_\ell \ra^j \w
\xi \omega^{n-j}$ and 
 $ \int_\U u_{\ell-1} \la dd^c u_{\ell-1} \ra^j \w \xi \omega^{n-j}$ 
arbirtrarily small. In particular, for any $C>0$ we can inductively
choose $b_i$ so that \eqref{skakad} holds. 
\end{ex}

Let us look at some explicit examples. 
\begin{ex}\label{racket}
  Given $a=(a_1,\ldots, a_k)\in (\C^n_x)^k$, let $|a\cdot x|^2=
  |a_1\cdot x|^2+\cdots + |a_k\cdot x|^2$, and let
 $v_a = \log |a\cdot x|^2$. Then $v_a$ is
  psh with analytic singularities and the unbounded locus of $v_a$
  equals
  $P_a:=\bigcap_{i=1}^k \{a_i\cdot x =0\}$.
  Note that for generic choices of $a$, $P_a$ is a plane of
  codimension $k$. 
  Next, choose $a^i \in (\C^n_x)^k, i=1,2,\ldots$ so that $\bigcup_i
  P_{a^i}$ is dense in $\C^n$, let
  $v_i=v_{a^i}$, and let
  $u=\sum b_i v_i$ be constructed as in Example
  ~\ref{plutt}. Then the restriction of $u$ to $\B$ is in $\G(\B)$, but $u$ is
  not locally bounded anywhere; in particular, $u$
  does not have sul.
  \end{ex}

%The following is a variant of Example ~\ref{racket}.
\begin{ex}\label{punkt}
 As in Example ~\ref{plutt}, let $\U$ be a neighborhood of the unit
 ball $\B\subset \C^n_x$.  Given $a=(a_1, \ldots, a_n)\in \U$, let $v_a=\log
  |x-a|^2=\log (|x_1-a_1|^2+\cdots +|x_n-a_n|^2)$. Then $v_a$ is
  psh in $\U$ with analytic singularities and the unbounded locus of $v_a$ equals $a$.
  Next, let $a^i, i=1,2,\ldots$, be a dense subset of $\B$, let
  $v_i=v_{a^i}$, and let
  $u=\sum b_i v_i$ be constructed as in Example
  ~\ref{plutt}. Then the restriction of $u$ to $\B$ is in $\G(\B)$, but $u$ is
  not locally bounded anywhere; in particular, $u$
  does not have sul.
  In fact, $u\in \D(\U)$, see, e.g.,
  \cite{B06}*{Theorem~2}. 
  \end{ex}

\subsection{Direct products}\label{lokalproducerat}

%Assume that $X_1$ and $X_2$ are complex manifolds (or domains). 
%Let $X=X_1\times X_2$ and let 
%$\pi_i\colon X\to X_i$, $i=1,2$, be the natural projections. 
%If $\varphi^i\in \psh(X_i)$,
%then $\varphi:=\pi^*_1\varphi^1+\pi^*_2\varphi^2\in \psh(X)$. 

On a direct product $X=X_1\times X_2$ we can produce new examples of
functions in $\G_k(X)$ by combining functions in $\G_k(X_1)$ and
$\G_k(X_2)$.

\begin{prop}\label{pengar}
  Assume that for $i=1,2$, $u^i\in \G_k(X_i)$, where $X_i$ is a
  complex manifold. 
Let
$X=X_1\times X_2$ and let 
$\pi_i\colon X\to X_i$, $i=1,2$, be the natural projections.
Then $u:=\pi^*_1u^1+\pi^*_2u^2\in \G_k(X)$. 
%  
%Let $(X_i,\omega_i)$, $(X, \omega)$, $u^i$, and $u$ be as
%above. Assume that $u^i\in \G_k(X_i,\omega_i)$, $i=1,2$. Then $u\in G(X,\omega)$. 
\end{prop}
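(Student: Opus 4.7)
The plan is to reduce everything to the identity
\begin{equation*}
\la dd^c u\ra^j \;=\; \sum_{a+b=j}\binom{j}{a}\,\pi_1^*\la dd^c u^1\ra^a \w \pi_2^*\la dd^c u^2\ra^b,
\end{equation*}
and then to check local finiteness and local integrability of $u$ term by term by Fubini. So I would first set $v_i := \pi_i^* u^i$, expand $(dd^c u)^j = (dd^c v_1 + dd^c v_2)^j$ using the multilinearity \eqref{beundra} of the non-pluripolar product, and argue that each cross product $\la (dd^c v_1)^a \w (dd^c v_2)^b\ra$ is well-defined provided $a,b\leq k$.

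For this key step, the point is that the relevant plurifine open set
\begin{equation*}
O_\ell \;=\; \{v_1>-\ell\}\cap\{v_2>-\ell\} \;=\; O_\ell^1\times O_\ell^2
\end{equation*}
has a product structure, where $O_\ell^i=\{u^i>-\ell\}\subset X_i$. Since $\max(v_i,-\ell)=\pi_i^*\max(u^i,-\ell)$, the Bedford--Taylor current $(dd^c\max(v_1,-\ell))^a\w(dd^c\max(v_2,-\ell))^b$ splits as a wedge of pullbacks from $X_1$ and $X_2$ (the two differential systems are in disjoint variables), and the cut-off on $O_\ell$ factors as $\1_{O_\ell^1}\otimes \1_{O_\ell^2}$. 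Passing to the limit in \eqref{husqvarna}, the cross term equals $\pi_1^*\la dd^c u^1\ra^a \w \pi_2^* \la dd^c u^2\ra^b$, and its total mass on a compact product $K_1\times K_2$ is simply $\int_{K_1}\la dd^c u^1\ra^a\w\omega_1^{n_1-a}\cdot \int_{K_2}\la dd^c u^2\ra^b\w\omega_2^{n_2-b}$, which is finite by the assumption $u^i\in\G_k(X_i)$. Having established well-definedness of every cross term, \eqref{beundra} gives the displayed identity and hence the local finiteness of $\la dd^c u\ra^j$ for $j\leq k$.

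For the integrability, since $u=\pi_1^*u^1+\pi_2^*u^2$, it suffices to show that each $\pi_i^*u^i$ is locally integrable against every cross term. By Fubini, for a compactly supported cut-off $\chi\geq 0$ and $\omega = \pi_1^*\omega_1+\pi_2^*\omega_2$,
\begin{equation*}
\int_X \pi_1^* u^1 \cdot \pi_1^*\la dd^c u^1\ra^a\w\pi_2^*\la dd^c u^2\ra^b\w\chi\,\omega^{n-j}
\end{equation*}
expands into a finite sum of iterated integrals of the shape $\int_{X_1} u^1\la dd^c u^1\ra^a\w\omega_1^{n_1-a}\,\cdot\,F(x_1)$, where the inner function $F(x_1)=\int_{X_2}\chi(x_1,\cdot)\la dd^c u^2\ra^b\w\omega_2^{n_2-b}$ is bounded by a constant depending only on $\chi$ and the local mass of $\la dd^c u^2\ra^b$; the outer integral is then finite since $u^1$ is locally integrable against $\la dd^c u^1\ra^a$. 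The symmetric argument handles $\pi_2^*u^2$.

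The main technical obstacle is the first step, namely the identification of the non-pluripolar product of $dd^c v_1$'s and $dd^c v_2$'s with the tensor product of the factorwise non-pluripolar products; the rest is a routine Fubini bookkeeping once that identity is in hand. The only delicate point there is to justify that the Bedford--Taylor products of bounded functions really do factor through the projections in the plurifine-local sense used in \eqref{husqvarna}, which follows because the differentials $dd^c\max(v_i,-\ell)$ live in complementary variable systems and the cut-off set is a rectangle.
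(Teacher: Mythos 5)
Your proposal is correct and follows essentially the same route as the paper's proof: both exploit that the sublevel sets factor as $O_\ell^1\times O_\ell^2$, invoke plurifine locality and multilinearity to split the (truncated) Monge--Amp\`ere products into factorwise pieces, and then bound the resulting integrals by products of integrals over $X_1$ and $X_2$. The only difference is organizational: you first record the explicit product formula $\la dd^c u\ra^j=\sum_{a+b=j}\binom{j}{a}\pi_1^*\la dd^c u^1\ra^a\w\pi_2^*\la dd^c u^2\ra^b$ and then apply Fubini, whereas the paper carries out the same factorization directly on the truncated integrals \eqref{kurra} without isolating that identity.
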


\begin{proof}
  First note that $u\in \psh(X)$.
 For $i=1,2$, let $\omega_i$ be a smooth strictly positive $(1,1)$-form on
 $X_i$, so that $\omega:=\pi_1^*\omega_1+\pi_2^*\omega_2$ is a smooth 
 strictly positive $(1,1)$-form on $X$. 
To prove that $u\in \G_k(X)$ it suffices to prove that, for any function $\chi$ of the form
$\chi=\pi_1^*\chi_1\cdot \pi_2^*\chi_2$, where $\chi_i$ is a
non-negative test function 
in $X_i$, and for 
$0\leq j\leq k$,
\begin{equation*}
  -\infty < \int_X u\la dd^c u \ra^j \w
  \chi \omega^{n-j}=
  \lim_{\ell\to \infty} 
 \int_{O_\ell} u\la dd^c u \ra^j \w
  \chi \omega^{n-j},  
\end{equation*}
where $O_\ell=\{u>-\ell\}\subset X$.
Note that for $\lambda$ large enough, $O_\ell\cap \supp \chi \subset O^1_\lambda\times
O^2_\lambda$, where $O^i_\lambda=\{u_i>-\lambda\}\subset X_i$. 
In particular, $u=\pi_1^*u^1_\lambda + \pi_2^* u^2_\lambda$
in $O_\ell\cap \supp \chi$, where $u^i_\lambda=\max (u^i, -\lambda)$. Thus, since 
the non-pluripolar Monge-Amp\`ere operator  is local in the plurifine topology, 
\begin{multline}\label{borsta}
  \int_{O_\ell} u\la dd^c u\ra^j \w
   \chi\omega^{n-j}
   =\\
   \int_{O_\ell} (\pi_1^*u^1_\lambda + \pi_2^* u^2_\lambda)
   \big ( dd^c (\pi_1^*u^1_\lambda + \pi_2^* u^2_\lambda) \big )^j \w
   \pi_1^*\chi_1\pi_2^*\chi_2
(\pi_1^*\omega_1+\pi_2^* \omega_2)^{n-j}. 
\end{multline}
Since \eqref{mixad} is multilinear, \eqref{borsta} is a finite sum of terms of the form
\begin{multline}\label{kurra}
  \int_{O_\ell} \pi_1^*u^1
( dd^c \pi_1^*u^1_\lambda )^{j_1} \w \pi_1^*\chi_1 (\pi_1^*\omega_1)^{n_1-j_1}\w
 ( dd^c \pi_2^* u^2_\lambda )^{j_2} \w 
 \pi_2^*\chi_2 (\pi_2^* \omega_2)^{n_2-j_2}
 \geq \\
 \int_{O^1_\lambda} u^1
( dd^c u^1_\lambda)^{j_1} \w \chi_1\omega_1^{n_1-j_1}\int_{O^2_{\lambda}}
 (dd^c u^2_\lambda)^{j_2} \w 
 \chi_2 \omega_2^{n_2-j_2} %> -C^2
  \end{multline}
or of the form where the first factor $\pi^*_1u^1$ is replaced by
$\pi^*_2u^2$. Since $u^i\in \G_k(X_i)$ each factor in the
right hand side of \eqref{kurra} is bounded uniformly in $\lambda$. 
%Since $u^i\in \G_k(X_i, \omega_i)$, 
%we can choose $C>0$ so that, for $i=1,2$, $\int_{X_i}\chi_i\omega_i^{n_i}<C$ and 
%\[
%  \int_{X_i}u^i\la dd^c u^i 
%  \ra^j\w \chi_i\omega_i^{n_i-j}>-C, ~~~ 0\leq j\leq k.
%  \]
%It follows that each term \eqref{kurra} is $>-C^2$, and thus $\int_X
%u\la dd^c u \ra^j \chi \w
 % \omega^{n-j} >-\kappa C^2$. %; in particular, $u\in\G_k(X, \omega)$. 
\end{proof}

%Let us look at an explicit example. 

\begin{ex}
  Let $\B\subset \C^n$ be the unit ball.
Choose $u_1\in \D(\B)\cap \G(\B)$ that does
not have analytic singularities, e.g., let $u_1$ be as in Example
~\ref{punkt}. Moreover let
$u_2$ be a psh function in $\B$ with analytic singularities that is not in
$\D(\B)$. Then by Proposition
~\ref{lokalproducerat}, 
$u=\pi_1^*u^1+\pi_2^* u^2\in \G(\B\times \B)$.
%Now $u$ does not have analytic singularities, since $u_1$ does not,
%and $u\notin\D(\B\times \B)$, since $u_2\notin \D(\B)$.
Now $u$ neither has analytic singularities nor is in $\D(\B\times
\B)$.
  \end{ex}

%\begin{ex}\label{betong}\textcolor{red}{Uppdatera efter Davids ex,
%    bestam hur skriva disken}
%Let $X_1=X_2$ be the unit disc $\Di\subset \C$. Let $u^1=\log |f|^2$, where
%$f\not \equiv 0$ is a holomorphic function, and let $u^2$ be as
%in Example ~\ref{davids} \textcolor{red}{Anpassa}. Then by Proposition
%\ref{lokalproducerat} $u=\pi_1^*u^1+\pi_2^* u^2\in \G(\Di\times \Di)$.
%Moreover, $u$ is not locally bounded anywhere 
%since $u^2$ is not locally bounded anywhere; in particular, $u$ does not have
%sul. 
%\end{ex}

  \section{Regularization}\label{regsec}

  We will prove Theorem ~\ref{vegetera}. In fact, we will prove the
  following slightly more general result.
  \begin{thm}\label{vegetarisk}
  Assume that $u\in \G_k(X)$ and that $v$ is a smooth psh function on $X$. 
Then, for $p\leq k+1$, 
  \begin{equation}\label{knippe}
    \big(dd^c \max(u,v-\ell)\big)^p \to [dd^c u]^p + \sum_{j=1}^{p-1}
    \S_j(u)\w (dd^c v)^{p-j}, \quad \ell\to\infty.  
  \end{equation}

More generally, let $\chi_\ell:\R\to \R$ be a sequence of nondecreasing
convex functions, bounded from below, 
that decreases to $t$
as $\ell\to \infty$, and let $u_\ell=\chi_\ell\circ (u-v)+v$.  Then,
for $p\leq k+1$, 
\begin{equation}\label{vandrare}
   (dd^cu_\ell)^p
  \to [dd^c u]^p + \sum_{j=1}^{p-1}
 \S_j(u)\w  (dd^c v)^{p-j}, \quad \ell\to\infty.  
    \end{equation}
  \end{thm}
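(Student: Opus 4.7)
I would argue by induction on $p\leq k+1$; the base case $p=1$ is immediate from $u_\ell\searrow u$ and weak continuity of $dd^c$. For the inductive step denote the inductive limit by
\[
L_{p-1}:=[dd^cu]^{p-1}+\sum_{j=1}^{p-2}\S_j(u)\wedge(dd^cv)^{p-1-j}.
\]
Writing $u_\ell=v+\chi_\ell(u-v)$ and using $dd^c(vT)=dd^cv\wedge T$ for smooth $v$ and closed $T$ gives
\[
(dd^cu_\ell)^p=dd^cv\wedge(dd^cu_\ell)^{p-1}+dd^c\bigl(\chi_\ell(u-v)(dd^cu_\ell)^{p-1}\bigr).
\]
The first summand tends to $dd^cv\wedge L_{p-1}$ by induction, and a short manipulation using $[dd^cu]^{p-1}-\langle dd^cu\rangle^{p-1}=\S_{p-1}(u)$ reduces the theorem to showing that the second summand tends to $[dd^cu]^p-dd^cv\wedge\langle dd^cu\rangle^{p-1}$.

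For the max regularization $\chi_\ell(t)=\max(t,-\ell)$ the plan is to exploit plurifine locality on $O_\ell=\{u>v-\ell\}$. There $u_\ell=u$, so $T_\ell:=\mathbf{1}_{O_\ell}(dd^cu_\ell)^{p-1}=\mathbf{1}_{O_\ell}\langle dd^cu\rangle^{p-1}$, while on $O_\ell^c$ one has $u_\ell-v=-\ell$ identically. Setting $R_\ell:=(dd^cu_\ell)^{p-1}-T_\ell$ and observing that $dd^cR_\ell=-dd^cT_\ell$ since $(dd^cu_\ell)^{p-1}$ is closed, one obtains
\[
dd^c\bigl(\chi_\ell(u-v)(dd^cu_\ell)^{p-1}\bigr)=dd^c\bigl((u-v)T_\ell\bigr)+\ell\,dd^cT_\ell.
\]
The first term tends to $dd^c((u-v)\langle dd^cu\rangle^{p-1})=[dd^cu]^p-dd^cv\wedge\langle dd^cu\rangle^{p-1}$ by dominated convergence, using that $u$ and the smooth function $v$ both belong to $L^1_{\loc}(\langle dd^cu\rangle^{p-1})$ by the $\G_k$ hypothesis, and that $\mathbf{1}_{O_\ell}$ increases to the characteristic function of $\{u>-\infty\}$ outside which $\langle dd^cu\rangle^{p-1}$ vanishes.

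The main obstacle is then to show that $\ell\,dd^cT_\ell\to 0$. Testing against a smooth $(n-p,n-p)$-form $\varphi$ and using that $\langle dd^cu\rangle^{p-1}$ is closed rewrites this pairing as $-\ell\int_{O_\ell^c}\langle dd^cu\rangle^{p-1}\wedge dd^c\varphi$. Choosing a smooth positive $(1,1)$-form $\omega$ with $-C\omega^{n-p+1}\leq dd^c\varphi\leq C\omega^{n-p+1}$ on the support of $\varphi$, and using the pointwise inequality $\ell\cdot\mathbf{1}_{O_\ell^c}\leq v-u$ valid on $O_\ell^c=\{u\leq v-\ell\}$, the task reduces to
\[
\int_X\mathbf{1}_{O_\ell^c}(v-u)\langle dd^cu\rangle^{p-1}\wedge\omega^{n-p+1}\longrightarrow 0,
\]
which holds by dominated convergence since $(v-u)\langle dd^cu\rangle^{p-1}$ is locally finite by $\G_k$ and $O_\ell^c$ shrinks to the pluripolar set $\{u=-\infty\}$ that $\langle dd^cu\rangle^{p-1}$ does not charge.

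For a general $\chi_\ell$ the plurifine identity $u_\ell=u$ on $O_\ell$ is no longer available, but the same strategy can be carried out on the auxiliary plurifine open sets $\widetilde O_M=\{u>-M\}$ for fixed $M$. On $\widetilde O_M$ the function $u-v$ is locally bounded, so Dini's theorem applied to the decreasing continuous sequence $\chi_\ell\searrow t$ yields $\chi_\ell(u-v)\to u-v$, and hence $u_\ell\to u$, locally uniformly; Lemma~\ref{konto} then identifies the weak limit of $(dd^cu_\ell)^{p-1}$ on $\widetilde O_M$ as $\langle dd^cu\rangle^{p-1}|_{\widetilde O_M}$, and Lemma~\ref{pronto} supplies the convergence of the product $\chi_\ell(u-v)(dd^cu_\ell)^{p-1}|_{\widetilde O_M}$ to $(u-v)\langle dd^cu\rangle^{p-1}|_{\widetilde O_M}$. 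The tail on $\widetilde O_M^c$ is controlled by exactly the same $\G_k$ mass estimate as in the max case, after which letting $M\to\infty$ completes the identification of the limit.
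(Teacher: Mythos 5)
Your argument for the max-regularization \eqref{knippe} is essentially sound, and it is a legitimate variant of the paper's proof. The paper establishes the exact telescoping identity of Lemma~\ref{dynamit}, in which the troublesome constant $-\ell$ is annihilated outright by Stokes' theorem because it multiplies a difference of \emph{closed} currents; you instead run an induction on $p$ and are left with the error term $\ell\,dd^cT_\ell$, which you kill by the pointwise domination $\ell\,\mathbf{1}_{O_\ell^c}\leq v-u$ together with the $L^1_{\loc}$ hypothesis from $\G_k$. Both routes rest on the same two pillars (plurifine locality on $O_\ell$ and Stokes), so this half is a reorganization rather than a new idea, and it is correct modulo the standard trace-domination estimate you invoke for $|\langle dd^cu\rangle^{p-1}\wedge dd^c\varphi|$.

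The general case \eqref{vandrare} is where there is a genuine gap. Your plan is to localize to $\widetilde O_M=\{u>-M\}$, get locally uniform convergence $u_\ell\to u$ there by Dini, and then invoke Lemmas~\ref{konto} and~\ref{pronto} ``on $\widetilde O_M$''. But $\widetilde O_M$ is only plurifine open, and those lemmas require convergence on genuinely open sets; more seriously, restriction to a Borel (or plurifine open) set is not a weakly continuous operation, so from $(dd^cu_\ell)^{p-1}\to L_{p-1}$ you cannot conclude $\mathbf{1}_{\widetilde O_M}(dd^cu_\ell)^{p-1}\to\mathbf{1}_{\widetilde O_M}L_{p-1}$. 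The Monge--Amp\`ere product at a point of $\widetilde O_M$ depends on $u_\ell$ in a full Euclidean neighborhood, which need not meet $\widetilde O_M$ in anything controllable; plurifine locality rescues you only when the functions \emph{coincide} on the plurifine open set, not when they are merely uniformly close there. Finally, the claim that ``the tail on $\widetilde O_M^c$ is controlled by exactly the same $\G_k$ mass estimate as in the max case'' does not hold up: the max-case estimate used that $u_\ell-v\equiv-\ell$ on $O_\ell^c=\{u\le v-\ell\}$, which gave both the exact identification of $T_\ell$ and the inequality $\ell\le v-u$ there. For a general convex $\chi_\ell$ the function $u_\ell-v=\chi_\ell(u-v)$ is constant on no set, and on the fixed set $\widetilde O_M^c=\{u\le -M\}$ it is merely bounded below by $\inf\chi_\ell$, which tends to $-\infty$ with $\ell$; no analogous domination is available.

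This is exactly the difficulty the paper's proof is built to circumvent: Lemma~\ref{pam} writes $\chi_\ell(t)=\int_0^\infty\max(t,-s)\,\chi_\ell''(-s)\,ds$ with $\chi_\ell''(-s)\,ds$ a probability measure, so that $(dd^cu_\ell)^p$ becomes an average of mixed products $dd^cu_{s_p}\wedge\cdots\wedge dd^cu_{s_1}$ of max-regularizations (where your argument, or Lemma~\ref{dynamit}, applies exactly), and Lemma~\ref{fyrkant} then handles the passage to the limit as the measures concentrate at infinity. To complete your proof you would need either to reproduce this superposition step or to find a genuinely different mechanism for general $\chi_\ell$; the localization to $\{u>-M\}$ as written does not provide one.
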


  To illustrate the idea of the proof, let us start with a special
  case.
  \begin{proof}[Proof of \eqref{knippe} when $v=0$]
    Let $u_\ell=\max (u, -\ell)$. It is enough to prove that
    \begin{equation}\label{andning}
      (dd^c u_\ell)^p = dd^c u_\ell \w \la dd^cu \ra^{p-1}
\end{equation}
since the right hand side converges to $[dd^c u]^p$.

To prove \eqref{andning}, let $\xi$ be a test form on $X$ and consider
\begin{equation}\label{kiwi}
\int_X u_{\ell} \big ((dd^c u_\ell)^{p-1} - \la dd^c u\ra^{p-1} \big ) \w dd^c \xi.
  \end{equation}
Since $u_{\ell}=u$
in $O_\ell:=\{u>-\ell\}$ and the non-pluripolar Monge-Amp\`ere operator is local in the plurifine topology, we
get that $ u_{\ell} (dd^c u_\ell)^{p-1} = u_\ell \la dd^c u\ra^{p-1}$
in $O_\ell$. Using that $u_\ell=-\ell$ in $X\setminus O_\ell$ we see that
\eqref{kiwi} equals
\begin{equation*}
-\ell \int_X \big ((dd^c u_\ell)^{p-1} - \la dd^c u\ra^{p-1} \big ) \w
dd^c \xi,
  \end{equation*}
  which vanishes by Stokes' theorem. Thus \eqref{andning} follows. 
\end{proof} 
The proof of \eqref{knippe} for general $v$ follows in the same way after replacing 
\eqref{andning} by Lemma ~\ref{dynamit} below (with $\ell_1=\cdots
=\ell_p=\ell$). The general case follows by writing $\chi$ as a
superposition of functions $\max(t, -s)$. 
For this we need some
  auxiliary results. Let us first consider an elementary lemma.

  \begin{lma}\label{pam}
Assume that  $\chi$ is nondecreasing, convex, and bounded on $(-\infty,0]$ and that
$\chi(0)=0$ and  $\chi'(0)=1$.  Let $g(s)=\chi''(-s)$.
Then $g(s)ds$ is a probability measure on $[0, \infty)$. 
Moreover, 
\begin{equation*}%\label{pam1}
\chi(t)=\int_{s=0}^\infty \max(t, -s) g(s)ds.
\end{equation*}
\end{lma}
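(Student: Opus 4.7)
I would prove the two assertions separately, with both resting on the fact that convexity of $\chi$ makes $\chi''$ a nonnegative Radon measure (or nonnegative function a.e.) on $(-\infty,0]$, so that $g\ge 0$.

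For the probability-measure claim, the key step is to identify $\chi'(-\infty)=0$. Since $\chi$ is convex, $\chi'$ is nondecreasing; since $\chi$ is nondecreasing, $\chi'\ge 0$ on $(-\infty,0]$. If $c:=\lim_{\tau\to-\infty}\chi'(\tau)>0$, then $\chi(\tau)\le \chi(0)+c\,\tau\to-\infty$ as $\tau\to-\infty$, contradicting boundedness on $(-\infty,0]$; hence $c=0$. The fundamental theorem of calculus then gives
\[
\int_0^\infty g(s)\,ds=\int_{-\infty}^0\chi''(\tau)\,d\tau=\chi'(0)-\chi'(-\infty)=1.
\]

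For the integral representation, I would verify it for $t\le 0$ by a Fubini/integration-by-parts computation, then observe that the case $t\ge 0$ reduces to $\chi(t)=t$, which holds in the situations of interest (\eg $\chi_\ell(t)=\max(t,-\ell)$). Starting from $\chi(t)=-\int_t^0\chi'(\tau)\,d\tau$ for $t\le 0$ and writing
\[
\chi'(\tau)=1-\int_\tau^0\chi''(\sigma)\,d\sigma,
\]
Fubini applied to the nonnegative measure $\chi''(\sigma)\,d\sigma\otimes d\tau$ on the triangle $\{t\le\tau\le\sigma\le 0\}$ gives
\[
\chi(t)=t+\int_t^0(\sigma-t)\chi''(\sigma)\,d\sigma=t+\int_0^{-t}(-s-t)\,g(s)\,ds,
\]
after the substitution $s=-\sigma$. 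On the other hand, splitting at $s=-t$ using $\max(t,-s)=-s$ for $s\le -t$ and $\max(t,-s)=t$ for $s\ge -t$, and using $\int_{-t}^\infty g=1-\int_0^{-t}g$, one finds
\[
\int_0^\infty\max(t,-s)\,g(s)\,ds=-\int_0^{-t}s\,g(s)\,ds+t\Bigl(1-\int_0^{-t}g(s)\,ds\Bigr)=t+\int_0^{-t}(-s-t)\,g(s)\,ds,
\]
matching the expression for $\chi(t)$.

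There is no substantial obstacle here; the only technical point is to make sense of $\chi''$ when $\chi$ is merely convex rather than $C^2$. I would treat $\chi''$ as a nonnegative Radon measure on $(-\infty,0]$, so that the Fubini exchange above is the interchange of integration against the product measure $\chi''(\sigma)\,d\sigma\otimes d\tau$, which is legitimate by nonnegativity. The rest is bookkeeping.
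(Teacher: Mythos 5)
Your proof is correct and follows essentially the same route as the paper: both establish $\chi'(-\infty)=0$ from boundedness to get the total mass $\chi'(0)=1$, and both interpret $\chi''$ as a nonnegative (Lebesgue--Stieltjes) measure to handle non-smooth $\chi$. The only difference is cosmetic: where the paper verifies the identity $\chi(t)=\int_0^\infty \max(t,-s)\,g(s)\,ds$ by a single integration by parts, you verify it by applying Fubini--Tonelli on the triangle $\{t\le\tau\le\sigma\le 0\}$ and matching both sides against the closed form $t+\int_0^{-t}(-s-t)g(s)\,ds$ --- the same computation in a different order.
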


Here $\chi'$ should be interpreted as the left derivative of $\chi$, which is
always well-defined since $\chi$ is convex. 

\begin{proof}
Note that $g$ is a (positive) measure since $\chi$ is
convex.

First assume that $\chi$ is smooth.  Notice that $\chi'(-s)\to 0$ when
$s\to \infty$ since $\chi$ is bounded.
Therefore
\begin{equation*}
\int_{0}^\infty g(s)ds = \int_{0}^\infty\chi''(-s)ds=-\chi'(-s)\Big|_0^\infty=\chi'(0)=1
\end{equation*}
and thus $g$ is a probability measure. 
Moreover,
\begin{multline*}
  %\int_{0}^\infty u_s g(s)ds =
  \int_{0}^\infty \max(t, -s)\chi''(-s)ds=
  -\max(t, -s)\chi'(-s)\Big|_0^\infty+\\
  \int_0^\infty\frac{d}{ds}\max(t, -s)\cdot \chi'(-s)ds =
-\int_0^{-t}\chi'(-s)ds=\chi(t).
\end{multline*}

%If $\chi$ is not smooth, the right and left derivatives are still 
%well-defined since $\chi$ is convex. In particular $\chi'(0)$ is
%well-defined (as the left derivative of $\chi$ at $0$). The above
%arguments goes through verbatim if we understand $\chi'$ as the left
%derivative and the integrals as Lebesque-Stieltjes integrals. 

If $\chi$ is not smooth, the above
arguments goes through verbatim if we understand $\chi'$ as the left
derivative of $\chi$, $g(s)ds$ as the corresponding Lebesque-Stieltjes
measure, and the integrals as Lebesque-Stieltjes integrals.

%
%  
%Thus  for $\sigma\le 0$ we have
%\begin{multline*}
%\int_{s=0}^\infty \max(s,\sigma)\chi''(-s)ds=-\max(s,\sigma)\chi'(-s)\Big|_0^\infty+\int_0^\infty\frac{d}{ds}\max(s,\sigma)\cdot \chi'(-s)ds =\\
%%
%-\int_0^{-\sigma}\chi'(-s)ds=\chi(\sigma).
%\end{multline*}
%By the convexity $\chi''\ge 0$, and
%$$
%\int_0^\infty\chi''(-s)ds=-\chi(-s)\Big|_0^\infty=\chi'(0)=1.
%$$
%%
\end{proof}

Assume that $T(s)$, $s\in \R^p_{\geq 0},$ is a continuous family of
currents of order zero on $X$ and $G(s)ds$ is a measure on $\R^p_{\geq 0}$. Then
$\int_{\R_{\geq 0}^p}  T(s) G(s)ds$ is a well-defined current of order
zero on $X$, defined by
\[
  \int_X\int_{\R_{\geq 0}^p}  T(s) G(s) ds \w \xi = \int_{\R_{\geq
      0}^p}  \int_X T(s) \w \xi G(s) ds,  
\]
if $\xi$ is a (continuous) test form on $X$.

\begin{lma}\label{fyrkant} 
Assume that $T(t), t\geq 0$, is a continuous family of positive currents that
converges weakly to a current $T_\infty$ on $X$ when $t$ tends to
$\infty$.
Moreover assume that $G_\ell(s)=
G_\ell(s_1,\ldots, s_p)$ is a sequence of
probability measures on $\R^p_{\geq 0}$ such that for all $R>0$, 
\begin{equation}\label{havtorn}
  \lim_{\ell\to \infty} \int_{s_p>R}\cdots\int_{s_1>R}
  G_\ell(s) ds=1,
\end{equation}
where $ds=ds_1\cdots ds_p$.
Finally, assume that $\rho:\R_{\geq 0}^p\to \R$ is a continuous function such
that $\rho(s)\geq \min_j s_j$. 
Then 
\begin{equation*}
\int_{\R_{\geq 0}^p} T\big (\rho(s)\big ) G_\ell(s) ds
\to  T_\infty
\end{equation*}
weakly when $\ell\to \infty$. %converges weakly to $T_\infty$ when 
\end{lma}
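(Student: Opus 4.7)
The plan is to reduce the asserted weak convergence of currents to a scalar statement by pairing with an arbitrary smooth test form $\xi$ on $X$. Set $f(t) := \langle T(t), \xi \rangle$ and $f_\infty := \langle T_\infty, \xi \rangle$. The continuity of the family $t \mapsto T(t)$ and the weak convergence $T(t) \to T_\infty$ as $t \to \infty$ ensure that $f : [0,\infty) \to \R$ is continuous with finite limit $f_\infty$ at infinity; it therefore extends continuously to the one-point compactification $[0,\infty]$ and is in particular bounded, say $|f| \le M$. It then suffices to prove
\begin{equation*}
\int_{\R^p_{\geq 0}} f(\rho(s))\, G_\ell(s)\, ds \to f_\infty, \quad \ell \to \infty.
\end{equation*}

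For the main estimate I would fix $\epsilon > 0$, choose $R > 0$ so that $|f(t) - f_\infty| < \epsilon$ for all $t > R$, and set $A_R := \{s \in \R^p_{\geq 0} : s_j > R \text{ for all } j\}$. The hypothesis $\rho(s) \ge \min_j s_j$ forces $\rho(s) > R$ on $A_R$, so $|f(\rho(s)) - f_\infty| < \epsilon$ throughout $A_R$. Using that $G_\ell$ is a probability measure and splitting the integral over $A_R$ and its complement then gives
\begin{equation*}
\Big| \int_{\R^p_{\geq 0}} f(\rho(s))\, G_\ell(s)\, ds - f_\infty \Big| \le \epsilon \int_{A_R} G_\ell\, ds + (M + |f_\infty|) \int_{A_R^c} G_\ell\, ds.
\end{equation*}
The first term is bounded by $\epsilon$, and the second tends to $0$ as $\ell \to \infty$ by hypothesis \eqref{havtorn}. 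Taking $\limsup_\ell$ and then $\epsilon \to 0$ yields the desired convergence.

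The argument is essentially a Portmanteau-type lemma and I do not anticipate a serious obstacle. The only delicate point is the boundedness of $f$, which follows from continuity of the family on $[0,\infty)$ together with the existence of a limit at infinity; notably, positivity of the currents $T(t)$ plays no role in this reduction.
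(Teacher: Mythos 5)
Your proposal is correct and follows essentially the same route as the paper: pair with a test form, use continuity plus convergence at infinity to get a uniform bound $M$, choose $R$ so the paired currents are $\epsilon$-close to the limit for $t>R$, note that $\rho(s)\geq\min_j s_j$ puts $\rho(s)>R$ on $A_R$, and split the integral into $A_R$ (controlled by $\epsilon$ since $G_\ell$ is a probability measure) and its complement (controlled by \eqref{havtorn}). The only difference is presentational — you phrase the reduction as a scalar Portmanteau-type statement — but the estimates are the same.
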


\begin{proof}
  Let $\xi$ be a test form on $X$.
  %Since $G_\ell$ are probability
  % measures,
  We need to prove that
  \begin{equation}\label{simma}
    \int_X\int_{\R_{\geq 0}^p}  T\big (\rho(s)\big ) G_\ell(s) ds \wedge \xi = 
 \int_{\R_{\geq 0}^p}  \int_XT\big (\rho(s)\big ) \wedge \xi  G_\ell(s) ds
    \to
    \int_X T_\infty \wedge \xi
    %=
    %\int_X\int T_\infty G_\ell(s) ds \wedge \xi
    \end{equation}
    when $\ell\to \infty$.

    Take $\epsilon>0$. Then there is an $R>0$ such that for $t>R$,
    \begin{equation*}
      \big | \int_X \big (T(t) -T_\infty\big ) \wedge \xi \big |
      <\epsilon.
    \end{equation*}
    % Let $|s|=\max_j |s_j|$.
    Let $A_R=\{s_j>R, j=1,\ldots, p\}$. Then $\rho(s)>R$ on $A_R$ and, 
    since $G_\ell$ are probability measures, it follows that
      \begin{equation}\label{stockholm} 
      \big | \int_{A_R} \int_X\big (T\big(\rho(s) \big) -T_\infty\big )
      \wedge \xi G_\ell (s) ds\big |
      <\epsilon
    \end{equation}
    for all $\ell$.
    Since $T(t)$ is continuous and convergent, there is
    an $M\in \R$ such that 
        $|\int_X T(t) \wedge \xi |<M$ for all
   $t$. 
   Now, by \eqref{havtorn}, 
    \begin{equation}\label{oslo}
      \big | \int_{\R^p_{\geq 0}\setminus A_R}\int_X \big (T\big(\rho(s)\big) -T_\infty\big )
      \wedge \xi  G_\ell (s) ds \big |
      < 2M \int_{\R^p_{\geq 0}\setminus A_R} G_\ell(s) \to 0
    \end{equation}
    when $\ell\to \infty$.
     Since $\epsilon$ is arbitrary, \eqref{simma} follows from
  \eqref{stockholm} and \eqref{oslo}. 
  \end{proof}

\begin{lma}\label{dynamit}
Assume that $u\in \G_{k}(X)$  
% , or, more generally, $\G_k(X)$ for $k\geq p$,
and that $v$ is a smooth psh function on
$X$. For $\ell \geq 0$, let $u_\ell=\max (u, v-\ell)$. Then for any
$p\leq k+1$ and any 
$\ell_1,\ldots, \ell_p$ such that 
$\ell_1\geq \cdots \geq \ell_p$, 
\begin{multline}\label{tobin} 
  dd^c u_{\ell_p}\w\cdots\w dd^c u_{\ell_1} =\\
  dd^c u_{\ell_p}\w \la dd^c u\ra^{p-1}  +\sum_{j=1}^{p-1}\big (dd^c u_{\ell_j} -
  \la dd^c u\ra \big ) \w \la dd^c u \ra^{j-1}\w (dd^c v)^{p-j}. 
    \end{multline}
\end{lma}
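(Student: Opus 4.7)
My plan is to mimic the Stokes/plurifine-locality trick sketched earlier in the paper for the case $v=0$, proving \eqref{tobin} by induction on $p$. Write $R_p := dd^c u_{\ell_p} \w \cdots \w dd^c u_{\ell_1}$ and let $L_p$ denote the right-hand side of \eqref{tobin}. The base case $p=1$ is immediate since both sides equal $dd^c u_{\ell_1}$. For the inductive step, I would first repackage the inductive hypothesis $R_{p-1}=L_{p-1}$ by absorbing the leading term of $L_{p-1}$ into the sum (it becomes the $j=p-1$ term with $(dd^c v)^0=1$):
\[
R_{p-1} - \la dd^c u\ra^{p-1} = \sum_{j=1}^{p-1}(dd^c u_{\ell_j} - \la dd^c u\ra) \w \la dd^c u\ra^{j-1} \w (dd^c v)^{p-1-j}.
\]
Wedging this with $dd^c v$ and adding $dd^c u_{\ell_p} \w \la dd^c u\ra^{p-1}$ shows $L_p = dd^c u_{\ell_p} \w \la dd^c u\ra^{p-1} + dd^c v \w (R_{p-1} - \la dd^c u\ra^{p-1})$; on the other hand $R_p = dd^c u_{\ell_p} \w R_{p-1}$ admits the analogous decomposition with $dd^c u_{\ell_p}$ in place of $dd^c v$. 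The inductive step therefore reduces to the identity
\[
(dd^c u_{\ell_p} - dd^c v) \w \bigl(R_{p-1} - \la dd^c u\ra^{p-1}\bigr) = 0.
\]

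The crux is to locate the support of the closed order-zero current $T := R_{p-1} - \la dd^c u\ra^{p-1}$. Consider the standard open set $O_{\ell_{p-1}} := \{u > v - \ell_{p-1}\}$, which is open by upper semicontinuity of $u$ and continuity of $v$, and on which $u$ is locally bounded. For each $j \leq p-1$ one has $\ell_j \geq \ell_{p-1}$, so $v-\ell_j \leq v-\ell_{p-1} < u$ on $O_{\ell_{p-1}}$ and thus $u_{\ell_j}=u$ there. By the standard locality of the Bedford-Taylor product for locally bounded psh functions, $R_{p-1} = (dd^c u)^{p-1} = \la dd^c u\ra^{p-1}$ on $O_{\ell_{p-1}}$, so $T$ is supported in $\{u \leq v-\ell_{p-1}\}$. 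Since $\ell_{p-1} \geq \ell_p$, this support is contained in $\{u \leq v-\ell_p\}$, on which $u_{\ell_p} = v - \ell_p$; equivalently, the bounded Borel function $u_{\ell_p} - v + \ell_p = \max(u-v+\ell_p, 0)$ vanishes identically on $\supp T$. Because $T$ has order zero (the difference of two closed positive currents with locally finite mass, using $u \in \G_k(X)$ and $p-1 \leq k$), multiplication by this bounded Borel function gives zero, so
\[
(u_{\ell_p} - v)\,T = -\ell_p\, T.
\]

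To conclude, I apply $dd^c$. Since $u_{\ell_p}$ is bounded psh, $v$ is smooth psh, and $R_{p-1}$ and $\la dd^c u\ra^{p-1}$ are closed positive currents, the Bedford-Taylor identity $dd^c(fS) = dd^c f \w S$ holds for each pair $(f,S)$ with $f \in \{u_{\ell_p}, v\}$ and $S \in \{R_{p-1}, \la dd^c u\ra^{p-1}\}$. Subtracting appropriately,
\[
(dd^c u_{\ell_p} - dd^c v) \w T = dd^c\bigl((u_{\ell_p} - v)\,T\bigr) = -\ell_p \, dd^c T = 0,
\]
since $T$ is closed; this finishes the induction. The main delicacy I anticipate is not algebraic but a bookkeeping issue: one must be careful that $T$ is genuinely a signed current of order zero (so that multiplication by a bounded Borel function is well defined and supported multiplication really vanishes), and that the Leibniz identity $dd^c(fT) = dd^c f \w T$ passes from the positive-current case to the difference $T = R_{p-1} - \la dd^c u\ra^{p-1}$, which is handled by linearity term by term.
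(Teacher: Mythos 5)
Your proof is correct and is essentially the paper's argument in a different packaging: the paper proves the one-step identity $(dd^c u_{\ell_{j+1}}-dd^c v)\w\big(dd^c u_{\ell_j}\w\cdots\w dd^c u_{\ell_1} - \la dd^c u\ra^j\big)\w(dd^c v)^{p-j-1}=0$ for each $j$ by exactly your mechanism (the difference current is annihilated by $\1_{\{u>v-\ell\}}$ via locality, $u_\ell=v-\ell$ on the complement, and the resulting constant dies under $dd^c$ by Stokes) and then telescopes, whereas you induct on $p$ with the full formula as hypothesis. One small correction: $\{u>v-\ell_{p-1}\}$ is not Euclidean open (upper semicontinuity gives openness of sublevel sets, not superlevel sets); it is plurifine open, and the locality you need is the plurifine locality of the (non-pluripolar) Monge--Amp\`ere operator, which is what the paper invokes --- with that substitution your argument goes through unchanged.
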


\begin{proof}
  We claim that for $j=1,\ldots, p-1$, 
  \begin{multline}\label{gasp}
   dd^c u_{\ell_{j+1}}\w\cdots\w dd^c u_{\ell_1} \w  (dd^c v)^{p-j-1} =
    dd^c u_{\ell_{j}}\w\cdots\w dd^c u_{\ell_1}\w (dd^c v)^{p-j} 
    +\\
      dd^c u_{\ell_{j+1}}\w\la dd^c u\ra^j \w (dd^c v)^{p-j-1} 
      -  \la dd^c u\ra^j\w (dd^c v)^{p-j}. 
    \end{multline}
Taking \eqref{gasp} for granted, by recursively
applying it to $j=p-1, \ldots, 1$ we obtain ~\eqref{tobin}.

To prove \eqref{gasp}, let $\xi$ be a test form on $X$ and consider
\begin{equation}\label{sax}
\int_X u_{\ell_{j+1}} \big (dd^c u_{\ell_j}\w\cdots\w
dd^c u_{\ell_1}\w (dd^c v)^{p-j-1} - \la dd^c u\ra^j \w (dd^c v)^{p-j-1} \big ) \w dd^c \xi.
  \end{equation}
Since $u_{\ell_j}=\cdots = u_{\ell_1}=u$
in $O=\{u>v-\ell_{j+1}\}$, which is open in the plurifine topology,
and since the non-pluripolar Monge-Amp\`ere operator is local in the plurifine topology, we
get that 
$$
dd^c u_{\ell_j}\w\cdots\w dd^c u_{\ell_1}\w (dd^c v)^{p-j-1} = \la
dd^c u\ra^j\w (dd^c
v)^{p-j-1}$$ there.
Using that  $u_{\ell_{j+1}}=v-\ell_{j+1}$ in
$X\setminus  O$,
we see that \eqref{sax} equals
\begin{equation}\label{laxen}
\int_X (v-\ell_{j+1})\big (dd^c u_{\ell_j}\w\cdots\w
dd^c u_{\ell_1}\w (dd^c v)^{p-j-1} - \la dd^c u\ra^j \w (dd^c v)^{p-j-1} \big ) \w dd^c \xi.
  \end{equation}
Now \eqref{gasp} follows from \eqref{sax} and \eqref{laxen} by Stokes'
theorem, since $\la dd^c u\ra^j$ is closed. 
  \end{proof}

  \begin{proof}[Proof of Theorem ~\ref{vegetarisk}]
    Since \eqref{vandrare}  is a local statement we may assume that $u-v$
    is bounded from above. In fact, we may assume that $u-v< 0$. Otherwise,  
    if $u-v< c$, let $\check u=u-c$ and $\check
    \chi_\ell(t)=\chi_\ell(t+c)-c$. Then $\check u-v< 0$ and $\check
    \chi_\ell$ is a seqence of functions as in the assumption of the
    theorem. Moreover $[dd^c\check u]^k = [dd^c u]^k$
    % for $k\leq p$
    and 
$\check \chi_\ell \circ (\check u - v) = \chi_\ell \circ (u-v) -c$; in particular
$dd^c (\check \chi_\ell \circ (\check u - v) )= dd^c (\chi_\ell
\circ (u - v))$. Thus it suffices to prove \eqref{vandrare} for $\check u$ and $\check
\chi_\ell$. 

\smallskip 

Throughout this proof let $u_\ell=\max (u, v-\ell)$ and $\tilde
u_\ell=\chi_\ell\circ (u-v) + v$.
Let us first assume that $\chi_\ell(0)=0$ and $\chi'_\ell(0)=1$ so
that $\chi_\ell$ (restricted to $(-\infty, 0]$) is as in Lemma ~\ref{pam}. 
Let $g_\ell(t)=\chi_\ell''( -t)$. 
Note that $u_\ell=\max (u-v, -\ell) + v$ and thus by Lemma ~\ref{pam}  
\begin{equation*}
  \int_{s=0}^\infty u_sg_\ell(s)ds =
  \int_{s=0}^\infty \max (u-v, -s) g_\ell(s)ds +
  \int_{s=0}^\infty v g_\ell(s)ds = \chi_\ell\circ (u-v) + v=\tilde u_\ell.
\end{equation*}
It follows from Lemma ~\ref{pronto} that $dd^c u_{s_p}\w\cdots\w
dd^c u_{s_1} $ is continuous in $s$. Thus 
\begin{equation}\label{himmelrike}
  (dd^c \tilde u_\ell)^p = 
  \int_{s_p=0}^\infty\cdots\int_{s_1=0}^\infty dd^c u_{s_p}\w\cdots\w
dd^c u_{s_1} g_\ell(s_1)\cdots g_\ell(s_p) ds.
  \end{equation}
Let $\rho_j:\R^p_{\geq 0}\to \R$ be the function that maps
$s=(s_1,\ldots, s_p)$ to the $j$th largest $s_i$; in particular,
$\rho_p(s)=\min_i s_i$. 
Since $dd^c u_{s_p}\w\cdots\w
dd^c u_{s_1}$ is commutative in the factors $dd^cu_{s_i}$, it follows from Lemma ~\ref{dynamit} that
\begin{equation*}
  dd^c u_{s_p}\w\cdots\w dd^c u_{s_1} =
  \sum_{j=1}^{p} dd^c u_{\rho_j(s)} \w \la dd^c u \ra^{j-1} \w (dd^c v)^{p-j} -
  \sum_{j=1}^{p-1}\la dd^c u \ra^{j}\w (dd^c v)^{p-j}. 
  \end{equation*}

For $j=1,\ldots, p$, let $T_j(t)=dd^c u_{t} \w \la dd^c
u \ra^{j-1}\w (dd^c v)^{p-j}$. By Lemma ~\ref{konto}, $T_j(t)$ is continuous
in $t$. Moreover, since $u\in \G_p(X)$, 
it converges weakly to $[dd^c u]^j\w (dd^c v)^{p-j}$.   
By Lemma ~\ref{pam}, $G_\ell(s):=g_\ell(s_1)\cdots g_\ell(s_p)ds$ is a probability
measure. Since $\chi_\ell(t)\to t$ when $\ell\to\infty$, given $R>0$,
$\chi_\ell'(-R)\to 1$ and thus $\int_{-R}^0g_\ell(s)ds \to 0$. It follows that $G_\ell(s)$ satisfies \eqref{havtorn}. Since $\rho_j$ is
continuous and $\rho_j(s)\geq
\min_i s_i$, Lemma ~\ref{fyrkant} yields that
\begin{equation*}
 \lim_{\ell\to\infty} \int_{\R^p_{\geq 0}}
dd^c u_{\rho_j(s)} \w \la dd^c u \ra^{j-1}\w (dd^c v)^{p-j} 
G_\ell(s)ds = 
[dd^c u]^j\w (dd^c v)^{p-j}.
\end{equation*} 
Hence, since $G_\ell$ is a probability measure, the limit of
\eqref{himmelrike} when $\ell\to \infty$ 
equals
\begin{equation*}
\sum_{j=1}^{p} [dd^c u]^j \w (dd^c v)^{p-j} -
\sum_{j=1}^{p-1} \la dd^c u \ra^{j}\w (dd^c v)^{p-j}
=
[dd^c u]^p+ \sum_{j=1}^{p-1} \S_j(u)\w (dd^c v)^{p-j}.
\end{equation*}

Finally, let us consider a sequence $\chi_\ell$ where we drop the
extra assumptions on $\chi_\ell(0)$ and $\chi_\ell'(0)$.
Since $\chi_\ell(t)$ are convex functions converging to $t$,
$\chi_\ell'(0)\to 1$; in particular $\chi_\ell'(0)\neq 0$ for large
enough $\ell$. 
Let $\hat \chi_\ell = (\chi_\ell-\chi_\ell(0)
)/\chi_\ell'(0)$. Then $\hat\chi_\ell$ is a sequence of nondecreasing
convex functions bounded from below, such that $\hat\chi_\ell(t)\to t$,
when $\ell\to \infty$, and $\hat \chi_\ell(0)=0$ and $\hat
\chi_\ell'(0)=1$. By the above arguments 
$$
(dd^c \hat u_\ell)^k \to  [dd^c u]^k+ \sum_{j=1}^{k-1}\S_j(u)\w (dd^c
v)^{k-j} 
$$
for $k\leq p$, where $\hat u_\ell = \hat
\chi_\ell\circ (u-v)+v$. 
Note that 
$dd^c \tilde u_\ell = \chi_\ell'(0) dd^c \hat
u_\ell+(1-\chi_\ell'(0))dd^c v$. Since $\chi_\ell'(0)\to 1$, it
follows that 
$$
\lim_{\ell\to \infty} (dd^c \tilde u_\ell)^p=\lim_{\ell\to \infty}
(dd^c \hat u_\ell)^p = [dd^c u]^p+ \sum_{j=1}^{p-1}
\S_j(u)\w (dd^c v)^{p-j}.
$$
\end{proof}

\begin{remark} 
  Note that the proof above only uses that $\chi_\ell(t)\to t$, when
  $\ell
  \to \infty$. Therefore we could, in fact, drop the assumption that
  $\chi_\ell$ is a decreasing sequence in Theorem ~\ref{vegetarisk}
  (as well as in the theorems in the introduction). 
\end{remark}

\begin{remark}\label{flertal}
   It follows from the proof above that
  we can choose different
sequences $\chi_\ell$ in Theorem ~\ref{vegetarisk} (and the theorems
in the introduction) 
and get the following generalization: 
\emph{For
$\lambda=1,\ldots, p$, let $\chi^\lambda_\ell:\R\to \R$ be a sequence of nondecreasing
convex functions, bounded from below, 
that decreases to $t$
as $\ell\to \infty$, and let $u^\lambda_\ell=\chi^\lambda_\ell\circ (u-v)+v$.  Then
\begin{equation*}
dd^c u_\ell^p \w\cdots\w dd^c u_\ell^1   
  \to [dd^c u]^p + \sum_{j=1}^{p-1}
  \S_j(u) \w (dd^c v)^{p-j}, \quad \ell\to\infty.  
    \end{equation*}}
Indeed, the proof goes through verbatim if we let
$g_\ell^\lambda(t)=(\chi_\ell^\lambda)''(-t)$ and $G_\ell(s)=g^1_\ell(s_1)\cdots
g_\ell^p(s_p)ds$. 
\end{remark}

\begin{remark}\label{oversattning}
     Let us relate Theorem ~\ref{vegetarisk} to \cite{B}*{Theorem~1}. 
Note that 
\begin{equation*}
  u_\ell=\chi_\ell\circ (u-v) + v
  % \max (u, v-\ell)=\max(u-v, -\ell) +v
  =: \varphi_\ell +v,
\end{equation*} 
where $\varphi_\ell$ is a sequence converging to the
quasiplurisubharmonic (qpsh)
function $\varphi:= u-v$. 
Theorem ~1 in \cite{B} asserts that if $\varphi$ has analytic
singularities, then 
\begin{equation}\label{tennis}
  (dd^c \varphi_\ell)^p\to [dd^c\varphi]^p, 
\end{equation}
where $[dd^c\varphi]^n$ is an extension of \eqref{pig} to qpsh
functions, see, e.g., \cite{B, LRSW} for details. 
Using \eqref{tennis}, we see that 
\begin{equation*}
 (dd^c u_\ell)^p = (dd^c \varphi_\ell+dd^c v)^p\to ([dd^c\varphi] + dd^cv)^p. 
\end{equation*}
It follows from the definition of $[dd^c \varphi]^p$, cf.\ \eqref{pig}
that, in fact, $([dd^c\varphi] + dd^cv)^p$ equals the right hand side
of \eqref{vandrare} (e.g., by arguments as in \cite{LRSW}).
Thus if $u$ has analytic singularities Theorem ~\ref{vegetarisk} follows
from \eqref{tennis}.
\end{remark}

\begin{ex}\label{storoliten}
  Let $u=\log|z_1|^2+|z_2|^2$ in the unit ball $\B$ in $\C^2$. Then $u$ has analytic
  singularities and thus $u\in \G(\B)$.
  By the Poincar\'e-Lelong formula $$[dd^cu]=[z_1=0]+ dd^c |z_2|^2$$ and 
  one easily checks that
\[
   [ dd^c u]^2 = [z_1=0]\w dd^c |z_2|^2 \neq 0.
 \]
Since $S_1(u)=[z_1=0]\neq 0$, we know from Theorem ~\ref{vegetarisk}
that there are sequences of bounded psh
functions $u_\ell$ decreasing to $u$ such that the limits of the
Monge-Amp\`ere currents $(dd^c u_\ell)^2$ converge to different
measures. In fact, it follows that we can find $u_\ell$ so that the
mass of the measures are arbitrarily large: 
Let $v_\ell=\ell|z_2|^2$ and let $u_{\ell, \lambda}=\max(u, v_\ell-\lambda)$. By Theorem
~\ref{vegetarisk}, 
\begin{equation}\label{amazer}
  \lim_{\lambda \to \infty} (dd^c u_{\ell, \lambda})^2 = [dd^c u]^2+
  \ell [z_1=0]\w dd^c |z_2|^2.
\end{equation}
If we choose $\lambda_1 \ll \lambda_2 \ll \lambda_3 \ll\ldots$,
then
$u_\ell:=u_{\ell, \lambda_\ell}$ is a sequence of bounded psh
functions decreasing to $u$ and in view of \eqref{amazer} $(dd^c u_\ell)^2$ do
not have locally uniformly bounded mass.

In fact, the function $u$ is a maximal psh function and therefore in
this case it is 
possible to find a sequence of smooth psh functions $u_\ell$
decreasing to $u$ such that $(dd^c u_\ell)^2 $ converges
weakly to $0$, see \cite{ABW}*{Example~3.4}. 
\end{ex}

The following example shows that one needs some condition on a psh
function $u$ for the Monge-Amp\`ere currents of the natural regularization $u_\ell=\max (u, -\ell)$
to converge. In particular, $u$ below is an example of a psh
function that does not have locally finite non-pluripolar energy.

\begin{ex}\label{varannan}
Consider the plurisubharmonic function $$u(z, w)=\sup_{k\geq
  1}^*\big \{(1+1/k)\log|z|^2-a_k+(1-(-1)^k)|w|^2\big \}$$ in the bidisc
$\Di\times \Di$ for some choice of $a_k>0$, $k=1,2,\ldots$; here $*$
denotes the usc regularization. 
It is not hard to see that if we choose $0 \ll a_0 \ll a_1
\ll a_2 \ll \ldots$, then there is an increasing sequence $\ell_k\in
\N$ such that $u(z,w)=(1+1/2k)\log|z|^2-a_{2k}$ on the set
$\{|u(z,w)+\ell_{2k}|<1\}$, whereas $u(z,w)=(1+1/(2k+1))\log|z|^2-a_{2k+1}+2|w|^2$ on the set
$\{|u(z,w)+\ell_{2k+1}|<1\}$.
Note that 
\begin{equation*}
  \big (dd^c \max((1+1/k)\log|z|^2-a_k,-\ell) \big )^2=0
  \end{equation*}
  while
  \begin{equation*}
    \big (dd^c \max((1+1/k)\log|z|^2-a_k+2|w|^2,-\ell)\big
    )^2\to
    %(1/\pi i) (1+1/k)[z=0]\wedge dw\wedge d\bar{w}
     2(1+1/k)[z=0]\wedge dd^c|w|^2
  \end{equation*}
  as $\ell\to \infty$.
    It follows
that $$\lim_{k\to \infty} \big (dd^c \max(u,-\ell_{2k})\big )^2 =
\langle dd^c u\rangle^2,$$
whereas
$$
\lim_{k\to \infty} \big ( dd^c \max(u,
-\ell_{2k+1})\big)^2=
\langle dd^c u\rangle^2+2[z=0]\wedge
dd^c |w|^2.
%\langle dd^c u\rangle^2+(1/\pi i)[z=0]\wedge
%dw\wedge d\bar{w}.
$$
\end{ex}

\section{Global Monge-Amp\`ere products}\label{globala} 

Let $(X, \omega)$ be a compact K\"ahler manifold.
To define global analogues of the classes $\G_k(\Omega)$, let us first
recall some results
on the non-pluripolar Monge-Amp\`ere operator. 

%Recall that a
%function $\varphi$ on $X$ is quasiplurisubharmonic (qpsh) if it is
%locally the sum of a psh function and a smooth function. Moreover, a
%qpsh function $\varphi$ on $X$ is 
%$\omega$-psh, $\varphi\in\psh(X, \omega)$, if $dd^c\varphi +\omega$ is
%a positive current. In other words, if locally $dd^c h=\omega$, then
%$\varphi$ is $\omega$-psh if 
%$\varphi+h$ is psh. 
%We want to define analogues of the classes of psh functions with
%locally finite non-pluripolar energy for 
%$\omega$-psh functions on $X$. First, we %need to
%recall some results
%on the non-pluripolar Monge-Amp\`ere operator. 

Assume that
for $j=1, \ldots, n$, $\omega_j$ is a K\"ahler form on $X$ and $\varphi_j$ is
$\omega_j$-psh. %, i.e., $dd^c\varphi_j+\omega_j\geq 0$. 
%$\omega_1,\ldots, \omega_n$ are K\"ahler forms, and that $\varphi_1, \ldots, \varphi_n$
%Since \eqref{husqvarna} only depends on the currents 
Since \eqref{husqvarna} only depends on the currents $dd^c u_j$, 
%in unaffected when adding pluriharmonic
%functions to the $u_j$, 
\begin{equation}\label{jonkoping}
  \la dd^c \varphi_p+\omega_p \ra \w \cdots  \w \la dd^c \varphi_1+\omega_1 \ra,
\end{equation}
locally defined as
%\begin{equation*}
 $ \la dd^c (\varphi_p+h_p) \ra \w \cdots  \w \la dd^c
 (\varphi_1+h_1) \ra, $
%\end{equation*}
where $h_j$ are local $dd^c$-potentials of the $\omega_j$, is a global
closed positive current on $X$, see Section ~\ref{pruttkudde}.

Assume that $\varphi_1, \ldots, \varphi_n\in\psh(X, \omega)$. Then, by
Stokes'  theorem,  
\begin{equation}\label{ringar}
   \int_X (dd^c \varphi_n+\omega) \w \cdots \w (dd^c \varphi_1 + \omega) 
%  \int_X (dd^c \varphi+\omega)^p \w \omega^{n-p}
  = \int_X \omega^{n},
  \end{equation}
  cf.\ \eqref{masslikhet}.
  %
%If $\varphi, \psi\in \psh(X, \omega)$, then $\varphi$ is
%said to be \emph{less singular} than $\psi$, $\varphi\succeq \psi$,
%if $\varphi\geq \p%si +O(1)$. If $\varphi\succeq \psi$ and
%$\psi\succeq\varphi$ we say that $\varphi$ and $\psi$ have the same \emph{singularity type} and write $\varphi\sim \psi$.
For the non-pluripolar products we have the following monotonicity
property. Recall that a
function $\varphi$ on $X$ is quasiplurisubharmonic (qpsh) if it is
locally the sum of a psh function and a smooth function.
\begin{prop}\label{monoton}
Assume that $\varphi_1,\ldots, \varphi_n$ and $\psi_1,\ldots, \psi_n$
are $\omega$-psh functions, such that $\varphi_j\succeq \psi_j$. 
%that $\varphi_j$ is less singular than $\psi_j$.
Then
\begin{equation*}
  \int_X \la dd^c \varphi_n+\omega \ra \w \cdots \w \la  dd^c
  \varphi_1+\omega \ra
  \geq
  \int_X \la dd^c \psi_n+\omega \ra \w \cdots \w \la dd^c
  \psi_1+\omega \ra
\end{equation*}
\end{prop}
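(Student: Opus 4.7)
The plan is to follow the monotonicity theorem of \cite{BEGZ}*{Theorem~1.16}. Since the non-pluripolar product $\langle dd^c\psi_j + \omega\rangle$ depends only on the current $dd^c\psi_j + \omega$ (cf.\ Section~\ref{pruttkudde}), shifting the $\psi_j$ by constants leaves the right hand side unchanged; combined with $\varphi_j \succeq \psi_j$ this lets me assume $\varphi_j \geq \psi_j$ pointwise. A standard telescoping then reduces the claim to changing one factor at a time: by the multilinearity/symmetry of the non-pluripolar product (Section~\ref{pruttkudde}), it suffices to prove the inequality when $\varphi_j = \psi_j =: \alpha_j$ for $j \geq 2$ and $\varphi := \varphi_1 \geq \psi_1 =: \psi$.

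Next, introduce the canonical cutoffs $\varphi^{(\ell)} = \max(\varphi,-\ell)$, $\psi^{(\ell)} = \max(\psi,-\ell)$, $\alpha_j^{(\ell)} = \max(\alpha_j,-\ell)$; these are bounded $\omega$-psh with $\varphi^{(\ell)} \geq \psi^{(\ell)}$. By \eqref{ringar},
\[
\int_X (dd^c\varphi^{(\ell)}+\omega)\w\bigwedge_{j\geq 2}(dd^c\alpha_j^{(\ell)}+\omega)
= \int_X\omega^n
= \int_X (dd^c\psi^{(\ell)}+\omega)\w\bigwedge_{j\geq 2}(dd^c\alpha_j^{(\ell)}+\omega).
\]
Setting $O_\ell^\varphi := \{\varphi > -\ell\}\cap\bigcap_{j\geq 2}\{\alpha_j > -\ell\}$ and analogously $O_\ell^\psi$, plurifine locality of the Bedford--Taylor product together with definition \eqref{husqvarna} gives
\[
\int_X \langle (dd^c\varphi+\omega)\w\bigwedge_{j\geq 2}(dd^c\alpha_j+\omega)\rangle
 = \lim_{\ell\to\infty}\int_{O_\ell^\varphi}(dd^c\varphi^{(\ell)}+\omega)\w\bigwedge_{j\geq 2}(dd^c\alpha_j^{(\ell)}+\omega),
\]
and likewise for $\psi$. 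Since $\varphi \geq \psi$ we have $O_\ell^\psi \subseteq O_\ell^\varphi$, so the task reduces to showing that the mass escaping to the complement is no larger for $\varphi$ than for $\psi$.

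This final comparison is the main obstacle. It is handled, as in \cite{BEGZ}*{Theorem~1.16}, by an integration by parts applied to the bounded function $\varphi^{(\ell)} - \psi^{(\ell)} \geq 0$: expanding the difference of the two bounded MA products telescopically, each term is a $dd^c$-boundary which vanishes after integration by Stokes' theorem, while plurifine locality on $O_\ell^\psi$ (where $\varphi^{(\ell)}=\varphi$ and $\psi^{(\ell)}=\psi$) together with the positivity of the remaining wedge factors controls the escaping mass inductively in the number of factors. Passing $\ell\to\infty$ then yields the desired monotonicity and, after undoing the telescoping, the full statement of the proposition.
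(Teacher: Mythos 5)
The paper does not actually prove Proposition~\ref{monoton}: it is quoted from the literature, with the sul case attributed to \cite{BEGZ}*{Theorem~1.16}, the case $\varphi_j=\varphi$, $\psi_j=\psi$ to \cite{WN}*{Theorem~1.2}, and the general case to \cite{DDL}*{Theorem~1.1}. So there is no in-paper argument to compare yours against; what matters is whether your sketch would constitute a proof, and it would not.

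Your reductions are fine: normalizing so that $\varphi_j\geq\psi_j$ pointwise (using that the non-pluripolar product only depends on $dd^c\psi_j+\omega$), telescoping to change one factor at a time, and invoking \eqref{ringar} for the cutoffs are all legitimate. The gap is the step you yourself flag as ``the main obstacle'' and then dispatch in one sentence by appeal to \cite{BEGZ}*{Theorem~1.16}. That theorem, and its integration-by-parts proof, require the small unbounded locus hypothesis, which is not assumed here; the whole reason the general statement was open until \cite{WN} and \cite{DDL} is precisely that the BEGZ argument breaks down without sul. Moreover, the mechanism you describe does not produce an inequality: Stokes' theorem applied to the telescoped difference of the \emph{bounded} products only re-derives the equality of total masses in \eqref{ringar}, and the assertion that plurifine locality plus positivity ``controls the escaping mass inductively'' is exactly the content of the theorem, not a proof of it. Note that on $O_\ell^\psi$ the two currents $(dd^c\varphi^{(\ell)}+\omega)\wedge\cdots$ and $(dd^c\psi^{(\ell)}+\omega)\wedge\cdots$ are genuinely different (locality only identifies each cutoff with its own uncut function there), so the inclusion $O_\ell^\psi\subseteq O_\ell^\varphi$ gives no direct comparison of the two limits. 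A correct general proof requires a different idea, e.g.\ the auxiliary construction on a product with $\P^1$ used in \cite{WN} and extended in \cite{DDL}; if you only need the sul case used elsewhere in the paper, you should say so explicitly and carry out the BEGZ capacity/integration-by-parts estimates rather than citing them as a black box.
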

As a consequence, the integral of \eqref{jonkoping} only depends on the singularity types
of the $\varphi_j$.
For 
$\omega$-psh functions with sul Proposition ~\ref{monoton} was proved in
\cite{BEGZ}*{Theorem~1.16}, in the case when
$\varphi_j=\varphi$ and $\psi_j=\psi$ in
\cite{WN}*{Theorem~1.2}, and in the general case in \cite{DDL}*{Theorem~1.1}.
Also, see \cite{Duc21}*{Theorem~1.1} for an even stronger monotonicity result.

We will use the following integration by parts result. 

\begin{prop}[\cite{BEGZ}*{Theorem~1.14}]\label{partial}
Let $A\subset X$ be a closed complete pluripolar set, and let $T$ be a
closed positive $(n-1,n-1)$-current on $X$. Let $\varphi_i$ and
$\psi_i$, $i=1,2$, be qpsh functions on $X$ that are locally bounded on
$X\setminus A$. If $u:=\varphi_1-\varphi_2$ and $v:=\psi_1-\psi_2$ are
globally bounded on $X$, then
\begin{equation}\label{ostbage}
  \int_{X\setminus A} udd^c v \w T =
  \int_{X\setminus A} vdd^c u \w T =
   -\int_{X\setminus A} dv\w d^c u \w T. 
  \end{equation}
  \end{prop}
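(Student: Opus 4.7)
The plan is to reduce the proposition to the case of \emph{bounded} qpsh data by truncation, apply Stokes' theorem on the boundaryless compact manifold $X$, and then pass to the limit using the plurifine locality of the non-pluripolar Monge-Amp\`ere operator together with the global boundedness of $u=\varphi_1-\varphi_2$ and $v=\psi_1-\psi_2$.

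First I would truncate: set $\varphi_i^\ell:=\max(\varphi_i,-\ell)$, $\psi_i^\ell:=\max(\psi_i,-\ell)$, $u_\ell:=\varphi_1^\ell-\varphi_2^\ell$, and $v_\ell:=\psi_1^\ell-\psi_2^\ell$. Each $\varphi_i^\ell,\psi_i^\ell$ is a bounded qpsh function on $X$; a short case analysis using $|\varphi_1-\varphi_2|\le \|u\|_\infty$ shows that $|u_\ell|\le \|u\|_\infty$ uniformly in $\ell$, and similarly for $v_\ell$. Indeed, on $\{\varphi_1>-\ell\}\cap\{\varphi_2>-\ell\}$ we have $u_\ell=u$, and if e.g.\ $\varphi_1\le -\ell$ then $\varphi_2\le\varphi_1+\|u\|_\infty\le -\ell+\|u\|_\infty$, forcing both $\varphi_1^\ell$ and $\varphi_2^\ell$ to lie in $[-\ell,-\ell+\|u\|_\infty]$.

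For bounded qpsh $u_\ell,v_\ell$ and a closed positive $(n-1,n-1)$-current $T$ on $X$, the identities
\[
\int_X u_\ell\,dd^cv_\ell\w T \;=\; \int_X v_\ell\,dd^cu_\ell\w T \;=\; -\int_X dv_\ell\w d^cu_\ell\w T
\]
are classical: in the smooth case they follow from $d(u_\ell\,d^cv_\ell\w T)=du_\ell\w d^cv_\ell\w T+u_\ell\,dd^cv_\ell\w T$, Stokes' theorem on the boundaryless $X$, and $dT=0$; the extension to the bounded qpsh case is by Demailly regularization combined with the Bedford-Taylor continuity of the Monge-Amp\`ere operator under decreasing sequences of locally bounded psh functions (Lemma~\ref{konto}). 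I would then pass to the limit $\ell\to\infty$: on the plurifine open set $O_\ell:=\{\varphi_1>-\ell\}\cap\{\varphi_2>-\ell\}\cap\{\psi_1>-\ell\}\cap\{\psi_2>-\ell\}$, truncations coincide with originals, so plurifine locality gives $\1_{O_\ell}dd^cu_\ell\w T=\1_{O_\ell}dd^cu\w T$ and similarly for $v$. The complements $X\setminus O_\ell$ are contained, for all $\ell$ large, in $A\cup\{\varphi_i=-\infty\}\cup\{\psi_i=-\infty\}$, which is pluripolar and hence carries no mass of $T$ or of the non-pluripolar products. Uniform boundedness of $u_\ell,v_\ell$ together with dominated convergence identifies the limits of the first two integrals with $\int_{X\setminus A}u\,dd^cv\w T$ and $\int_{X\setminus A}v\,dd^cu\w T$.

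The main obstacle is the gradient integral $\int_X dv_\ell\w d^cu_\ell\w T$: sup-norm control on $u_\ell,v_\ell$ does not bound their first derivatives, so naive dominated convergence fails on this term. I would handle this by choosing a qpsh function $\chi$ with $\{\chi=-\infty\}=A$ (which exists since $A$ is complete pluripolar), building cutoffs $\theta_\varepsilon:=\max(1+\varepsilon\chi,0)$ that vanish near $A$ and equal $1$ off a shrinking neighborhood of $A$, inserting $\theta_\varepsilon$ in front of the integrands, and estimating the error via Chern-Levine-Nirenberg-type inequalities for $du_\ell\w d^cv_\ell\w T$ in terms of $\|u_\ell\|_\infty,\|v_\ell\|_\infty$ and the masses of lower-order Monge-Amp\`ere products; the key point is that on $X\setminus A$ every $\varphi_i^\ell,\psi_i^\ell$ is locally bounded, so ordinary Bedford-Taylor theory applies and Stokes' theorem is available without boundary contributions. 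This is the technical core carried out in \cite{BEGZ}*{Theorem~1.14}.
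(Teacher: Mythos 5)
This proposition is not proved in the paper at all: it is imported verbatim from \cite{BEGZ}*{Theorem~1.14}, and the paper's Remark~\ref{popcorn} explicitly defers even the well-definedness of the three integrals to Lemma~1.15 and the surrounding discussion in \cite{BEGZ}. So there is no in-paper argument to compare yours against; your proposal has to stand on its own as a reconstruction of the BEGZ proof.

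As such a reconstruction it has the right general shape (truncate, integrate by parts in the bounded case, pass to the limit), and the uniform bound $|u_\ell|\le\|u\|_\infty$ via the case analysis is correct. But there are two genuine gaps. First, the assertion that $X\setminus O_\ell$ is contained, for large $\ell$, in the pluripolar set $A\cup\{\varphi_i=-\infty\}\cup\{\psi_i=-\infty\}$ is false: for any finite $\ell$ the sublevel set $\{\varphi_1\le-\ell\}$ is typically a set of positive measure, and these sets only \emph{decrease} to the pluripolar set. Moreover $T$ is an arbitrary closed positive current and may well charge pluripolar sets, so "pluripolar sets carry no mass of $T$" is not available; what one actually needs is that the order-zero currents $u\,dd^cv\wedge T$ and $dv\wedge d^cu\wedge T$, defined a priori only on $X\setminus A$ by Bedford--Taylor theory, have finite mass near $A$ and that the truncated quantities converge to them there. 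Establishing this is precisely the content of \cite{BEGZ}*{Lemma~1.15} and is not a routine dominated-convergence step. Second, and decisively, your treatment of the gradient term ends with "this is the technical core carried out in \cite{BEGZ}*{Theorem~1.14}" --- i.e.\ the one step you identify as the real difficulty is delegated to the theorem being proved. The cutoff idea $\theta_\varepsilon=\max(1+\varepsilon\chi,0)$ with $\{\chi=-\infty\}=A$ is indeed the right device, but without carrying out the Chern--Levine--Nirenberg estimate showing that the error terms supported near $A$ vanish as $\varepsilon\to0$ \emph{uniformly in $\ell$}, and without justifying that the limits of $\int dv_\ell\wedge d^cu_\ell\wedge T$ exist and agree with $\int_{X\setminus A}dv\wedge d^cu\wedge T$, the argument is an outline rather than a proof. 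If you want a self-contained account, that mass estimate near $A$ is the point to develop in detail.
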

  \begin{remark}\label{popcorn} 
In particular, the integrals in \eqref{ostbage} are well-defined, cf.\
Lemma~1.15 and the discussion after Theorem 1.14 in \cite{BEGZ}. Note
that if $v=u$, then \eqref{ostbage} is
non-positive.
%\textcolor{red}{Vi pratade om att skriva nat sant har
 % den 11 mars. Ska man saga nagot mer?}
    \end{remark}
    \begin{remark}\label{objektiv}
      Note that Proposition \ref{partial} %this integration by parts result
    recently has been
generalized to the case when $\varphi_i$ and $\psi_i$ do not
necessarily have sul, see \cite{Xia19}*{Theorem 1.1} and \cite{Duc20}*{Theorem~2.6}.
\end{remark}

\subsection{The classes $\G_k(X, \omega)$}\label{vattenflaska}

The classes $\G_k(X)$ in Definition ~\ref{prisen}, are naturally
carried over to the global setting. Recall from the introduction that
on $(X, \omega)$, the 
non-pluripolar Monge-Amp\`ere currents $\la dd^c \varphi +
\omega\ra^j$ are always 
finite.

\begin{df}\label{prinsessan}
 Let $(X, \omega)$ be a compact K\"ahler manifold of dimension $n$.
 For $1\leq k\leq n-1$, we say that $\varphi\in \psh(X, \omega)$ has
\emph{finite non-pluripolar energy of order k}, $\varphi\in\G_k(X, \omega)$,
if, for each $1\leq j\leq k$,  
$\varphi$ is integrable with respect to $\la dd^c \varphi+ \omega \ra^j$.
\end{df}
Note that
$\varphi\in \psh(X, \omega)$ is in $\G_k(X, \omega)$ if and only if
$\varphi$ is integrable with respect ~to 
\[
  \sum_{j=0}^k\la dd^c\varphi + \omega \ra^j \w \omega^{n-j},
  \]
  cf.\ \eqref{grizzly}. 
Clearly
\begin{equation*}
\G_1(X, \omega)\supset \G_2(X, \omega) \supset \cdots\supset
\G_{n-1}(X, \omega)=\G(X, \omega), 
  \end{equation*}
where $\G(X, \omega)$ is as in Definition ~\ref{drottning}. 

If $\varphi\in \G_k(X, \omega)$, for $p=1,\ldots, k+1$, we can define currents
$$
[dd^c\varphi + \omega]^p=[dd^c (\varphi+h)]^p,
$$
where $h$ is a local potential for $\omega$, and 
$$
\S_p^\omega(\varphi)= [dd^c\varphi + \omega]^p -\la dd^c\varphi +
\omega\ra ^p
$$
as 
in Definition~\ref{prins}.
By Propositions ~\ref{slutna} and ~\ref{obero}, they are well-defined
global closed positive currents on $X$ that only depend on the current $dd^c
\varphi+\omega$ and not on the choice of $\omega$ as a K\"ahler
representative in the class $[\omega]$.

\begin{remark}
  Assume that $\varphi$ has sul and that $A$ is a closed complete pluripolar set such that
$\varphi$ is locally bounded in $X\setminus A$. Then 
$
\1_{X\setminus A} [dd^c \varphi+\omega]^p=\la dd^c \varphi+\omega\ra^p
$
so that 
$
\S_p^\omega(\varphi)=\1_{A} [dd^c \varphi+\omega]^p.
$
\end{remark}

From the definitions above and the basic properties of the
Monge-Amp\`ere currents we get an immediate proof of the mass formula
Theorem ~\ref{mannagryn}.
In fact, we prove the following slightly more general
version. 

\begin{thm}\label{mannagrynsgrot}
%Let $(X, \omega)$ be a compact K\"ahler manifold $X$ of dimension
%$n$.
Assume that $\varphi\in
\G_{k}(X, \omega)$. Then for $p\leq k+1$, 
\begin{equation}\label{klassforestandare}
\int_X \la dd^c \varphi+\omega\ra^p\w \omega^{n-p}+ \sum_{j=1}^p \int_X
\S_j^\omega(\varphi)\w \omega^{n-j}=\int_X \omega^n.
\end{equation}
\end{thm}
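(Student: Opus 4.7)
The plan is to prove Theorem \ref{mannagrynsgrot} by induction on $p$, where the crucial ingredient is the identity
\begin{equation*}
\int_X [dd^c \varphi + \omega]^p \w \omega^{n-p} = \int_X \la dd^c \varphi + \omega \ra^{p-1} \w \omega^{n-p+1}, \qquad 1 \leq p \leq k+1.
\end{equation*}
To establish this, I would work locally: if $h$ is a $dd^c$-potential for $\omega$ on a chart, then by Definition \ref{prins} and \eqref{pig},
$[dd^c \varphi + \omega]^p = dd^c((\varphi+h) \la dd^c \varphi + \omega \ra^{p-1})$ on the chart. Since $h$ is smooth and $\la dd^c \varphi + \omega \ra^{p-1}$ is closed, the Leibniz-type rule gives $dd^c(h \la dd^c \varphi + \omega \ra^{p-1}) = \omega \w \la dd^c \varphi + \omega \ra^{p-1}$ locally. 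Both sides of
\begin{equation*}
[dd^c \varphi + \omega]^p = dd^c\bigl(\varphi \la dd^c \varphi + \omega \ra^{p-1}\bigr) + \omega \w \la dd^c \varphi + \omega \ra^{p-1}
\end{equation*}
are globally defined currents on $X$: the first summand on the right is well-defined because $\varphi \in \G_k(X,\omega)$ with $p-1 \leq k$ forces $\varphi \in L^1(\la dd^c \varphi + \omega\ra^{p-1})$, so that $\varphi \la dd^c \varphi + \omega\ra^{p-1}$ has finite total mass on compact $X$.

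Wedging with $\omega^{n-p}$ and integrating, the $dd^c$ term vanishes by Stokes: for any current $S$ of order zero with finite mass,
\begin{equation*}
\int_X dd^c S \w \omega^{n-p} = \int_X S \w dd^c(\omega^{n-p}) = 0,
\end{equation*}
since $\omega$ is closed. This yields the key identity.

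The induction now runs cleanly. For $p=1$, the key identity reads $\int_X [dd^c \varphi + \omega] \w \omega^{n-1} = \int_X \omega^n$, and since $[dd^c \varphi + \omega] = \la dd^c \varphi + \omega\ra + \S_1^\omega(\varphi)$ by Definition \ref{prins}, this is exactly the $p=1$ case of \eqref{klassforestandare}. For the inductive step, assume \eqref{klassforestandare} holds for $p-1$ and compute
\begin{align*}
\int_X \la dd^c\varphi+\omega\ra^p \w \omega^{n-p} + \sum_{j=1}^p \int_X \S_j^\omega(\varphi)\w\omega^{n-j}
&= \int_X [dd^c\varphi+\omega]^p \w \omega^{n-p} + \sum_{j=1}^{p-1} \int_X \S_j^\omega(\varphi)\w\omega^{n-j}\\
&= \int_X \la dd^c\varphi+\omega\ra^{p-1} \w \omega^{n-p+1} + \sum_{j=1}^{p-1} \int_X \S_j^\omega(\varphi)\w\omega^{n-j},
\end{align*}
using $[dd^c\varphi+\omega]^p = \la dd^c\varphi+\omega\ra^p + \S_p^\omega(\varphi)$ in the first equality and the key identity in the second. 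The last expression equals $\int_X \omega^n$ by the induction hypothesis.

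The only technical point requiring care is the use of Stokes' theorem on the current $\varphi \la dd^c \varphi + \omega \ra^{p-1}$. This is precisely where the hypothesis $\varphi \in \G_k(X,\omega)$ enters: it guarantees that this current has finite total mass on compact $X$, so that its distributional $dd^c$ pairs with the closed smooth form $\omega^{n-p}$ to give zero, with no contribution from the polar set of $\varphi$. Apart from this, the argument is purely a cohomological shuffling of the defining relation $[dd^c\varphi+\omega]^p - \la dd^c\varphi+\omega\ra^p = \S_p^\omega(\varphi)$, and crucially does not invoke Theorem \ref{krubba}.
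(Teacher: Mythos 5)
Your proof is correct and follows essentially the same route as the paper's: both rest on the identity $[dd^c\varphi+\omega]^p = dd^c\bigl(\varphi\la dd^c\varphi+\omega\ra^{p-1}\bigr)+\omega\w\la dd^c\varphi+\omega\ra^{p-1}$ (justified by the integrability from $\varphi\in\G_k(X,\omega)$) together with Stokes' theorem to kill the exact term. The paper phrases the conclusion as a telescoping sum of the identities $\int_X\la dd^c\varphi+\omega\ra^j\w\omega^{n-j}-\int_X\la dd^c\varphi+\omega\ra^{j-1}\w\omega^{n-j+1}=-\int_X\S_j^\omega(\varphi)\w\omega^{n-j}$ rather than as an induction, but this is only a cosmetic repackaging of the same argument.
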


\begin{proof}
First, note in view of Proposition ~\ref{obero} that, for $1\le j\le k$,
\begin{equation}\label{gorilla}
  dd^c \big (\varphi \la dd^c \varphi+\omega\ra^{j-1}\big ):= [dd^c
  \varphi+\omega]^j-\la dd^c \varphi+\omega\ra^{j-1}\w \omega
\end{equation}
is a well-defined exact current on $X$.  We claim that for $1\le j\le k$ we have
\begin{equation}\label{apa3}
\int_X\la dd^c \varphi+\omega\ra^j\w \omega^{n-j}- 
\int_X\la dd^c \varphi+\omega\ra^{j-1}\w \omega^{n-j+1}=
-\int_X \S_j^\omega(\varphi)\w \omega^{n-j}.
\end{equation}
In fact,
\begin{multline*}
\int_X\la dd^c \varphi+\omega\ra^{j}\w \omega^{n-j}+
\int_X \S_j^\omega(\varphi) \w \omega^{n-j}=
\int_X  [dd^c \varphi+\omega]^j\w \omega^{n-j}= \\
\int_X dd^c (\varphi \la dd^c \varphi+\omega\ra^{j-1})\w \omega^{n-j}
+\int_X \la dd^c \varphi+\omega\ra^{j-1}\w \omega^{n-j+1}
=\int_X \la dd^c \varphi+\omega\ra^{j-1}\w \omega^{n-j+1},
\end{multline*}
where we have used \eqref{gorilla} for the second equality and the last equality follows from Stokes' theorem. Thus \eqref{apa3} holds,
and summing from $1$ to $k$ we get \eqref{klassforestandare}. 
\end{proof}

Theorem ~\ref{mannagrynsgrot}  also follows immediately from
\eqref{ringar} and the
following slightly generalized version of Theorem ~\ref{kyrka}. 
\begin{thm}\label{kyrkbacke}
   Assume that $\varphi\in \G_k(X, \omega)$ and that $\eta$ is a
  K\"ahler form in $[\omega]$
  so that $\eta=\omega+dd^c
  g$, where $g$ is a smooth function on $X$.
Let $\varphi_\ell=\max (\varphi-g, -\ell)+g$.
  Then, for $1\leq p\leq k+1$, 
  \begin{equation}\label{krokus}
 (dd^c \varphi_\ell +\omega)^p \to  
[dd^c \varphi+\omega]^p+\sum_{j=1}^{p-1} \S_j^\omega
(\varphi) \w \eta^{p-j}, \quad \ell\to\infty.   
\end{equation}

More generally, let $\chi_\ell:\R\to \R$ be a sequence of nondecreasing
convex functions, bounded from below, 
that decreases to $t$
as $\ell\to \infty$, and let $\varphi_\ell=\chi_\ell\circ
(\varphi-g)+g$. Then \eqref{krokus} holds for $1\leq p\leq k+1$. 
\end{thm}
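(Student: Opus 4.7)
The plan is to reduce Theorem \ref{kyrkbacke} to its local counterpart, Theorem \ref{vegetarisk}, via local $dd^c$-potentials. Since both sides of \eqref{krokus} are currents on $X$, weak convergence is a local property: it suffices to verify the convergence in a neighborhood $\mathcal U$ of each point of $X$.

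First I would fix a small open set $\mathcal U\subset X$ on which $\omega$ admits a smooth local potential $h$, so that $\omega=dd^c h$ on $\mathcal U$. Set $u:=\varphi+h$, which is psh on $\mathcal U$; the hypothesis $\varphi\in \G_k(X,\omega)$ translates precisely to $u\in \G_k(\mathcal U)$, since $\la dd^c u\ra^j = \la dd^c\varphi+\omega\ra^j$ locally. Because $\eta=\omega+dd^cg$ with $g$ smooth, the function $v:=h+g$ is a smooth local potential for $\eta$, and since $\eta$ is K\"ahler we have $dd^c v=\eta>0$, so $v$ is in particular smooth and psh on $\mathcal U$, as required by the hypotheses of Theorem \ref{vegetarisk}.

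With these identifications the global regularization becomes, on $\mathcal U$,
\begin{equation*}
u_\ell:=\varphi_\ell+h=\chi_\ell\circ(\varphi-g)+g+h=\chi_\ell\circ(u-v)+v,
\end{equation*}
which is exactly the twisted regularization appearing in Theorem \ref{vegetarisk}. Applying that theorem on $\mathcal U$ with the given $\chi_\ell$ and with $p\leq k+1$ yields
\begin{equation*}
(dd^c u_\ell)^p\to [dd^c u]^p+\sum_{j=1}^{p-1}\S_j(u)\w (dd^c v)^{p-j},\quad \ell\to\infty.
\end{equation*}
Translating via $dd^c u_\ell=dd^c\varphi_\ell+\omega$, $[dd^c u]^p=[dd^c\varphi+\omega]^p$, $\S_j(u)=\S_j^\omega(\varphi)$, and $dd^c v=\eta$ gives \eqref{krokus} on $\mathcal U$, and patching over a cover finishes the proof. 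The first assertion is then the special case $\chi_\ell(t)=\max(t,-\ell)$, under which $u_\ell=\max(u-v,-\ell)+v$ as in Theorem \ref{vegetarisk}.

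The main thing to check carefully, rather than a real obstacle, is that the local definitions are compatible on overlaps: different choices of local potentials $h$ differ by pluriharmonic functions, and by Proposition \ref{obero} both $[dd^c u]^p$ and $\S_p(u)$ are unaffected by such modifications, so the pieces assembled above agree with the globally defined currents $[dd^c\varphi+\omega]^p$ and $\S_j^\omega(\varphi)$ from Definition \ref{prins}. Thus the only substantive work has already been done in the local setting, and no further cohomological or integration-by-parts argument is needed here.
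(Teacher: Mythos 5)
Your proposal is correct and follows essentially the same route as the paper: both reduce to the local Theorem \ref{vegetarisk} by writing $u=\varphi+h$ and $v=h+g$ on a chart, observing that $u_\ell=\varphi_\ell+h=\chi_\ell\circ(u-v)+v$, and translating back via $dd^cv=\eta$ and Proposition \ref{obero}. Your explicit remark that $v$ is psh because $dd^cv=\eta>0$ is a small point the paper leaves implicit, but otherwise the arguments coincide.
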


 \begin{proof}
It is enough to prove the statement locally. We may therefore assume
that $\varphi=u-h$, where $u$ is psh and $h$ is a smooth
$dd^c$-potential for $\omega$. Let
\begin{equation*}
  u_\ell = \chi_\ell\circ (u-h-g) + h + g.
\end{equation*}
Now Theorem ~\ref{vegetarisk} asserts that
\begin{equation}\label{maxat}
  (dd^c u_\ell)^p\to [dd^cu]^p +
  \sum_{j=1}^{p-1} \S_j(u)\w \big (dd^c (h+g)\big )^{p-j}. 
\end{equation}
Note that $u_\ell=\varphi_\ell + h$. Thus the left hand side of
\eqref{maxat} equals $(dd^c \varphi_\ell + \omega)^p$ and the right
hand side equals $[dd^c\varphi+\omega]^p + \sum_{j=1}^{p-1}
\S_j^\omega(\varphi)\w \eta^{p-j}$. 
   \end{proof}

   \begin{remark}\label{overstyv}
         If $\varphi$ has analytic singularities, then Theorem
     ~\ref{kyrkbacke} 
     follows from
     \cite{B}*{Theorem~1} as in Remark ~\ref{oversattning}. 
     \end{remark}

\section{The Monge-Amp\`ere energy}\label{maenergi}
We want to 
describe $\G_k(X, \omega)$
as finite energy classes. To do this, let us start by recalling the classical
setting. 
If $\varphi\in \psh(X,\omega)\cap L^{\infty}(X)$ then its
Monge-Amp\`ere energy is defined as \eqref{skramla}.  
More generally for $0\leq k\leq n$ and $\varphi\in
\psh(X,\omega)\cap L^{\infty}(X)$ one can define the \emph{Monge-Amp\`ere energy of order $k$} as
\begin{equation*}
    E_k(\varphi):=\frac{1}{k+1}\sum_{j=0}^k\int_X \varphi(dd^c
    \varphi+\omega)^j\wedge \omega^{n-j}.
  \end{equation*}
  These functionals can be extended
  to the entire class $\psh(X,\omega)$ by
  letting
  \begin{equation*}%\label{penn}
    E_k(\varphi):=\inf\left \{E_k(\psi): \psi\geq \varphi, \psi\in \psh(X,\omega)\cap
    L^{\infty}(X)\right \},
\end{equation*}
cf.\ \eqref{ramla}. 
We let 
  \begin{equation*}
    \E_k(X,\omega):=\{\varphi\in \psh(X,\omega):
    E_k(\varphi)>-\infty\}
  \end{equation*}
  be the corresponding \emph{finite energy classes}.
 Moreover, we consider the \emph{full mass classes}  
  \begin{equation*}
    \F_k(X,\omega):=\left \{\varphi\in \psh(X,\omega): \int_X
    \langle dd^c \varphi+\omega\rangle^ k\w\omega^{n-k}=\int_X
    \omega^n\right \}.
  \end{equation*}
  Note that
  $$
  \E(X, \omega)=\mathcal E_n(X, \omega) \subset \cdots \subset \E_1(X,\omega), 
  $$
where $\E(X, \omega)$  is the
  standard finite energy class \eqref{famla} corresponding to the energy functional
  $E=E_n$. Similarly,  $\F(X, \omega)=\F_n(X, \omega)$ is
  the standard full mass class \eqref{svamla}.  
%
%
% Note that $\mathcal E(X, \omega)=\mathcal E_n(X, \omega)$ is the
%  standard finite energy class \eqref{famla} corresponding to the energy functional
%  $E=E_n$. Similarly,  $\F(X, \omega)=\F_n(X, \omega)$ is
 % the standard full mass class \eqref{svamla}.  
%
%  
%Clearly, from the definition, 
%$\E_n(X,\omega)\subset \cdots \subset \E_1(X,\omega).$
\begin{prop}\label{massklasserna}
  We have $\F_n(X,\omega)\subset \cdots \subset \F_1(X,\omega).$
  \end{prop}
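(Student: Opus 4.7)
The plan is to prove the single-step inclusion $\F_k(X,\omega)\subset \F_{k-1}(X,\omega)$ for each $2\le k\le n$, from which the full chain of inclusions is immediate. The key ingredient is the monotonicity of non-pluripolar masses under singularity ordering, Proposition \ref{monoton}, which will be applied by strategically replacing a single factor $\la dd^c\varphi+\omega\ra$ by the Kähler form $\omega=\la dd^c 0+\omega\ra$.

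Since $X$ is compact and $\varphi\in\psh(X,\omega)$ is upper semicontinuous, $\varphi$ is bounded above, so $0\succeq \varphi$ (and $0$ is $\omega$-psh because $\omega\ge 0$). I would then apply Proposition \ref{monoton} with the choice
\begin{equation*}
\varphi_j=\psi_j=\varphi \text{ for } 1\le j\le k-1,\quad \varphi_k=0,\ \psi_k=\varphi,\quad \varphi_j=\psi_j=0 \text{ for } k+1\le j\le n.
\end{equation*}
Then $\varphi_j\succeq\psi_j$ in every slot (trivially, except at $j=k$ where $0\succeq\varphi$), so Proposition \ref{monoton} yields
\begin{equation*}
\int_X \la dd^c \varphi+\omega\ra^{k-1}\w \omega^{n-k+1}
\ \ge\
\int_X \la dd^c \varphi+\omega\ra^{k}\w \omega^{n-k}.
\end{equation*}

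Now suppose $\varphi\in \F_k(X,\omega)$, so the right-hand side above equals $\int_X\omega^n$. Combined with the universal upper bound $\int_X \la dd^c \varphi+\omega\ra^{k-1}\w \omega^{n-k+1}\le \int_X\omega^n$ recalled from \cite{BEGZ}*{Proposition~1.20} in the introduction, this forces $\int_X \la dd^c \varphi+\omega\ra^{k-1}\w \omega^{n-k+1}=\int_X\omega^n$, i.e., $\varphi\in \F_{k-1}(X,\omega)$, completing the step.

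There is no real obstacle to this argument once Proposition \ref{monoton} is invoked; the entire content lies in selecting which single factor to raise from $\varphi$ to $0$, and the rest is a sandwich between the monotonicity inequality and the BEGZ mass bound. A more hands-on approach would truncate $\varphi_\ell=\max(\varphi,-\ell)$, use Stokes' theorem to get equality of the two integrals (they both equal $\int_X\omega^n$ in the bounded case), and then pass to the limit via plurifine locality; but this requires more care, and the above route via Proposition \ref{monoton} is the cleaner one.
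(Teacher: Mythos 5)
Your proof is correct and follows essentially the same route as the paper: both arguments reduce to the single-step inclusion and deduce it by applying Proposition~\ref{monoton} to compare $\int_X\la dd^c\varphi+\omega\ra^{k}\w\omega^{n-k}$ with $\int_X\la dd^c\varphi+\omega\ra^{k-1}\w\omega^{n-k+1}$ (replacing one factor $\varphi$ by $0$), and then sandwich the latter against the upper bound $\int_X\omega^n$. The only cosmetic difference is that the paper writes the argument as a telescoping decomposition of $0$ into two nonpositive terms, invoking Proposition~\ref{monoton} for both, whereas you invoke the BEGZ mass bound for the second term; these are the same inequality.
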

  \begin{proof}
Assume that $\varphi\in \F_k(X, \omega)$. 
Then
\begin{multline*}
  0=
\int_X \la dd^c\varphi + \omega \ra^k \w
  \omega^{n-k} - \int_X \omega^n 
  =\\
\bigg (\int_X \la dd^c\varphi + \omega \ra^k \w
\omega^{n-k} -
\int_X \la dd^c\varphi + \omega \ra^{k-1} \w
\omega^{n-k+1}\bigg)
+\\
\bigg(\int_X \la dd^c\varphi + \omega \ra^{k-1} \w
\omega^{n-k+1}- 
\int_X \omega^n\bigg ).
 \end{multline*} 
 By Proposition ~\ref{monoton} both terms in
the right hand side are $\leq 0$ and thus they must both vanish. In
particular, $$\int_X \la dd^c\varphi + \omega \ra^{k-1} \w
\omega^{n-k+1}=
\int_X \omega^n$$ and thus $\varphi\in\F_{k-1}(X, \omega)$. 
    \end{proof}

  \begin{remark}\label{relatera}
  Note that $\varphi\in \G_{k-1}(X, \omega)$ is in $\F_k(X, \omega)$ if
  and only if $\S_j^\omega(\varphi)$ vanishes for $j=1,\ldots, k$. 
      \end{remark}

The finite energy classes $\E_k(X, \omega)$ have the following fundamental
properties. 
\begin{thm}\label{grunderna}
  Let $(X, \omega)$ be a compact K\"ahler manifold. Then 
      \begin{enumerate}
      \item
        if $\varphi\in \E_k(X,\omega)$ and $\psi \sim \varphi$
        then $\psi\in \E_k(X,\omega)$;
      \item
        $\E_k(X,\omega)$ is convex;
      \item
        $\E_k(X,\omega)\subseteq \F_k(X,\omega)$.
\end{enumerate}
      \end{thm}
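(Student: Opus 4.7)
All three properties are classical; in the special case $k=n$ they are the standard facts about $\E(X,\omega)$ proved in \cite{GZ, BEGZ}, and I would carry the same blueprint over uniformly for every $k\leq n$.

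For (1), I combine the infimum definition \eqref{ramla} with the translation identity $E_k(\chi+c)=E_k(\chi)+cV$ for bounded $\omega$-psh $\chi$ and $c\in\R$, where $V:=\int_X\omega^n$; this identity follows from \eqref{ringar}, since each $\int_X(dd^c\chi+\omega)^j\wedge\omega^{n-j}$ equals $V$. Pick $C>0$ with $\psi\geq\varphi-C$. For any bounded $\omega$-psh $\chi\geq\psi$, $\chi+C\geq\varphi$ is a candidate in \eqref{ramla} for $\varphi$, so $E_k(\varphi)\leq E_k(\chi+C)=E_k(\chi)+CV$, i.e.\ $E_k(\chi)\geq E_k(\varphi)-CV$. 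Taking the infimum over $\chi$ yields $E_k(\psi)\geq E_k(\varphi)-CV>-\infty$.

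For (2), the key ingredient is concavity of $E_k$ along affine paths of bounded $\omega$-psh functions. For bounded $\varphi_0,\varphi_1$ and $\varphi_t=t\varphi_1+(1-t)\varphi_0$, Stokes gives the first variation $\frac{d}{dt}E_k(\varphi_t)=\int_X(\varphi_1-\varphi_0)(dd^c\varphi_t+\omega)^k\wedge\omega^{n-k}$, and one differentiation more together with the integration-by-parts formula (Proposition~\ref{partial}) produces
\[
\frac{d^2}{dt^2}E_k(\varphi_t) = -k\int_X d(\varphi_1-\varphi_0)\wedge d^c(\varphi_1-\varphi_0)\wedge(dd^c\varphi_t+\omega)^{k-1}\wedge\omega^{n-k}\leq 0,
\]
so $E_k(\varphi_t)\geq tE_k(\varphi_1)+(1-t)E_k(\varphi_0)$ on bounded functions. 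For general $\varphi_0,\varphi_1\in\E_k$, set $\varphi_i^L:=\max(\varphi_i,-L)$ and $\chi_L:=t\varphi_1^L+(1-t)\varphi_0^L$; then $\chi_L$ is bounded $\omega$-psh with $\chi_L\searrow\varphi_t$. The bounded concavity together with the monotonicity $E_k(\varphi_i^L)\geq E_k(\varphi_i)$ (from the infimum definition) yields $E_k(\chi_L)\geq tE_k(\varphi_1)+(1-t)E_k(\varphi_0)>-\infty$ uniformly in $L$. Invoking the classical continuity of $E_k$ along decreasing sequences of bounded $\omega$-psh functions \cite{GZ}, one has $E_k(\chi_L)\to E_k(\varphi_t)$, and passing to the limit gives $E_k(\varphi_t)\geq tE_k(\varphi_1)+(1-t)E_k(\varphi_0)>-\infty$, so $\varphi_t\in\E_k$.

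For (3), after subtracting a constant I may assume $\sup_X\varphi=0$, so that $\varphi_\ell:=\max(\varphi,-\ell)\leq 0$ is bounded and $\omega$-psh. By \eqref{ringar}, $\int_X(dd^c\varphi_\ell+\omega)^k\wedge\omega^{n-k}=V$, while plurifine locality of the Bedford--Taylor product gives
\[
\1_{\{\varphi>-\ell\}}(dd^c\varphi_\ell+\omega)^k = \1_{\{\varphi>-\ell\}}\la dd^c\varphi+\omega\ra^k,
\]
so splitting yields
\[
V = \int_{\{\varphi>-\ell\}}\la dd^c\varphi+\omega\ra^k\wedge\omega^{n-k} + a_\ell,\qquad a_\ell:=\int_{\{\varphi\leq -\ell\}}(dd^c\varphi_\ell+\omega)^k\wedge\omega^{n-k}.
\]
The first summand tends to $\int_X\la dd^c\varphi+\omega\ra^k\wedge\omega^{n-k}$ by monotone convergence (the non-pluripolar product does not charge $\{\varphi=-\infty\}$), so it suffices to show the escape $a_\ell\to 0$. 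Since $\varphi_\ell\equiv-\ell$ on $\{\varphi\leq-\ell\}$ and $\varphi_\ell\leq 0$ throughout,
\[
a_\ell = -\frac{1}{\ell}\int_{\{\varphi\leq -\ell\}}\varphi_\ell(dd^c\varphi_\ell+\omega)^k\wedge\omega^{n-k} \leq -\frac{1}{\ell}I_k(\varphi_\ell),
\]
where $I_k(\psi):=\int_X\psi(dd^c\psi+\omega)^k\wedge\omega^{n-k}$. The classical comparison/capacity estimates of \cite{GZ} provide a uniform bound $|I_k(\varphi_\ell)|\leq C|E_k(\varphi)|$ independently of $\ell$, and hence $a_\ell=O(1/\ell)\to 0$, so $\varphi\in\F_k(X,\omega)$.

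The main obstacle is the uniform control of $|I_k(\varphi_\ell)|$ in part (3). Integration by parts yields $I_{j+1}(\psi)\leq I_j(\psi)\leq 0$ for $\psi\leq 0$ bounded, and together with $(k+1)E_k=\sum_j I_j$ this only delivers $I_k\leq E_k$, i.e.\ $|I_k|\geq |E_k|$, which is the wrong direction. Bridging this gap requires the classical capacity argument based on the comparison principle applied to the sub-level sets $\{\varphi\leq -t\}$, and this is where the bulk of the technical work sits.
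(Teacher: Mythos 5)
Parts (1) and (2) of your proposal are correct and follow essentially the paper's own route: the paper derives both from Proposition~\ref{ponton} (monotonicity and concavity of $E_k$ on all of $\psh(X,\omega)$), which is reduced to the bounded case via the variation formulas of Proposition~\ref{gris1} --- exactly the translation identity and second-derivative computation you use. Your passage to the limit in (2), via continuity of $E_k$ along decreasing sequences of bounded $\omega$-psh functions, is legitimate (it follows from monotonicity on bounded functions plus Bedford--Taylor continuity, cf.\ \cite{BEGZ}*{Proposition~2.10} for $k=n$); alternatively one can avoid invoking it by comparing an arbitrary bounded majorant $\psi\geq\varphi_t$ with $(1-t)\varphi_0^{\ell}+t\varphi_1^{\ell}$ for $\ell$ large, which is what the concavity argument for $E_k$ on $\psh(X,\omega)$ amounts to.

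Part (3) is not complete as written: you explicitly leave the uniform bound on $|I_k(\varphi_\ell)|$ unproven and defer it to ``classical capacity arguments''. That is a genuine gap in the write-up, but it closes with an elementary observation you already have in hand. For bounded $\psi\leq 0$ every $I_j(\psi)=\int_X\psi(dd^c\psi+\omega)^j\w\omega^{n-j}$ is $\leq 0$, hence
\[
I_k(\varphi_\ell)\;\geq\;\sum_{j=0}^{k}I_j(\varphi_\ell)\;=\;(k+1)E_k(\varphi_\ell)\;\geq\;(k+1)E_k(\varphi),
\]
the last inequality because $\varphi_\ell$ is a bounded majorant of $\varphi$ and $E_k(\varphi)$ is defined as an infimum over such majorants. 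This gives $|I_k(\varphi_\ell)|\leq(k+1)|E_k(\varphi)|$ uniformly in $\ell$, so $a_\ell=O(1/\ell)\to 0$ and no comparison principle is needed. (Your inequality $I_k\leq E_k$ comes from $I_k\leq I_j$; the bound you actually need comes instead from $I_j\leq 0$ for $j<k$.) For comparison, the paper obtains (3) from Theorem~\ref{grisegenskaper} via Proposition~\ref{gris2}: the same plurifine-locality splitting shows that $E_k(\varphi_\ell)+\frac{\ell}{k+1}T$, with $T:=\sum_{j=0}^k\int_X(\omega^j-\la dd^c\varphi+\omega\ra^j)\w\omega^{n-j}\geq 0$ fixed, decreases to $E_k^{np}(\varphi)$; since $E_k(\varphi_\ell)$ is bounded below when $\varphi\in\E_k(X,\omega)$ and the limit is finite from above, the term $\ell T$ forces $T=0$, and since each summand of $T$ is nonnegative this gives $\varphi\in\F_k(X,\omega)$. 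Both are ``mass escape'' arguments built on the identity \eqref{lugga}; the paper's version packages the escape into the single quantity $T$ and avoids estimating $I_k(\varphi_\ell)$ altogether.
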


 The last part is a consequence of the second part of Theorem
 ~\ref{grisegenskaper} below, but it also follows from
 \cite{BEGZ}*{Proposition~2.11} (for $k=n$). 
The first two parts follow from the following result.

\begin{prop}\label{ponton}
The functional
$E_k$ is non-decreasing and concave on $\psh(X, \omega)$. 
\end{prop}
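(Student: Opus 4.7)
My plan is to first establish both properties on bounded $\omega$-psh functions by a direct integration-by-parts computation, and then extend them to all of $\psh(X,\omega)$ via a monotone approximation.

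\smallskip

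\emph{Bounded case.} For $\varphi_0,\varphi_1\in\psh(X,\omega)\cap L^\infty(X)$, let $\varphi_t=(1-t)\varphi_0+t\varphi_1$ and $u=\varphi_1-\varphi_0$. Using Proposition~\ref{partial} (applicable since $u$ is globally bounded and one can take $A=\emptyset$) and the identity $dd^c\varphi_t=(dd^c\varphi_t+\omega)-\omega$, I would differentiate inside the definition of $E_k$, integrate by parts on the $j$-th term, and let the resulting sum telescope to obtain
\begin{equation*}
\frac{d}{dt}E_k(\varphi_t)=\int_Xu\,(dd^c\varphi_t+\omega)^k\wedge\omega^{n-k}.
\end{equation*}
A further differentiation together with another application of Proposition~\ref{partial} yields
\begin{equation*}
\frac{d^2}{dt^2}E_k(\varphi_t)=-k\int_Xdu\wedge d^cu\wedge(dd^c\varphi_t+\omega)^{k-1}\wedge\omega^{n-k}\leq 0.
\end{equation*}
The first formula, applied with $\varphi_0\leq\varphi_1$, shows $E_k$ is non-decreasing on bounded $\omega$-psh functions; the second gives concavity there.

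\smallskip

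\emph{Extension to $\psh(X,\omega)$.} Monotonicity is immediate from the definition of $E_k$ as an infimum: if $\varphi\leq\varphi'$, every bounded $\psi\geq\varphi'$ also dominates $\varphi$, so the infimum for $\varphi'$ is taken over a smaller set. For concavity, fix $\varphi_0,\varphi_1\in\psh(X,\omega)$ and $t\in[0,1]$, and set $\varphi_i^M:=\max(\varphi_i,-M)$ and $\psi_M:=(1-t)\varphi_0^M+t\varphi_1^M$. Each $\psi_M$ is bounded $\omega$-psh, and $\psi_M\searrow\varphi_t:=(1-t)\varphi_0+t\varphi_1$ as $M\to\infty$. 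The bounded concavity yields
\begin{equation*}
E_k(\psi_M)\geq(1-t)E_k(\varphi_0^M)+tE_k(\varphi_1^M).
\end{equation*}
The desired inequality then follows by passing to the limit in $M$, provided one knows that $E_k(\chi_j)\to E_k(\chi)$ whenever $\chi_j\searrow\chi$ is a decreasing sequence of bounded $\omega$-psh functions with $\chi\in\psh(X,\omega)$; applying this to $\psi_M$ and to each $\varphi_i^M$ gives $E_k(\varphi_t)\geq(1-t)E_k(\varphi_0)+tE_k(\varphi_1)$.

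\smallskip

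\emph{Main obstacle.} The continuity claim above is the crux of the argument. The inequality $E_k(\chi)\leq\lim_jE_k(\chi_j)$ is immediate from the infimum definition. For the reverse, given any bounded competitor $\eta\geq\chi$ with $\eta\geq-C$ on $X$, I would note that $\max(\chi_j,-C)$ is a decreasing sequence of bounded $\omega$-psh functions with $\max(\chi_j,-C)\searrow\max(\chi,-C)\leq\eta$. The continuity of $E_k$ under decreasing sequences of \emph{bounded} $\omega$-psh functions (itself a consequence of Lemma~\ref{konto} applied to each summand in $E_k$) then gives $E_k(\max(\chi_j,-C))\to E_k(\max(\chi,-C))\leq E_k(\eta)$, while the bounded monotonicity yields $E_k(\chi_j)\leq E_k(\max(\chi_j,-C))$. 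Combining these, $\lim_jE_k(\chi_j)\leq E_k(\eta)$, and taking the infimum over $\eta$ gives $\lim_jE_k(\chi_j)\leq E_k(\chi)$, as required.
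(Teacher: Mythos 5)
Your proof is correct and follows essentially the same route as the paper: the bounded case is exactly Proposition~\ref{gris1} (the first and second derivative formulas obtained by the integration by parts you describe, as in Berman--Boucksom), and the passage to unbounded functions is the max-regularization argument the paper leaves implicit when it remarks that the reduction to $\psh(X,\omega)\cap L^{\infty}(X)$ ``is not hard to see''. The only cosmetic difference is that your continuity lemma for arbitrary decreasing bounded sequences is a bit stronger than strictly necessary --- for the specific regularizations $\max(\varphi,-\ell)$ the identity $E_k(\varphi)=\lim_{\ell}E_k(\max(\varphi,-\ell))$ already follows from the infimum definition together with bounded monotonicity, without invoking Bedford--Taylor continuity --- but your argument for it is sound.
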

It is not hard to see that one can reduce the proof of Proposition
~\ref{ponton} to prove that $E_k$ is non-decreasing and concave on
$\psh(X, \omega)\cap L^\infty(X)$. This, in turn, is an immediate
consequence of the following result.

 \begin{prop}\label{gris1}
If $\varphi$ and $\varphi+u$ are $\omega$-psh and bounded, then 
\begin{equation}\label{energi1}
\frac{d}{dt}\Big|_{t=0^+} E_k(\varphi+tu)=\int_X u (
dd^c\varphi+\omega)^k\w\omega^{n-k} 
\end{equation}
and
\begin{equation}\label{energi2}
\frac{d^2}{dt^2}\Big|_{t=0^+} E_k(\varphi+t u)=-k\int_X du\w d^c u\w (dd^c\varphi+\omega)^{k-1}\w\omega^{n-k}.
\end{equation}
\end{prop}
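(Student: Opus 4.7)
The plan is to view $t\mapsto E_k(\varphi+tu)$ as a polynomial in $t$ and compute its derivatives directly from the definition. Set $\varphi_t=\varphi+tu$ and $\omega_t=dd^c\varphi_t+\omega=\omega_0+tdd^c u$, where $\omega_0=dd^c\varphi+\omega$. Since $\varphi$ and $\varphi+u$ are both $\omega$-psh and bounded, their difference $u$ has bounded $dd^c u=\omega_1-\omega_0$ (a difference of two positive currents), and by Bedford--Taylor multilinearity for bounded psh functions,
\begin{equation*}
\omega_t^j=\sum_{i=0}^j\binom{j}{i}t^i\,\omega_0^{j-i}\wedge(dd^c u)^i.
\end{equation*}
Thus the function $t\mapsto E_k(\varphi_t)=\frac{1}{k+1}\sum_{j=0}^k\int_X\varphi_t\,\omega_t^j\wedge\omega^{n-j}$ is polynomial (hence smooth) in $t$.

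For the first derivative, I would differentiate termwise and group the contributions of $\dot\varphi_t=u$ and $\dot\omega_t=dd^c u$:
\begin{equation*}
\tfrac{d}{dt}E_k(\varphi_t)=\tfrac{1}{k+1}\sum_{j=0}^k\int_X u\,\omega_t^j\wedge\omega^{n-j}+\tfrac{1}{k+1}\sum_{j=1}^k j\int_X\varphi_t\,dd^c u\wedge\omega_t^{j-1}\wedge\omega^{n-j}.
\end{equation*}
Then I would apply Proposition~\ref{partial} (with $A=\emptyset$, legitimate because all potentials are bounded) to move $dd^c$ off $u$, obtaining $\int_X u\,dd^c\varphi_t\wedge\omega_t^{j-1}\wedge\omega^{n-j}$, and finally substitute $dd^c\varphi_t=\omega_t-\omega$. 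The resulting expression telescopes: the combinatorial identity $\sum_{j=0}^k(j+1)a_j-\sum_{j=1}^k j\,a_{j-1}=(k+1)a_k$ applied to $a_j=\int_X u\,\omega_t^j\wedge\omega^{n-j}$ collapses everything to $\int_X u\,\omega_t^k\wedge\omega^{n-k}$, which at $t=0$ gives \eqref{energi1}.

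For the second derivative, differentiating the formula $\frac{d}{dt}E_k(\varphi_t)=\int_X u\,\omega_t^k\wedge\omega^{n-k}$ yields
\begin{equation*}
\tfrac{d^2}{dt^2}E_k(\varphi_t)=k\int_X u\,dd^c u\wedge\omega_t^{k-1}\wedge\omega^{n-k},
\end{equation*}
and another application of Proposition~\ref{partial} converts this into $-k\int_X du\wedge d^c u\wedge\omega_t^{k-1}\wedge\omega^{n-k}$, giving \eqref{energi2} upon evaluating at $t=0$.

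The only delicate point is justifying integration by parts when the potentials are merely bounded (not smooth); but the hypothesis that $\varphi$ and $\varphi+u$ are bounded $\omega$-psh puts us squarely within the Bedford--Taylor framework, so Proposition~\ref{partial} applies with $A=\emptyset$ and no regularization argument is needed. The telescoping combinatorial identity is the only other ingredient, and it is elementary.
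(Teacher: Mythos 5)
Your proof is correct and is essentially the argument the paper has in mind: the paper simply defers to \cite{BB}*{Propositions~4.1 and~4.4} and notes that "the general case can be proved by the same arguments," and those arguments are exactly your polynomial expansion in $t$, termwise differentiation, integration by parts (legitimate here since all potentials are bounded), and the telescoping identity that collapses the sum to the top term. The only cosmetic difference is that you invoke Proposition~\ref{partial} with $A=\emptyset$ where the paper's source uses the integration-by-parts statements of \cite{BB}; both are valid in the bounded Bedford--Taylor setting.
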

For $k=n$ this was proved in \cite{BB}*{Propositions~4.1 and~4.4}
and the general case can be
proved by the same arguments.

%The proofs of \eqref{energi1} and \eqref{energi2} follow precisely the
%proofs of Proposition~4.1 and Proposition~4.4, respectively, in
%\cite{BB}. Proposition ~\ref{gris1} also follows from Propositions
%~\ref{plommon} and ~\ref{krikon} below. 

\section{The non-pluripolar energy}\label{ickepp}

Let us now introduce an alternative way of extending the energies
$E_k(\varphi)$ to the entire class $\psh(X,\omega)$.
\begin{df}\label{samba} 
  Let $(X, \omega)$ be a compact K\"ahler manifold of dimension
  $n$. For  $1\leq k\leq n-1$ we define the
  \emph{non-pluripolar energy of order $k$} of $\varphi\in \psh(X, \omega)$ as  
\begin{equation*}
E_k^{np}(\varphi)=\frac{1}{k+1}\sum_{j=0}^k\int_X \varphi\langle dd^c
\varphi+\omega\rangle^j\wedge \omega^{n-j}.
\end{equation*}
\end{df}
Note that the \emph{non-pluripolar energy} $E^{np}(\varphi)$, as
defined in
Definition ~\ref{mamba}, equals $E^{np}_{n-1}(\varphi)$. 
Note that if $\varphi\in \psh(X, \omega)\cap L^\infty(X)$, then
$E_k^{np}(\varphi)=E_k(\varphi)$.
Moreover, in view of Definition ~\ref{prinsessan}, 
%Following the introduction we define the non-pluripolar energy class
\begin{equation*}
  \G_k(X,\omega)=\{\varphi\in \psh(X,\omega):
  E_k^{np}(\varphi)>-\infty\}.
\end{equation*}

\begin{remark}\label{blind}
Since $0\leq \int_X \langle dd^c
\varphi+\omega\rangle^j\wedge \omega^{n-j}\leq \int_X\omega^n$, if
$C\geq 0$, 
\begin{equation*}%\label{snoblind}
  E^{np}_k(\varphi)\leq E^{np}_k(\varphi+C)\leq
  E^{np}_k(\varphi)+C\int_X\omega^n.
  \end{equation*}
\end{remark}

The functional $E_k^{np}$ is neither monotone nor concave on
$\psh(X,\omega)$ in general, see Examples ~\ref{ejkonvex} and ~\ref{ejmonoton}
below. In particular ${\G}_k(X,\omega)$ is not convex in general. However, we have the following partial generalization of Theorem
~\ref{grunderna}. 

\begin{thm}\label{grisegenskaper}
  Let $(X, \omega)$ be a compact K\"ahler manifold. 
  \begin{enumerate}
  \item\label{argare} Assume that $\varphi, \psi\in \psh(X, \omega)$. If $\varphi\in
    {\G}_k(X,\omega)$ and $\psi\sim \varphi$, then $\psi\in
    {\G}_k(X,\omega)$. 
  \item\label{klassare}
     We have $\mathcal{E}_k(X,\omega)=\mathcal{G}_k(X,\omega)\cap
   \F_k(X,\omega).$
  Moreover, if $\varphi\in \F_k(X, \omega)$, then
  $E_k^{np}(\varphi)=E_k(\varphi)$. 
    \end{enumerate} 
  \end{thm}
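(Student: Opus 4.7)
The plan is to handle both statements using the max-regularizations $\varphi_\ell=\max(\varphi,-\ell)$, plurifine locality of the non-pluripolar product, and the monotonicity and integration-by-parts results of Section~\ref{globala} (Proposition~\ref{monoton}, and Proposition~\ref{partial} in the generalized form of Remark~\ref{objektiv}). Part~(\ref{argare}) is handled by direct manipulation of mixed non-pluripolar products; part~(\ref{klassare}) will follow from matching upper and lower bounds for $\int_X\varphi_\ell\,d\mu_\ell^j$, where $\mu_\ell^j:=(dd^c\varphi_\ell+\omega)^j\w\omega^{n-j}$ and $\mu^j:=\la dd^c\varphi+\omega\ra^j\w\omega^{n-j}$.

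For part~(\ref{argare}), write $u=\psi-\varphi$, so $|u|\le C$. By multilinearity of the mixed non-pluripolar product,
\[
\la dd^c\psi+\omega\ra^j-\la dd^c\varphi+\omega\ra^j=\sum_{i=0}^{j-1}\la dd^c\psi+\omega\ra^i\w dd^cu\w\la dd^c\varphi+\omega\ra^{j-1-i},
\]
with $dd^cu=(dd^c\psi+\omega)-(dd^c\varphi+\omega)$ understood through the multilinear mixed product. Pairing with $\varphi\omega^{n-j}$ and applying integration by parts (Remark~\ref{objektiv}) to each term to transfer $dd^c$ from $u$ onto $\varphi$, the difference $\int_X\varphi\la dd^c\psi+\omega\ra^j\w\omega^{n-j}-\int_X\varphi\la dd^c\varphi+\omega\ra^j\w\omega^{n-j}$ is rewritten as a sum of integrals of the bounded function $u$ against mixed non-pluripolar products wedged with smooth forms. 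By Proposition~\ref{monoton}, such mixed masses depend only on singularity types and are hence finite (as $\psi\sim\varphi$). Combined with the trivially bounded correction $\int_Xu\la dd^c\psi+\omega\ra^j\w\omega^{n-j}$, this shows $\int_X\psi\la dd^c\psi+\omega\ra^j\w\omega^{n-j}$ is finite iff $\int_X\varphi\la dd^c\varphi+\omega\ra^j\w\omega^{n-j}$ is, i.e., $\psi\in\G_k(X,\omega)$ iff $\varphi\in\G_k(X,\omega)$.

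For part~(\ref{klassare}), the central claim is $E_k^{np}(\varphi)=E_k(\varphi)$ for $\varphi\in\F_k(X,\omega)$. Assume WLOG $\varphi\le0$ and set $O_\ell=\{\varphi>-\ell\}$. Plurifine locality gives $\1_{O_\ell}\mu_\ell^j=\1_{O_\ell}\mu^j$; Stokes' theorem gives $\mu_\ell^j(X)=\int_X\omega^n$; and by Proposition~\ref{massklasserna}, the $\F_k$-hypothesis yields $\mu^j(X)=\int_X\omega^n$ for every $j\le k$. In particular $\mu_\ell^j(X\minus O_\ell)=\mu^j(X\minus O_\ell)\to0$ and $\mu_\ell^j\to\mu^j$ weakly. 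Decomposing
\[
\int_X(-\varphi_\ell)\,d\mu_\ell^j=\int_{O_\ell}(-\varphi)\,d\mu^j+\ell\,\mu_\ell^j(X\minus O_\ell),
\]
and using $-\varphi\ge\ell$ on $X\minus O_\ell$ together with $\mu_\ell^j(X\minus O_\ell)=\mu^j(X\minus O_\ell)$ to get $\int_{X\minus O_\ell}(-\varphi)\,d\mu^j\ge\ell\,\mu_\ell^j(X\minus O_\ell)$, yields $\int_X\varphi_\ell\,d\mu_\ell^j\ge\int_X\varphi\,d\mu^j$ for every $\ell$. The matching upper bound $\limsup_\ell\int_X\varphi_\ell\,d\mu_\ell^j\le\int_X\varphi\,d\mu^j$ follows from $\varphi_\ell\le\varphi_{\ell_0}$ for $\ell\ge\ell_0$, the standard upper semicontinuity $\limsup_\ell\int_X\varphi_{\ell_0}\,d\mu_\ell^j\le\int_X\varphi_{\ell_0}\,d\mu^j$ for bounded usc $\varphi_{\ell_0}$ against weakly convergent positive measures of equal total mass, and monotone convergence $\int_X\varphi_{\ell_0}\,d\mu^j\searrow\int_X\varphi\,d\mu^j$ as $\ell_0\to\infty$. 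Hence $\lim_\ell\int_X\varphi_\ell\,d\mu_\ell^j=\int_X\varphi\,d\mu^j$ (possibly both $-\infty$), so $E_k(\varphi_\ell)\to E_k^{np}(\varphi)$; since $E_k(\varphi)=\lim_\ell E_k(\varphi_\ell)$ by the definition of $E_k$ on unbounded functions, $E_k=E_k^{np}$ on $\F_k(X,\omega)$.

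From this equality, $\F_k\cap\G_k\subset\E_k$ is immediate. For $\E_k\subset\F_k$, assume $\varphi\le0$: the bound $E_k(\varphi_\ell)\ge-M$ forces $\int_X(-\varphi_\ell)\,d\mu_\ell^k\le(k+1)M$, and the decomposition above (valid without the $\F_k$-hypothesis) gives $\ell\,\mu_\ell^k(X\minus O_\ell)\le(k+1)M$, hence $\mu_\ell^k(X\minus O_\ell)\to0$ and $\mu^k(X)=\int_X\omega^n$, i.e., $\varphi\in\F_k$; then $\E_k\subset\G_k$ follows from the equality. The main technical step is the lower bound for $\int_X\varphi_\ell\,d\mu_\ell^j$: it relies on the delicate mass identity $\mu_\ell^j(X\minus O_\ell)=\mu^j(X\minus O_\ell)$ (coming from Stokes' theorem and the $\F_k$-condition, on top of the plurifine-local agreement on $O_\ell$), which is what allows the pointwise bound $-\varphi\ge\ell$ on $X\minus O_\ell$ to control the $\mu_\ell^j$-mass there.
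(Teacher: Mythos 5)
Your part~(\ref{klassare}) is correct and is essentially the paper's own argument in different packaging: with your notation $\mu_\ell^j=(dd^c\varphi_\ell+\omega)^j\w\omega^{n-j}$ and $\mu^j=\la dd^c\varphi+\omega\ra^j\w\omega^{n-j}$, the identity $\int_X(-\varphi_\ell)\,d\mu_\ell^j=\int_{O_\ell}(-\varphi)\,d\mu^j+\ell\,\mu_\ell^j(X\setminus O_\ell)$ is exactly the rearranged form of \eqref{lugga} in the proof of Proposition~\ref{gris2}, and the three conclusions are drawn the same way there: positivity of the defect term gives $E_k\le E_k^{np}$ and hence $\E_k(X,\omega)\subset\G_k(X,\omega)$; finiteness of $E_k$ forces $\ell\,\mu_\ell^k(X\setminus O_\ell)$ to stay bounded, hence $\E_k(X,\omega)\subset\F_k(X,\omega)$; and vanishing of the defect on $\F_k(X,\omega)$ (via Proposition~\ref{massklasserna}) gives $E_k=E_k^{np}$ there. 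Your separate upper and lower bounds for $\lim_\ell\int\varphi_\ell\,d\mu_\ell^j$ can be compressed, since the paper simply notes that the right-hand side of \eqref{lugga} decreases to $\int\varphi\,d\mu^j$ by monotone convergence; but your version is sound.

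Part~(\ref{argare}), however, has a genuine gap. You telescope $\la dd^c\psi+\omega\ra^j-\la dd^c\varphi+\omega\ra^j$ into mixed terms containing $dd^cu$ with $u=\psi-\varphi$ bounded, pair with $\varphi\,\omega^{n-j}$, and then ``transfer $dd^c$ from $u$ onto $\varphi$'' by integration by parts. But Proposition~\ref{partial}, and equally the generalizations cited in Remark~\ref{objektiv}, require \emph{both} functions entering the formula to be globally bounded differences of qpsh functions; $u$ is bounded, but $\varphi$ is not, so the swap $\int\varphi\,dd^cu\w T=\int u\,dd^c\varphi\w T$ is not covered by anything you invoke. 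Worse, the left-hand side $\int_X\varphi\,dd^cu\w T$ is an integral of the unbounded $\varphi$ against a signed measure whose positive and negative parts are precisely the mixed masses whose $\varphi$-integrability is the content of the statement, so the manipulation is not even obviously well defined before the conclusion is known. The paper avoids this entirely: part~(\ref{argare}) is deduced from Proposition~\ref{arma} (monotonicity of $E_k^{np}$ within a fixed singularity type) together with Remark~\ref{blind}, and Proposition~\ref{arma} in turn follows from Proposition~\ref{gris2} --- the very truncation identity you already exploit in part~(\ref{klassare}) --- combined with Proposition~\ref{monoton} and the monotonicity of $E_k$ on bounded functions; no integration by parts is needed. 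If you want to keep your route, you would have to run the telescoping and integration by parts on the truncations $\max(\varphi,-\ell)$ and $\max(\psi,-\ell)$, where everything is bounded, and then control the limit; at that point you are redoing Proposition~\ref{gris2}.
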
 
In particular, $\G_k(X, \omega)$ contains the convex subclass $\E_k(X,
\omega)$.
Note that Theorem ~\ref{stranda} follows from Theorem ~\ref{grisegenskaper}.

\smallskip 

The proof relies on the following description of the non-pluripolar energy as a limit of
energies of bounded $\omega$-psh functions and Monge-Amp\`ere masses. 
\begin{prop}\label{gris2}
  If $\varphi\in \psh(X,\omega)$,
then
\begin{equation}\label{gris3}
E_k\big (\max(\varphi,-\ell)\big )+\frac{\ell}{k+1}\sum_{j=0}^k \int_X \big(\omega^j-\la dd^c\varphi + \omega\ra^j\big)\w\omega^{n-j}  \searrow E^{np}_k(\varphi).
\end{equation}
\end{prop}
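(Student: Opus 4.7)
\smallskip

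My plan is to give a clean algebraic identity that turns the left-hand side of \eqref{gris3} into an integral involving only the non-pluripolar current $\la dd^c\varphi+\omega\ra^j$, after which both monotonicity and the limit become straightforward. Write $\varphi_\ell:=\max(\varphi,-\ell)$, so that $\{\varphi>-\ell\}$ is plurifine open. The plurifine locality of the Bedford-Taylor product (combined with the definition \eqref{husqvarna} of $\la dd^c\varphi+\omega\ra^j$) gives
\begin{equation*}
\mathbf{1}_{\{\varphi>-\ell\}}(dd^c\varphi_\ell+\omega)^j = \mathbf{1}_{\{\varphi>-\ell\}}\la dd^c\varphi+\omega\ra^j.
\end{equation*}
Splitting $\int_X \varphi_\ell(dd^c\varphi_\ell+\omega)^j\w\omega^{n-j}$ along $\{\varphi>-\ell\}$ and its complement (on which $\varphi_\ell\equiv -\ell$), and using the total mass identity $\int_X(dd^c\varphi_\ell+\omega)^j\w\omega^{n-j}=\int_X\omega^n$ (a consequence of \eqref{ringar} applied to the bounded function $\varphi_\ell$) to eliminate the unknown $(dd^c\varphi_\ell+\omega)^j$ from the complement, I will obtain, after combining with the correction term $\frac{\ell}{k+1}\sum_j\int_X(\omega^j-\la dd^c\varphi+\omega\ra^j)\w\omega^{n-j}$,
\begin{equation*}
A_\ell:=E_k(\varphi_\ell)+\tfrac{\ell}{k+1}\sum_{j=0}^k\int_X(\omega^j-\la dd^c\varphi+\omega\ra^j)\w\omega^{n-j}
=\tfrac{1}{k+1}\sum_{j=0}^k\Bigl[\int_{\{\varphi>-\ell\}}\varphi\,\la dd^c\varphi+\omega\ra^j\w\omega^{n-j}-\ell\int_{\{\varphi\leq-\ell\}}\la dd^c\varphi+\omega\ra^j\w\omega^{n-j}\Bigr].
\end{equation*}

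With this explicit formula in hand, monotonicity is elementary. For $\ell'>\ell$, I split $\{\varphi\leq-\ell\}=\{-\ell'<\varphi\leq-\ell\}\cup\{\varphi\leq-\ell'\}$ and compute
\begin{equation*}
A_{\ell'}-A_\ell=\tfrac{1}{k+1}\sum_{j=0}^k\Bigl[\int_{\{-\ell'<\varphi\leq-\ell\}}(\varphi+\ell)\la dd^c\varphi+\omega\ra^j\w\omega^{n-j}-(\ell'-\ell)\int_{\{\varphi\leq-\ell'\}}\la dd^c\varphi+\omega\ra^j\w\omega^{n-j}\Bigr].
\end{equation*}
Both terms are manifestly $\le 0$, since $\varphi+\ell\leq 0$ on the first region and $\ell'>\ell$. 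Hence $A_\ell$ is non-increasing.

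For the limit, I may assume $\varphi\leq 0$ (after subtracting $\sup_X\varphi$, which shifts both sides of \eqref{gris3} by the same amount, as a short check using \eqref{ringar} shows). Then $\varphi\mathbf{1}_{\{\varphi>-\ell\}}$ decreases to $\varphi$ pointwise outside the pluripolar set $\{\varphi=-\infty\}$, which has $\la dd^c\varphi+\omega\ra^j$-measure zero. Monotone convergence yields
\begin{equation*}
\int_{\{\varphi>-\ell\}}\varphi\,\la dd^c\varphi+\omega\ra^j\w\omega^{n-j}\searrow\int_X\varphi\,\la dd^c\varphi+\omega\ra^j\w\omega^{n-j},
\end{equation*}
possibly to $-\infty$. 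For the second term I use the elementary bound $\ell\int_{\{\varphi\leq-\ell\}}\la dd^c\varphi+\omega\ra^j\w\omega^{n-j}\leq\int_{\{\varphi\leq-\ell\}}(-\varphi)\la dd^c\varphi+\omega\ra^j\w\omega^{n-j}$. If $\varphi\in\G_k(X,\omega)$ this tends to $0$ by dominated convergence since the sets decrease to a pluripolar set; combined with the first term this gives $A_\ell\to E_k^{np}(\varphi)$. If $\varphi\notin\G_k(X,\omega)$, some $\int_X\varphi\la dd^c\varphi+\omega\ra^j\w\omega^{n-j}=-\infty$ and the matching first term forces $A_\ell\to-\infty=E_k^{np}(\varphi)$ (the second term remains non-positive and does no harm).

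The main technical obstacle is simply keeping the bookkeeping correct in the identity for $A_\ell$; once that is in place, monotonicity is algebra and the limit is standard monotone/dominated convergence applied to the two pieces separately. The one place where care is needed is the limit in the non-integrable case, which is why I dispatch it by the crude sign bound rather than trying to compute the limits of the two pieces independently.
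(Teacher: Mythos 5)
Your proof is correct and follows essentially the same route as the paper's: the key identity obtained from plurifine locality together with the total mass formula $\int_X(dd^c\varphi_\ell+\omega)^j\w\omega^{n-j}=\int_X\omega^n$ is exactly the paper's identity $\int_X\varphi_\ell(dd^c\varphi_\ell+\omega)^j\w\omega^{n-j}+\ell\int_X(\omega^j-\la dd^c\varphi+\omega\ra^j)\w\omega^{n-j}=\int_X\varphi_\ell\la dd^c\varphi+\omega\ra^j\w\omega^{n-j}$, merely written with the integral split over $\{\varphi>-\ell\}$ and its complement. Your explicit two-region verification of monotonicity and your case split for the limit are just a more hands-on rendering of the paper's one-line observation that $\int_X\varphi_\ell\la dd^c\varphi+\omega\ra^j\w\omega^{n-j}$ decreases to $\int_X\varphi\la dd^c\varphi+\omega\ra^j\w\omega^{n-j}$ by monotone convergence.
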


\begin{proof}
  Let $\varphi_\ell=\max (\varphi, -\ell)$. We claim that
 \begin{multline}\label{lugga}
\int_X\varphi_\ell (dd^c \varphi_\ell + \omega)^j \w\omega^{n-j}+\ell
\int_X \big (\omega^j -\la dd^c \varphi +\omega \ra^j\big ) \w\omega^{n-j}=\\
 \int_X \varphi_\ell  \la dd^c\varphi + \omega\ra^j\w\omega^{n-j}.
\end{multline}
Taking this for granted and noting that the right hand side decreases
to
\newline $\int_X
\varphi\la dd^c\varphi + \omega\ra^j\w\omega^{n-j}$, 
the proposition follows by summing over $j$. 

It remains to prove the claim. 
First, since $\varphi_{\ell}=\varphi$ 
in $O:=\{ \varphi>-\ell\}$, 
which is open in the plurifine topology, since $\varphi_\ell
=-\ell$ in $X\setminus  O$, and since the non-pluripolar
Monge-Amp\`ere operator is local in
the plurifine topology, 
\begin{equation}\label{tugga}
  \varphi_\ell \big ((dd^c \varphi_\ell + \omega)^j  -\la dd^c\varphi
  + \omega\ra^j \big )=
  -\ell \big ((dd^c \varphi_\ell + \omega)^j  -\la dd^c\varphi
  + \omega\ra^j \big ).
\end{equation}
%
%\begin{multline}\label{tugga}
%  \int_X\varphi_\ell
%  \big ((dd^c \varphi_\ell + \omega)^j
%  \w\omega^{n-j}
%  -\la dd^c\varphi + \omega\ra^j\w\omega^{n-j}\big )
%  =\\
% \ell  \int_X
%  \big ((dd^c \varphi_\ell + \omega)^j
%  \w\omega^{n-j}
%  -\la dd^c\varphi + \omega\ra^j\w\omega^{n-j}\big ) 
%\end{multline}
Next, since $\varphi_\ell$ is bounded, cf.\ \eqref{ringar}, 
\begin{equation}\label{njugga}
\int_X (dd^c \varphi_\ell + \omega)^j  \w \omega^{n-j}=\int_X \omega^n.
\end{equation}
Combining \eqref{tugga} and \eqref{njugga}, we get \eqref{lugga}. 
\end{proof}

We have the following partial generalization of Proposition
~\ref{ponton}. 
Although $E_k^{np}$ is not monotone on $\psh(X, \omega)$, it is 
monotone on functions of the same singularity type. 
%From this we get that $\G_k(X,\omega)$ is increasing on
%$\omega$-psh functions with the same singularity type. 

\begin{prop}\label{arma}
  Assume that $\varphi, \psi\in \psh(X, \omega)$. If $\varphi\sim
  \psi$ and 
  %If $\varphi\sim\psi$ are $\omega$-psh and
  $\psi \geq \varphi$, then
  $E_k^{np}(\psi)\geq E_k^{np}(\varphi)$. 
\end{prop}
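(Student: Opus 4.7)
The plan is to exploit the approximation formula Proposition~\ref{gris2}, which expresses $E_k^{np}$ as a decreasing limit of Monge-Amp\`ere energies of bounded truncations plus a correction term involving only the Monge-Amp\`ere masses. Since $\psi \geq \varphi$ implies $\max(\psi,-\ell) \geq \max(\varphi,-\ell)$ for every $\ell$, and since $\varphi \sim \psi$ will force the correction terms to coincide, the monotonicity of the bounded-case energy $E_k$ will pass to the limit.

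More concretely, I would proceed as follows. By Proposition~\ref{gris2} applied to $\varphi$ and to $\psi$, we have
\begin{equation*}
E_k^{np}(\varphi) = \lim_{\ell\to\infty}\Bigl[ E_k\bigl(\max(\varphi,-\ell)\bigr) + \tfrac{\ell}{k+1}\sum_{j=0}^k \int_X \bigl(\omega^j - \langle dd^c\varphi+\omega\rangle^j\bigr)\wedge\omega^{n-j}\Bigr],
\end{equation*}
and likewise for $\psi$. Because $\varphi \sim \psi$, the monotonicity of non-pluripolar masses under singularity type (Proposition~\ref{monoton}) gives
\begin{equation*}
\int_X \langle dd^c\varphi+\omega\rangle^j\wedge\omega^{n-j} = \int_X \langle dd^c\psi+\omega\rangle^j\wedge\omega^{n-j}, \qquad 0\leq j\leq k,
\end{equation*}
so the correction terms in the two approximating sequences are identical for each $\ell$.

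It then remains to compare the bounded energies. Since $\psi \geq \varphi$ we have $\max(\psi,-\ell)\geq\max(\varphi,-\ell)$, and by the monotonicity part of Proposition~\ref{ponton} applied to these bounded $\omega$-psh functions,
\begin{equation*}
E_k\bigl(\max(\psi,-\ell)\bigr) \geq E_k\bigl(\max(\varphi,-\ell)\bigr)
\end{equation*}
for every $\ell$. Adding the common correction term and letting $\ell\to\infty$ yields $E_k^{np}(\psi)\geq E_k^{np}(\varphi)$. The inequality is trivial if $E_k^{np}(\varphi)=-\infty$, and otherwise both limits exist in $\R\cup\{-\infty\}$ by monotone convergence, so nothing further is needed.

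There is no serious obstacle; the only subtlety is confirming that the correction terms truly match, which is exactly where the hypothesis $\varphi\sim\psi$ (as opposed to merely $\psi\geq\varphi$) is essential, and this is precisely what Proposition~\ref{monoton} provides. Without equal singularity types the correction terms would differ by a quantity of order $\ell$ and no conclusion could be drawn in the limit --- explaining also why $E_k^{np}$ fails to be monotone on all of $\psh(X,\omega)$.
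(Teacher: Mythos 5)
Your proposal is correct and follows essentially the same route as the paper: monotonicity of $E_k$ on bounded functions (Proposition~\ref{ponton}), equality of the non-pluripolar masses via Proposition~\ref{monoton} since $\varphi\sim\psi$, and passage to the limit through the approximation formula of Proposition~\ref{gris2}. The paper's proof is just a more compressed version of exactly this argument.
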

\begin{proof}
  Since $E_k$ is non-decreasing, see Proposition ~\ref{ponton},
  $E_k(\max(\psi,-\ell))\ge
  E_k(\max(\varphi,-\ell))$. 
  Morover, since $\varphi\sim\psi$, by
  Proposition ~\ref{monoton}, $$\int_X \la dd^c \psi + \omega \ra^j
  \w \omega^{n-j} = \int_X \la dd^c \varphi + \omega \ra^j
  \w \omega^{n-j}.$$
  Now, the proposition follows from Proposition ~\ref{gris2}. 
  \end{proof}

  \begin{proof}[Proof of Theorem ~\ref{grisegenskaper}]
Part \eqref{argare} follows from
    Proposition ~\ref{arma} in view of Remark ~\ref{blind}. 

   It remains to prove part \eqref{klassare}.   Since $E_k(\varphi)=\lim_{\ell\to \infty} E_k(\varphi_\ell)$ and
  $$T:=\sum_{j=0}^k \int_X(\omega^j-\la dd^c \varphi + \omega \ra^j)\w \omega^{n-j}\geq
  0$$ it follows from \eqref{gris3} that $E_k(\varphi)\leq
  E_k^{np}(\varphi)$, and thus $\E_k(X, \omega)\subset \G_k(X,
  \omega)$.  Moreover if $E_k(\varphi)>-\infty$, then $T=0$, since
  clearly 
  $E^{np}(\varphi)$ is bounded from above.  It follows that $\E_k(X,\omega)\subset \F_k(X, \omega)$. 
If $\varphi\in \F_k(X, \omega)$, then $T=0$ by Proposition
~\ref{massklasserna}, and thus $E_k(\varphi)=E_k^{np}(\varphi)$. Hence
$\E_k(X, \omega) = \G_k(X, \omega)\cap \F_k(X, \omega)$.     
\end{proof}

\subsection{Concavity of $E_k^{np}$}\label{derivatorna} 

We have the following generalization of (the second part of)
Proposition ~\ref{ponton}. 

 \begin{prop}\label{konkav}
   Assume that $\varphi\in\G_k(X, \omega)$ has sul. Then
     $E_k^{np}$ is concave on the set of $\psi\in \psh(X, \omega)$ such
   that 
$\psi\sim \varphi$.
\end{prop}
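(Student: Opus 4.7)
The plan is to show that for any $\psi_0, \psi_1 \sim \varphi$, the function $F(t) := E_k^{np}((1-t)\psi_0 + t\psi_1)$ is concave on $[0,1]$, by establishing the explicit second derivative formula
\begin{equation*}
  F''(t) = -k \int_X du \wedge d^c u \wedge \la dd^c\psi_t + \omega \ra^{k-1} \wedge \omega^{n-k},
\end{equation*}
where $\psi_t := (1-t)\psi_0 + t\psi_1$ and $u := \psi_1 - \psi_0$. Since $du\wedge d^c u$ is a non-negative $(1,1)$-form, $F''(t) \leq 0$ and concavity follows. Note that by Theorem \ref{grisegenskaper}(1), each $\psi_t$ lies in $\G_k(X, \omega)$ with $\psi_t \sim \varphi$, so $\psi_t$ has sul with the same exceptional set $A$ as $\varphi$, and $u$ is globally bounded on $X$. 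The formula is the non-pluripolar counterpart of the one in Proposition \ref{gris1}.

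First I would observe that $F(t)$ is a polynomial in $t$ of degree at most $k+1$. Writing $\Theta_i := dd^c\psi_i + \omega$ and using the multilinearity \eqref{beundra} of the non-pluripolar product, the identity $dd^c\psi_t + \omega = (1-t)\Theta_0 + t\Theta_1$ yields the binomial expansion
\begin{equation*}
  \la dd^c\psi_t + \omega \ra^j = \sum_{i=0}^j \binom{j}{i}(1-t)^{j-i} t^i \la \Theta_0^{j-i} \wedge \Theta_1^i \ra,
\end{equation*}
which, combined with the linearity of $\psi_t$ in $t$, shows $F$ is polynomial. Differentiating twice term by term and applying multilinearity produces
\begin{multline*}
  F''(t) = \frac{1}{k+1}\sum_{j=0}^k\Bigl[2j\int_X u\,dd^c u\wedge\Theta_t^{j-1}\wedge\omega^{n-j}\\
   + j(j-1)\int_X \psi_t\,(dd^c u)^2\wedge\Theta_t^{j-2}\wedge\omega^{n-j}\Bigr],
\end{multline*}
with $\Theta_t := \la dd^c\psi_t + \omega \ra$ and all wedge products understood as non-pluripolar.

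The heart of the computation is the integration-by-parts identity
\begin{multline*}
  \int_X \psi_t (dd^c u)^2\wedge\Theta_t^{j-2}\wedge\omega^{n-j} = \int_X u\,dd^c u\wedge\Theta_t^{j-1}\wedge\omega^{n-j}\\
   - \int_X u\,dd^c u\wedge\Theta_t^{j-2}\wedge\omega^{n-j+1},
\end{multline*}
obtained by moving one $dd^c$ from $u$ onto $\psi_t$ and then substituting $dd^c\psi_t = \Theta_t - \omega$. Substituting this into $F''(t)$ and telescoping the double sum in $j$ collapses everything to the single term $k\int_X u\,dd^c u\wedge\Theta_t^{k-1}\wedge\omega^{n-k}$, and a final integration by parts (directly covered by Proposition \ref{partial} since $u$ is globally bounded) converts $u\,dd^c u$ into $-du\wedge d^c u$, giving the claimed formula.

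The main obstacle is justifying the first integration by parts, since $\psi_t$ is not globally bounded and thus lies outside the scope of Proposition \ref{partial} as stated. I would resolve this by truncation: apply Proposition \ref{partial} to the bounded $\psi_t^{(\ell)} := \max(\psi_t, -\ell)$ and pass to the limit $\ell \to \infty$, using plurifine locality of the non-pluripolar product together with the equality of the Monge-Amp\`ere masses $\int_X \la\Theta_t\ra^j\wedge\omega^{n-j}$ along the singularity class of $\varphi$ (Proposition \ref{monoton}), which ensures that the mass escaping on $\{\psi_t \leq -\ell\}$ vanishes in the limit. Alternatively, the extensions of Proposition \ref{partial} referenced in Remark \ref{objektiv} provide a more direct justification.
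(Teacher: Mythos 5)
Your overall strategy is the same as the paper's: reduce concavity to non-positivity of the second derivative of $t\mapsto E_k^{np}$ along the segment, via the same polynomial expansion and the same raw formula for $F''$ (cf.\ \eqref{prata}). The gap is in your central integration-by-parts identity. The claimed identity
\begin{multline*}
  \int_X \psi_t\,(dd^c u)^2\w \la dd^c\psi_t+\omega\ra^{j-2}\w\omega^{n-j}
  =\int_X u\,dd^c u\w \la dd^c\psi_t+\omega\ra^{j-1}\w\omega^{n-j}\\
  -\int_X u\,dd^c u\w \la dd^c\psi_t+\omega\ra^{j-2}\w\omega^{n-j+1}
\end{multline*}
is false in general. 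When you truncate to $\max(\psi_t,-\ell)$, apply Proposition~\ref{partial}, and let $\ell\to\infty$, the portion of $\int_X u\,dd^c u\w(dd^c\max(\psi_t,-\ell)+\omega)\w T$ carried by $X\setminus O_\ell$ does \emph{not} tend to zero. Your justification for its vanishing --- equality of the non-pluripolar masses within a fixed singularity type --- does not do the job: Proposition~\ref{monoton} makes $\int_X\la dd^c\psi_t+\omega\ra^{j}\w\omega^{n-j}$ independent of $t$, but this common value is in general strictly smaller than $\int_X\omega^n$, and it is precisely this deficit that the truncated currents deposit on $X\setminus O_\ell$ in the limit. This is the same phenomenon as the currents $\S_j$ in Theorems~\ref{vegetera} and~\ref{krubba}; already for $\psi_t$ with analytic singularities along a positive-dimensional set these residual contributions are non-zero. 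Consequently the telescoping does not collapse $F''(t)$ to the single term $-k\int_X du\w d^c u\w\la dd^c\psi_t+\omega\ra^{k-1}\w\omega^{n-k}$, and that formula for $F''$ is incorrect as stated.

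The paper's Proposition~\ref{krikon} carries out exactly this computation but keeps the residual terms: the correct formula \eqref{trex} contains, besides your term, the extra sum
\begin{equation*}
-\frac{1}{k+1}\sum_{j=2}^k j(j-1)\lim_{\ell\to\infty}\int_{X\setminus(O_\ell\cup A)} du\w d^c u\w(dd^c\varphi_\ell+\omega)\w\la dd^c\varphi+\omega\ra^{j-2}\w\omega^{n-j}.
\end{equation*}
The argument is saved not by showing these terms vanish, but by the further manipulation \eqref{godis}, which rewrites each residual as $-\lim_\ell\int du\w d^c u\w(\cdots)$ against a positive current, whence it is non-positive (Remark~\ref{popcorn}); the total second derivative is then still $\leq 0$ and concavity follows. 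To repair your proof you must retain the residual terms and establish their non-positivity via this additional integration by parts, at which point your argument becomes the paper's.
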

      This is a 
      consequence of the following generalization of \eqref{energi2}. 
  \begin{prop}\label{krikon} 
Assume that $\varphi,\psi\in \G_k(X,\omega)$ have sul and 
$\psi\sim\varphi$. Let $A$ be a closed complete pluripolar set such that
$\varphi$ and thus $\psi$ are locally bounded outside $A$. Moreover,
let $u=\psi-\varphi$. 
Then,
\begin{multline}\label{trex}
  \frac{d^2}{dt^2}\Big|_{t=0^+} E^{np}_k(\varphi+t u)
  =
 - k \int_{X\setminus A} du \w d^c u\w\la dd^c \varphi +\omega
 \ra^{k-1}\w \omega^{n-k}
 \\ 
 - \frac{1}{k+1} 
\sum_{j=2}^k j(j-1)\lim_{\ell\to\infty} \int_{X\setminus (O_\ell\cup A)}
du \w du^c \w ( dd^c\varphi_\ell +\omega )\w \la dd^c \varphi+\omega
\ra^{j-2}\w \omega^{n-j}, 
\end{multline}
where $O_\ell=\{\varphi>-\ell\}\cap\{\psi>-\ell\}$ and $\varphi_\ell=\max
(\varphi, -\ell)$. 
\end{prop}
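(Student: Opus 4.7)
The plan is to expand $F(t) := E^{np}_k(\varphi+tu)$ as an explicit polynomial in $t$ using the multilinearity \eqref{beundra} of the non-pluripolar product, and to read off $F''(0)$ by differentiating term by term. Write $T := dd^c\varphi+\omega$ and $S := dd^c\psi+\omega$, so that
\[
T_t := dd^c(\varphi+tu) + \omega = T + t\,dd^c u = (1-t)T + tS.
\]
Multilinearity gives
\begin{equation*}
\langle T_t^j\rangle = \sum_{i=0}^j \binom{j}{i} t^i\,\langle T^{j-i}\wedge (dd^c u)^i\rangle,
\end{equation*}
so each $F_j(t) := \int_X(\varphi+tu)\langle T_t^j\rangle\wedge\omega^{n-j}$ is a polynomial in $t$ of degree $j+1$. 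Differentiating twice at $t=0$, only the $i=2$ monomial weighted by $\varphi$ and the $i=1$ monomial weighted by $u$ survive; setting
\begin{equation*}
a_{j,2} := \int_X \varphi\,\langle T^{j-2}\wedge(dd^c u)^2\rangle\wedge\omega^{n-j},\qquad b_{j,1} := \int_X u\,\langle T^{j-1}\wedge dd^c u\rangle\wedge\omega^{n-j},
\end{equation*}
one obtains $(k+1)F''(0) = \sum_{j=2}^k j(j-1)\,a_{j,2} + 2\sum_{j=1}^k j\,b_{j,1}$.

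For $b_{j,1}$, decomposing $\langle T^{j-1}\wedge dd^c u\rangle = \langle T^{j-1}\wedge S\rangle - \langle T^j\rangle$ exhibits the signed current as a difference of closed positive $(n-1,n-1)$-currents (after wedging with $\omega^{n-j}$), and Proposition~\ref{partial} applied with $v := u$ --- bounded since $\psi\sim\varphi$ --- to each piece gives
\[
b_{j,1} = -B_j,\qquad B_j := \int_{X\setminus A} du\wedge d^c u\wedge T^{j-1}\wedge\omega^{n-j}.
\]
For $a_{j,2}$ I would first truncate by $\varphi_\ell := \max(\varphi,-\ell)$; splitting $(dd^c u)^2 = S^2 - 2\,S\wedge T + T^2$ into positive pieces, monotone convergence yields $a_{j,2} = \lim_{\ell\to\infty}\int_{X\setminus A}\varphi_\ell\,T^{j-2}\wedge(dd^c u)^2\wedge\omega^{n-j}$. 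Working on $X\setminus A$ where $\varphi_\ell$ is bounded, I integrate by parts once to swap $\varphi_\ell\leftrightarrow u$ across one factor of $dd^c u$, then write $dd^c\varphi_\ell = (dd^c\varphi_\ell+\omega)-\omega$ and integrate by parts on the remaining $dd^c u$, arriving at
\begin{equation*}
\int_{X\setminus A}\varphi_\ell\,T^{j-2}\wedge(dd^c u)^2\wedge\omega^{n-j} = -\int_{X\setminus A}du\wedge d^c u\wedge(dd^c\varphi_\ell+\omega)\wedge T^{j-2}\wedge\omega^{n-j} + B_{j-1}.
\end{equation*}

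Since $\varphi_\ell = \varphi$ on $O_\ell$, also $dd^c\varphi_\ell+\omega = T$ on $O_\ell\setminus A$; splitting the remaining integral into its parts over $O_\ell\setminus A$ and $X\setminus(O_\ell\cup A)$ and using that $du\wedge d^c u\wedge T^{j-1}\wedge\omega^{n-j}$ does not charge the pluripolar set $X\setminus\bigcup_\ell O_\ell$, the $O_\ell$-piece tends to $B_j$ as $\ell\to\infty$. Hence
\begin{equation*}
a_{j,2} = -B_j + B_{j-1} - C_j,\qquad C_j := \lim_{\ell\to\infty}\int_{X\setminus(O_\ell\cup A)} du\wedge d^c u\wedge(dd^c\varphi_\ell+\omega)\wedge T^{j-2}\wedge\omega^{n-j},
\end{equation*}
the existence of $C_j$ being forced by the convergence of the other three terms. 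Substituting $b_{j,1} = -B_j$ and $a_{j,2} = -B_j + B_{j-1} - C_j$ into the expression for $(k+1)F''(0)$ and using the reindexing $\sum_{j=2}^k j(j-1)B_{j-1} = \sum_{j=1}^{k-1} j(j+1)B_j$, the coefficient of $B_j$ vanishes for every $1\leq j\leq k-1$, and at $j = k$ one is left with $-k(k+1)B_k$, yielding exactly \eqref{trex}. The main technical hurdle is the double integration by parts for $a_{j,2}$: the intermediate current $\langle T^{j-2}\wedge dd^c u\rangle$ is only signed, so Proposition~\ref{partial} cannot be applied directly, and one has to split $dd^c u = S - T$ into closed positive pieces before invoking it on each piece separately.
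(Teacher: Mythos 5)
Your proposal is correct and follows essentially the same route as the paper's proof: formal differentiation of the polynomial expansion coming from multilinearity, integration by parts via Proposition~\ref{partial} (after splitting $dd^c u$ into closed positive pieces), truncation by $\varphi_\ell$ combined with plurifine locality on $O_\ell$, and the telescoping cancellation of the $B_j$. The only step to make explicit is that differentiating term by term requires the finiteness of the coefficients $\int_X \phi\, \la dd^c\varphi+\omega\ra^{j-i}\w (dd^c u)^{i}\w\omega^{n-j}$, which is exactly the content of Lemma~\ref{aventyr} and uses the hypotheses $\varphi,\psi\in\G_k(X,\omega)$ and $\psi\sim\varphi$.
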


%In view of Remark ~\ref{popcorn}, the 
The right hand side of \eqref{trex} is non-positive, cf.\
Remark ~\ref{popcorn}.
%It follows from the proof below that
If $\varphi+tu$ is $\omega$-psh also for $t>-\epsilon$ for some
$\epsilon >0$ so that $g(t):=E^{np}_k(\varphi+t u) =E^{np}_k((1-t)\varphi + t
\psi)$ is defined in a neighborhood of $t=0$, then it follows from the proof below that \eqref{trex} is indeed the two-sided second
derivative of $g(t)$ at $t=0$. 
%
%$g(t):=E^{np}_k(\varphi+t u) =E^{np}_k((1-t)\varphi + t
%\psi)$ at $t=0$. 
It follows that $g(t)$ is concave on the interval $(0,1)$ (or more
generally where it is defined). Thus Proposition ~\ref{konkav}
follows.

For the proof of Proposition ~\ref{krikon} we need the following
lemma, cf.\ Lemma ~\ref{gummistovel}.

\begin{lma}\label{aventyr}
Assume that $\varphi,\psi\in \G_k(X,\omega)$ and $\psi\sim\varphi$.
Then, for any $i\leq j\leq k$, 
\begin{equation*}%\label{segling}
\int_X \varphi \la dd^c\varphi+\omega \ra^{i}  \w \la dd^c \psi
+\omega \ra^{j-i}\w\omega^{n-j}>-\infty. %, ~~~~~~~ i\leq j\leq k.
\end{equation*}
%for any $i\leq j\leq k$.
\end{lma}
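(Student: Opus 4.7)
The plan is to reduce the claim to the diagonal case $i=j$, which is exactly the hypothesis $\varphi\in\G_k(X,\omega)$, by using the multilinearity of the non-pluripolar Monge-Amp\`ere product to dominate the mixed current $\la dd^c\varphi+\omega\ra^i\w\la dd^c\psi+\omega\ra^{j-i}$ by a single pure $j$-th power of an auxiliary $\omega$-psh function in the singularity class of $\varphi$.

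After subtracting constants I may assume $\varphi,\psi\leq 0$. Let $\tilde\varphi:=(\varphi+\psi)/2$. Then $\tilde\varphi$ is $\omega$-psh, since $dd^c\tilde\varphi+\omega=\tfrac12(dd^c\varphi+\omega)+\tfrac12(dd^c\psi+\omega)$, and $|\tilde\varphi-\varphi|=|\psi-\varphi|/2$ is globally bounded, so $\tilde\varphi\sim\varphi$. By Theorem \ref{grisegenskaper}(1) we therefore have $\tilde\varphi\in\G_k(X,\omega)$.

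The key step will be to establish the binomial identity
\begin{equation*}
2^j\,\la dd^c\tilde\varphi+\omega\ra^j \;=\; \sum_{\ell=0}^{j}\binom{j}{\ell}\la dd^c\varphi+\omega\ra^\ell\w\la dd^c\psi+\omega\ra^{j-\ell}
\end{equation*}
as (closed positive) currents on $X$. Since $dd^c(\varphi+\psi)+2\omega=(dd^c\varphi+\omega)+(dd^c\psi+\omega)$ and $\la dd^c(\varphi+\psi)+2\omega\ra^j=2^j\la dd^c\tilde\varphi+\omega\ra^j$, this follows by iterating the multilinearity identity \eqref{beundra} (applied with local potentials of $\omega$) on each of the $j$ factors, using commutativity of the non-pluripolar product and the fact that in the global compact K\"ahler setting every non-pluripolar product of $\omega$-psh functions is well-defined. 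Every summand on the right is a positive current, so picking out the $\ell=i$ term yields
\begin{equation*}
\la dd^c\varphi+\omega\ra^i\w\la dd^c\psi+\omega\ra^{j-i}\w\omega^{n-j} \;\leq\; \frac{2^j}{\binom{j}{i}}\,\la dd^c\tilde\varphi+\omega\ra^j\w\omega^{n-j}
\end{equation*}
as positive measures on $X$.

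Multiplying by $-\varphi\geq 0$, integrating, and using $-\varphi\leq-\tilde\varphi+C$ (valid for some $C>0$ since $\tilde\varphi\sim\varphi$), I obtain
\begin{equation*}
\int_X(-\varphi)\,\la dd^c\varphi+\omega\ra^i\w\la dd^c\psi+\omega\ra^{j-i}\w\omega^{n-j} \;\leq\; \frac{2^j}{\binom{j}{i}}\int_X(-\tilde\varphi+C)\,\la dd^c\tilde\varphi+\omega\ra^j\w\omega^{n-j}.
\end{equation*}
The right-hand side is finite: the $(-\tilde\varphi)$ contribution is finite because $\tilde\varphi\in\G_k(X,\omega)$, and the $C$ contribution is bounded by $C\int_X\omega^n$ by the standard bound $\int_X\la dd^c\tilde\varphi+\omega\ra^j\w\omega^{n-j}\leq\int_X\omega^n$ recalled in the introduction. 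The main obstacle is the binomial identity: one has to check that iterating \eqref{beundra} in the global $\omega$-psh setting genuinely produces the full binomial expansion as currents (not merely as integrals), which amounts to the well-definedness of each intermediate mixed non-pluripolar product invoked along the way.
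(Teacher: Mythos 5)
Your proof is correct and follows essentially the same route as the paper: both arguments apply Theorem~\ref{grisegenskaper}(1) to place $\varphi+\psi$ (your $\tilde\varphi=(\varphi+\psi)/2$) in the finite-energy class, expand a power of its non-pluripolar Monge--Amp\`ere current binomially via multilinearity, and use positivity to extract the mixed term. The only difference is cosmetic bookkeeping at the end --- the paper keeps the weights $\phi\in\{\varphi,\psi\}$ inside the expanded integral and notes every summand is non-positive, whereas you dominate the measure and replace the weight by $-\tilde\varphi+C$ --- and both versions rest on exactly the same two ingredients.
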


\begin{proof}
We may assume that
$\varphi, \psi\leq 0$.
Since $\varphi\sim\psi$, it follows from Theorem ~\ref{grisegenskaper}
~\eqref{argare} 
that $\varphi+\psi\in \G_k(X, 2\omega)$.
Thus
\begin{equation}\label{kiting}
\int_X (\varphi + \psi) \big \la dd^c(\varphi + \psi) +2\omega \big
\ra^{j} \w (2\omega)^{n-j}>-\infty
\end{equation}
for $j\leq k$. 
Since the non-pluripolar Monge-Amp\`ere product is multilinear,
%see Section ~\ref{pruttkudde}, 
\eqref{kiting} is a sum of terms 
\begin{equation}\label{cuisine}
 \int_X \phi \la dd^c\varphi+\omega \ra^{i}  \w \la dd^c \psi
 +\omega \ra^{j-i}\w\omega^{n-j}, 
 \end{equation} 
 where $\phi$ is $\varphi$ or $\psi$.
Since they are all non-positive, the lemma follows. 
% Since $\phi\leq 0$ and $\la dd^c\varphi+\omega \ra^{i}  \w \la dd^c \psi
% +\omega \ra^{j-i}\w\omega^{n-j}$ are positive measures,
% \eqref{kiting} implies that the integral of each term
% \eqref{cuisine};  
% is $>-\infty$ %which proves the lemma.
% in particular \eqref{segling} holds for $i\leq j\leq k$. 
\end{proof}

\begin{proof}[Proof of Proposition ~\ref{krikon}]
  Since the non-pluripolar Monge-Amp\`ere product is multilinear,
  %see Section ~\ref{pruttkudde}, %\eqref{husqvarna} is multilinear, 
\begin{equation*}
  E^{np}_k(\varphi+tu)
  =
\frac{1}{k+1}\sum_{j=0}^k\int_X (\varphi+tu) \big \la
 dd^c(\varphi+tu)+\omega\big \ra^j\w\omega^{n-j}  
\end{equation*}
is a polynomial in $t$ with coefficients that are sums of terms of the
form \eqref{cuisine},
where $\phi$ is $\varphi$ or
$\psi$. Since each such integral is finite by Lemma ~\ref{aventyr}, we
may differentiate $E^{np}_k(\varphi+tu)$ formally. Thus 
\begin{multline}\label{prata}
  (k+1)\frac{d^2}{dt^2}\bigg |_{t=0^+} E^{np}_k(\varphi+tu)=
\sum_{j=1}^k2j \int_X u dd^c u \w \big \la dd^c\varphi+\omega\big
\ra^{j-1}\w\omega^{n-j}
+\\ 
\sum_{j=2}^k (j-1)j \int_X  \varphi (dd^c u)^2\w\big \la
dd^c\varphi+\omega\big \ra^{j-2}\w \omega^{n-j}, 
\end{multline}
where 
$dd^c u=\la dd^c\psi+\omega \ra-\la dd^c\varphi+\omega \ra$. 
Since currents of the form $(dd^cu)^\ell \w \la dd^c \varphi + \omega \ra^i$ do not charge
$A$, we may replace $X$ by $X\setminus A$ in \eqref{prata}.
%assume that the integrals in \eqref{prata} are over $X\setminus A$. 

Let $T=  \big \la
dd^c\varphi+\omega\big \ra^{j-2}\w \omega^{n-j}$.
Since $\varphi_\ell:=\max (\varphi, -\ell)$
decreases to $\varphi$, the integral in the $j$th term in the
second sum in \eqref{prata} equals 
\begin{equation}\label{kurva}
%  \int_X  \varphi (dd^c u)^2\w T
%  =
  \int_{X\setminus A}  \varphi (dd^c u)^2\w T
  =
  \lim_{\ell\to \infty} \int_{X\setminus A}  \varphi_\ell (dd^c u)^2 \w T.
\end{equation}
Since $dd^cu\w T$ is the difference of two closed positive currents,
we can apply 
Proposition ~\ref{partial} to this. %implies that the right hand side of
It follows that the right hand side of \eqref{kurva} equals 
%By Proposition ~\ref{partial} this equals
\begin{equation}\label{andas}
  \lim_{\ell\to \infty} \int_{X\setminus A} u  dd^c u \w (dd^c\varphi_\ell
  +\omega) \w T -
 \int_{X\setminus A} u  dd^c u \w \omega \w T.
\end{equation}
The first term in \eqref{andas} equals
\begin{equation*}%\label{randas}
  \lim_{\ell\to \infty} \int_{O_\ell\setminus A} u  dd^c u \w (dd^c\varphi_\ell
  +\omega) \w T + 
 \lim_{\ell\to \infty}\int_{X\setminus (O_\ell\cup A)} u  dd^c u \w (dd^c\varphi_\ell
  +\omega) \w T. 
\end{equation*}
In view of \eqref{husqvarna} 
we conclude that 
\begin{multline*}
\int_{X\setminus A}  \varphi (dd^c u)^2\w\big \la
dd^c\varphi+\omega\big \ra^{j-2}\w \omega^{n-j}
=\\
\int_{X\setminus A}  u dd^c u \w\big \la
dd^c\varphi+\omega\big \ra^{j-1}\w \omega^{n-j}
-
\int_{X\setminus A}  u dd^c u \w\big \la
dd^c\varphi+\omega\big \ra^{j-2}\w \omega^{n-j+1}
+ \\
\lim_{\ell\to\infty} \int_{X\setminus (O_\ell\cup A)}
u dd^c u\w  (dd^c\varphi_\ell +\omega )\w  \la dd^c \varphi+\omega
\ra^{j-2}\w \omega^{n-j}.
  \end{multline*}
Plugging this into \eqref{prata} and, as above, replacing integrals
  over $X$ by integrals over $X\setminus A$, we get
  that
\begin{multline*}%\label{tjata}
 \frac{d^2}{dt^2}\bigg |_{t=0^+} E^{np}_k(\varphi+tu)=
k \int_{X\setminus A} u dd^c u \w \big \la dd^c\varphi+\omega\big
\ra^{k-1}\w\omega^{n-k}
+\\ 
\frac{1}{k+1}
\sum_{j=0}^k (j-1)j
\lim_{\ell\to\infty} \int_{X\setminus (O_\ell\cup A)}
u dd^c u\w  (dd^c\varphi_\ell +\omega )\w  \la dd^c \varphi+\omega
\ra^{j-2}\w \omega^{n-j}.
\end{multline*}

%Applying
By Proposition ~\ref{partial}, 
% and using that $dd^c u\w\la dd^c \varphi
%+\omega \ra^{k-1}$ does not charge $A$, 
%%neither $dd^c u\w\la dd^c \varphi
%+\omega \ra^{k-1}$ nor  $du\w d^c u\w\la dd^c \varphi +\omega
%\ra^{k-1}$ charges $A$,
%we get that
% the first term in the right hand
%side of \eqref{kalasbyxor} equals 
\begin{equation*}
  k \int_{X\setminus A} u dd^c u\w\la dd^c \varphi +\omega \ra^{k-1}\w \omega^{n-k}
  =
  - k \int_{X\setminus A} du \w d^c u\w\la dd^c \varphi +\omega \ra^{k-1}\w \omega^{n-k},
\end{equation*}
which is precisely the first term in the right hand side of
\eqref{trex}.

Next, we claim that
 \begin{multline}\label{godis}
\lim_{\ell\to\infty} \int_{X\setminus (O_\ell \cup A)}
u dd^c u\w  (dd^c\varphi_\ell +\omega )\w  \la dd^c \varphi+\omega
\ra^{j-2}\w \omega^{n-j}
=\\
- \lim_{\ell\to\infty} \int_{X\setminus ( O_\ell\cup A)}
du \w du^c \w ( dd^c\varphi_\ell +\omega )\w \la dd^c \varphi+\omega
\ra^{j-2}\w \omega^{n-j}.
\end{multline}
 Taking this for granted, the last sum in \eqref{andas} equals
 the sum in \eqref{trex}, and thus the proposition follows.
 
 It remains to prove \eqref{godis}. To do this, let $T=\la dd^c
 \varphi+\omega \ra^{j-2}\w \omega^{n-j}$, and write 
 %Since $dd^c u\w (dd^c\varphi_\ell +\omega )\w T$ does not
 %charge $A$, %the integral in left hand side of \eqref{godis} equals
 \begin{multline}\label{polkagris}
  \int_{X\setminus (O_\ell\cup A)}
  u dd^c u\w  (dd^c\varphi_\ell +\omega )\w T
  =\\
   \int_{X\setminus A}
u dd^c u\w  (dd^c\varphi_\ell +\omega )\w T
-
\int_{O_\ell\setminus A}
u  dd^c u\w (dd^c\varphi_\ell +\omega )\w T.
\end{multline}
By Proposition ~\ref{partial},  
\begin{equation}\label{morot}
\int_{X\setminus A}
u dd^c u\w (dd^c\varphi_\ell +\omega )\w  T= -\int_{X\setminus A}
du \w d^c u\w  (dd^c\varphi_\ell+\omega) \w T.
\end{equation}
Next, in view of \eqref{husqvarna}, 
\begin{equation}\label{ballong}
 - \lim_{\ell\to \infty} \int_{O_\ell\setminus A}
    u  dd^c u \w ( dd^c\varphi_\ell +\omega ) \w T
 =
  -\int_{X\setminus A}
  u  dd^c u \w \la dd^c\varphi +\omega \ra \w T. 
\end{equation}
 By Proposition ~\ref{partial}, using that the non-pluripolar
 Monge-Amp\`ere operator is local
 in the plurifine topology, this equals
 \begin{multline}\label{talong}
     \int_{X\setminus A}
    du\w d^c u \w \la dd^c\varphi +\omega \ra \w T
    =\\
   \int_{O_\ell\setminus A}
    du \w d^c u \w ( dd^c\varphi_\ell +\omega ) \w T
+ 
    \int_{X\setminus (O_\ell\cup A)}
    du \w d^c u \w \la dd^c\varphi +\omega \ra \w T.
 \end{multline}
Since $u$ is bounded, \eqref{ballong} is finite and thus the second
term in \eqref{talong} tends to $0$ when
$\ell\to \infty$.
We conclude that
\begin{equation}\label{planering}
 -\lim_{\ell\to \infty} \int_{O_\ell\setminus A}
    u  dd^c u \w ( dd^c\varphi_\ell +\omega ) \w T
  =
 \lim_{\ell\to\infty} \int_{O_\ell\setminus A}
    du \w d^c u \w ( dd^c\varphi_\ell +\omega ) \w T
  \end{equation}
Now combining \eqref{polkagris}, \eqref{morot}, and \eqref{planering}
we get \eqref{godis}. 
\end{proof}

\begin{remark}
 By arguments as in the above proof we also get a formula for
 the first derivative of $E(\varphi+t u)$ in the situation of
 Proposition ~\ref{krikon}, cf.\ \eqref{energi1}: 
\begin{multline}\label{kommun}
\frac{d}{dt} E^{np}_k(\varphi+t u)= 
\int_X u\la dd^c(\varphi+tu)+\omega\ra^k\w \omega^{n-k}
+\\
\frac{1}{1+k}\sum_{j=0}^k
j \lim_{\ell\to \infty} \int_{X\setminus O_\ell} u\big
(dd^c(\varphi_\ell+tu_\ell)+\omega) \w \la dd^c(\varphi+tu)+\omega\big
\ra^{j-1}\w\omega^{n-j}.
\end{multline}
In particular,  \eqref{kommun} is non-negative if $u$ is; thus we get an
alternative proof of Proposition ~\ref{arma} in this case. 
\end{remark}

\section{Relative energy}\label{relativa} 
We slightly extend the notion of
relative energy from the introduction. 
 \begin{df}
  Let $\psi\in\G_k(X, \omega)$.
For $\varphi\in \psh(X,\omega)$, such that $\varphi\preceq \psi$ and
for $1\leq k\leq n-1$, we
define the \emph{energy relative to $\psi$ of order $k$} as 
  \begin{equation*}
    E_k^{\psi}(\varphi)=\inf\{E_k^{np}(\varphi'): \varphi'\geq 
    \varphi, \varphi'\sim \psi\}.\end{equation*}
We define the corresponding \emph{finite relative energy classes} 
  \begin{equation*}
  \E_k^\psi(X, \omega) =\{\varphi\preceq \psi,
  E^\psi_k(\varphi)>-\infty\}
  \end{equation*}
  and the \emph{relative full mass classes} 
    \begin{multline*}
      \F_k^{\psi}(X,\omega)=\\\Big \{\varphi\in \psh(X,\omega): \varphi\preceq
      \psi, \sum_{j=0}^k\int_X \langle dd^c \varphi+\omega\rangle^j \wedge
      \omega^{n-j}=\sum_{j=0}^k\int_X \langle dd^c \psi+\omega\rangle^j\wedge
      \omega^{n-j} \Big \}.
      \end{multline*}
\end{df}
  Note that if $\psi=0$, or more generally $\psi\in \psh (X,
  \omega)\cap L^\infty (X)$, then
  $E_k^\psi=E_k$, $\E_k^\psi(X, \omega)=\E_k(X, \omega)$, and 
  $\F_k^\psi(X, \omega)=\F_k(X, \omega)$, cf.\ Proposition
  ~\ref{massklasserna}. 
  Also, note that $E^\psi=E_{n-1}^\psi$, 
  \begin{equation*}
    \E^\psi(X, \omega)=\E_{n-1}^\psi(X,\omega)\subset \cdots \subset
    \E_1^\psi(X,\omega), 
  \end{equation*}
  and
  %\text{ and } 
  \begin{equation*}
    \F^\psi(X, \omega) = \F_{n-1}^\psi(X,\omega)\subset \cdots \subset \F_1^\psi(X,\omega),
\end{equation*}
where $E^\psi$, $\E^\psi(X, \omega)$, and $\F^\psi(X, \omega)$ are as
in Definition ~\ref{flamenco}.

We have the following generalization of Theorems ~\ref{grunderna} and
~\ref{grisegenskaper}. 
\begin{thm}\label{psiegenskaper}
  Let $(X, \omega)$ be a compact K\"ahler manifold. Then 
      \begin{enumerate}
      \item\label{knatte}
        if $\varphi\in \E_k^\psi(X,\omega)$ and $\varphi' \sim
        \varphi$, 
        then $\varphi'\in \E_k^\psi(X,\omega)$;
      \item\label{fnatte}
        if $\psi$ has sul, then $\E_k^\psi(X,\omega)$ is convex;
      \item\label{tjatte}
        $\E_k^\psi(X,\omega)=\G_k(X,\omega)\cap \F_k^\psi(X,\omega)$,
        and moreover if $\varphi\in \F_k^{\psi}(X,\omega)$, then
  $E_k^{\psi}(\varphi)=E_k^{np}(\varphi)$.
\end{enumerate}
      \end{thm}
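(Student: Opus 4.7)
The natural approach is to work with the sequence $\varphi_\ell^\psi := \max(\varphi, \psi - \ell)$. Once $\ell$ exceeds the constant controlling $\varphi \preceq \psi$, each $\varphi_\ell^\psi$ lies in $\{\chi \sim \psi : \chi \geq \varphi\}$ and this sequence is cofinal from below in that set while decreasing to $\varphi$. Combined with the monotonicity of $E_k^{np}$ on functions of fixed singularity type (Proposition~\ref{arma}), one obtains
\[
E_k^\psi(\varphi) = \lim_{\ell \to \infty} E_k^{np}(\varphi_\ell^\psi)
\]
as a decreasing limit. This is the main computational handle for all three parts.

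For part \eqref{knatte}, if $\varphi \sim \varphi'$ with $|\varphi - \varphi'| \leq C$, the sequences $\varphi_\ell^\psi$ and $(\varphi')_{\ell+C}^\psi$ differ pointwise by at most $C$, so by Remark~\ref{blind} the corresponding energies differ by at most $C\int_X \omega^n$ uniformly in $\ell$. Passing to the limit, $E_k^\psi(\varphi)$ and $E_k^\psi(\varphi')$ are finite together.

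For part \eqref{fnatte}, let $\varphi_0, \varphi_1 \in \E_k^\psi$, $t \in (0,1)$, and $\chi_t := t\varphi_1 + (1-t)\varphi_0$. The candidate $\tilde\chi_{t,\ell} := t(\varphi_1)_\ell^\psi + (1-t)(\varphi_0)_\ell^\psi$ is $\sim\psi$ and, by a direct check, dominates both $\chi_t$ and $(\chi_t)_\ell^\psi$. When $\psi$ has sul, Proposition~\ref{konkav} yields concavity of $E_k^{np}$ on $\{\sim\psi\}$, hence
\[
E_k^{np}(\tilde\chi_{t,\ell}) \geq t\, E_k^{np}((\varphi_1)_\ell^\psi) + (1-t)\, E_k^{np}((\varphi_0)_\ell^\psi) \geq t E_k^\psi(\varphi_1) + (1-t) E_k^\psi(\varphi_0),
\]
which is finite uniformly in $\ell$. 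Since $\tilde\chi_{t,\ell}$ is a candidate for $E_k^\psi(\chi_t)$, this gives the upper bound $E_k^\psi(\chi_t) \leq E_k^{np}(\tilde\chi_{t,\ell})$; an asymptotic comparison of $(\chi_t)_\ell^\psi$ with $\tilde\chi_{t,\ell}$, via the plurifine decomposition described below, transfers the uniform finite lower bound to $E_k^{np}((\chi_t)_\ell^\psi)$, yielding $E_k^\psi(\chi_t) > -\infty$.

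Part \eqref{tjatte} generalizes the second part of Theorem~\ref{grisegenskaper}. Using the plurifine locality of the non-pluripolar Monge-Amp\`ere operator to write
\[
\la dd^c\varphi_\ell^\psi+\omega\ra^j = \1_{O_\ell}\la dd^c\varphi+\omega\ra^j + \1_{X\setminus O_\ell}\la dd^c\psi+\omega\ra^j, \qquad O_\ell := \{\varphi > \psi - \ell\},
\]
and the mass identity $\int_X\la dd^c\varphi_\ell^\psi+\omega\ra^j\wedge\omega^{n-j} = m_j^\psi := \int_X\la dd^c\psi+\omega\ra^j\wedge\omega^{n-j}$ (as $\varphi_\ell^\psi \sim \psi$), one derives a relative analog of Proposition~\ref{gris2}:
\[
E_k^{np}(\varphi_\ell^\psi) + \frac{\ell}{k+1}\sum_{j=0}^k (m_j^\psi - m_j^\varphi) \searrow E_k^{np}(\varphi),
\]
where $m_j^\varphi := \int_X\la dd^c\varphi+\omega\ra^j\wedge\omega^{n-j}$. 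Finiteness of the left-hand side as $\ell\to\infty$ forces $\varphi\in\G_k$ and $m_j^\psi = m_j^\varphi$ for $j\leq k$, i.e., $\varphi\in\F_k^\psi$; conversely on $\G_k\cap\F_k^\psi$ the correction vanishes and $E_k^\psi(\varphi) = E_k^{np}(\varphi)$. The main obstacle is showing that the implicit error terms supported on $X\setminus O_\ell$ decay: the delicate one is $\ell\int_{X\setminus O_\ell}\la dd^c\varphi+\omega\ra^j\wedge\omega^{n-j}$, bounded via $\psi - \varphi \geq \ell$ on $X\setminus O_\ell$ by $\int_{X\setminus O_\ell}(\psi-\varphi)\la dd^c\varphi+\omega\ra^j\wedge\omega^{n-j}$, which vanishes provided $\psi\in L^1(\la dd^c\varphi+\omega\ra^j\wedge\omega^{n-j})$. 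This relative strengthening of Lemma~\ref{aventyr} is obtained by applying Lemma~\ref{aventyr} to $\varphi_m^\psi\sim\psi$ and passing $m\to\infty$ via monotone convergence, using $\varphi\in\F_k^\psi$ to justify the limit.
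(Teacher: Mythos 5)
Your overall architecture---the limit formula $E_k^\psi(\varphi)=\lim_\ell E_k^{np}\big(\max(\varphi,\psi-\ell)\big)$, monotonicity and concavity of $E_k^{np}$ on a fixed singularity class, and the plurifine splitting over $O_\ell$ and its complement---is the same as the paper's (Lemma~\ref{enkelt}, Propositions~\ref{arma} and~\ref{konkav}, and the decomposition \eqref{plommon}); your part \eqref{knatte} and your proof of the inclusion $\E_k^\psi(X,\omega)\subset\G_k(X,\omega)\cap\F_k^\psi(X,\omega)$ are essentially the paper's arguments. There are, however, two genuine gaps.

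In part \eqref{fnatte} the domination goes the wrong way. You correctly observe that $\tilde\chi_{t,\ell}\geq(\chi_t)_\ell^\psi$, but combined with Proposition~\ref{arma} this gives $E_k^{np}(\tilde\chi_{t,\ell})\geq E_k^{np}\big((\chi_t)_\ell^\psi\big)$, i.e.\ an \emph{upper} bound on the quantity whose limit is $E_k^\psi(\chi_t)$; likewise the inequality $E_k^\psi(\chi_t)\leq E_k^{np}(\tilde\chi_{t,\ell})$ you record is an upper bound on an infimum and says nothing about its finiteness. What is needed is the reverse domination: for any competitor $\chi'\geq\chi_t$ with $\chi'\sim\psi$ (in particular for $(\chi_t)_\ell^\psi$) one has $\chi'\geq\tilde\chi_{t,\ell'}$ once $\ell'$ is large enough relative to the constants in $\chi'\geq\psi-m$ and $\varphi_i\leq\psi+C_i$; then Proposition~\ref{arma} and Proposition~\ref{konkav} give $E_k^{np}(\chi')\geq E_k^{np}(\tilde\chi_{t,\ell'})\geq t\,E_k^\psi(\varphi_1)+(1-t)\,E_k^\psi(\varphi_0)$. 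This is exactly how the paper's Proposition~\ref{armbage} runs. Your appeal to an unspecified ``asymptotic comparison via the plurifine decomposition'' does not supply this step.

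In part \eqref{tjatte} the inclusion $\G_k(X,\omega)\cap\F_k^\psi(X,\omega)\subset\E_k^\psi(X,\omega)$ rests on your ``relative analog of Proposition~\ref{gris2}'', whose error terms on $X\setminus O_\ell$ are not all controlled. Besides the term you single out, the term $\sum_j\int_{X\setminus O_\ell}\psi\,\la dd^c\varphi_\ell^\psi+\omega\ra^j\w\omega^{n-j}$ must also be shown to vanish in the limit (this uses $\psi\in\G_k$). More seriously, your derivation of $\psi\in L^1\big(\la dd^c\varphi+\omega\ra^j\w\omega^{n-j}\big)$ by ``applying Lemma~\ref{aventyr} to $\varphi_m^\psi$ and passing $m\to\infty$ by monotone convergence'' does not work: the lemma gives finiteness for each fixed $m$ but no bound uniform in $m$, and there is no monotonicity in $m$ to invoke; ``using $\varphi\in\F_k^\psi$ to justify the limit'' is not an argument. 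In fact that integrability is not needed: after normalizing $\psi\leq 0$ one has $\ell\leq\psi-\varphi\leq-\varphi$ on $X\setminus O_\ell$, so the delicate term is dominated by $\int_{X\setminus O_\ell}(-\varphi)\la dd^c\varphi+\omega\ra^j\w\omega^{n-j}\to 0$ using only $\varphi\in\G_k$. The paper avoids this entire computation by a double limit through the classical energy: by Proposition~\ref{gris2}, $E_k^{np}(\varphi_\ell^\psi)=\lim_\lambda\big(E_k(\max(\varphi_\ell^\psi,-\lambda))+\lambda T(\psi)\big)$, and monotonicity of $E_k$ then yields $E_k^\psi(\varphi)\geq E_k^{np}(\varphi)+\lim_\lambda\lambda\big(T(\psi)-T(\varphi)\big)$ directly, which is both shorter and cleaner than the route you propose.
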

Note that Theorem ~\ref{blanda} corresponds to $k=n-1$. 
For the proof we will use the following
observation. 
\begin{lma}\label{enkelt}
  We have
  \begin{equation*}%\label{enkel}
    E^\psi_k(\varphi)=\lim_{\ell\to\infty} E^{np}_k\big(\max (\varphi, \psi-\ell)\big).
    \end{equation*}
  \end{lma}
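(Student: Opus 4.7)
\smallskip

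\textbf{Proof plan for Lemma \ref{enkelt}.} Set $\varphi_\ell:=\max(\varphi,\psi-\ell)$. The plan is to show that $\varphi_\ell$ is an admissible competitor in the infimum defining $E_k^\psi(\varphi)$, that the sequence $E_k^{np}(\varphi_\ell)$ is monotone decreasing in $\ell$, and that every other competitor eventually dominates some $\varphi_\ell$.

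First I would verify the basic properties of $\varphi_\ell$. By construction $\varphi_\ell\geq\varphi$ and $\varphi_\ell\geq\psi-\ell$. Using $\varphi\preceq\psi$, pick $M$ with $\varphi\leq\psi+M$; then $\varphi_\ell\leq\max(\psi+M,\psi-\ell)\leq \psi+M$ for $\ell\geq -M$. Thus $\psi-\ell\leq\varphi_\ell\leq\psi+M$, which gives $\varphi_\ell\sim\psi$. In particular, by Theorem \ref{grisegenskaper}\,\eqref{argare}, $\varphi_\ell\in\G_k(X,\omega)$, so $E_k^{np}(\varphi_\ell)$ is finite, and $\varphi_\ell$ is an admissible competitor, yielding
\begin{equation*}
E_k^{\psi}(\varphi)\leq E_k^{np}(\varphi_\ell)\qquad\text{for all sufficiently large }\ell.
\end{equation*}

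Next I would observe that $\ell\mapsto\psi-\ell$ is decreasing, so $\varphi_\ell$ is a decreasing sequence of functions, each equivalent to $\psi$. Proposition \ref{arma} then gives that $E_k^{np}(\varphi_\ell)$ is a decreasing sequence, so its limit exists in $[-\infty,\infty)$, and the displayed inequality passes to the limit:
\begin{equation*}
E_k^{\psi}(\varphi)\leq\lim_{\ell\to\infty}E_k^{np}(\varphi_\ell).
\end{equation*}

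For the reverse inequality I would take an arbitrary competitor $\varphi'\geq\varphi$ with $\varphi'\sim\psi$. Since $\varphi'\succeq\psi$, there is a constant $C$ with $\varphi'\geq\psi-C$. Combining this with $\varphi'\geq\varphi$ gives $\varphi'\geq\max(\varphi,\psi-\ell)=\varphi_\ell$ for every $\ell\geq C$. Moreover $\varphi'\sim\psi\sim\varphi_\ell$, so Proposition \ref{arma} applies and yields $E_k^{np}(\varphi')\geq E_k^{np}(\varphi_\ell)\geq \lim_{\ell\to\infty}E_k^{np}(\varphi_\ell)$. Taking the infimum over all admissible $\varphi'$ gives $E_k^{\psi}(\varphi)\geq\lim_{\ell\to\infty}E_k^{np}(\varphi_\ell)$, completing the proof.

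The only subtle point is that Proposition \ref{arma} requires both functions being compared to have the same singularity type; this is why it is essential to check $\varphi_\ell\sim\psi$ at the outset, and why the relation $\varphi'\succeq\psi$ (rather than just $\varphi'\geq\varphi$) is used to sandwich $\varphi'$ above $\varphi_\ell$ for large $\ell$. No genuine obstacle arises beyond these singularity-type bookkeeping issues.
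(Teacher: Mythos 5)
Your proof is correct and follows essentially the same route as the paper's: both directions rest on $\varphi_\ell=\max(\varphi,\psi-\ell)$ being an admissible competitor with $\varphi_\ell\sim\psi$, together with the monotonicity of $E_k^{np}$ on functions of the same singularity type (Proposition~\ref{arma}) to compare an arbitrary competitor $\varphi'\succeq\psi$ with $\varphi_\ell$ for $\ell$ large. Your write-up is slightly more explicit about verifying $\varphi_\ell\sim\psi$ and the monotone convergence of $E_k^{np}(\varphi_\ell)$, but there is no substantive difference.
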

  \begin{proof}
 %   Let $\varphi_\ell=\max (\varphi, \psi-\ell)$.
First, note that $\varphi_\ell:= \max (\varphi, \psi-\ell)\sim\psi$ and $\varphi_\ell$ decreases
to $\varphi$. Thus, by Proposition ~\ref{arma}, $\lim_{\ell\to\infty} E_k^{np}(\varphi_{\ell})\geq
E^\psi_k(\varphi)$. 
Next, assume that $\phi^j\sim\psi$ is a sequence decreasing to
$\varphi$. Since $\phi^j\sim \psi$ we can take $\ell_j\to \infty$
such that $\phi^j \geq \psi-\ell_j$ and thus $\phi^j\geq
\varphi_{\ell_j}$. Now $\lim_{j\to\infty} E_k^{np}(\phi^j)\geq
\lim_{j\to\infty} E_k^{np}(\varphi_{\ell_j})=\lim_{\ell\to\infty} E_k^{np}(\varphi_{\ell})$ by Proposition
  ~\ref{arma}. 
    \end{proof}

    We get the following partial 
    generalization of Proposition ~\ref{ponton}. 
    
\begin{prop}\label{armbage}
  Take $\psi\in \G_k(X, \omega)$.
  Then $E^\psi_k$ is non-decreasing.
 If $\psi$ has sul, then  
 $E^\psi_k$ is concave. 
\end{prop}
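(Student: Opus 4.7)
The monotonicity is immediate from the infimum definition. If $\varphi_1 \le \varphi_2 \preceq \psi$, then every $\omega$-psh $\varphi'$ satisfying $\varphi' \sim \psi$ and $\varphi' \ge \varphi_2$ also satisfies $\varphi' \ge \varphi_1$, so the admissible set in the definition of $E_k^\psi(\varphi_2)$ is contained in that for $E_k^\psi(\varphi_1)$; taking infima gives $E_k^\psi(\varphi_1) \le E_k^\psi(\varphi_2)$.

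For the concavity assertion the plan is to reduce to Proposition~\ref{konkav}, which uses that $\psi$ has sul, by a limiting argument. Fix $\varphi_0, \varphi_1 \preceq \psi$ and $t \in [0,1]$, set $\varphi_t = (1-t)\varphi_0 + t\varphi_1$, and consider
\[
\varphi_i^\ell := \max(\varphi_i, \psi - \ell), \qquad \Phi_t^\ell := (1-t)\varphi_0^\ell + t \varphi_1^\ell.
\]
Since each $\varphi_i^\ell \sim \psi$, the convex combination $\Phi_t^\ell$ is also $\sim \psi$ and decreases to $\varphi_t$ as $\ell \to \infty$. Proposition~\ref{konkav} yields
\[
E_k^{np}(\Phi_t^\ell) \ge (1-t)\, E_k^{np}(\varphi_0^\ell) + t\, E_k^{np}(\varphi_1^\ell),
\]
and Lemma~\ref{enkelt} gives $\lim_{\ell\to\infty} E_k^{np}(\varphi_i^\ell) = E_k^\psi(\varphi_i)$. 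The concavity inequality for $E_k^\psi$ at $\varphi_t$ will follow by passing to the limit, provided one also knows that $\lim_{\ell\to\infty} E_k^{np}(\Phi_t^\ell) = E_k^\psi(\varphi_t)$.

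Establishing this last identification is the main obstacle. The inequality $E_k^\psi(\varphi_t) \le \lim_\ell E_k^{np}(\Phi_t^\ell)$ is immediate since $\Phi_t^\ell$ is admissible in the infimum defining $E_k^\psi(\varphi_t)$. For the reverse inequality one compares $\Phi_t^\ell$ with $\bar\varphi_{t,\ell} := \max(\varphi_t, \psi - \ell)$, for which Lemma~\ref{enkelt} directly gives the limit $E_k^\psi(\varphi_t)$. Both sequences lie in $\{\varphi' \sim \psi\}$, both decrease to $\varphi_t$, and $\Phi_t^\ell \ge \bar\varphi_{t,\ell}$ pointwise; moreover the two agree on the plurifine open set $\{\varphi_0 \ge \psi - \ell\} \cap \{\varphi_1 \ge \psi - \ell\}$, whose complement decreases to a pluripolar set as $\ell \to \infty$. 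Following the integration-by-parts scheme used in the proof of Proposition~\ref{krikon}, one expresses $E_k^{np}(\Phi_t^\ell) - E_k^{np}(\bar\varphi_{t,\ell})$ as an integral of the bounded-above nonnegative difference $\Phi_t^\ell - \bar\varphi_{t,\ell}$ against non-pluripolar Monge--Amp\`ere currents built from $\Phi_t^\ell$ and $\bar\varphi_{t,\ell}$, and uses that these currents do not charge pluripolar sets, together with the uniform bound on their total mass, to conclude that the difference tends to $0$. Passing to the limit in the concavity inequality then yields $E_k^\psi(\varphi_t) \ge (1-t)\, E_k^\psi(\varphi_0) + t\, E_k^\psi(\varphi_1)$, completing the proof.
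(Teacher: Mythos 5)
Your monotonicity argument is correct and is in fact more direct than the paper's: the paper instead compares the regularizations $\max(\varphi,\psi-\ell)$ and $\max(\varphi',\psi-\ell)$ via Proposition~\ref{arma} and passes to the limit with Lemma~\ref{enkelt}. Your purely set-theoretic observation about nested admissible sets does the job.

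For concavity, your skeleton (apply Proposition~\ref{konkav} to $\Phi_t^\ell=(1-t)\varphi_0^\ell+t\varphi_1^\ell$ and pass to the limit via Lemma~\ref{enkelt}) is the same as the paper's, but the step you flag as ``the main obstacle'' is a genuine gap as written. You need $E_k^\psi(\varphi_t)\ge\lim_\ell E_k^{np}(\Phi_t^\ell)$, and your proposed route is to show $E_k^{np}(\Phi_t^\ell)-E_k^{np}(\bar\varphi_{t,\ell})\to 0$ by integrating the difference $\Phi_t^\ell-\bar\varphi_{t,\ell}$ against Monge--Amp\`ere currents. Two problems: first, that difference is \emph{not} uniformly bounded in $\ell$ --- on the set where exactly one of $\varphi_0,\varphi_1$ drops below $\psi-\ell$ it can be of size comparable to $\ell$ --- so you are facing a term of the form ``$\ell$ times the mass living where some potential is very negative,'' and such terms do not vanish in general (this is exactly the mechanism in Proposition~\ref{gris2} that separates $E_k$ from $E_k^{np}$ outside the full mass class). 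Second, the estimate is only sketched, with no control of the relevant masses.

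The good news is that the identity you want is true and needs no integration by parts. Since $\varphi_i\preceq\psi$, there is a constant $C$ with $\varphi_i\le\psi+C$, and a short pointwise case check gives
\[
\bar\varphi_{t,\ell}\;\le\;\Phi_t^\ell\;\le\;\bar\varphi_{t,m(\ell)},\qquad m(\ell):=\min\bigl((1-t)\ell,\,t\ell\bigr)-C\to\infty .
\]
All three functions have the singularity type of $\psi$, so Proposition~\ref{arma} sandwiches $E_k^{np}(\Phi_t^\ell)$ between $E_k^{np}(\bar\varphi_{t,\ell})$ and $E_k^{np}(\bar\varphi_{t,m(\ell)})$, both of which converge to $E_k^\psi(\varphi_t)$ by Lemma~\ref{enkelt}. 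Alternatively --- and this is what the paper does --- you can avoid identifying the limit altogether: for any competitor $\phi\ge\varphi_t$ with $\phi\sim\psi$ one can choose $\ell$ so large that $\phi\ge\Phi_t^\ell$, whence $E_k^{np}(\phi)\ge E_k^{np}(\Phi_t^\ell)\ge(1-t)E_k^{np}(\varphi_0^\ell)+tE_k^{np}(\varphi_1^\ell)\ge(1-t)E_k^\psi(\varphi_0)+tE_k^\psi(\varphi_1)$ by Propositions~\ref{arma} and~\ref{konkav}, and taking the infimum over $\phi$ finishes the proof.
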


 \begin{proof}
    As above, let $\varphi_\ell=\max (\varphi, \psi-\ell)$. 
Assume that $\varphi, \varphi'\preceq \psi$ are such that
$\varphi\geq \varphi'$. Then $\varphi_\ell\sim\varphi_\ell'\sim\psi$
and $\varphi_\ell\geq \varphi'_\ell$ and thus, by Proposition
~\ref{arma}, 
$E_k^{np}(\varphi_\ell)\geq E_k^{np}(\varphi'_\ell)$. Taking limits
over $\ell$, in view of Lemma ~\ref{enkelt}, we
get $E_k^\psi(\varphi)\geq E_k^\psi(\varphi')$.

Assume that $\psi$ has sul. It remains to prove that then $E_k^\psi$ is
concave. 
% It remains to prove that $E_k^\psi$ is concave if $\psi$ has sul.
Take $\varphi,
\varphi'\preceq \psi$, $t\in (0,1)$, and a sequence $\phi^j\sim\psi$ decreasing to
$(1-t)\varphi + t\varphi'$. %Since $\phi^j\sim\psi$
Note that we can choose $\ell_j\to \infty$ such that for
each $j$, $\phi^j\geq (1-t) \varphi_{\ell_j} +
t\varphi'_{\ell_j}$. %Also note that
Now 
\[
  E_k^{np}(\phi^j) \geq E_k^{np} \big ( (1-t) \varphi_{\ell_j} +
  t\varphi'_{\ell_j} \big )
  \geq
  (1-t) E_k^{np}(\varphi_{\ell_j}) + t E_k^{np}(\varphi'_{\ell_j}).
\]
The first inequality follows since $E_k^{np}$ is nondecreasing on
$\omega$-psh functions of the same singularity type, 
Proposition ~\ref{arma}. The second inequality follows since
$E_k^{np}$ is concave on
$\omega$-psh functions with sul of the same singularity type, Proposition
~\ref{konkav}.
Indeed, note that $\varphi_{\ell}\sim\varphi'_{\ell}\sim (1-t)\varphi_{\ell}+t\varphi'_{\ell}\sim\psi$. 
Now, taking limits over $j$, and $\inf$ over all sequences $\phi^j$, we get
\[
  E_k^\psi((1-t)\varphi + t \varphi') \geq 
  (1-t) E_k^\psi(\varphi) + t E_k^\psi(\varphi').
\]
Thus $E_k^\psi$ is concave. % where it is defined. 
    \end{proof}

    \begin{proof}[Proof of Theorem ~\ref{psiegenskaper}]
      Note, in view of Remark ~\ref{blind} and Lemma ~\ref{enkelt}, that $$E^{\psi}_k(\varphi)\leq
      E^{\psi}_k(\varphi+C)\leq E^{\psi}_k(\varphi)+C\int_X\omega^n$$
      if $C\geq 0$. 
      Hence part \eqref{knatte} follows from (the first part of) Proposition ~\ref{armbage}.
Moreover, part \eqref{fnatte} follows immediately from (the second
part of)  Proposition ~\ref{armbage}.

\smallskip 

It remains to prove part \eqref{tjatte}. 
We start by proving
\begin{equation}\label{hallon}
  \G_k(X,\omega)\cap \F_k^{\psi}(X,\omega)\subset
  \E_k^{\psi}(X,\omega).
  \end{equation}
Let %\textcolor{red}{kunde ev vara bra aven innan} 
    \[
      T(\varphi)= \frac{1}{k+1} \sum_{j=0}^k \int_X \big (\omega^j -
      \la dd^c \varphi +\omega \ra^j \big ) \w \omega^{n-j}.
      \]
      Moreover, let $\varphi_\ell=\max (\varphi, \psi-\ell)$. Note
      that $\varphi_\ell\geq \varphi$. % (so $E_k(\varphi_\ell)\geq E_k(\varphi)$).
      Since
      $\varphi_\ell \sim \psi$, it follows from Proposition
      ~\ref{monoton} 
      that
      \begin{equation}\label{jordgubbe} 
        T(\varphi_\ell)=T(\psi).
        \end{equation}
      Now %, by Lemma \eqref{enkelt} and Proposition \eqref{gris2} we have 
      \begin{multline*}%\label{stamma}
        E_k^\psi(\varphi)=\lim_{\ell\to \infty} E_k^{np}(\varphi_\ell)
        =
      \lim_{\ell\to \infty} \lim_{\lambda\to \infty}
      \big (E_k\big (\max (\varphi_\ell, -\lambda )\big ) + \lambda
      T(\varphi_\ell) \big )
      \geq \\
\lim_{\lambda\to \infty}
      \big (E_k\big (\max (\varphi, -\lambda )\big ) + \lambda
      T(\psi) \big )
      = E_k^{np}(\varphi) + \lim_{\lambda\to \infty}\lambda \big
      (T(\psi)-T(\varphi)\big );
    \end{multline*}
here we have used Lemma ~\ref{enkelt} for the first equality,
Proposition ~\ref{gris2} for the second and last equality, and the
monotonicity of $E_k$ for the inequality. 
Note that $T(\varphi)=T(\psi)$ if and only if $\varphi\in \F^{\psi}_k(X,
\omega)$.
%In particular, $E^\psi_k(\varphi)=E^{np}_k(\psi)$ if $\varphi\in \F^{\psi}_k(X,
%\omega)$.
Hence \eqref{hallon} follows. 
%Thus it follows that $\G_k(X,\omega)\cap \F_k^{\psi}(X,\omega)\subset
%\E_k^{\psi}(X,\omega) $. % follows from \eqref{stamma}. 
Also if $\varphi\in \F^{\psi}_k(X,
\omega)$, then $E^\psi_k(\varphi)=E^{np}_k(\varphi)$. 

\smallskip

%  We know from Proposition ~\ref{giga} that $\G_k^{\psi}(X,\omega)\subset
%\G_k(X,\omega)$ and thus it remains to prove that $\G_k^{\psi}(X,\omega)\subset
%\F^\psi_k(X,\omega)$. To do this we will use a modified version of
%Proposition ~\ref{gris2}.

To prove the reverse inclusion, consider
%Consider
\begin{multline}\label{plommon}
  %(k+1) E_k^{np}(\varphi_\ell) = %\frac{1}{k+1}
  \sum_{j=0}^{k}
  \int_X \varphi_\ell \la dd^c \varphi_\ell\ra^j\w\omega^{n-j}=
  \sum_{j=0}^{k}\int_{O_\ell} \varphi_\ell \la dd^c \varphi \ra^j\w\omega^{n-j}+\\
  \sum_{j=0}^{k} \int_{X\setminus O_\ell} \psi \la dd^c
  \varphi_\ell\ra^j\w\omega^{n-j}
  -
 \ell \sum_{j=0}^{k} \int_{X\setminus O_\ell} \la dd^c
  \varphi_\ell\ra^j\w\omega^{n-j};
  % =\\
  % \int_{O_\ell} \varphi_\ell \la dd^c \varphi \ra^j\w\omega^{n-j}+ 
 %  \int_{X\setminus O_\ell} (\psi-\ell) \la dd^c
  % \varphi_\ell\ra^j\w\omega^{n-j}
\end{multline}
here the equality follows since \eqref{husqvarna} is local in the
plurifine topology. 
%Assume that $\varphi\in \E^\psi(X, \omega)$. %Then
%$E^{\psi}_k(\varphi)$, which by Lemma ~\ref{enkelt}
By Lemma ~\ref{enkelt}, the left hand side converges to $(k+1)
E^{\psi}_k(\varphi)$. Assume that $\varphi\in \E^\psi_k(X, \omega)$ so
that $E_k^\psi (\varphi)>-\infty$. Note that the three terms in the right
hand side are bounded from above. Thus each of them is $>-\infty$. 
%
%In view of Lemma ~\ref{enkelt}, note that the left hand side converges to $(k+1)
%E^{\psi}_k(\varphi)$.
%Moreover note that the three terms on the right
%hand side are $\leq 0$.
%Assume that $\varphi\in \E^\psi(X, \omega)$. Then the limit of all
%three terms in the right hand side are
%$>-\infty$. 
%
%Thus if $\varphi\in \E^\psi(X, \omega)$ they
%all vanish. In particular the first term vanis
In particular,  
\[
  \sum_{j=0}^{k}\int_{O_\ell} \varphi_\ell \la dd^c \varphi
  \ra^j\w\omega^{n-j}
  \to (k+1) E^{np}_k(\varphi)>-\infty, 
\]
which means that $\varphi\in \G_k(X, \omega)$. 
Moreover,
the finiteness of the third term in the right hand side
of \eqref{plommon} implies that
%the fact that the limit of the third term in the right hand side
%of \eqref{plommon} is finite implies that
%the
%limit of $\sum_{j=0}^{k} \int_{X\setminus O_\ell} \la dd^c
%  \varphi_\ell\ra^j\w\omega^{n-j}$ vanishes. We can write this as 
\begin{multline*}
  0=
 \lim_{\ell\to \infty} \sum_{j=0}^{k} \int_{X\setminus O_\ell} \la dd^c
  \varphi_\ell\ra^j\w\omega^{n-j}=\\
  \lim_{\ell\to\infty}\sum_{j=0}^{k} \int_{X} \la dd^c
  \varphi_\ell\ra^j\w\omega^{n-j}- \lim_{\ell\to \infty} \sum_{j=0}^{k} \int_{O_\ell} \la dd^c
  \varphi_\ell\ra^j\w\omega^{n-j}
  = \\
   \sum_{j=0}^{k} \int_{X} \la dd^c
  \psi \ra^j\w\omega^{n-j}- \sum_{j=0}^{k} \int_{X} \la dd^c
  \varphi \ra^j\w\omega^{n-j}, 
  \end{multline*}
  i.e., $\varphi\in \F^{\psi}_k(X, \omega)$. Here we have used
  \eqref{jordgubbe} for the last equality. 
 \end{proof}

It is quite possible that the more general integration by parts
results \cite{Xia19}*{Theorem~1.1} and \cite{Duc20}*{Theorem~2.6},
cf.\ Remark \ref{objektiv}, can be used remove the sul assumption from Thm~\ref{psiegenskaper} \eqref{fnatte}. In fact Vu has already proved the convexity of certain related finite relative energy classes \cite{Duc20}*{Theorem~1.1}.

\section{Relations to the B\l{}ocki-Cegrell class}\label{dumle} 

In \cite{B06} the domain of the Monge-Amp\`ere operator was characterized in
several equivalent ways. In order to prove Theorem ~\ref{globalbc} we will use the
following characterization from \cite{B06}*{Theorem~1.1}. 
\begin{prop}\label{humle} 
Let $\Omega$ be an open subset of $\C^n$. Then $u\in \psh(\Omega)$ is
in $\D(\Omega)$ if and only if for each open $\U\subset \Omega$ and
any 
sequence of smooth $u_\lambda\in \psh (\U)$ decreasing to $u$ in $\U$, the
sequences 
\begin{equation*}
  |u_\lambda|^{n-j-2}du_\lambda \w d^cu_\lambda\w (dd^c u_\lambda)^j
  \w \omega^{n-j-1}, ~~~~~ j=0,1,\ldots, n-2,
\end{equation*}
where $\omega$ is a smooth strictly positive $(1,1)$-form, are locally
weakly bounded in $\U$. 
  \end{prop}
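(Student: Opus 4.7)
The strategy hinges on the pointwise algebraic identity
\begin{equation*}
(n-j)(n-j-1)\, |u_\lambda|^{n-j-2}\, du_\lambda\wedge d^c u_\lambda = dd^c\bigl(|u_\lambda|^{n-j}\bigr)+(n-j)\, |u_\lambda|^{n-j-1}\, dd^c u_\lambda,
\end{equation*}
valid for smooth $u_\lambda \le 0$, which couples the weighted gradient currents in the statement to Monge-Amp\`ere currents of the form $|u_\lambda|^{m}(dd^cu_\lambda)^{p}$. Wedging this with $(dd^cu_\lambda)^j \wedge \omega^{n-j-1}$, pairing with a nonnegative test function $\chi$, and using that $(dd^cu_\lambda)^j \wedge \omega^{n-j-1}$ is closed to transfer the $dd^c$ onto $\chi$ via integration by parts, local weak boundedness of the weighted gradient currents becomes equivalent to local uniform boundedness of the family
\begin{equation*}
\int \chi\, |u_\lambda|^{m}\,(dd^cu_\lambda)^{p}\wedge\omega^{n-p}, \qquad 0 \le m,\ 0 \le p,\ m+p \le n,
\end{equation*}
as $\lambda\to\infty$.

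For the forward direction, assuming $u \in \D(\Omega)$, I would use the equivalence between $\D(\Omega)$ and the local B\l{}ocki-Cegrell finite energy class (going back to Cegrell) to conclude that all such Monge-Amp\`ere integrals are locally uniformly bounded for any smooth decreasing approximation $u_\lambda \searrow u$. Substituting into the identity then yields the required weak local boundedness of the gradient currents.

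For the reverse direction, the plan is to recurse on $j$. The hypothesis at level $j$ gives a uniform local bound on $\int \chi\, |u_\lambda|^{n-j-2}du_\lambda \wedge d^cu_\lambda \wedge (dd^c u_\lambda)^j \wedge \omega^{n-j-1}$, and the identity together with integration by parts transfers this to a bound on $\int \chi\, |u_\lambda|^{n-j-1}(dd^c u_\lambda)^{j+1}\wedge\omega^{n-j-1}$ modulo terms already controlled at a previous level. Iterating from $j=n-2$ upward propagates local mass bounds on $|u_\lambda|^{m}(dd^cu_\lambda)^{p}$ all the way to $p=n$. These bounds, combined with a Bedford-Taylor style comparison argument run between two distinct smooth decreasing approximations to control the pairwise error of the associated Monge-Amp\`ere currents, show both that $(dd^cu_\lambda)^n$ has a weak limit and that this limit is independent of the approximating sequence, so that $u \in \D(\Omega)$.

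The main obstacle will be executing the recursion in the reverse direction without circularity: the weights $|u_\lambda|^{n-j-2}$ are precisely large in the region where $u_\lambda$ tends to $-\infty$, which is where Monge-Amp\`ere mass can escape, and the integration by parts generates error terms involving $d\chi$ and $d^c\chi$ that must be absorbed via Cauchy-Schwarz against the inductive hypothesis at the previous level. This is essentially B\l{}ocki's delicate argument in \cite{B06}; a cleaner alternative is to factor both directions through Cegrell's characterization of $\D(\Omega)$ via his class $\mathcal{F}$, which reduces the statement to known finiteness properties of Cegrell's energy functionals combined with the algebraic identity above.
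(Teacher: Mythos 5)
The paper does not prove this proposition at all: it is quoted verbatim as \cite{B06}*{Theorem~1.1} (B\l{}ocki's characterization of the domain of definition of the complex Monge--Amp\`ere operator) and is used as a black box in the proof of Theorem~\ref{globalbc}. So there is no in-paper argument to compare yours against.

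As a free-standing proof, your proposal has a genuine gap: it is not self-contained precisely where the difficulty lies. The algebraic identity
\begin{equation*}
(n-j)(n-j-1)\,|u_\lambda|^{n-j-2}\,du_\lambda\w d^cu_\lambda
= dd^c\bigl(|u_\lambda|^{n-j}\bigr)+(n-j)\,|u_\lambda|^{n-j-1}\,dd^cu_\lambda
\end{equation*}
is correct for smooth $u_\lambda\le 0$ and is indeed the engine of B\l{}ocki's argument. But in the forward direction you invoke ``the equivalence between $\D(\Omega)$ and the local B\l{}ocki--Cegrell finite energy class,'' and in the reverse direction you concede that the recursion ``is essentially B\l{}ocki's delicate argument in \cite{B06}.'' Both of these are (parts of) the very theorem being proved, so the argument as written is circular rather than a proof. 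The genuinely hard steps --- showing that weak boundedness of the gradient currents for \emph{one} smooth decreasing sequence forces it for \emph{every} such sequence; absorbing the $d\chi$ and $d^c\chi$ error terms produced by the integration by parts via Cauchy--Schwarz against the inductive hypothesis at the previous level; and the comparison argument showing that $(dd^cu_\lambda)^n$ converges to a limit independent of the approximating sequence --- are exactly the content of \cite{B06} and are not supplied. Moreover, the asserted ``equivalence'' between weak boundedness of the gradient currents and uniform local bounds on all integrals $\int\chi\,|u_\lambda|^{m}(dd^cu_\lambda)^{p}\w\omega^{n-p}$ with $m+p\le n$ is stated without justification and is itself a nontrivial portion of B\l{}ocki's theorem. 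Given that the paper treats the proposition as a citation, the honest options are to do the same or to reproduce B\l{}ocki's induction in full; the present sketch does neither.
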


  %\begin{df}
  %  We say that $\varphi\in \psh(X, \omega)$ is in $\D(X, \omega)$ if
  %  whenever $g$ is a local $dd^c$-potential of $\omega$ in an open
  %  set $\U\subset X$, then $\varphi+g\in \D(\U)$. 
  %   \end{df}

%\begin{thm}\label{globaldomanen}
%  Let $(X, \omega)$ be a compact K\"ahler manifold of dimension $n$.
%  Then $\D(X, \omega)\subset \G(X, \omega)$.
%  \end{thm} 
%For the proof we need the following lemma. 
We will also use the following lemma. 
  
\begin{lma}\label{stolsben} 
Assume that $\{\U_i\}$ is a finite open covering
of $X$ and that, for each $i$, 
$g_i$ is a local $dd^c$-potential of $\omega$ in $\U_i$. Moreover,
assume that $\chi_i$ is partition of unity subordinate $\{\U_i\}$.
Then there
is a $C>0$, such that, if 
$\varphi\in \psh(X, \omega)$ is smooth and $\varphi<0$, then, for $1\le
j\le n-1$, 
\begin{multline}\label{merkurius}
  \int_X -\varphi (dd^c\varphi+\omega)^{j}\w\omega^{n-j}\\
    \leq 2 \sum_i \int_X \chi_i d(\varphi+g_i)\w d^c (\varphi +g_i) \w
  (dd^c\varphi +\omega)^{j-1}\w\omega^{n-j} +\\
  \int_X -\varphi (dd^c\varphi+\omega)^{j-1}\w\omega^{n-j+1} + C.
  % , ~~~~~ 1\le j\le n-1.
\end{multline}
%$1\le j\le n-1$. 
\end{lma}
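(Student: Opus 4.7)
The plan is to reduce the power of $(dd^c\varphi+\omega)$ by one and then integrate by parts. Starting from the identity
\begin{equation*}
(dd^c\varphi+\omega)^j\wedge\omega^{n-j} = dd^c\varphi\wedge(dd^c\varphi+\omega)^{j-1}\wedge\omega^{n-j} + (dd^c\varphi+\omega)^{j-1}\wedge\omega^{n-j+1},
\end{equation*}
multiplication by $-\varphi$ and integration produces the second term on the right of \eqref{merkurius} automatically, along with the quantity $B := -\int_X \varphi\, dd^c\varphi\wedge T$, where $T := (dd^c\varphi+\omega)^{j-1}\wedge\omega^{n-j}$ is a closed, smooth, positive $(n-1,n-1)$-form. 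The remaining task is to bound $B$ by the first term on the right of \eqref{merkurius} plus a constant.

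I would next split $B = \sum_i\int_X(-\chi_i\varphi)\,dd^c\varphi\wedge T$ via the partition of unity and perform integration by parts. Since $T$ is closed and $X$ is closed, Stokes applied to $d(\chi_i\varphi\, d^c\varphi\wedge T)$ yields
\begin{equation*}
-\int_X\chi_i\varphi\, dd^c\varphi\wedge T = \int_X\chi_i\, d\varphi\wedge d^c\varphi\wedge T + \int_X \varphi\, d\chi_i\wedge d^c\varphi\wedge T.
\end{equation*}
Summing over $i$, the last sum vanishes because $\sum_i d\chi_i = 0$, leaving $B = \sum_i\int_X\chi_i\, d\varphi\wedge d^c\varphi\wedge T$.

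Finally, on $\U_i$ I would write $u_i := \varphi+g_i$ (which is psh) and expand
\begin{equation*}
d\varphi\wedge d^c\varphi = du_i\wedge d^c u_i - 2\, du_i\wedge d^c g_i + dg_i\wedge d^c g_i.
\end{equation*}
Positivity of $d(u_i\pm g_i)\wedge d^c(u_i\pm g_i)$ as $(1,1)$-forms gives $\pm 2\, du_i\wedge d^c g_i\leq du_i\wedge d^c u_i + dg_i\wedge d^c g_i$, and hence pointwise $d\varphi\wedge d^c\varphi \leq 2\,du_i\wedge d^c u_i + 2\,dg_i\wedge d^c g_i$. Wedging with the positive form $T$ and integrating against $\chi_i$ produces the main sum on the right of \eqref{merkurius}; the residual error term is controlled by $dg_i\wedge d^c g_i\leq C_0\,\omega$ (since $g_i$ is smooth) together with $\int_X T\wedge\omega = \int_X\omega^n$ (by Stokes on the closed manifold), yielding a $\varphi$-independent constant $C$. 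The main conceptual point, and the only real work, is recognising that the mixed term $du_i\wedge d^c g_i$, though not itself positive, is absolutely bounded after wedging with the positive form $T$ by the symmetric wedges that appear in the target estimate; once this Cauchy--Schwarz-type step is in place, the rest is bookkeeping.
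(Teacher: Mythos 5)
Your proof is correct and follows essentially the same route as the paper's: integrate by parts to reduce to $\int d\varphi\wedge d^c\varphi\wedge T$, then control it via a Cauchy--Schwarz-type bound on the mixed term $d\varphi\wedge d^c g_i$ against the positive current $T$, with the smooth-potential contribution $dg_i\wedge d^c g_i\leq D\,\omega$ absorbed into the constant via $\int_X\omega\wedge T=\int_X\omega^n$. The only cosmetic differences are that you split by the partition of unity before applying Stokes (using $\sum_i d\chi_i=0$) and bound the cross term with equal weights, whereas the paper uses a weighted inequality and absorbs $\tfrac12\int d\varphi\wedge d^c\varphi\wedge T$ back into the left-hand side; both land on the same factor $2$.
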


\begin{proof}
We will use the following statement. 
Assume that $A, B, a, b$ are real smooth functions. Then 
\begin{equation}\label{solsken}
  A B da\w d^c b+   A B db\w d^c a = 2AB (da\w d^c b)_{(1,1)} 
  \leq  
A^2 da\w d^ca + B^2 db\w d^c b
%  \int_X A B da\w d^c b\w T +  \int_X A B db\w d^c a\w T + 
%  \le \int_X A^2 da\w d^ca\w T+ \int_X B^2 db\w d^c b\w T.
\end{equation}
as forms, where $(1,1)$ denotes the component of bidegree $(1,1)$.  This is an immediate consequence of the %following
inequality
$ i\alpha\w\bar\beta+i\beta\w\bar\alpha 
  \leq i\alpha\w\bar\alpha +  i\beta\w\bar\beta$ 
for $(1,0)$-forms $\alpha, \beta$, applied to $\alpha=A\partial a$ and
$\beta=B\partial b$.

\smallskip 
  
  Take $1\leq j \leq n-1$, let $T=(dd^c\varphi+\omega)^{j-1}\w\omega^{n-j}$, and let
  \[
    I=\sum_i \int_X \chi_i d(\varphi+g_i)\w d^c (\varphi +g_i) \w T. 
      \]
 Then, by Stokes' theorem, the left hand side of \eqref{merkurius} equals
  \begin{equation}\label{pluto}
    \int_X -\varphi (dd^c \varphi + \omega) \w  T  = 
  %  \int_X -\varphi (dd^c \varphi) \w  T  + \int_X -\varphi \w\omega T
   % = 
    \int_X d\varphi \w d^c \varphi \w  T + \int_X -\varphi \w\omega T.
  \end{equation}
Moreover, 
  \begin{multline}\label{uranus} 
  \int_X d\varphi \w d^c \varphi \w  T=  \sum_i \int_X \chi_i d\varphi
  \w d^c \varphi \w  T
  =\\
    I - \sum_i \int_X \chi_i d\varphi \w d^c g_i \w  T -
    \sum_i \int_X \chi_i d g_i \w d^c \varphi \w  T - \sum_i \int_X \chi_i d g_i \w d^c g_i \w  T 
    \\
    \leq
   I + \frac{1}{2} \sum_i \int_X \chi_i d\varphi \w d^c \varphi \w  T  + 
    \sum_i \int_X \chi_i d g_i \w d^c g_i \w  T = 
   \\
    I + \frac{1}{2}  \int_X d\varphi \w d^c \varphi \w  T + 
    \sum_i \int_X \chi_i d g_i \w d^c g_i \w  T 
 \end{multline}
 Here, the inequality follows by \eqref{solsken} applied to
 $A=-1/\sqrt 2$, $B=\sqrt 2$, $a=\varphi$, and $b=g_i$. 
  Note that there is a $D>0$, such that for all $i$, 
 $
   dg_i\w d^c g_i\w T\leq D \omega \w T
 $
 as forms. Thus, from \eqref{uranus} we conclude that 
 \begin{equation}\label{tellus}
   \frac{1}{2}\int_X d\varphi \w d^c \varphi \w  T
   \leq I + D  \int_X \omega\w T = I+ D  \int_X \omega^n. 
 \end{equation}
 Combining \eqref{pluto} and \eqref{tellus}, we get
 \eqref{merkurius} (with $C=2 D \int_X \omega^n$). 
 \end{proof} 

 \begin{proof}[Proof of Theorem ~\ref{globalbc}]
   Assume that $\varphi\in \D(X, \omega)$. Clearly, we may assume that
   $\varphi<0$. 
%   Given $C\in \R$, note that $\varphi\in \G(X, \omega)$ if and only if $\varphi+C\in \G(X, \omega)$. 
 %  Thus since $X$ is compact we may assume that  $\varphi< 0$.
Let $\varphi_\ell=\max
(\varphi, -\ell)$. Moreover choose a sequence $\epsilon_\lambda\to 0$,
$\lambda\in \N$. By  \cite{Dem93}*{Theorem~1.1}, for each $\ell$, there is a sequence 
$\varphi_{\ell, \lambda}$ %\in \psh (X, (1+\epsilon_\lambda)$, where 
of smooth negative $(1+\epsilon_{\lambda})\omega$-psh functions decreasing to
$\varphi_\ell$. % where  $\epsilon_{\lambda} \to 0$.
We may assume that $\varphi_{\ell,\lambda}>\varphi$; otherwise replace
$\varphi_{\ell, \lambda}$ by $\varphi_{\ell, \lambda}+\delta_{\ell,
  \lambda}$, where $\delta_{\ell, \lambda}\to 0$. % when $\lambda\to 0$. 
Since $X$ is compact we may inductively choose $\lambda_\ell\geq \ell$ so that
$\varphi_{\ell, \lambda_\ell}<\varphi_{\kappa, \lambda_{\ell-1}}$ for
all 
%$\varphi_{\ell, \lambda_\ell}<\varphi_{\kappa, \ell}$ for all
$\kappa<\ell$, and % $\varphi_{\ell, \lambda_\ell}-\varphi_\ell <1$, and 
\begin{equation}\label{veckan}
 \int_{X} -\varphi_{\ell} (dd^c \varphi_{\ell}+\omega)^ j\w
 \omega^{n-j} < 
  \int_{X} -\varphi_{\ell, \lambda_\ell} (dd^c \varphi_{\ell, \lambda_\ell}+\omega)^ j\w
  \omega^{n-j} + 1/\ell. 
\end{equation}
Set $\psi_\ell=\varphi_{\ell, \lambda_\ell}$.
%and
%$\epsilon_\ell=\epsilon_{\ell, \lambda_\ell}$.  
Then $\psi_\ell$ is a sequence of smooth negative
$(1+\epsilon_{\lambda_\ell})$-psh functions decreasing to
$\varphi$. 
Assume that $\{\U_i\}$, $g_i$, and $\chi_i$ are as in Lemma
\ref{stolsben}. 
Then $\varphi+g_i\in \D(\U_i)$ and
$\psi_\ell+(1+\epsilon_{\lambda_\ell})g_i$ is a sequence of smooth psh functions
decreasing to $\varphi+g_i$. It follows by
Proposition ~\ref{humle} that there is a $K>0$, such that, for each
$\ell$ and $j=0, \ldots, n-2$, 
\begin{equation*}
   \sum_i \int_X \chi_i d\big (\psi_{\ell}+(1+\epsilon_{\lambda_\ell})
   g_i \big)\w d^c \big(\psi_{\ell} +(1+\epsilon_{\lambda_\ell}) g_i \big ) \w
  \big (dd^c\psi_{\ell} +(1+\epsilon_{\lambda_\ell})\omega\big
  )^{j}\w \big ((1+\epsilon_{\lambda_\ell})\omega\big )^{n-j-1}\leq K.
\end{equation*}
By Lemma ~\ref{stolsben}, inductively applied to $j=1, \ldots,
n-1$, we get that there is an $M>0$ such that, for $1\leq j\leq
  n-1$, 
\[
 \int_X -\psi_{\ell} (dd^c \psi_{\ell}+\omega)^ j\w \omega^{n-j} \leq M.
%  \int_X -\varphi_{\ell, \lambda} (dd^c \varphi_{\ell,
%    \lambda}+\omega)^ j\w \omega^{n-j} \leq M.
%, ~~~~~~ 1\leq j\leq
 % n-1. 
\]
By \eqref{veckan}, 
\[
  \int_{O_\ell} -\varphi_{\ell} (dd^c \varphi_{\ell}+\omega)^ j\w
  \omega^{n-j} \leq M +1/\ell.
%, ~~~~~~ 1\leq j\leq n-1
\]
Thus, in view of \eqref{husqvarna}, as desired, 
\[
  \int_X -\varphi \la dd^c \varphi +\omega\ra ^ j\w
  \omega^{n-j} \leq M. 
\]
   \end{proof}

\section{Examples of functions with finite non-pluripolar
  energy}\label{exempelsamling}

Let us present some exemples of functions with finite non-pluripolar
energy. 
%For any compact K\"ahler manifold $(X,\omega)$, $\E(X, \omega)\subset \G_k(X, \omega)$, cf.\ Theorem
%~\ref{grisegenskaper},  and
As in
the local situation $\omega$-psh functions with analytic singularities
are in $\G_k(X,\omega)$.

\begin{ex}\label{salta} 
Assume that $\varphi\in \psh(X, \omega)$ has analytic
singularities, i.e., locally $\varphi=c\log |f|^2+b$, where $c>0$, $f$ is a
tuple of holomorphic functions, and $b$ is bounded. Then $\varphi\in
\G(X, \omega)$ by Example ~\ref{aladab}.
Note that
$\varphi\in \F(X, \omega)$ if and
only if $\varphi$ is locally bounded, cf.\ Remark ~\ref{relatera}. 

Moreover, note that if $\varphi, \psi\in \psh(X, \omega)$ have
analytic singularities, then $(1-t)\varphi+t\psi\in \psh(X, \omega)$
has analytic singularities. Thus the set of functions in $\G_k(X,
\omega)$ with analytic singularities is a convex subclass.
%cf.\ $\E_k^\psi(X, \omega)$.
%of $\G(X, \omega)$, 
  \end{ex}

  %In this section
  We will consider some examples on $\P^n=\P^n_x$ with homogeneous coordinates $[x]=[x_0,\ldots,
x_n]$ and equipped with the Fubini-Study form
\begin{equation}\label{nassla}
  \omega_{\text{FS}}=dd^c\log |x|^2=\log (|x_0|^2+\cdots +
  |x_n|^2).
  \end{equation}

\begin{ex}\label{dansa} 
Let $f$ be a $\lambda$-homogeneous polynomial on $\C^{n+1}$ that we
consider as a holomorphic section of $\Ok(\lambda)\to \P^n$.
%Then $f$
%corresponds to a holomorphic section of $\Ok(\lambda)\to \P^n$.
Now 
$\varphi:=\log (|f|^2/|x|^{2})=\log|f|^2_{\text{FS}}$, where $|\cdot|_{\text{FS}}$ denotes
the Fubini-Study metric, is in $\psh(\P^n, \lambda
\omega_{\text{FS}})$ and has analytic singularities. In particular,   
$\varphi\in \G(\P^n, \lambda\omega_{\text{FS}})$ by Example ~\ref{salta}.
\end{ex}

Next, let us consider examples of functions with finite non-pluripolar
energy %in $\G_k(X, \omega)$
that do not have
analytic singularities.
First, we present a global version of Example ~\ref{utvidgat} that shows
that $\G_k(X, \omega)$ is not convex in
general. For this we need
the following lemma.
\begin{lma}\label{lillemma}
  Assume that $\varphi\in\psh(X, \omega)$, where $(X, \omega)$ is a
  compact K\"ahler manifold. Moreover, assume that $g:\R\to\R$ is smooth, 
  non-decreasing and convex, and that $g'(\varphi)\leq 1$. Then
  $g(\varphi)\in \psh(X, \omega)$. 
  \end{lma}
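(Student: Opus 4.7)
The plan is to realize $g\circ\varphi$ as the supremum of a family of $\omega$-psh functions and then invoke the standard fact that such a supremum is $\omega$-psh after upper-semicontinuous regularization. The key observation is that a smooth convex $g$ equals the supremum of its affine tangent lines $t\mapsto g(s)+g'(s)(t-s)$, and under our assumptions each of those tangents composed with $\varphi$ will be $\omega$-psh.

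In detail: since $X$ is compact, $M:=\max_X\varphi$ is attained and finite. The assumption $g'(\varphi)\leq 1$, combined with $g'$ being nondecreasing (convexity of $g$) and nonnegative (monotonicity of $g$), forces $g'(s)\in[0,1]$ for every $s\leq M$. For $t\leq M$ the supremum $\sup_{s\in\R}(g(s)+g'(s)(t-s))$ equals $g(t)$ and is attained at $s=t$, so the same identity holds with the supremum restricted to $s\leq M$. A short separate check (using that either $g$ is strictly increasing at all points $\leq M$, making both sides $-\infty$, or else $g$ is eventually constant on some $(-\infty,s_0]$) shows the identity persists at points where $\varphi(x)=-\infty$, with both sides equal to $\lim_{t\to-\infty}g(t)$.

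Write $h_s(x):=g(s)+g'(s)(\varphi(x)-s)=g'(s)\varphi(x)+\bigl(g(s)-sg'(s)\bigr)$. Setting $a:=g'(s)\in[0,1]$, the identity
\begin{equation*}
dd^c(a\varphi)+\omega = a(dd^c\varphi+\omega)+(1-a)\omega\geq 0
\end{equation*}
shows $h_s\in\psh(X,\omega)$. The family $\{h_s\}_{s\leq M}$ is uniformly bounded above by $g(M)$, so $(\sup_{s\leq M}h_s)^*\in\psh(X,\omega)$ by the standard regularization result for suprema of $\omega$-psh functions. Since $g\circ\varphi$ is upper semicontinuous (composition of usc $\varphi$ with continuous nondecreasing $g$), it coincides with its usc regularization, giving $g(\varphi)\in\psh(X,\omega)$.

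The only delicate point is the reduction step: verifying that restricting the tangent-line supremum to $s\leq M$ still recovers $g(t)$ for every value $t$ in the range of $\varphi$, including $t=-\infty$. This is where compactness of $X$ enters essentially, through the finiteness of $\max\varphi$; everything else is then a mechanical application of convexity and the $\omega$-psh analogue of Choquet's lemma.
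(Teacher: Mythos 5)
Your argument is correct in outline but takes a genuinely different route from the paper. The paper's proof is a one-line curvature computation: it writes
\[
dd^c\big(g(\varphi)\big)+\omega=g''(\varphi)\,d\varphi\wedge d^c\varphi+g'(\varphi)\,(dd^c\varphi+\omega)+\big(1-g'(\varphi)\big)\,\omega
\]
and observes that each term is nonnegative by convexity, monotonicity, and the hypothesis $g'(\varphi)\leq 1$. Your proof instead realizes $g$ as the supremum of its affine minorants and reduces everything to the trivial case $a\varphi+c$ with $a\in[0,1]$, then invokes Choquet-type regularization. What your approach buys is that it never differentiates $g\circ\varphi$, so it works verbatim for $g$ merely convex and nondecreasing (not smooth) and avoids any discussion of how to interpret $g''(\varphi)\,d\varphi\wedge d^c\varphi$ for non-smooth $\varphi$; the price is the bookkeeping at the polar set and the reliance on compactness to pin down the range of slopes. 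The paper's computation is shorter and also yields the explicit decomposition of $dd^c(g(\varphi))+\omega$, which is what gets reused in Example 9.4.

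There is, however, one step in your write-up that fails as stated: the dichotomy in your ``short separate check.'' You claim that if $g$ is strictly increasing on $(-\infty,M]$ then both $\sup_{s\leq M}h_s(x)$ and $\lim_{t\to-\infty}g(t)$ equal $-\infty$ at a pole of $\varphi$. This is false for, say, $g(t)=e^{t}$ (after the harmless normalization making $g'\leq 1$ on the relevant range): $g$ is strictly increasing and convex, yet $\lim_{t\to-\infty}g(t)=0$, while every tangent line has positive slope and hence $h_s(x)=-\infty$ wherever $\varphi(x)=-\infty$. So the pointwise identity $\sup_{s\leq M}h_s=g\circ\varphi$ genuinely fails on $\{\varphi=-\infty\}$ in this case, and your final sentence --- which deduces $(\sup_s h_s)^*=g\circ\varphi$ from that identity plus upper semicontinuity --- does not go through as written. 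The gap is easily repaired: one always has $\sup_{s\leq M}h_s\leq g\circ\varphi$ with equality on $\{\varphi>-\infty\}$, and since $\{\varphi=-\infty\}$ has empty interior and $\varphi(y)\to-\infty$ as $y\to x$ for any pole $x$ (by upper semicontinuity), the regularization satisfies $(\sup_s h_s)^*(x)=\lim_{t\to-\infty}g(t)=g(\varphi(x))$ there as well. You should replace the false dichotomy by this argument, or simply note that two $\omega$-psh candidates agreeing off a pluripolar set and both computed as the same limit along $\{\varphi>-\infty\}$ must coincide.
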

  \begin{proof}
    Note that
    \[
      dd^c \big (g(\varphi) \big )+\omega=
      g''(\varphi) d\varphi\w d^c \varphi + g'(\varphi) (dd^c
      \varphi+\omega) + \big (1-g'(\varphi)\big )\omega, 
    \]
    and that each term in the right hand side is $\geq 0$ %non-negative
    by the
    assumptions on $\varphi$ and ~$g$. 
  \end{proof}

\begin{ex}\label{ejkonvex}
Let $X$ be a projective manifold and $L\to X$ an ample line
bundle equipped with a positive metric $h$ with corresponding K\"ahler
form $\omega$.
%be the
%corresponding K\"ahler form. 
Moreover, let $f=(f_1,\ldots, f_m)$ be a holomorphic section of
$L^{\oplus m}$, take $C>0$ such that $|f|^2_h =|f_1|_h^2+\cdots +
|f_m|_h^2< C$, and let $\varphi=\log|f|^2_h-C$. 
For $\epsilon \in (0,1)$, let $g(t)=-(-t)^\epsilon$ and
%for some $\epsilon \in (0,1)$ and  let 
\[
  \psi=g(\varphi)=-(-\log|f|^2_h+C)^\epsilon.
\]
Then $\psi\in \psh(X, \omega)$ by Lemma ~\ref{lillemma}.
%; indeed, note that $g$ is non-decreasing and convex and
%$g'(\varphi)\leq \epsilon$.
As in
Example ~\ref{utvidgat} one sees that, for each $k$, $\psi\in \G_k(X, \omega)$ if and
only if $\epsilon <1/2$, and that $(1-t)\varphi+t\psi
\in \psh(X,
\omega)\setminus \G_k(X, \omega)$
for all $t\in (0,1)$ and all
$\epsilon \in (0,1)$.
\end{ex}

The previous example also shows that $E^{np}_k$ is not monotone in
general.
  \begin{ex}\label{ejmonoton}
Let us use the notation from Example ~\ref{ejkonvex}. Note that
$g(t)>t$ for $t\leq -1$. It follows that $\psi >\varphi$, and thus
$\varphi':= (1-t)\varphi+t\psi> \varphi$ if $t\in (0,1)$. By 
Example ~\ref{ejkonvex}, $E^{np}_1(\varphi')=-\infty$, whereas
$E^{np}_1(\varphi)>-\infty$. Thus $E^{np}_1$ is not non-decreasing. 
    \end{ex}

\smallskip 

Next, let us consider some examples of functions with finite
non-pluripolar energy that do not have sul. 
The following is a global version of Example ~\ref{plutt}. 
\begin{ex}\label{skrutt}
  Let $(X, \omega)$ be a compact K\"ahler manifold of dimension $n$.
  For $i=1,2,\ldots$, let $\psi_i$ be an $\omega$-psh function with analytic
singularities and let
$b_i>0$.
If $B:=\sum b_i<\infty$, then 
\begin{equation*}
\varphi :=\sum_1^\infty b_i \psi_i \in \psh (X, B\omega).
\end{equation*}
Given $C>0$, by the same arguments as in Example ~\ref{plutt} we can
choose $b_i$ inductively so that 
\begin{equation*}%\label{orm}
\int_{X} \varphi \la dd^c
\varphi+B\omega\ra^j\w(B\omega)^{n-j}\geq -C , \quad j\le n-1, 
\end{equation*}
and thus $\varphi\in \G(X, B\omega)$. 
\end{ex}

The following global variant of Example ~\ref{punkt}  gives
an example of a function with finite non-pluripolar energy that
neither has sul nor full mass.
\begin{ex}\label{kplan}
 Let $\sigma=(\sigma_1,\ldots, \sigma_k)$ be a holomorphic section of
$\O(1)^{\oplus k}\to \P^n_x$, 
such that the zero set is a codimension $k$-plane $P_\sigma$, and let
$\psi_\sigma=\log (|\sigma|^2/|x|^2)$. 
Then $\psi_\sigma\in \psh(\P^n, \omega_{\text{FS}})$ has anaytic
singularities and unbounded locus  $P_\sigma$. Next, let
$\sigma^1,\sigma^2, \ldots$ be holomorphic sections of $\O(1)^{\oplus
  k}$ that
define codimension $k$-planes $P_{\sigma^1}, P_{\sigma^2}, \ldots$,
respectively, 
such that $\bigcup P_{\sigma^i}$ is dense in $\P^n$. 
Let 
  $\psi_i=\psi_{\sigma^i}$, and let 
  $\psi=\sum b_i \psi_i$ be constructed as in Example
  ~\ref{skrutt}.
Then $\varphi:=\psi/B \in \G(\P^n, \omega_{\text{FS}})$, where
$B=\sum_i b_i$, but $\varphi$ is
  not locally bounded anywhere; in particular, $\varphi$
  does not have sul.
Moreover, since $\psi_i\notin \F_k(\P^n, \omega_{\text{FS}})$, cf.\ Example
~\ref{salta}, it follows that $\varphi\notin \F_k(\P^n, \omega_{\text{FS}})$ and thus $\varphi\notin\E_k(\P^n,
\omega_{\text{FS}})$.
\end{ex}

Next, let us consider a variant of Example ~\ref{kplan} in $\P^1_x$. 
\begin{ex}\label{kyckling} 
  Let $\psi_i=\max (\log|x_1-a_ix_0|_{\text{FS}}^2, -c_i)$, where $(1, a_i)\in
  \P^1$ and $c_i\in (0,\infty]$. Then $\psi_i\in \psh(\P^1, \omega_{\text{FS}})$ has analytic
  singularities and as in Example ~\ref{skrutt} we can choose $b_i$ so
  that
  \[
    \varphi=\sum_{i=1}^\infty b_i \psi_i =
    \sum_{i=1}^\infty b_i \max (\log|x_1-a_ix_0|_{\text{FS}}^2, -c_i)
  \]
  is in $\G(\P^n,
  B\omega_{\text{FS}})$, where $B=\sum_i b_i$. 
%  has finite non-pluripolar energy and thus $\varphi\in \G(\P^n,
%  B\omega)$, where $B=\sum_i b_i$. 
 Note that if $c_i=\infty$ we are in the situation in Example ~\ref{kplan}.
  %, whereas if all $c_i <\infty$, then
%$E^{np}(\varphi)=E(\varphi)$, so that $\varphi\in \E(\P^n,
% B\omega)$.
 
 Moreover, if the points $(1,a_i)$ are dense in $\P^1$ and $b_ic_i\to
 \infty$, then $\varphi$ is not locally bounded anywhere; in
 particular, it does not have sul. 
  By choosing $b_i$ and $c_i$ inductively it is 
  possible to arrange this so that in addition $\varphi\in \G(\P^n,
  B\omega_{\text{FS}})$. One can check that if $b_i\to 0$ fast enough
  and $b_i^2c_i\to 0$, then, in fact, $\varphi\in \E(\P^n,
  B\omega_{\text{FS}})$.
\end{ex}

\subsection{Product spaces}\label{prodsection}
Let $(X_1, \omega_1)$ and $(X_2, \omega_2)$ be compact K\"ahler manifolds. Let
$X=X_1\times X_2$ and $\omega=\pi^*_1\omega+\pi_2^*\omega_2$, where 
$\pi_i\colon X\to X_i$ are the natural projections. 
Then it is readily verified
that $(X, \omega)$ is a
compact K\"ahler manifold. Moreover, if $\varphi^i\in \psh(X_i, \omega_i)$,
then $\varphi:=\pi^*_1\varphi^1+\pi^*_2\varphi^2\in \psh(X, \omega)$. 
We have the following global version of Proposition ~\ref{pengar} that
can be proved in the same way. 
\begin{prop}\label{prodspace}
Let $(X_i,\omega_i)$, $(X, \omega)$, $\varphi^i$, and $\varphi$ be as
above. Assume that $\varphi^i\in \G_k(X_i,\omega_i)$, $i=1,2$. Then $\varphi\in G_k(X,\omega)$. 
\end{prop}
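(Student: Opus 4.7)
The plan is to reduce to the local Proposition~\ref{pengar} using local $dd^c$-potentials of $\omega$ that are themselves products.

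First I would check that $\varphi \in \psh(X,\omega)$: since $dd^c$ commutes with pullback,
\begin{equation*}
dd^c\varphi + \omega = \pi_1^*(dd^c\varphi^1 + \omega_1) + \pi_2^*(dd^c\varphi^2 + \omega_2),
\end{equation*}
which is a sum of pullbacks of closed positive currents, hence closed and positive.

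Next I would verify the finite non-pluripolar energy condition locally, since for a compact $X$, membership in $\G_k(X,\omega)$ is equivalent to $\varphi + h \in \G_k(U)$ for local $dd^c$-potentials $h$ of $\omega$ on the members of some finite open cover $\{U\}$ of $X$. Given any point $(p_1,p_2) \in X = X_1 \times X_2$, choose relatively compact open neighborhoods $U_i \subset X_i$ of $p_i$ on which $\omega_i$ admits a smooth $dd^c$-potential $h_i$. Then on the product neighborhood $U := U_1 \times U_2$, the function $h := \pi_1^* h_1 + \pi_2^* h_2$ is a smooth $dd^c$-potential of $\omega$, and
\begin{equation*}
\varphi + h = \pi_1^*(\varphi^1 + h_1) + \pi_2^*(\varphi^2 + h_2).
\end{equation*}
Since $\varphi^i \in \G_k(X_i,\omega_i)$, by locality we have $\varphi^i + h_i \in \G_k(U_i)$. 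Applying Proposition~\ref{pengar} to the two complex manifolds $U_1$ and $U_2$ and the psh functions $\varphi^i + h_i$, we conclude that $\varphi + h \in \G_k(U)$. Since $X$ is compact, it is covered by finitely many such product neighborhoods, and hence $\varphi \in \G_k(X,\omega)$.

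The main issue to be careful about is simply that the local $dd^c$-potential on a product neighborhood can be chosen as a sum of pullbacks (so that the already-proved Proposition~\ref{pengar} applies verbatim), but this is immediate. Beyond that, no new analytic input is needed; in particular, no further integration by parts or Monge-Amp\`ere estimates are required because all the work was already done in the local case.
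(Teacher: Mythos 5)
Your proof is correct. The one point worth making explicit is that you need the local finiteness half of Definition~\ref{prisen} for $\varphi^i+h_i$ on $U_i$ as well as the integrability half; the former is automatic because the non-pluripolar products $\la dd^c\varphi^i+\omega_i\ra^j$ are always well-defined for $\omega_i$-psh functions on the compact K\"ahler manifold $X_i$ (\cite{BEGZ}*{Proposition~1.6}), and the latter follows from $\varphi^i\in\G_k(X_i,\omega_i)$ together with the boundedness of $h_i$ on the relatively compact $U_i$. With that in place, and after shrinking the product neighborhoods to a finite cover by relatively compact subsets so that local integrability on each piece sums to integrability over all of $X$, the reduction to Proposition~\ref{pengar} goes through.

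Your route is, however, not quite the one the paper takes. The paper proves Proposition~\ref{prodspace} ``in the same way'' as Proposition~\ref{pengar}: it reruns the local computation globally, working directly with $\int_X\varphi\la dd^c\varphi+\omega\ra^j\w\omega^{n-j}$, cutting off by $O_\ell=\{\varphi>-\ell\}\subset O^1_\lambda\times O^2_\lambda$, and expanding by multilinearity into products of one-factor integrals, each bounded because $\varphi^i\in\G_k(X_i,\omega_i)$. You instead treat the local proposition as a black box and reduce to it by choosing, on each product chart $U_1\times U_2$, a $dd^c$-potential of $\omega$ of the special form $\pi_1^*h_1+\pi_2^*h_2$, so that $\varphi+h$ is exactly a sum of pullbacks of functions in $\G_k(U_i)$. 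Your version avoids repeating the Fubini-type estimate but pays for it with the (standard, but not entirely free) observation that membership in $\G_k(X,\omega)$ localizes and is insensitive to adding a smooth function; the paper's version is self-contained at the cost of redoing the computation. Both are valid, and the outputs agree.
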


We can use Proposition ~\ref{prodspace} to find new non-trivial
examples of $\omega$-psh functions with finite non-pluripolar
energy.

\begin{ex}\label{konkretex}
  Take $\varphi^1\in \psh(\P^n, \omega_{\text{FS}})$ that has analytic
  singularities and is not locally bounded. Then $\varphi^1\in \G(\P^n, \omega_{\text{FS}})$ but $\varphi^1\notin \F(\P^n, \omega_{\text{FS}})$. 
Next, take $\varphi^2\in \E(\P^1, \omega_{\text{FS}})$ that does not have sul,
e.g., as in Example ~\ref{kyckling}.
Then, by Proposition
~\ref{prodspace}, 
$\varphi=\pi^*_1\varphi^1+ \pi_2^* \varphi^2 \in \G_k(\P^n\times \P^1,
\pi^*_1\omega_{\text{FS}}+\pi^*_2 \omega_{\text{FS}})$, but $\varphi$
neither has sul nor full mass.
\end{ex}

\subsection{Convex combinations in projective space}\label{joina}

Througout this subsection we assume that $\P^n=\P^n_x$ is equipped with
the Fubini-Study form \eqref{nassla} that we denote by $\omega$ or
$\omega_{\P^n}$. 
We know from Example ~\ref{ejkonvex} that convex combinations of functions
in $\G_k(\P^n,\omega)$ are not in $\G_k(\P^n,\omega)$ in general. 
It turns out, however, that if $\varphi^i\in \G_k(\P^{n_i}, \omega)$
for $i=1,2$, then there are natural associated functions $\tilde\varphi^i\in \G_k(\P^{N}, \omega)$,
%$\omega$-psh functions
%in $\P^N$,
where $N=n_1+n_2+1$, such that any convex combination of $\tilde
\varphi_1$ and $\tilde \varphi_2$ is in 
$\G_k(\P^{N}, \omega)$.

To make this more precise, let us first settle some
notation. 
%As above, we assume that $\P^n=\P^n_x$ with homogeneous coordinates $[x]=[x_0,\ldots,
%x_n]$ is equipped with the standard K\"ahler form
%$\omega=\omega_{\P^n}=dd^c\log |x|^2=\log (|x_0|^2+\cdots +
%|x_n|^2)$.
Recall that $\P^n$ is covered by coordinate charts
$\U_i=\U_i^{\P^n}=\{x_i\neq 0\}$. In $\U_0=\{x_0\neq
0\}=\{[1,x_1,\ldots x_n]\}$ we have local coordinates
$x'=(x_1,\ldots, x_n)$ and  
$\gamma_{x'}:=dd^c \log (1+|x'|^2)$ is a local potential for $\omega$.

%To make this more precise, let us first
Next, note that the maps
$p_1: \P^N_{x,y}\dashrightarrow \P^{n_1}_x, [x,y]\mapsto [x]$ and
$p_2: \P^N_{x,y}\dashrightarrow \P^{n_2}_x, [x,y]\mapsto [y]$ are
well-defined outside the planes $\{x=0\}$ and $\{y=0\}$, respectively. If
we let $\pi: Y\to \P^N$ be the blowup of $\P^N$ along these planes,
then there are maps $\hat p_i: Y\to \P^{n_i}$, $i=1,2$, so that the diagrams 
%we have well-defined maps $\hat p_i: Y\to \P^{n_i}$.
\begin{equation*}
\xymatrix{
Y \ar[d]_{\pi} \ar[dr]^{\hat p_i} & \\
\P^N \ar@{-->}[r]_{{p_i}} & \P^{n_i}
}
\end{equation*}
commute. 
Moreover, if
$\varphi\in \psh (\P^{n_i}, \omega)$, then $p_i^*\varphi:=\pi_*\hat
p_i^* \varphi$ is an upper semicontinuous function on $\P^N$ with
possible singularities along $\{x=0\}$ or $\{y=0\}$. If we understand
$p_i^*\varphi$ as the upper semicontinuous regularization we get a 
well-defined upper semicontinous function on
$\P^N$.

Now, assume that $\varphi\in \psh(\P^{n_1}, \omega)$. We want to show
that there is a natural associated %$\omega$-psh function on $\P^N$.
$\tilde \varphi \in \psh (\P^N, \omega)$. 
 Consider the functions $\Gamma_x =\log |x|^2$ and $\Gamma_{x,y}=\log
 |x,y|^2$ on
 $\C^{N+1}$. Note that $\Gamma_x-\Gamma_{x,y}$ is a $0$-homogenous function on
 $\C^{N+1}$; thus it defines a global function on $\P^N$.
 %Then $\Gamma_x-\Gamma$ is $0$-homogenous and thus it defines a global function on $\P^N$.
 Now, let
 \begin{equation*}%\label{raket}
   \tilde \varphi = p_1^* \varphi + \Gamma_x-\Gamma_{x,y}.
   \end{equation*}
We claim that $\tilde \varphi\in \psh(\P^N, \omega)$. To prove this we
need to show 
that $dd^c \tilde \varphi + \omega_{\P^N}\geq 0$. In fact, it suffices to do
this outside $\{x=0\}$, since $\{x=0\}$ has codimension at least $2$
and $dd^c \tilde \varphi$ is a normal $(1,1)$-current and thus we may assume that $p_1$ is holomorphic. Note that
$dd^c (\Gamma_x-\Gamma) = p_1^* \omega_{\P^{n_1}} -\omega_{\P^N}$. Thus $dd^c \tilde \varphi + \omega_{\P^N}= p_1^* (dd^c\varphi +\omega_{\P^{n_1}})\geq
0$, since $\varphi$ is $\omega_{\P^{n_1}}$-psh. 

Analogously, if $\varphi\in \psh(\P^{n_2}, \omega)$,
$\tilde \varphi:= p_2^* \varphi+\Gamma_y-\Gamma_{x,y}$, where
$\Gamma_y=\log |y|^2$, is a well-defined function in $\G_k(\P^{N}, \omega)$.

\begin{prop}\label{joinprop}
Assume that $\varphi^i\in \G_k(\P^{n_i}, \omega), i=1,2$. Then for
$0\leq t\leq 1$, $(1-t)\tilde\varphi^1+t\tilde\varphi^2\in
\G_k(\P^{n_1+n_2+1}, \omega)$, where $\tilde\varphi^i$ are defined as
above. 
\end{prop}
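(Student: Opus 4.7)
The plan is to pass to the blowup $\pi\colon Y\to \P^N$ of the two disjoint linear centers $\{x=0\}$ and $\{y=0\}$, where $N=n_1+n_2+1$, with exceptional divisors $E_1$ and $E_2$. On $Y$ the rational maps $p_i$ become morphisms $\hat p_i\colon Y\to \P^{n_i}$, and together they yield a $\P^1$-bundle $q=(\hat p_1,\hat p_2)\colon Y\to \P^{n_1}\times\P^{n_2}$. After reducing to $\varphi^i\leq 0$, so that $\psi:=(1-t)\tilde\varphi^1+t\tilde\varphi^2\leq 0$, the task becomes showing $\int_{\P^N}(-\psi)\langle dd^c\psi+\omega\rangle^j\wedge\omega^{N-j}<\infty$ for every $j\leq k$.

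Setting $f_1=\Gamma_x-\Gamma_{x,y}$ and $f_2=\Gamma_y-\Gamma_{x,y}$, the key algebraic step will be the decomposition
\[
\pi^*\psi=q^*\Phi+g,\qquad \Phi:=(1-t)\pi_1^*\varphi^1+t\pi_2^*\varphi^2,\qquad g:=(1-t)\pi^*f_1+t\pi^*f_2,
\]
where $\Phi\in\psh(\P^{n_1}\times\P^{n_2},\omega')$ with $\omega':=(1-t)\pi_1^*\omega+t\pi_2^*\omega$, and $g$ has analytic singularities along $E_1\cup E_2$ (and is bounded outside). Applying Proposition \ref{prodspace} to the rescaled functions $(1-t)\varphi^1\in \G_k(\P^{n_1},(1-t)\omega)$ and $t\varphi^2\in \G_k(\P^{n_2},t\omega)$ then places $\Phi\in \G_k(\P^{n_1}\times\P^{n_2},\omega')$. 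A Poincar\'e--Lelong computation gives
\[
dd^c\pi^*\psi+\pi^*\omega=q^*(dd^c\Phi+\omega')+(1-t)[E_1]+t[E_2],
\]
and since non-pluripolar products do not charge divisors, $\langle dd^c\pi^*\psi+\pi^*\omega\rangle^j=q^*\langle dd^c\Phi+\omega'\rangle^j$.

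Next I would pull the integral back along the birational map $\pi$ (valid because non-pluripolar currents ignore the pluripolar exceptional locus) and split it according to $\pi^*\psi=q^*\Phi+g$. The $q^*\Phi$-piece reduces by the projection formula to
\[
\int_{\P^{n_1}\times\P^{n_2}}(-\Phi)\langle dd^c\Phi+\omega'\rangle^j\wedge q_*(\pi^*\omega)^{N-j},
\]
which is finite because $q_*(\pi^*\omega)^{N-j}$ is a smooth positive $(N-j-1,N-j-1)$-form on the compact base, hence dominated by a constant multiple of $\omega'^{N-j-1}$, and $\Phi\in\G_k(\P^{n_1}\times\P^{n_2},\omega')$ by the previous step.

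The main obstacle will be the $g$-piece $\int_Y(-g)\,q^*\langle dd^c\Phi+\omega'\rangle^j\wedge(\pi^*\omega)^{N-j}$: here $-g$ carries logarithmic poles along $E_1\cup E_2$, and one must verify that these are tame against the displayed non-pluripolar measure. My strategy is to exploit that $(\pi^*\omega)^{N-j}$ vanishes to positive order along each exceptional divisor, which together with the analytic singularities of $g$ and a Chern--Levine--Nirenberg estimate in the spirit of Example \ref{aladab} (and \cite{AW}*{Proposition~4.1}) should render the logarithmic contributions integrable.
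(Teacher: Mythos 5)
Your construction is, at bottom, the paper's own proof in global dress: the paper's choice of homogeneous coordinates $[1,x',\lambda,\lambda y']$ on $\U_0^{\P^N}$ is precisely a coordinate chart on your blowup $Y$, the splitting $\pi^*\psi=q^*\Phi+g$ corresponds to the paper's splitting of $\varphi$ into $(1-t)\varphi^1(x')+t\varphi^2(y')$, the term $t\log|\lambda|^2$ (your divisorial part $g$), and a smooth remainder, and both arguments then discard the divisorial contribution from the non-pluripolar product and invoke Proposition~\ref{prodspace} for the product part. So the route is the same; what differs is only the packaging.

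There is, however, one step whose stated justification does not hold up: the finiteness of $\int_Y(-g)\,q^*\la dd^c\Phi+\omega'\ra^j\w(\pi^*\omega)^{N-j}$. First, the Chern--Levine--Nirenberg estimate you cite (as in Example~\ref{aladab}) bounds the integral of a function with log poles against a Monge--Amp\`ere product of \emph{locally bounded} potentials; here the measure is built from the unbounded function $\Phi$, so that estimate does not apply --- indeed, if it did, membership in $\G_k$ would be automatic, which Example~\ref{utvidgat} shows it is not. Second, the claim that $(\pi^*\omega)^{N-j}$ vanishes to positive order along $E_1$ is simply false once $j>n_1$: since $\pi$ maps $E_1$ onto $\{x=0\}\cong\P^{n_2}$, one has $(\pi^*\omega)^m|_{E_1}=0$ only for $m>n_2$, i.e.\ $N-j>n_2$, i.e.\ $j\le n_1$. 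The correct mechanism --- the one the paper uses implicitly when it notes that $\log|\lambda|^2$ is integrable --- is a Fubini argument in the fiber direction of $q$: the current $q^*\la dd^c\Phi+\omega'\ra^j$ is constant along the $\P^1$-fibers, the smooth form $(\pi^*\omega)^{N-j}$ contributes an absolutely continuous density on each fiber, and $E_1$, $E_2$ are \emph{sections} of $q$, so $-g$ restricted to each fiber has isolated logarithmic poles and is integrable there with a uniform bound; integrating out the fiber then reduces everything to the finiteness of the total mass $\int\la dd^c\Phi+\omega'\ra^j\w(\pi_1^*\omega+\pi_2^*\omega)^{n_1+n_2-j}$. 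With that substitution your argument closes; as written, the justification of this step would fail.
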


\begin{proof}
  Let $\varphi=(1-t)\tilde\varphi^1+t\tilde \varphi^2$ and, as above, let $N=n_1+n_2+1$.
  We need to prove that for $0\leq j\leq k$, 
  \begin{equation}\label{varvinter}
\int_{\U} \varphi\la dd^c\varphi+\omega\ra^j \w\omega^{N-j} >-\infty, 
\end{equation}
where $\U\subset \P^N$ is open.
We may assume that $\U$ has compact support in one of the coordinate
charts $\U_i^{\P^N}$, say in $\U_0^{\P^N}$ with homogeneous coordinates
$[1,x',\lambda, \lambda y']$. 

Let us consider the integrand in \eqref{varvinter} in these
coordinates.
First note that
$$\Gamma_x=\log (1+|x'|)=\gamma_{x'} \quad \text{ and } \quad 
\Gamma_y=\log \big (|\lambda|^2(1+|y'|)\big )=\log |\lambda|^2 +
\gamma_{y'}.$$
We write $\gamma$  for $\Gamma_{x,y}$.
Since $\varphi^i$ are $0$-homogeneous, 
$p_1^*\varphi^1$ and $p_2^*\varphi^2$ only depend on $x'$ and $y'$,
respectively; % we can denote them by are $0$-homogeneous,
we write
$p_1^*\varphi^1=\varphi^1(x')$ and $p_2^*\varphi^2=\varphi^2(y')$. 

We start with the factor 
$\la dd^c\varphi+\omega\ra^j = \la dd^c (\varphi + \gamma) \ra^j$. 
Now
\begin{equation*}
  \varphi+\gamma=(1-t)\tilde\varphi^1+t\tilde \varphi^2+\gamma
  =(1-t)\big(\varphi^1(x')+\gamma_{x'}\big)
  +t\big(\varphi^2(y')+\log |\lambda|^2+\gamma_{y'}\big ). 
\end{equation*}
Note
that $dd^c \log|\lambda|^2=[\lambda=0]$; in particular, it has support
where $\log |\lambda|^2=-\infty$. It follows that
\begin{equation*}
  \la dd^c\varphi+\omega\ra^j
  =
  \big \la (1-t) (dd^c \varphi^1(x')+ \omega_x)+
  t(dd^c \varphi^2 (y')+ \omega_y)\big \ra^j, 
  \end{equation*}
where $\omega_x=dd^c\gamma_{x'}$ and $\omega_y=dd^c \gamma_{y'}$.

  Next, let $\theta=\omega_x+\omega_y+\eta_\lambda$, where
  $\eta_\lambda =id\lambda \w d\bar\lambda$. Then $\theta$ is a
  smooth strictly  positive $(1,1)$-form on $\P^N$. Thus we can replace $\omega^{N-j}$ in
  \eqref{varvinter} by $\theta^{N-j}$. Note that for degree reasons
  \eqref{varvinter} is a sum of terms of the form
  \begin{equation}\label{reklam}
\int_{\U} \varphi \big \la (1-t) (dd^c \varphi^1(x')+ \omega_x)+
  t(dd^c \varphi^2 (y')+ \omega_y)\big \ra^j (\omega_x +
  \omega_y)^{n_1+n_2-j}\w \eta_\lambda.
    \end{equation}

    Finally consider
    \begin{equation*}
      \varphi=(1-t)\tilde \varphi^1+t \tilde \varphi^2 =
    %  (1-t)p_1^*(\varphi_1
     % +\gamma_{\P^{n_1}})+tp_2^*(\varphi_2+\gamma_{\P^{n_2}})-\gamma_{\P^N}
     % =\\
        (1-t)\varphi^1(x')+t \varphi^2(y') + t\log|\lambda|^2 + (1-t)\gamma_x+t\gamma_y-\gamma.
     \end{equation*}
Since the last three terms are smooth, 
the contribution from these terms in \eqref{reklam} is finite. 
Next, the contribution from $\log |\lambda|^2$ is finite since
$\log|\lambda|^2$ is (locally) integrable. 
Finally, if we replace $\varphi$ by $\varphi^1(x')$ or
$\varphi^2(y')$, then we are in the situation in Section
~\ref{prodsection}. 
%in (the proof
%of) Proposition ~\ref{prodspace}.
Indeed, we can regard $\U$ as a relatively compact subset of
$\U_0^{\P^{n_1}}\times
\U_0^{\P^{n_2}}\times \C_\lambda\subset
\P^{n_1}\times
\P^{n_2}\times \C_\lambda$. 
Then $\varphi^1(x')$, $\omega_x$, $\varphi^2(y')$, and $\omega_y$ are
just the pullbacks of $\varphi^1$, $\omega_{\P^{n_1}}$, $\varphi^2$,
and $\omega_{\P^{n_2}}$, respectively, under the natural projections $\U\to
\U_0^{\P^{n_i}}$. Thus by Proposition ~\ref{prodspace} this 
contribution is also finite. We conclude that
\eqref{varvinter} holds and hence $\varphi\in \G_k(\P^N, \omega)$. 
\end{proof}

%The second term in \eqref{polkagris} equals 
%\begin{multline*}
%-\int_{O_\ell}u dd^c u\w  ( dd^c\varphi_\ell+\omega) \w T
%=
%-\int_{O_\ell}u dd^c u\w  \la dd^c\varphi+\omega\ra \w T
%=\\
%-\int_{X\setminus A}u dd^c u\w  \la dd^c\varphi+\omega\ra\w
%T+\int_{X\setminus O_\ell}u dd^c u\w  \la dd^c\varphi+\omega\ra \w T
%=\\
%-\int_{X\setminus A}u dd^c u\w  \la dd^c\varphi+\omega\ra \w T+o(1)=
%\int_{X\setminus A}du \w d^c u\w  \la dd^c\varphi+\omega\ra \w T+o(1)
%=\\
%\int_{O_\ell}du \w d^c u\w  \la dd^c\varphi+\omega\ra\w T-
%\int_{X\setminus O_\ell}du \w d^c u\w  \la dd^c\varphi+\omega\ra \w T
%+o(1)
%=\\
%\int_{O_\ell}du \w d^c u\w  \la dd^c\varphi+\omega\ra \w T+o(1)=
%\int_{O_\ell}du \w d^c u\w  (dd^c\varphi_\ell+\omega ) \w T+o(1).
%\end{multline*}
%Here we have used Proposition ~\ref{lokalen} for the first and last
%equality. The second equality follows since $dd^c u\la dd^c\varphi+\omega\ra \w T$ does not charge
%$A$. The third equality follows since $\varphi\sim\psi\in \G_k(X,
%\omega)$ \textcolor{red}{Behover man utveckla det? Tank igenom} and $\bigcap_ j 
%X\setminus O_j$ is pluripolar. The fourth equality follows from
%\cite{BEGZ}*{Theorem~1.14}. Finally, the fifth equality follows since
%$\int_{X\A\setminus O_\ell}du \w d^c u\w  \la dd^c\varphi+\omega\ra \w
%T\in o(1)$ \textcolor{red}{Varfor galler detta?}.

%We can use Proposition 

\begin{ex}\label{kaffekopp}
  Let $\varphi^i$, $i=1,2$ be as in Example ~\ref{konkretex}. 
Then   
 $\varphi:= (1-t)\tilde\varphi^1+t\tilde\varphi^2\in
\G_k(\P^{n+1}, \omega)$ and $\varphi$ neither has analytic singularities nor
full mass. 
\end{ex}

\begin{remark}
 % Recall that the complex Monge-Amp\`ere equation
Let us consider the complex Monge-Amp\`ere equation
  \begin{equation}\label{calabi}
    \la dd^c \varphi + \omega \ra^n = f\omega^n.
  \end{equation}
  on the compact K\"ahler manifold $(X, \omega)$. 
  By Yau's solution to the famous Calabi conjecture, \eqref{calabi} has a smooth
  solution if $f$ is smooth.
   This result has been generalized
  by several authors in many different directions allowing less
  regular $f$.
   For instance, in \cite{DDL}*{Theorem~1.4} it was proved that under certain
  conditions on $f$ there is a unique solution $\varphi\in \psh(X,\omega)$ with
  prescribed singularity type. 
  
  One could hope that it would be possible to use our Monge-Amp\`ere currents to
  solve a Monge-Amp\`ere equation of the form
       \begin{equation}\label{babbla}
    [dd^c \varphi + \omega ]^n = \mu
  \end{equation}
where $\mu$ is allowed to be a more general current. For instance, assume that 
$\mu=\mu_1+\mu_2$, where $\mu_1=f\omega^n$ and $\mu_2$ has support on
a subvariety $Z\subset X$, defined by a holomorphic section $s$ of
a Hermitian vector bundle over $X$. Then it might be natural to
look for a solution $\varphi\sim\log|s|^2$ to \eqref{babbla}. Note
that if $\varphi$ solves \eqref{babbla}, then, in particular, $\la dd^c
\varphi + \omega \ra^n = \mu_1$. Now, by \cite{DDL}*{Theorem~1.4} this
completely determines $\varphi$. Thus one can only hope to solve
\eqref{babbla} for very special choices of $\mu_1$ and $\mu_2$. 
  \end{remark}

\begin{bibdiv}
\begin{biblist}

  \bib{A}{article}{
    AUTHOR = {Andersson, Mats},
     TITLE = {Residues of holomorphic sections and {L}elong currents},
   JOURNAL = {Ark. Mat.},
     VOLUME = {43},
      YEAR = {2005},
    NUMBER = {2},
     PAGES = {201--219},
      ISSN = {0004-2080},
 %  MRCLASS = {32C30 (32A27)},
 % MRNUMBER = {2172988},
%MRREVIEWER = {Alain Yger},
 %      DOI = {10.1007/BF02384777},
 %      URL = {https://doi.org/10.1007/BF02384777},
}

\bib{ABW}{article}{
   author={{Andersson}, Mats},
   author={ {B{\l}ocki}, Zbigniew},
   author={Wulcan, Elizabeth},
   title={On a Monge-Amp\`ere operator for plurisubharmonic functions with analytic singularities},
   JOURNAL = {Indiana Univ.\ Math. J.},
  FJOURNAL = {Indiana University Mathematics Journal},
    VOLUME = {68},
      YEAR = {2019},
    NUMBER = {4},
     PAGES = {1217--1231},
}

\bib{AESWY}{article}{
  AUTHOR = {Andersson, Mats}, 
  author = {Eriksson, Dennis}, 
  author = {{S}amuelsson {}Kalm, H\aa kan}, 
  author = {Wulcan, Elizabeth}, 
  author = {Yger, Alain},
    TITLE = {Nonproper intersection products and generalized cycles},
  status={accepted for publication in European Journal of Mathematics}
     eprint={arXiv:1812.03054[math.CV]},
   url={http://arxiv.org/abs/arXiv:1812.03054}
 }
 
%}

\bib{ASWY}{article}{
  AUTHOR = {Andersson, Mats}, 
  author = {{S}amuelsson {}Kalm, H\aa kan}, 
  author = {Wulcan, Elizabeth}, 
  author = {Yger, Alain},
     TITLE = {Segre numbers, a generalized {K}ing formula, and local
              intersections},
   JOURNAL = {J.\ Reine Angew.\ Math.},
  %FJOURNAL = {Journal f\"{u}r die Reine und Angewandte Mathematik. [Crelle's
             % Journal]},
    VOLUME = {728},
      YEAR = {2017},
     PAGES = {105--136},
  %    ISSN = {0075-4102},
 %  MRCLASS = {32U40 (32C30 32U25)},
%  MRNUMBER = {3668992},
%MRREVIEWER = {Lucas Kaufmann},
%       DOI = {10.1515/crelle-2014-0109},
%       URL = {https://doi.org/10.1515/crelle-2014-0109},
}

\bib{AW}{article}{
 AUTHOR = {Andersson, Mats},
 AUTHOR = {Wulcan, Elizabeth},
     TITLE = {Green functions, {S}egre numbers, and {K}ing's formula},
   JOURNAL = {Ann. Inst. Fourier (Grenoble)},
%  FJOURNAL = {Universit\'e de Grenoble. Annales de l'Institut Fourier},
    VOLUME = {64},
      YEAR = {2014},
    NUMBER = {6},
     PAGES = {2639--2657},
%       DOI = {10.5802/aif.2922},
%       URL = {http://dx.doi.org/10.5802/aif.2922}
}

\bib{BT1}{article}{
   author={Bedford, Eric},
   author={Taylor, B. A.},
   title={The Dirichlet problem for a complex Monge-Amp\`ere equation},
   journal={Invent. Math.},
   volume={37},
   date={1976},
   number={1},
   pages={1--44},
   issn={0020-9910},
%   review={\MR{0445006}},
%   doi={10.1007/BF01418826},
}

\bib{BT2}{article}{
   author={Bedford, Eric},
   author={Taylor, B. A.},
   title={A new capacity for plurisubharmonic functions},
   journal={Acta Math.},
   volume={149},
   date={1982},
   number={1-2},
   pages={1--40},
   %issn={0001-5962},
   %review={\MR{674165}},
   %doi={10.1007/BF02392348},
}

\bib{BT3}{article}{
   AUTHOR = {Bedford, Eric}, 
author = {Taylor, B. A.},
     TITLE = {Fine topology, \v{S}ilov boundary, and {$(dd^c)^n$}},
   JOURNAL = {J. Funct. Anal.},
    VOLUME = {72},
      YEAR = {1987},
    NUMBER = {2},
     PAGES = {225--251},
   }

\bib{BB}{article}{
   AUTHOR = {Berman, Robert}, 
   author =  {Boucksom, S\'{e}bastien},
     TITLE = {Growth of balls of holomorphic sections and energy at
              equilibrium},
   JOURNAL = {Invent.\ Math.},
 % FJOURNAL = {Inventiones Mathematicae},
    VOLUME = {181},
      YEAR = {2010},
    NUMBER = {2},
     PAGES = {337--394},
   }

   \bib{B06}{article}{
    AUTHOR = {B{\l}ocki, Zbigniew},
     TITLE = {The domain of definition of the complex {M}onge-{A}mp\`ere
              operator},
   JOURNAL = {Amer. J. Math.},
%  FJOURNAL = {American Journal of Mathematics},
    VOLUME = {128},
      YEAR = {2006},
    NUMBER = {2},
     PAGES = {519--530},
}

\bib{B}{article}{
   author={ {B{\l}ocki}, Zbigniew},
    title={On the complex Monge-Amp\`ere operator for quasi-plurisubharmonic functions with analytic singularities},
    journal={Bull. London Math. Soc.},
volume = {51},
year = {2019},
pages = {431--435},
}

\bib{BEGZ}{article}{
   AUTHOR = {Boucksom, S\'{e}bastien}, 
author = {Eyssidieux, Philippe}, 
author = {Guedj, Vincent}, 
author = {Zeriahi, Ahmed},
     TITLE = {Monge-{A}mp\`ere equations in big cohomology classes},
   JOURNAL = {Acta Math.},
%  FJOURNAL = {Acta Mathematica},
    VOLUME = {205},
      YEAR = {2010},
    NUMBER = {2},
     PAGES = {199--262},
%      ISSN = {0001-5962},
%   MRCLASS = {32U40 (32Q20 32U15 32W20)},
%  MRNUMBER = {2746347},
%MRREVIEWER = {S\l awomir Dinew},
%       DOI = {10.1007/s11511-010-0054-7},
%       URL = {https://doi.org/10.1007/s11511-010-0054-7},
}

\bib{cegrell}{article}{ 
    AUTHOR = {Cegrell, Urban},
     TITLE = {Pluricomplex energy},
   JOURNAL = {Acta Math.},
 % FJOURNAL = {Acta Mathematica},
    VOLUME = {180},
      YEAR = {1998},
    NUMBER = {2},
     PAGES = {187--217},
 %     ISSN = {0001-5962},
 %  MRCLASS = {32F07},
%  MRNUMBER = {1638768},
%MRREVIEWER = {S\l awomir Ko\l odziej},
%       DOI = {10.1007/BF02392899},
 %      URL = {https://doi.org/10.1007/BF02392899},
}

\bib{C}{article}{
    AUTHOR = {Cegrell, Urban},
     TITLE = {The general definition of the complex {M}onge-{A}mp\`ere
              operator},
   JOURNAL = {Ann.\ Inst.\ Fourier (Grenoble)},
 % FJOURNAL = {Universit\'{e} de Grenoble. Annales de l'Institut Fourier},
    VOLUME = {54},
      YEAR = {2004},
    NUMBER = {1},
     PAGES = {159--179},
%      ISSN = {0373-0956},
 %  MRCLASS = {32W20 (32U05)},
 % MRNUMBER = {2069125},
%MRREVIEWER = {S\l awomir Ko\l odziej},
 %      URL = {http://aif.cedram.org/item?id=AIF_2004__54_1_159_0},
}

\bib{DDL}{article}{
  AUTHOR = {Darvas, Tam\'{a}s},
  author= {Di Nezza, Eleonora},
  author={Lu, Chinh H.},
     TITLE = {Monotonicity of nonpluripolar products and complex
              {M}onge-{A}mp\`ere equations with prescribed singularity},
   JOURNAL = {Anal. PDE},
 % FJOURNAL = {Analysis \& PDE},
    VOLUME = {11},
      YEAR = {2018},
    NUMBER = {8},
     PAGES = {2049--2087},
  %    ISSN = {2157-5045},
 %  MRCLASS = {32W20 (32Q15 32Q20 32U05 35J96)},
 % MRNUMBER = {3812864},
%MRREVIEWER = {S\l awomir Ko\l odziej},
%       DOI = {10.2140/apde.2018.11.2049},
 %      URL = {https://doi.org/10.2140/apde.2018.11.2049},
}

\bib{Dem87}{article}{
    AUTHOR = {Demailly, Jean-Pierre},
     TITLE = {Mesures de {M}onge-{A}mp\`ere et mesures pluriharmoniques},
   JOURNAL = {Math.\ Z.},
 % FJOURNAL = {Mathematische Zeitschrift},
    VOLUME = {194},
      YEAR = {1987},
    NUMBER = {4},
     PAGES = {519--564},
%      ISSN = {0025-5874},
%   MRCLASS = {32F05 (31C10)},
%  MRNUMBER = {881709},
%MRREVIEWER = {Eric Bedford},
%       DOI = {10.1007/BF01161920},
%       URL = {https://doi.org/10.1007/BF01161920},
}

\bib{Dem93}{article}{
    AUTHOR = {Demailly, Jean-Pierre},
     TITLE = {Monge-{A}mp\`ere operators, {L}elong numbers and intersection
              theory},
 BOOKTITLE = {Complex analysis and geometry},
    SERIES = {Univ. Ser. Math.},
     PAGES = {115--193},
 PUBLISHER = {Plenum, New York},
      YEAR = {1993},
 %  MRCLASS = {32C30 (32F07)},
%  MRNUMBER = {1211880},
%MRREVIEWER = {Norman Levenberg},
}

\bib{D}{book}{
author={{Demailly}, Jean-Pierre}, 
title={Complex Analytic and Differential Geometry}, 
year = {1997}, 
status = {available at
%{\tt http://www-fourier.ujf-grenoble.fr/$\widetilde
%{\phantom{a}}$demailly/books.html}}
% doi=
{\tt http://www-fourier.ujf-grenoble.fr/∼demailly/manuscripts/agbook.pdf}}
}

\bib{GZ}{article}{ 
  AUTHOR = {Guedj, Vincent}
  author = {Zeriahi, Ahmed},
     TITLE = {The weighted {M}onge-{A}mp\`ere energy of quasiplurisubharmonic
              functions},
   JOURNAL = {J. Funct. Anal.},
 % FJOURNAL = {Journal of Functional Analysis},
    VOLUME = {250},
      YEAR = {2007},
    NUMBER = {2},
     PAGES = {442--482},
  %    ISSN = {0022-1236},
 %  MRCLASS = {32W20 (32Q15 32U05)},
%  MRNUMBER = {2352488},
%MRREVIEWER = {Norman Levenberg},
%       DOI = {10.1016/j.jfa.2007.04.018},
 %      URL = {https://doi.org/10.1016/j.jfa.2007.04.018},
}

\bib{K}{article}{
    AUTHOR = {Kiselman, Christer O.},
     TITLE = {Sur la d\'{e}finition de l'op\'{e}rateur de {M}onge-{A}mp\`ere complexe},
 BOOKTITLE = {Complex analysis ({T}oulouse, 1983)},
    SERIES = {Lecture Notes in Math.},
    VOLUME = {1094},
     PAGES = {139--150},
 PUBLISHER = {Springer, Berlin},
      YEAR = {1984},
%   MRCLASS = {32F05 (31C10 35J60)},
%  MRNUMBER = {773106},
%MRREVIEWER = {Giorgio Patrizio},
%       DOI = {10.1007/BFb0099158},
 %      URL = {https://doi.org/10.1007/BFb0099158},
}

\bib{LRSW}{article}{
author={{L{\"a}rk{\"a}ng}, Richard},
author={{Raufi}, Hossein},
author={{Sera}, Martin},
author={{Wulcan}, Elizabeth}, 
title={Chern forms of hermitian metrics with analytic singularities on vector bundles},
   JOURNAL = {Indiana Univ. Math. J.},
  FJOURNAL = {Indiana University Mathematics Journal},
    VOLUME = {71},
      YEAR = {2022},
    NUMBER = {1},
     PAGES = {153--189},
      ISSN = {0022-2518},
   MRCLASS = {32U40 (32C30 32W20)},
  MRNUMBER = {4395594},
}

% \bib{LSW}{article}{
%author={{L{\"a}rk{\"a}ng}, Richard},
%author={{Sera}, Martin},
%author={{Wulcan}, Elizabeth},
%     TITLE = {On a mixed {M}onge-{A}mp\`ere operator for quasiplurisubharmonic
%              functions with analytic singularities},
%   JOURNAL = {Bull. Lond. Math. Soc.},
%    VOLUME = {52},
%      YEAR = {2020},
%    NUMBER = {1},
%     PAGES = {77--93},
 %}

\bib{R}{article}{
    AUTHOR = {Rashkovskii, Alexander},
     TITLE = {Analytic approximations of plurisubharmonic singularities},
   JOURNAL = {Math. Z.},
%  FJOURNAL = {Mathematische Zeitschrift},
    VOLUME = {275},
      YEAR = {2013},
    NUMBER = {3-4},
     PAGES = {1217--1238},
 %     ISSN = {0025-5874},
 %  MRCLASS = {32U05 (14B05 32U25 32U35)},
 % MRNUMBER = {3127056},
%MRREVIEWER = {Mattias Jonsson},
%       DOI = {10.1007/s00209-013-1179-0},
 %      URL = {https://doi.org/10.1007/s00209-013-1179-0},
}

%\bib{TTT}{article}{
%AUTHOR = {Truong, Tuyen Trung},
%title={Strong submeasures and applications to non-compact dynamical
%  systems}
%JOURNAL = {Ergodic Theory Dynam. Systems},
%status={published online}
%article{truong_2020,
%  title={Strong submeasures and applications to non-compact dynamical
%    systems},
%  DOI={10.1017/etds.2020.132},
%  journal={Ergodic Theory and Dynamical Systems}, publisher={Cambridge
%    University Press},
%  year={2020}, pages={1–23}
%}

\bib{Duc20}{article}{
  author={Vu, Duc-Viet},
  title={Convexity of the class of currents with finite relative
    energy}, 
   JOURNAL = {Ann.\ Polon.\ Math.},
      VOLUME = {128},
      YEAR = {2022},
    NUMBER = {3},
     PAGES = {275--288}
  }

\bib{Duc21}{article}{
  author={Vu, Duc-Viet},
  title={Relative non-pluripolar product of currents}
  journal={Ann.\ Glob.\ Anal.\ Geom.}
     VOLUME = {60},
      YEAR = {2021},
    NUMBER = {2},
     PAGES = {269--311}
    }

\bib{WN}{article}{
author={{Witt Nystr\"om}, David},
title={Monotonicity of non-pluripolar Monge-Amp\`ere masses}
journal={Indiana Univ. Math. J.},
volume={68},
year={2019}
pages={579--591}
}

\bib{Xia19}{article}{
    author={Xia, Mingchen},
    title={Integration by parts formula for non-pluripolar products},
    status={preprint},
     eprint={arXiv: 1907.06359} 
   url={http://arxiv.org/abs/arXiv: 1907.06359}
 } 

 \end{biblist}
\end{bibdiv} 

\end{document}